\newtheorem{theorem}{\textbf{Theorem}}[section]
\newtheorem{prop}[theorem]{\textbf{Proposition}}
\newtheorem{lemma}[theorem]{Lemma}
\newtheorem{corollary}[theorem]{Corollary}
\newtheorem{definition}[theorem]{\textbf{Definition}}
\theoremstyle{remark}
\newtheorem{remark}[theorem]{{\textbf{Remark}}}
\theoremstyle{example}
\newtheorem{example}[theorem]{\textbf{Example}}
\numberwithin{equation}{section}
\newcommand{\R}{\mathbb{R}}  
\DeclareMathOperator{\vol}{vol}
\DeclareMathOperator{\supp}{supp}
\DeclareMathOperator{\rank}{rank}
\DeclareMathOperator{\End}{End}
\DeclareMathOperator{\Ker}{Ker}
\DeclareMathOperator{\BS}{BS}
\DeclareMathOperator{\diver}{div}
\DeclareMathOperator{\PD}{PD}
\DeclareMathOperator{\Bl}{Bl}
\DeclareMathOperator{\lk}{lk}
\DeclareMathOperator{\cpt}{cpt}
\DeclareMathOperator{\loc}{loc}
\DeclareMathOperator{\dvol}{dvol}
\DeclareMathOperator{\ori}{or}
\DeclareMathOperator{\Aut}{Aut}
\DeclareMathOperator{\id}{id}
\DeclareMathOperator{\codim}{codim}
\DeclareMathOperator{\dist}{dist}
\DeclareMathOperator{\pt}{pt}
\DeclareMathOperator{\Imag}{Im}
\newcommand\bG{{\mathbb G}}
\newcommand\bH{{\mathbb H}}
\newcommand\bK{{\mathbb K}}
\newcommand{\bN}{{{\mathbb N}}}
\newcommand\bR{{\mathbb R}}
\newcommand\bZ{{\mathbb Z}}
\newcommand{\eR}{\EuScript{R}}
\newcommand{\ra}{\rightarrow}
\newcommand{\tcr}{\textcolor{red}}
\newcommand{\disp}{\displaystyle}
\newcommand{\eps}{\varepsilon}
\newcommand{\metric}{\langle \, , \, \rangle}
\begin{document}

\title{On linking numbers and Biot-Savart kernels}
\author{Daniel Cibotaru}
\address{Departamento de Matem\' atica, Universidade Federal do Cear\'a, Fortaleza, Brasil}
\email{daniel@mat.ufc.br}

\author{
 Luciano Mari}
\address{Dipartimento di Matematica, Universit\`a degli Studi di Milano, Milano, Italia}
\email{luciano.mari@unimi.it}

\begin{abstract} On a compact manifold,  the solutions of the initial value problem for the heat equation with currential initial conditions are smooth families of forms for $t>0$. If the initial condition is an exact submanifold $L$ then the integral in $t$ of this family gives a smooth form $\Omega$ on the complement of $L$ such that $\omega:=d^*\Omega$ is a  solution for the exterior derivative equation $d\omega=L$.  We introduce, for small $t$, an asymptotic approximation of these solutions  in order to show that $d^*\Omega$ is extendable to the oriented  blow-up of $L$ in codimension $1$ and $3$ and also $2$ when $L$ is minimal.   When $L$ is the diagonal in $M\times M$ we adapt these ideas to obtain a  {differential linking form}  for \emph{any} compact, ambient Riemannian manifold $M$ of dimension $3$. This coincides up to sign with the kernel of the Biot-Savart operator $d^*G$ and recovers the well-known Gauss formula for linking numbers in $\bR^3$.
\end{abstract}

\subjclass[2020]{Primary 57K10, 58A25, 58J90, Secondary 35C20, 53C65}
\maketitle
\tableofcontents

\section{Introduction}

The linking number of two oriented knots $K_1$ and $K_2$ in $\bR^3$ can be computed in differential geometry by Gauss formula. This is a  consequence of the fact that, up to a sign,  this number coincides with the degree of the Gauss mapping 
\[K_1\times K_2\ra \mathbb{S}^2,\qquad (x,y)\ra \frac{x-y}{|x-y|}\]
where $\mathbb{S}^2$ is the unit sphere.  This extends easily to the higher dimensional  Euclidean space $\bR^n$. But it is not clear at all how one can produce such formulas in general geometric ambients, since with a few notable exceptions  there is no  analogue of the Gauss mapping.  Interest in the problem of producing integral formulas that compute the linking number in other Riemannian ambients, with an emphasis on "universal" such formulas has been quite rich, not least because of the connection  with classical electromagnetism, in particular with the Biot-Savart and Ampere laws. We refer the reader to \cite{dTG, dTG2, Ku, Par, Ri, SVV, Vo}. By universal, we mean here formulas which use for the computation of the linking number the same "universal" form $\Omega$ defined on some "universal" target space $N$ such that the linking number is computed for any two knots $K_1$ and $K_2$ by the integral of the pull-back of $\Omega$ through a "natural" map $K_1\times K_2\ra N$. They are also often called in the literature \emph{linking forms} \cite{Vo} not to be confused with the topological notion of linking form, which is a related but different object.    

The formulas that these linking forms give are to be compared with other types of integral formulas\footnote{some might call them asymmetric linking formulas} where one  gives a recipe for constructing a differential  form induced by one of the knots, whose integral over the other knot computes again the linking number. In this article, we describe a general framework that allows one to produce both kinds of formulas. 

Inspired mainly by the work of Bechtluft-Samiou \cite{BSS1} and deTurck-Gluck \cite{dTG,dTG2},  we propose a new approach based fundamentally on the properties of particular solutions to the PDE for the exterior derivative:
\begin{equation}\label{eqi0} d\omega=L\end{equation}
where $L$ is an oriented, \emph{exact} submanifold of a Riemannian manifold $M^n$ of dimension $k$. Exact is a synonym here with submanifold that bounds another submanifold. We are of course aiming for degree $n-k-1$ forms $\omega$ which are smooth outside $L$ like  the classical fundamental solution $\omega_3$ of the equation $d\omega_3=\delta_0$ (here $\delta_0$  is the Dirac $0$-dimensional current) in $\bR^3$:
\begin{equation}\label{eqi1}\omega_3:=-\frac{1}{\sigma_2}\frac{1}{{(x^2+y^2+z^2)}^{3/2}}\left(xdy\wedge dz-ydx\wedge dz+zdx\wedge dy\right).\end{equation}
  In this example, the form $\omega_3$ is smooth on  $\bR^3\setminus\{0\}$ and has locally integrable coefficients in $\bR^3$ and thus defines a current of dimension $1$ in $\bR^3$. "Singular" currents such as the one induced by $\omega_3$ might be of limited use since,   currents do not naturally  pull-back.  We are aiming to at least be able to pull-back $\omega$ via mappings that are transverse to the set $L$ of "singularities" of $\omega$. The natural idea is to search for solutions $\omega$ of (\ref{eqi0}) that can be lifted to the total space of the oriented blow-up of $L$ as a continuous form. For example, it is easily checked that  the form $\omega_3$ of (\ref{eqi1}) is  extendible under the blow-up map of $\{0\}$ in $\bR^3$:
\[[0,\infty)\times S^2\ra \bR^3,\qquad (t,v)\ra tv. \]
to a continuous (in fact, smooth) form on $[0,\infty)\times S^2$. It turns out that this extendibility property is all what one needs in order to show that the linking number of $L$ with another exact submanifold $K^{n-k-1}\subset M$ (with $L\cap K=\emptyset$)  is given by the integral of $\omega$ over $K$. We call them \emph{extendible solutions} of (\ref{eqi0}). 

The main idea for generalizing Gauss formula is the following.   For closed but non-exact, oriented $L$ in order to still guarantee the  existence of solutions for  (\ref{eqi0})  one needs to alter the right hand side as follows
\begin{equation}\label{eqi2}
	d\omega=L-H_L
\end{equation}
where $H_L$ is the harmonic representative of the Poincar\'e dual of $L$. Now, if one can produce extendible solutions of the altered (\ref{eqi2}) when $L$ is the diagonal in $M\times M$ then one can produce universal linking number formulas by integrating such a form in $M\times M$ over the product of knots. 

 We then restrict to compact oriented ambient  manifolds $M$ because standard Hodge theory provides us with a candidate for the solution of (\ref{eqi0}). Indeed by extending, via dualization, the Green operator $G_k$ for the Laplacian on $k$-forms to act on currents, one can show that for an exact submanifold $L$, the form $d^*G_k(L)$ gives a solution to (\ref{eqi0}). Work of deRham \cite{dR} already implies that this is a smooth form away from $L$.  The difficult part is understanding the behaviour of $d^*G_k(L)$ close to $L$. For this purpose we introduce initial value problems (IVP) for the heat equation with submanifolds as initial values. The solution of an IVP  is a family of smooth degree $n-k$ forms $\Omega_t$ parametrized by $t>0$,  such that, in the sense of currents the following hold:
\begin{equation} \label{eqPVI}\left\{ \begin{array}{ccl}\left(\frac{\partial}{\partial t}+\Delta_k\right)\Omega_t&=&0\\[0.2cm]
\displaystyle\lim_{t\ra 0} \Omega_t&=&L. 
\end{array}\right.\end{equation}

For an exact $L$, the  unique solution $\Omega_t$ satisfies, as expected:
\begin{equation} \label{eqi1}\int_0^\infty\Omega_t~dt=G_k(L).\end{equation}

 More importantly, in a neighborhood of $L$ the solution $ \Omega_t$ can be written as
\[ 
\frac{1}{(4\pi t)^{\frac{n-k}{2}}}e^{-\frac{r^2}{4t}}\sum_{i=0}^{N} t^i\eta_i + \ \ \mathcal{O}\left(t^{N +1 - \frac{n-k}{2}}\right)
\]
 when $t\ra 0$. Here $r$ is the distance function to $L$ and $\eta_i$ are smooth forms in a neighborhood of  $L$ that satisfy a recurrence relation similar to the forms that make up the Hadamard parametrix (the formal solution) of the heat kernel.

More precisely, up to multiplication with the square root of the jacobian of the normal exponential map, $\eta_0$ is the parallel transport along normal geodesics of the normal volume form of $L$. Then $\eta_{i+1}$ is the solution of an ODE that involves the Laplacian of $\eta_i$. These ODE's have smooth solutions. However, in order to prove the blow-up extendability of $d^*\displaystyle\int_0^{\infty}\Omega_t~dt$ more information is needed about the forms $\eta_i$. 

Our main result shows that with the amount of information we have about $\eta_0$ the following holds:

\begin{theorem} \label{TA} Let $L\subset M^n$ be a closed oriented, submanifold of codimension $1$, $2$, $3$ or $n$. If $\codim{L}=2$ assume that $L$ is a minimal submanifold. Then $\BS(L):=d^*G(L-H_L)$ is a smooth form on $M\setminus L$ which has locally integrable coeficients\footnote{being $L^1_{\loc}$ is in fact a consequence of being blow-up extendible} is extendible to the oriented blow-up of $L$ and satisfies \[d\BS(L)=L.\]
 \end{theorem}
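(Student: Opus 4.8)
The plan is to reduce the statement to the asymptotic analysis of the heat flow $\Omega_t$ near $L$ described above, and then extract from it the blow-up extendability of $d^*\Omega$. First I would fix a tubular neighborhood $U$ of $L$ and work in Fermi (normal) coordinates, writing a point of $U\setminus L$ as $(p,r,v)$ with $p\in L$, $r=\dist(\cdot,L)>0$ and $v$ a unit normal vector; the oriented blow-up replaces $(0,v)$-limits by genuine boundary points $(p,0,v)\in L\times S(\nu L)$. Away from $L$, deRham's regularity theorem (invoked in the excerpt) already gives that $\BS(L)=d^*G(L-H_L)$ is smooth, and $d\BS(L)=L-H_L-(\text{harmonic part})=L$ follows from the standard Hodge-theoretic identity $dd^*G+d^*dG=\id-\mathcal{H}$ applied to the closed current $L-H_L$, together with $d^*H_L=0$; the exactness of $L$ (or rather $L-H_L$) is what makes $d^*dG(L-H_L)$ contribute nothing. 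So the whole content is the behavior near $L$.

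Next I would use the representation $G(L-H_L)=\int_0^\infty(\Omega_t-H_L)\,dt$ and split the $t$-integral at a fixed small time $t_0$: the tail $\int_{t_0}^\infty$ is smooth up to $L$ (standard parabolic regularity, since $\Omega_{t_0}$ is already a smooth form on all of $M$ and the semigroup is smoothing), so only $\int_0^{t_0}\Omega_t\,dt$ near $L$ matters. Here I would plug in the asymptotic expansion stated before the theorem,
\[
\Omega_t=\frac{1}{(4\pi t)^{\frac{n-k}{2}}}e^{-r^2/4t}\Big(\eta_0+t\eta_1+\cdots+t^N\eta_N\Big)+\mathcal O\!\left(t^{N+1-\frac{n-k}{2}}\right),
\]
apply $d^*$, and integrate term by term in $t$. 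The model computation is the flat one: $\int_0^\infty (4\pi t)^{-m/2}e^{-r^2/4t}\,dt$ and its $d^*$-image reproduce exactly the Newtonian-type kernels (e.g. $\omega_3$ in \eqref{eqi1} for $m=3$), which are manifestly smooth on the blow-up $[0,\infty)\times S^{m-1}$. The point is that each term $d^*\big((4\pi t)^{-\frac{n-k}{2}}e^{-r^2/4t}\,t^i\eta_i\big)$, after the change of variables $t\mapsto r^2 t$ (or $t\mapsto r^2/s$) and integration, produces a function of $r$ times a form on $U$ whose only $r$-dependence besides explicit powers $r^{-(n-k-1)+2i}$ or $r$-analytic factors pulls back smoothly in $(r,v)$; the exponential kills all contributions at $r=0$ from the lower limit, and the polynomial-in-$t$ structure means one only ever meets finitely many negative powers of $r$, all of which are matched by the angular factor coming from $d^*(r\,dr)$-type terms.

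The main obstacle — and the reason for the codimension hypotheses — is controlling the \emph{leading} form $\eta_0$ and the first few $\eta_i$ well enough that the potentially singular terms $r^{-(n-k-1)}\,d^*\eta_0$, etc., actually assemble into something that extends continuously across $r=0$ on the blow-up rather than merely being $L^1_{\loc}$. Since we only know $\eta_0$ explicitly (it is, up to the square-root-Jacobian factor, the parallel transport of the normal volume form of $L$ along normal geodesics), the argument must show that $d^*$ applied to this explicit $\eta_0$, integrated against the Gaussian, has exactly the structure of the flat model up to blow-up-smooth error; this is where $\codim L=1,3,n$ work cleanly, and where $\codim L=2$ forces the minimality assumption — minimality is precisely the condition making the relevant first-order term in the expansion of $\eta_0$ (governed by the mean curvature of $L$) vanish, so that no obstruction to extendability appears at that order. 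I would therefore organize the proof as: (i) Hodge identity giving $d\BS(L)=L$ and smoothness off $L$; (ii) reduction to $\int_0^{t_0}\Omega_t\,dt$ near $L$; (iii) the flat/model computation showing the $\eta_0$-term is blow-up extendible, using minimality when $\codim L=2$; (iv) a dimension count showing all remaining $\eta_i$-terms ($i\ge 1$) contribute blow-up-smooth, hence $L^1_{\loc}$, pieces, plus the $\mathcal O(\cdot)$ remainder handled by taking $N$ large.
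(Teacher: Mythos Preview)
Your outline follows the paper's route closely: represent $G(L-H_L)$ as $\int_0^\infty\Omega_t\,dt$, split off the smooth tail $\int_1^\infty$, replace $\Omega_t$ by the parametrix $\Psi_t$ on $[0,1]$ (the difference $p*H(\Psi)$ being $C^l$ for large $N$), and analyze $d^*\int_0^1 f_t(r)t^i\eta_i\,dt$ term by term. The paper does exactly this in Sections~7--8, with Lemma~\ref{Hfunc0} computing the model integrals $H_j(r)=\int_0^1 t^{-j/2}e^{-r^2/4t}\,dt$ explicitly and separating $j\le 0$ (already $C^1$), $j=1,2$ (borderline), and $j\ge 3$ (genuinely singular, $H_j\sim r^{2-j}$). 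So your steps (ii) and (iv) are right, and your identification of $\eta_0$ as the crucial term is correct.

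The gap is in step (iii). Saying that ``$d^*$ applied to $\eta_0$ has the structure of the flat model up to blow-up-smooth error'' is exactly the hard part, and you have not said what this structure is or how to verify it. The issue is that blow-up extendibility of a form like $r^{-a}\gamma$ depends not just on $a$ versus $\deg\gamma$ but on the decomposition of $\gamma$ by \emph{vertical degree} (degree in the normal-fiber directions) relative to a radial trivialization of $TU$; the paper develops this criterion in Appendix~\ref{apB} (Corollary~\ref{maincorA}). For codimension~$3$, the singular term is $d^*(r^{-1}\eta_0)=r^{-2}\iota_{\nabla r}\eta_0+r^{-1}d^*\eta_0$; the first piece is purely vertical of degree~$2$ and extends automatically, but for the second you must show that $d^*\eta_0|_L$ is \emph{purely vertical} of top degree $n-k-1$ --- this is not automatic and is the content of Proposition~\ref{below}, which computes $d^*\eta_0|_L=\iota_X\vol_{\nu L}$ with $X=\tfrac12\nabla j+H$ (mean curvature). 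For codimension~$2$ even that is insufficient: the singularity is $H_2(r)\sim\log r$, and one needs $d^*\eta_0|_L=0$ outright. Proposition~\ref{below1} gives $\nabla j|_L=-H$, so $X=\tfrac12 H$ and minimality is precisely $d^*\eta_0|_L=0$. Your phrasing ``minimality makes the first-order term in the expansion of $\eta_0$ vanish'' is not quite it: nothing about $\eta_0$ itself vanishes; what vanishes is $d^*\eta_0$ restricted to~$L$. Finally, a small algebraic slip: $d\BS(L)=dd^*G(L-H_L)=L-H_L$ as currents (since $L-H_L$ is exact, the harmonic projection kills nothing further), not $L$; the theorem statement itself appears to contain this typo.
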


The extendibility in codimension $3$ allows us to reach our main application. By definition a knot in $M$ is always an oriented, closed, simple, smooth curve which is the boundary of an oriented (Seifert) surface $S$.

\begin{theorem}\label{MT1} Let $M$ be an oriented, compact, Riemannian manifold of dimension $3$. Let $\delta^M\subset M\times M$ be the diagonal and $H_{\delta^M}$ be the harmonic representative of the Poincar\'e dual of $\delta^M$. Let  $\BS(\delta^M):=d^*G(\delta^M-H_{\delta^M})$ be the solution that Hodge theory provides of the equation
\[ d\omega=\delta^M-H_{\delta^M}\]

For any two  oriented knots $K_1$ and $K_2$ in $M$ the following holds
\[\lk(K_1,K_2)=\int_{K_1\times K_2} \iota^*\BS(\delta^M).\]
where $\iota:K_1\times  K_2\ra M\times M\setminus \delta^M$ is the natural inclusion.
\end{theorem}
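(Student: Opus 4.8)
The plan is to reduce Theorem \ref{MT1} to the extendibility statement of Theorem \ref{TA} applied to the diagonal $\delta^M \subset M \times M$, which has codimension $3$ (since $\dim M = 3$). First I would verify that the hypotheses of Theorem \ref{TA} are met: $M \times M$ is a compact oriented Riemannian manifold of dimension $6$ when equipped with the product metric, and $\delta^M$ is a closed oriented submanifold of codimension $3$. Theorem \ref{TA} then gives directly that $\BS(\delta^M) = d^* G(\delta^M - H_{\delta^M})$ is a smooth form on $(M\times M) \setminus \delta^M$, that it is extendible to the oriented blow-up $\Bl(\delta^M)$ as a continuous form, and that $d\BS(\delta^M) = \delta^M$ in the sense of currents (note the $H_{\delta^M}$ term disappears upon applying $d$, since $H_{\delta^M}$ is closed; more precisely $dd^*G(\delta^M - H_{\delta^M}) = (\mathrm{id} - \text{harmonic proj})(\delta^M - H_{\delta^M}) = \delta^M - H_{\delta^M} - (-H_{\delta^M}) = \delta^M$, using that $\delta^M - H_{\delta^M}$ is exact hence orthogonal to harmonics).

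Next I would set up the linking-number computation. Given two disjoint knots $K_1, K_2 \subset M$ (disjoint because the linking number requires $K_1 \cap K_2 = \emptyset$), the product $K_1 \times K_2$ is a closed oriented $2$-dimensional submanifold of $(M \times M) \setminus \delta^M$, and $\iota$ is a genuine smooth embedding into the complement of the singular locus, so the pullback $\iota^* \BS(\delta^M)$ is an honest smooth $2$-form on $K_1 \times K_2$ and the integral $\int_{K_1 \times K_2} \iota^* \BS(\delta^M)$ is well-defined. The key is then the general principle, stated in the introduction, that an extendible solution $\omega$ of $d\omega = L$ computes linking numbers: if $K$ is an exact submanifold with $K \cap L = \emptyset$ of complementary dimension $n - k - 1$, then $\lk(L, K) = \int_K \omega$. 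Here $L = \delta^M$ lives in the $6$-manifold $M \times M$ with $k = 3$, so $n - k - 1 = 2$, matching $\dim(K_1 \times K_2)$. The point is that $\lk(\delta^M, K_1 \times K_2)$ in $M \times M$ equals $\lk(K_1, K_2)$ in $M$: intersecting a Seifert surface for $K_1 \times K_2$ (take $S_1 \times K_2$ where $\partial S_1 = K_1$, or symmetrize) with $\delta^M$ recovers exactly the intersection $S_1 \cap K_2$ counted with signs, which is the definition of $\lk(K_1, K_2)$. I would make this identification precise by a Stokes/intersection-pairing argument: pick a Seifert surface $S$ with $\partial S = K_1 \times K_2$ inside $M \times M$ (it exists since $K_1 \times K_2$ bounds, e.g. $S_1 \times K_2 \cup (\pm) K_1 \times S_2$ suitably arranged), push it to be transverse to $\delta^M$, and observe $\int_{K_1 \times K_2} \iota^* \BS(\delta^M) = \int_S d\BS(\delta^M) + (\text{contribution from } \delta^M \cap S) = S \cdot \delta^M$ (the bulk term vanishes since $d\BS(\delta^M) = 0$ away from $\delta^M$ and one accounts for the current $\delta^M$ via a limiting argument on the blow-up using extendibility), and finally $S \cdot \delta^M = \lk(K_1, K_2)$.

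The one subtlety I would be careful about is the Stokes-type argument justifying $\int_K \omega = \lk(L,K)$ for extendible $\omega$: one cannot apply Stokes directly because $\omega$ is singular along $L$ and $S$ meets $L$. The clean way is to remove a small tubular neighborhood $N_\epsilon(L)$, apply Stokes on $S \setminus N_\epsilon(L)$ where $d\omega = 0$, and analyze the boundary integral over $S \cap \partial N_\epsilon(L)$ as $\epsilon \to 0$; the extendibility of $\omega$ to the oriented blow-up $\Bl(L)$ is precisely what makes this boundary integral converge, and its limit picks out the local intersection number of $S$ with $L$ (each transverse intersection point contributes $\pm 1$ because near such a point $\omega$ looks like the blow-up-extendible generator $\omega_3$ of \eqref{eqi1} whose integral over a small linking $2$-sphere is $1$). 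This argument is essentially the content of the "extendible solutions compute linking numbers" discussion in the introduction, so in the write-up I would either cite that prior section or spell out this $\epsilon$-tube computation once in the generality of Theorem \ref{TA} and then simply invoke it here with $L = \delta^M$, $K = K_1 \times K_2$. The main obstacle, therefore, is not in this deduction — which is essentially formal once Theorem \ref{TA} is in hand — but rather it was already surmounted in proving Theorem \ref{TA} (the blow-up extendibility in codimension $3$); the present proof is the short harvest of that work, the only genuine content being the geometric identification $\lk_{M\times M}(\delta^M, K_1\times K_2) = \lk_M(K_1, K_2)$.
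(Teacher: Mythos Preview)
Your overall architecture is exactly the paper's: invoke Theorem~\ref{TA} in codimension $3$ to get blow-up extendibility of $\BS(\delta^M)$, then feed this into the general mechanism of Section~\ref{Sec2} (Theorem~\ref{th1}, Proposition~\ref{P2}, Corollary~\ref{corf}). The identification $\lk_{M\times M}(\delta^M, K_1\times K_2) = \lk_M(K_1,K_2)$ is precisely Definition~\ref{deflk}.

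However, there is a genuine gap in your Stokes step. Your computation of $d\BS(\delta^M)$ is incorrect on two counts: first, $dd^*G$ is not $\mathrm{id}-P_{\mathrm{harm}}$ (that is $\Delta G$); second, even using the correct fact that $dd^*G$ acts as the identity on exact currents, you get $d\BS(\delta^M) = \delta^M - H_{\delta^M}$, not $\delta^M$. Consequently, on $(M\times M)\setminus\delta^M$ one has $d\BS(\delta^M) = -H_{\delta^M}$, which is \emph{not} zero unless $M$ is a rational homology sphere. Your ``bulk term vanishes'' claim therefore fails, and the Stokes computation over $S_1\times K_2$ produces an extra term $\int_{S_1\times K_2}\iota^*H_{\delta^M}$.

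This extra term does vanish, but for a reason you have not supplied: by Theorem~\ref{Dlf} (or equation~(\ref{eqh})), $H_{\delta^M}$ is a finite sum of products $\pi_1^*(*\omega_{k,i})\wedge\pi_2^*\omega_{k,i}$ with $\omega_{k,i}$ closed, so after integrating over $S_1\times K_2$ and using Fubini each summand contains a factor $\int_{K_2}\omega_{k,i} = 0$ because $K_2$ is a boundary. This is exactly the argument in the proof of Corollary~\ref{corf}, and it is the missing ingredient in your write-up.
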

We show in Section \ref{rnrn} that when $M=\bR^3$ the same recipe gives back Gauss formula. In fact, our Theorem \ref{MT1} recovers Vogel's differential linking form \cite{Vo} proved there only for real cohomology spheres, because of the following result (see Theorem \ref{Gradiag}).
\begin{theorem} The Biot-Savart form $\BS(\delta^M)$ of Theorem \ref{MT1} equals $d_y^*\bG$ where $\bG(x,y)$ is the total Green kernel of the Laplacian $ \Delta:\Omega^*(M)\ra \Omega^*(M)$. This is the kernel of the Biot-Savart operator $(-1)^nd^*G:\Omega^*(M)\ra \Omega^{*-1}(M)$. 
\end{theorem}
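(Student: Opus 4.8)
The plan is to peel $\BS(\delta^M)=d^*G(\delta^M-H_{\delta^M})$ — an object living on the product $M\times M$ — down to Schwartz kernels of operators on $M$ alone, exploiting that for the product metric $\Delta_{M\times M}=\Delta\otimes 1+1\otimes\Delta$, so in particular $\mathcal{H}^*(M\times M)=\mathcal{H}^*(M)\otimes\mathcal{H}^*(M)$ by K\"unneth. In the right-hand side of the definition, $G$ and $d^*$ act on $M\times M$; write $G_M$, $P_M$ for the Green operator and the harmonic projection of $M$, and $\bG$ for the Schwartz kernel of $G_M$ (the object in the statement). First I would fix a standard correspondence between (distributional) double forms on $M\times M$ and operators on $\Omega^*(M)$, say $(K\alpha)(x)=\int_{M_y}\kappa_K(x,y)\wedge\alpha(y)$, under which: the kernel of the identity is the diagonal current $\delta^M$; the kernel of $P_M$ is, by the K\"unneth description of the Poincar\'e dual of the diagonal together with uniqueness of harmonic representatives on $M\times M$, exactly $H_{\delta^M}$; precomposition of $K$ with an operator $A$ translates into letting $A$ act in the $x$-variable of $\kappa_K$, while postcomposition translates — after Stokes on the closed manifold $M$ and using that $e^{-t\Delta}$, $G_M$, $P_M$ commute with $*$ — into letting $A$ act, up to sign, in the $y$-variable.

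Granting this dictionary, the identities $\Delta G_M=G_M\Delta=\id-P_M$ and $d^*G_M=G_Md^*$ yield, as currents on $M\times M$, $\Delta_x\bG=\Delta_y\bG=\delta^M-H_{\delta^M}$, hence $\Delta_{M\times M}\bG=2(\delta^M-H_{\delta^M})$, and $d^*_x\bG=\pm d^*_y\bG$ (both sides being the kernel of the single operator $d^*G_M=G_Md^*$ read from the two sides). Moreover $\bG$ pairs to zero against every harmonic form of $M\times M$ (since $G_M$ kills harmonic forms and $\mathcal{H}^*(M\times M)=\mathcal{H}^*(M)\otimes\mathcal{H}^*(M)$), while $\delta^M-H_{\delta^M}$ is exact (same cohomology class); by the currential Hodge theory already invoked in the paper this forces $G(\delta^M-H_{\delta^M})=\tfrac12\bG$. (Equivalently, through the heat-equation framework of the earlier sections: by factorization of the heat semigroup over a product, $\Omega_t=e^{-t\Delta_{M\times M}}\delta^M$ is the Schwartz kernel of $e^{-2t\Delta}$ on $\Omega^*(M)$, so $\int_0^\infty(\Omega_t-H_{\delta^M})\,dt$ is the kernel of $\int_0^\infty(e^{-2t\Delta}-P_M)\,dt=\tfrac12 G_M$.) Now $d^*_{M\times M}=d^*_x\pm d^*_y$, so feeding in $d^*_x\bG=\pm d^*_y\bG$ the factor $2$ cancels the factor $\tfrac12$ and $\BS(\delta^M)=d^*_{M\times M}(\tfrac12\bG)=d^*_y\bG$. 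Finally, reading $d^*_y\bG$ back through the dictionary shows it is the kernel of $G_M$ composed with $d^*$, i.e. of the Biot--Savart operator $(-1)^n d^*G$ of $M$; the sign $(-1)^n$ is precisely the one picked up in transferring $d^*$ from the outer $x$-slot to the inner $y$-slot.

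I expect the genuine difficulties to sit in two places. The first is the identification underlying the parenthetical remark — that $e^{-t\Delta_{M\times M}}\delta^M$ is the kernel of $e^{-2t\Delta}$, equivalently that the external tensor action of the heat semigroup on double forms is honest conjugation of the associated operator: one must check that the ``wedge-transpose'' of $e^{-t\Delta}$ is $e^{-t\Delta}$ itself (this is where commutation with $*$ enters, and it is also why a Hodge star fails to intrude into $d^*_x\bG=\pm d^*_y\bG$) and justify the current limit $t\to 0^+$. The second, more tedious than deep, is the sign bookkeeping: the Koszul signs in $d_{M\times M}=d_x\pm d_y$ and $d^*_{M\times M}=d^*_x\pm d^*_y$, the degree-dependent signs in $d^*_x\bG=\pm d^*_y\bG$, and their conspiracy to leave exactly $+d^*_y\bG$ and the factor $(-1)^n$ — together with verifying that $\Delta_x\bG=\delta^M-H_{\delta^M}$ and its companions hold in the sense of currents, which is a consequence of de Rham's regularity theorem for $G_M$ and the standard Schwartz-kernel calculus. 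Everything else is routine.
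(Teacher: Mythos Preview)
Your proposal is correct and follows essentially the same route as the paper. The paper assembles the result from Theorem~\ref{Dlf} (which proves $\Delta(\tfrac12\bG_k)=\delta^{k,n-k}-H^{k,n-k}$ by explicit spectral expansion of $\bG_k$), the Lemma establishing $d_x^*\bG_k=d_y^*\bG_{k+1}$ via Hodge-star gymnastics, and Corollary~\ref{Dlf2} ($d^*(\tfrac12\bG)=d_y^*\bG$); you reach the same identities through the operator--kernel dictionary and the commutation $d^*G_M=G_Md^*$, which is a cleaner bookkeeping device but not a different idea. One minor divergence: you invoke K\"unneth to show $\bG$ has no harmonic component on $M\times M$, whereas the paper sidesteps this entirely --- it never needs $G(\delta-H)=\tfrac12\bG$ on the nose, only $d^*G(\delta-H)=d^*(\tfrac12\bG)$, and the possible harmonic discrepancy is killed by $d^*$ anyway. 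Your parenthetical heat-kernel computation ($e^{-t\Delta_{M\times M}}\delta^M$ as kernel of $e^{-2t\Delta}$) is a pleasant alternative that dovetails with Section~\ref{S5} but is not the path the paper takes. The sign verification you flag as ``tedious'' is indeed where the paper spends its effort (the Lemma with items (i)--(iv) before Corollary~\ref{Dlf2}); your instinct that this is the only genuinely delicate point is correct.
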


As a consequence of Theorem \ref{TA} we get some asymmetric linking formulas as well in a general $n$-dimensional Riemannian manifold $M$.

\begin{theorem} Let $L\subset M$ be  an exact submanifold of codimension  $1$, $2$ or $3$. If $\codim L=2$ assume $L$ is minimal. Let $K$ be an oriented closed submanifold with $\dim{K}=\codim{L}-1$ such that $L\cap K=\emptyset$. Then 
\[ \lk(K_1,L)=(-1)^{\dim{L}}\int_{K_1}\BS(L)\]
where $\BS(L)=d^*G(L)$.
\end{theorem}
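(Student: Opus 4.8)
The plan is to realize both sides of the identity as intersection numbers and to match them via Stokes' theorem, using only Theorem~\ref{TA} as input about $\BS(L)$. Since $L$ is exact its de Rham class vanishes, so $H_L=0$ and $\BS(L)=d^*G(L-H_L)=d^*G(L)$; Theorem~\ref{TA} (in the relevant codimension, with the minimality hypothesis when $\codim L=2$) then tells us that $\BS(L)$ is smooth on $M\setminus L$, has $L^1_{\loc}$ coefficients on $M$, is extendible under the oriented blow-up map $\pi\colon\widetilde M\to M$ of $L$, and satisfies $d\BS(L)=L$ in the sense of currents. As in the Introduction I take $K$ to be null-homologous and fix a compact oriented chain $P$ with $\partial P=K$, so that $\lk(K,L)=P\cdot L$ by definition; putting $P$ in general position, $P$ is transverse to $L$, $K\cap L=\emptyset$, and $P$ meets a fixed tubular neighbourhood of $L$ only in small transverse $(\codim L)$-disks $D_q$ centred at the finitely many points $q\in P\cap L$.

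For $\varepsilon>0$ small set $M_\varepsilon=\{\,r\ge\varepsilon\,\}$, where $r=\dist(\cdot,L)$, and $P_\varepsilon=P\cap M_\varepsilon$. Then $P_\varepsilon$ is a compact manifold with corners whose boundary is $K$ together with the spheres $S_\varepsilon(q)=D_q\cap\{r=\varepsilon\}$, each a $(\codim L-1)$-sphere carrying the boundary orientation of $D_q$ but with the opposite global sign, since $P_\varepsilon$ lies outside $D_q$. On $M_\varepsilon$ the form $\BS(L)$ is smooth, and it is closed there because $d\BS(L)=L$ is supported on $L$; hence Stokes' theorem yields
\[
0=\int_{P_\varepsilon}d\BS(L)=\int_{\partial P_\varepsilon}\BS(L)=\int_{K}\BS(L)-\sum_{q\in P\cap L}\int_{S_\varepsilon(q)}\BS(L),
\]
so $\int_K\BS(L)=\sum_{q}\int_{S_\varepsilon(q)}\BS(L)$ for every sufficiently small $\varepsilon$.

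Now I let $\varepsilon\to0$. By blow-up extendibility, $\pi^*\BS(L)$ extends continuously to $\widetilde M$, so each $\int_{S_\varepsilon(q)}\BS(L)=\int_{S_\varepsilon(q)}\pi^*\BS(L)$ converges to the integral of $\pi^*\BS(L)|_{\partial\widetilde M}$ over the corresponding normal fibre sphere, oriented by the local intersection sign $[P\cdot L]_q$ times the standard orientation. By the structure of the leading asymptotic coefficient $\eta_0$ — the parallel transport of the normal volume form of $L$, up to the jacobian factor — this normal-sphere integral equals one and the same constant $\mathfrak r$ for all $q$, and its value is pinned by testing $d\BS(L)=L$ against a bump form supported in a tube around $L$ (equivalently, by the model computation with the Euclidean fundamental solution of $d\omega=\delta_0$ on the normal $\R^{\codim L}$). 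Therefore $\int_{S_\varepsilon(q)}\BS(L)\to[P\cdot L]_q\,\mathfrak r$, and passing to the limit in the displayed equation gives
\[
\int_K\BS(L)=\sum_{q\in P\cap L}[P\cdot L]_q\,\mathfrak r=\mathfrak r\,(P\cdot L)=\mathfrak r\,\lk(K,L).
\]
Tracking through the orientation conventions — of the blow-up $\widetilde M$, of $\partial P_\varepsilon$, of the identity $d\BS(L)=L$, and of the intersection number defining $\lk$ — identifies $\mathfrak r=(-1)^{\dim L}$, which is the assertion.

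The technical core is the passage $\varepsilon\to0$ together with the attendant sign bookkeeping. This is exactly where the codimension-restricted extendibility of Theorem~\ref{TA} is indispensable: without it the boundary integrals $\int_{S_\varepsilon(q)}\BS(L)$ need not converge at all. And it is where one has to be most careful, since the value $\mathfrak r$ must come out $(-1)^{\dim L}$ and not its negative. Everything else is soft: the reduction $\BS(L)=d^*G(L)$ from the exactness of $L$, the general-position choice of the Seifert chain $P$, and Stokes' theorem on the smooth manifold-with-corners $P_\varepsilon\subset M\setminus L$, on which $\BS(L)$ is genuinely smooth so that no regularity issue arises there.
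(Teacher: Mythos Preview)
Your argument is essentially the same as the paper's: this theorem is a direct consequence of Corollary~\ref{corf}, whose proof is exactly Stokes on the Seifert surface with small disks around the intersection points removed, combined with the computation of the sphere integrals. The paper packages the sphere-integral step into Theorem~\ref{th1}: the implication (a)\,$\Rightarrow$\,(b2) shows that $d\BS(L)=L$ forces $\int_{\partial B(q)}\BS(L)=1$ for \emph{any} small transverse disk $B(q)$ with the $L$-transverse orientation, and Lemma~\ref{lem_invariance} shows this quantity is independent of the choice of disk and of $q$. Your appeal to ``the structure of $\eta_0$'' and to ``testing $d\BS(L)=L$ against a bump form'' is exactly the content of that implication, stated informally; in particular you do not need to pass to the limit $\varepsilon\to 0$ at all, since $d\BS(L)=0$ on $M\setminus L$ already makes each $\int_{S_\varepsilon(q)}\BS(L)$ independent of $\varepsilon$.

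Two small points. First, your identity $\lk(K,L)=P\cdot L$ is not the paper's Definition~\ref{deflk}, which reads $\lk(K_1,K_2)=(-1)^{n-k-1}I(S_1,K_2)$; with the paper's conventions one gets $\int_K\BS(L)=I(P,L)$ and hence $\mathfrak r=1$, the sign $(-1)^{\dim L}$ appearing only when one translates $I(P,L)$ into $\lk(K,L)$. Your sentence ``tracking through the orientation conventions \dots\ identifies $\mathfrak r=(-1)^{\dim L}$'' thus absorbs a discrepancy of definitions rather than a genuine sign in the sphere integral. Second, the role of blow-up extendibility in the paper is not to make the sphere integrals converge (they are constant in $\varepsilon$) but to ensure, via Proposition~\ref{P2}, that the pulled-back equation $d\iota^*\BS(L)=P\cap L$ holds on the Seifert surface, so that Theorem~\ref{th1} applies there; this is the clean substitute for your informal ``model computation''.
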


The simple "philosophy" behind equation (\ref{eqi0}) as a pointer towards differential linking formulas is the following. The linking number $\lk(K_1,K_2)$ between two boundaries $K_1$ and $K_2$  in other ambient manifolds is the intersection number between an oriented "Seifert surface" $S_1$ with $\partial S_1=K_1$ and $K_2$.  It thus feels natural to substitute the Seifert surface $S_1$ by a differential form $\Omega_1$ that satisfies (\ref{eqi0}) with $L=K_1$. After all, they are both solutions in the world of currents of the same equation $dT=K_1=:L$. In a sense, this is what Bott-Tu \cite{BT} (page 230) do with their differential topology definition of linking number valid on any homology spheres, except that they start by switching first to the Poincar\'e dual of $K_1$, which eliminates the need for the use of currents but makes concrete formulas perhaps less tractable.

 The equality of the intersection number $I(S_1,K_2)$ with the integral of $\Omega_1$ over $K_2$ is not immediate. We look at it as a question about intersection theory (or more generally, pull-back) of currents. In short, we want to ensure that the pull-back of $\Omega_1$ via a transverse map $\varphi$  to $K_1$ (like the inclusion $S_2\hookrightarrow M$ for example) goes on to satisfy (\ref{eqi0}) now with $\varphi^{-1}K_1$ on the right hand side. This seems to go to the heart of what (\ref{eqi0}) means and our Theorem \ref{th1} gives an equivalent characterization of (\ref{eqi0}) where the notion of blow-up extendibility enters fundamentally. 

While an equation such as (\ref{eqi0}) has an infinite number of solutions, we want one that is rather explicit and blow-up extendible. This is where Hodge theory provides an answer. While Hodge theory has been used in connection with linking numbers our approach is slightly different.

  Following a line sketched in  \cite{ArK}, Vogel uses in   \cite{Vo} the full Green kernel as an explicit linking form on {real homology spheres} of dimension $3$. Our approach recovers his result as a particular case, adding further insight on the role played by the Green kernel in the construction of a differential linking form.  In higher dimension,  on negatively curved symmetric spaces,  Bechtluft-Samiou \cite{BSS1,BSS2} using Hodge theory, construct explicit  solutions of the equation $d\Omega=\alpha$ where $\alpha$ is a smooth form and use this to produce a Gauss formula under the additional assumption of vanishing for certain homology groups. One important notion in \cite{BSS1} inspired by the work of deTurck-Gluck \cite{dTG}  that plays a prominent role here  as well is that of  Biot-Savart operators $\BS:=d^*G_k$ where $G_k$ is the Green operator on $k$-forms. We should mention that in \cite{dTG} the authors also present universal linking formulas in $S^3$ and $\bH^3$ (the hyperbolic $3$-space) which use certain fundamental solutions of the Laplacian operator on functions.

   In the holomorphic context, more precisely in connection with arithmetic intersection theory,  Gillet-Soul\'e \cite{GS} considered   "fundamental solutions" for the $dd^c$ operator (i.e. roughly replace $d$ by $dd^c$ in (\ref{eqi0}), which they called Green currents. Mostly within the context of Green currents  (\cite{Ha,He}) but also in connection with the universal Thom form of the diagonal (\cite{Ma})  some authors used the heat kernels of the Laplacian on forms in order to "regularize" currents like we do. This is all very natural. However, in order to overcome the analytical difficulty of being able to guarantee that the pull-back of the solutions of (\ref{eqi0}) stay solutions of the same equation we had to obtain more refined results (for small $t$) about the forms that regularize the currents.

For our purpose, the  available information about the kernels of classical operators is not enough. A certain description of the kernels $\bG_k$  of the Green operators $G_k$ on forms can be traced back to the work of deRham \cite{dR}. He showed that the kernel is smooth away from the diagonal and that close to the diagonal the forms defining $\bG_k$ are of type $d(x,y)^{2-n}$. This is enough to know they are locally integrable. However, the blow-up extendibility property requires more detailed knowledge of the behavior of $d^*G(L)$ close to $L$, not only local integrability. One of the main contributions of this article is the following asymptotic development for small $t$ of the initial value solutions for the heat kernel equation.
\begin{theorem}\label{MT2} Let $M$ be a compact manifold and $L$ a closed submanifold of dimension $k$. Let $\Omega_t$ be the $(n-k)$-forms which are unique solutions to (\ref{eqPVI}). Then close to $L$ 
\begin{equation}\label{asaprox}
	\Omega_t =P_t(L)=\frac{1}{(4\pi t)^{\frac{n-k}{2}}}e^{-\frac{r^2}{4t}}\sum_{i=0}^{N} t^i\eta_i+O(t^{N-(n-k)/2 +1})
\end{equation}
where $r$ is the distance function to $L$ and $\eta_i$ are smooth forms that satisfy certain ODE recursive relations. 
\end{theorem}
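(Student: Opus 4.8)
The plan is to construct the asymptotic expansion (\ref{asaprox}) by building a formal parametrix à la Hadamard–Minakshisundaram–Pleijel for the heat operator, but adapted to the submanifold $L$ rather than to the diagonal, and then to justify the expansion rigorously by a Duhamel/error-estimate argument. First I would work in a tubular neighborhood $U$ of $L$, using Fermi coordinates: let $r$ be the distance to $L$, and identify $U$ with a disk bundle in the normal bundle $NL$ via the normal exponential map. The leading Gaussian factor $(4\pi t)^{-(n-k)/2}e^{-r^2/4t}$ is the obvious ansatz, since $r^2$ plays the role that $d(x,y)^2$ plays in the classical heat parametrix; one expects the transverse directions to contribute the Euclidean heat kernel in $n-k$ variables while the tangential directions along $L$ contribute nothing to leading order. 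I would then substitute $P_t = (4\pi t)^{-(n-k)/2}e^{-r^2/4t}\sum_{i\ge 0} t^i \eta_i$ into $(\partial_t + \Delta_k)P_t = 0$, collect powers of $t$, and read off the transport equations for the forms $\eta_i$.

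**Next**, I would analyze these transport equations. The $t^{-1}$ term should force a first-order ODE along normal geodesics for $\eta_0$ — essentially $(\nabla_{\partial_r} + \tfrac{1}{r}\cdot\tfrac{1}{2}(\text{something involving } \Delta r^2 - 2(n-k)))\eta_0 = 0$ — whose solution, as the excerpt already indicates, is the parallel transport of the normal volume form of $L$ along normal geodesics, corrected by $J^{-1/2}$ where $J$ is the Jacobian of the normal exponential map. This is precisely the structure that guarantees $\lim_{t\to 0}P_t = L$ as currents: the normalization of the Gaussian together with $\eta_0$ integrating to $1$ on each normal fiber. The higher $\eta_i$ then solve inhomogeneous versions of the same ODE with right-hand side $\Delta_k \eta_{i-1}$, integrated along normal geodesics; smoothness of solutions follows from smoothness of the coefficients and the fact that the integrating factor is smooth and nonvanishing on $U$ (shrinking $U$ if necessary so that the normal exponential map is a diffeomorphism and $J>0$).

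**The harder analytic step** is to pass from this formal solution to the genuine solution $\Omega_t$ of (\ref{eqPVI}) with a controlled remainder. Here I would truncate the series at order $N$, call the truncation $P_t^{(N)}$ (extended smoothly by zero outside $U$ via a cutoff), compute $(\partial_t + \Delta_k)P_t^{(N)} = E_t$ where $E_t = O(t^{N - (n-k)/2})$ is supported in $U$ plus a term coming from differentiating the cutoff (which is $O(t^\infty)$ thanks to the $e^{-r^2/4t}$ factor away from $L$), and then use the Duhamel principle: $\Omega_t - P_t^{(N)} = -\int_0^t e^{-(t-s)\Delta_k} E_s\, ds$, bounding the right-hand side in $C^0$ (or any $C^m$) norm using the uniform boundedness of the true heat semigroup $e^{-s\Delta_k}$ on a compact manifold and integrating the power of $s$. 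The initial-condition matching — that $\Omega_t - P_t^{(N)} \to 0$ — requires care because $P_t^{(N)}$ itself tends to the current $L$, so one must check that the difference tends to $0$ as a current and invoke uniqueness of the solution to (\ref{eqPVI}), which I would establish separately via an energy estimate for the heat equation on currents. The main obstacle I anticipate is organizing the bookkeeping of powers of $t$ in the transport equations near $r=0$, where the naive expansion has apparent negative powers of $r$ that must cancel — exactly as in the classical parametrix construction, this cancellation is automatic once the correct homogeneity of each $\eta_i$ in the normal directions is tracked, but verifying it and extracting the clean recursive ODEs is the technical heart of the argument.
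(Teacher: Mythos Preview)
Your proposal is correct and follows essentially the same route as the paper: build a formal parametrix $\Psi_t=(4\pi t)^{-(n-k)/2}e^{-r^2/4t}\sum t^i\eta_i$ in a tubular neighborhood, derive first-order transport ODEs along normal geodesics for the $\eta_i$ (with $\eta_0$ being $j^{-1/2}$ times the parallel-transported normal volume form, $j$ the Jacobian of $\exp^\perp$), cut off, and pass to the genuine solution via the Duhamel identity $\Omega_t=\Psi_t-\int_0^t e^{-(t-s)\Delta}(H\Psi)_s\,ds$ together with uniqueness of the IVP. The paper makes one point explicit that you only gesture at: the cancellation of the apparent $r^{-1}$ singularities in the transport equations is effected by the identity $\tfrac{1}{2}(k-n-\Delta_0(\tfrac{r^2}{2}))=\nabla(\tfrac{r^2}{2})(\log\sqrt{j})$, which converts each transport equation into $\nabla_{\partial_r}(r^{i+1}j^{1/2}\eta_{i+1})=-r^i j^{1/2}\Delta\eta_i$ and makes the smooth solvability transparent; this is the concrete content behind your remark that ``the cancellation is automatic once the correct homogeneity is tracked.''
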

The proof of this result proceeds as in the case of the asymptotic approximation of the heat kernel (\cite{BGV}, Ch 2)  with the construction of a formal solution. However the transition from a formal solution to a genuine solution needs a new idea since the "Volterra series" process  does not work anymore. Fortunately a slightly different embodiment of the Duhamel principle comes to help.

With Theorem \ref{MT2} in hand,  Theorem \ref{TA} is then a detailed analysis of the divergence (i.e. $d^*$) applied to the integral  $\int_0^1~dt$ of the form on the right of  (\ref{asaprox}). 

The authors would like to thank Marco Magliaro and Anna Fino for interesting discussions on the topic of this paper. 

\vspace{0.4cm}

\section{Fundamental equations for the exterior derivative}\label{Sec2}

In $\bR^3$,  there are several equivalent definitions one can give to the \emph{linking number} that counts how many times an oriented knot $K_1$ wraps around another oriented knot $K_2$. We will stop short of reviewing them, only indicating some excellent sources  \cite{Li} or \cite{Ri} where the reader can get more information. With an eye towards generalization there are two definitions that stand out. One is intersection theoretic, where one uses a Seifert surface $S_1$ for $K_1$, i.e. an oriented surface in $\bR^3$ such that $
\partial S_1=K_1$  and defines
\[\lk(K_1,K_2)=I(S_1,K_2),\]
where $I(S_1,K_2)$ counts the points in $S_1\cap K_2$ with plus if $\ori S_1\wedge \ori K_2=\ori \bR^3$ at those points and with minus, otherwise. 

The other one  is given by the degree of the Gauss map
\[\mathscr{G}:K_1\times K_2\ra S^2,\qquad (x,y)\ra \frac{x-y}{|x-y|}.\]

It turns out that in fact these two are only equal up to a sign and the reader is invited to check this for a simple example like
 \[K_1:=\{(\cos (s),\sin (s),0)~|~s\in [0,2\pi]\} \quad\mbox{and}\quad K_2:=\{(\cos (t)+1,0, \sin (t))~|~t\in[0,2\pi]\}\] 
 
Both definitions make sense for closed oriented submanifolds $K_1$ and $K_2$  of dimensions $k$ and $n-k-1$ in $\bR^n$ which are boundaries of oriented submanifolds $S_1$ and $S_2$ respectively. We need them to be boundaries in order for the intersection number $I(S_1,K_2)$ not to depend on the choice of  $S_1$. The relation now,  is
\begin{equation}\label{eq01}\deg\mathscr{G}=(-1)^{n-k-1}I(S_1,K_2)\end{equation}
There exists nevertheless a cleaner relation in $\bR^n$, namely 
\begin{equation}\label{l10}\deg\mathscr{G}=I(S_1\times K_2,\delta_{\bR^n})\end{equation}
where $\delta_{\bR^n}$\footnote{The unorthodox notation for the diagonal is needed because of the Laplacian which will enter  later.} is the diagonal in $\bR^n\times \bR^n$. Relation (\ref{l10}) is quickly explained by the following differential topology result applied to 
\begin{equation*}\begin{split} F:\bR^n\times \bR^n\ra \bR^n\\ F(x,y)=x-y 
 \end{split}
 \end{equation*} and $L=S_1\times K_2$

\begin {lemma} \label{l1} If $F:=\bR^N\ra\bR^n$ is a smooth map $0$ is a regular point for $F$ and $L\subset \bR^N$ is a smooth, oriented, compact manifold with boundary of dimension $n$ such that $L\pitchfork F^{-1}(0)$ then 
\[ I(L,F^{-1}(0))=\deg \left(\frac{F}{|F|}\biggr|_{\partial L}\right).
\]
\end{lemma}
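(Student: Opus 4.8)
The plan is to reduce the intersection number $I(L, F^{-1}(0))$, which is defined by signed counting of points in the zero-dimensional manifold $L \cap F^{-1}(0)$ sitting in the interior of $L$, to the degree of the Gauss-type map $F/|F|$ restricted to $\partial L$. The natural device is a cobordism/excision argument combined with the classical fact that the degree of a map $S^{n-1} \to S^{n-1}$ of the form $G/|G|$, where $G$ extends to a map on the ball with $0$ as a regular value, counts the signed preimages of $0$.

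First I would use transversality to isolate the intersection points. Since $0$ is a regular value of $F$ and $L \pitchfork F^{-1}(0)$ in $\inte L$ (the boundary $\partial L$ is disjoint from $F^{-1}(0)$ because $F/|F|$ must be defined there), the set $L \cap F^{-1}(0)$ is a finite collection of points $p_1, \dots, p_m$ in the interior of $L$. Around each $p_j$ choose a small closed coordinate ball $B_j \subset \inte L$, pairwise disjoint, on which $F$ restricted to $L$ is a diffeomorphism onto a neighborhood of $0$ in $\bR^n$ (here I use that $L$ has dimension $n$, so $F|_L$ is a local diffeo near each $p_j$). Then on $\partial B_j$ the map $F/|F| : \partial B_j \cong S^{n-1} \to S^{n-1}$ has degree $\pm 1$, with sign exactly the local intersection sign of $L$ with $F^{-1}(0)$ at $p_j$; hence $\sum_j \deg(F/|F| |_{\partial B_j}) = I(L, F^{-1}(0))$.

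Next I would invoke the additivity/cobordism property of degree. Set $L' := L \setminus \bigcup_j \inte B_j$; this is a compact manifold with boundary $\partial L' = \partial L \sqcup \bigsqcup_j (-\partial B_j)$ (orientations as boundary components), and $F$ is nonvanishing on all of $L'$, so $F/|F| : L' \to S^{n-1}$ is a well-defined smooth map. Since $\partial L'$ bounds $L'$ and $F/|F|$ extends over $L'$, the degree of $F/|F|$ on $\partial L'$ vanishes, i.e.
\[
\deg\left(\frac{F}{|F|}\Big|_{\partial L}\right) - \sum_{j=1}^m \deg\left(\frac{F}{|F|}\Big|_{\partial B_j}\right) = 0.
\]
Combining with the previous paragraph gives $\deg(F/|F||_{\partial L}) = I(L, F^{-1}(0))$, which is the claim. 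One should double-check the orientation conventions so that the boundary orientation of $\partial B_j$ induced from $L'$ is the opposite of the one from $B_j$, which is exactly what makes the signs match the intersection signs.

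The main obstacle, or at least the only point requiring genuine care, is the bookkeeping of orientations: verifying that the local degree of $F/|F|$ on $\partial B_j$ equals the intersection sign at $p_j$ (this is essentially the statement that for a linear isomorphism $A : \bR^n \to \bR^n$, the map $x \mapsto Ax/|Ax|$ on $S^{n-1}$ has degree $\sgn \det A$), and that the induced boundary orientations assemble correctly in the cobordism $L'$. Everything else is standard degree theory; no hard analysis is involved, and the hypothesis that $\dim L = n$ is precisely what makes $F|_L$ a submersion at the intersection points so that the reduction to local model computations goes through.
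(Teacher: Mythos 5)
Your argument is correct and is essentially identical to the paper's proof: both isolate the intersection points in small balls where $F|_L$ is a local diffeomorphism, identify the local degree of $F/|F|$ on each small boundary sphere with the intersection sign, and then apply cobordism invariance of the degree on $L$ minus the union of the balls. No meaningful differences in strategy or in the points requiring care.
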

\begin{proof} Let $p\in L\cap F^{-1}(0)$. For a small geodesic ball $D_{\epsilon}(p)\subset L$ we have that $\tilde{F}^p:=F\bigr|_{D_{\epsilon}(p)}$ is a diffeomorphism onto is image in $\bR^n$ and so $I_p(L,F^{-1}(0))=\pm 1$ depending on whether $\tilde{F}^p$ preserves ($+1$) or reverses ($-1$) orientation. But it is standard then that this is also the degree of $\frac{\tilde{F}^p}{|\tilde{F}^p|}\biggr|_{\partial D_{\epsilon}(p)}$ with $\partial D_{\epsilon(p)}$ oriented as a boundary.

We use then the invariance under cobordism of the degree for the mapping $\frac{F}{|F|}\biggr|_{L\setminus \bigcup_p D_{\epsilon}(p)}$ and note that $\partial (L\setminus \bigcup_p D_{\epsilon}(p))=\partial L\bigsqcup_p \partial D_{\epsilon}(p)$ in order to finish the proof.
\end{proof}
The following straightforward property  of intersection numbers  explains also (\ref{eq01})
\begin{lemma}\label{l2} If $N_1$ and $N_2$ are two oriented submanifolds of an oriented manifold $M$ intersecting transversely at a single point $p$ then
\[I_p(N_1,N_2)=(-1)^{\dim N_2}I_{(p,p)}(N_1\times N_2,\delta^M)
\]
\end{lemma}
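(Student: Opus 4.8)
The plan is to reduce the identity to a single computation of orientation signs, since both sides are \emph{local} intersection indices and hence determined by the tangent spaces at the point in question. Write $n=\dim M$, let $V:=T_pM$ carry the orientation $\ori V$, and set $A:=T_pN_1$, $B:=T_pN_2$. Transversality at the isolated point $p$ forces $A\oplus B=V$; in particular $\dim A+\dim B=n$, so with $k:=\dim A$ one has $\dim N_2=n-k$. Fix oriented bases $a_1,\dots,a_k$ of $A$ and $b_1,\dots,b_{n-k}$ of $B$. By definition $I_p(N_1,N_2)=\varepsilon$, where $\varepsilon\in\{\pm 1\}$ is the sign of the ordered basis $(a_1,\dots,a_k,b_1,\dots,b_{n-k})$ of $V$ relative to $\ori V$.

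For the right-hand side I would write down an ordered basis adapted to the pair $(N_1\times N_2,\ \delta^M)$ at $(p,p)$ and simplify it by sign-preserving column operations. With the product orientation, $T_{(p,p)}(M\times M)=V\oplus V$; the subspace $T_{(p,p)}(N_1\times N_2)$ consists of $A$ in the first factor together with $B$ in the second, oriented by the product; and $T_{(p,p)}\delta^M$ is the image of $V$ under $v\mapsto(v,v)$ with the orientation making $x\mapsto(x,x)$ orientation preserving. Hence $I_{(p,p)}(N_1\times N_2,\delta^M)$ is the sign, relative to $\ori(V\oplus V)$, of the ordered tuple
\[(a_1,0),\dots,(a_k,0),\ (0,b_1),\dots,(0,b_{n-k}),\ (e_1,e_1),\dots,(e_n,e_n),\]
for any oriented basis $(e_1,\dots,e_n)$ of $V$. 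First I would replace the diagonal block $(e_i,e_i)$ by $(a_1,a_1),\dots,(a_k,a_k),(b_1,b_1),\dots,(b_{n-k},b_{n-k})$, which multiplies the sign by $\varepsilon$; then subtract $(a_i,0)$ from $(a_i,a_i)$ and $(0,b_j)$ from $(b_j,b_j)$, turning the diagonal block into $(0,a_1),\dots,(0,a_k),(b_1,0),\dots,(b_{n-k},0)$ without changing the sign; and finally permute the $2n$ vectors so that all first-factor vectors precede all second-factor ones. This permutation moves the block $(b_1,0),\dots,(b_{n-k},0)$ (of size $n-k$) to the left past the $n$ intervening second-factor vectors, contributing $(-1)^{n(n-k)}$; afterwards the first-factor block is $(a_1,\dots,a_k,b_1,\dots,b_{n-k})$, of sign $\varepsilon$ in $V$, and the second-factor block is $(b_1,\dots,b_{n-k},a_1,\dots,a_k)$, of sign $\varepsilon(-1)^{k(n-k)}$ in $V$. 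Multiplying the four contributions,
\[I_{(p,p)}(N_1\times N_2,\delta^M)=\varepsilon\cdot(-1)^{n(n-k)}\cdot\varepsilon\cdot\varepsilon(-1)^{k(n-k)}=\varepsilon\,(-1)^{(n+k)(n-k)}=\varepsilon\,(-1)^{n-k},\]
using $\varepsilon^2=1$ and $n^2-k^2\equiv n-k\pmod 2$. Since $(-1)^{n-k}=(-1)^{\dim N_2}$ is its own inverse and $\varepsilon=I_p(N_1,N_2)$, this rearranges exactly to $I_p(N_1,N_2)=(-1)^{\dim N_2}I_{(p,p)}(N_1\times N_2,\delta^M)$.

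The only delicate point is the orientation bookkeeping: keeping track of the product orientations on $M\times M$ and on $N_1\times N_2$, of the convention on $\delta^M$, and of the two independent sign sources, namely the block transposition in the permutation step and the internal reshuffle of the two factors. There is no analytic content. A useful consistency check is that running the same computation for $F(x,y)=x-y$ with $L=S_1\times K_2$ reproduces, via Lemma \ref{l1}, the sign $(-1)^{n-k-1}$ appearing in (\ref{eq01}).
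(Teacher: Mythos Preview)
Your proof is correct. The paper omits the proof of this lemma entirely, stating it only as a ``straightforward property of intersection numbers''; you have supplied a clean, explicit orientation computation that the paper elides, and your final consistency check against equation~(\ref{eq01}) confirms the sign is right.
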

By using the  Lemmas \ref{l1} and \ref{l2} also for $K_1\times S_2$ one gets the following
\begin{corollary} The degree of the Gauss map $\mathscr{G}:K_1\times K_2\ra \mathbb{S}^{n-1}$ equals 
\[I(S_1\times K_2,\delta_{\bR^n})=(-1)^kI(K_1\times S_2,\delta_{\bR^n})=(-1)^{n-k-1}I(S_1,K_2)=(-1)^nI(K_1,S_2).
\]
\end{corollary}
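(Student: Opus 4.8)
The plan is to feed the map $F\colon\bR^n\times\bR^n\to\bR^n$, $F(x,y)=x-y$, into Lemma~\ref{l1} for two different choices of the domain $L$, and then to convert the two resulting diagonal intersection numbers into the more familiar $I(S_1,K_2)$ and $I(K_1,S_2)$ by means of Lemma~\ref{l2}. Observe first that $dF(v,w)=v-w$ is surjective at every point, so $0$ is a regular value and $F^{-1}(0)=\delta_{\bR^n}$; moreover $F$ is nowhere zero on $K_1\times K_2$ because $K_1\cap K_2=\emptyset$, so $\mathscr{G}=\frac{F}{|F|}\big|_{K_1\times K_2}$ is defined and its degree makes sense.

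First I would take $L=S_1\times K_2$. This is a compact oriented manifold with boundary of dimension $(k+1)+(n-k-1)=n$, and since $K_2$ is closed and $\partial S_1=K_1$ carries its Seifert orientation, $\partial(S_1\times K_2)=K_1\times K_2$ as oriented manifolds. A short linear-algebra check shows that at a point $(p,p)\in(S_1\times K_2)\cap\delta_{\bR^n}$ one has $T_{(p,p)}(S_1\times K_2)+T_{(p,p)}\delta_{\bR^n}=\bR^{2n}$ exactly when $T_pS_1+T_pK_2=\bR^n$, so the transversality hypothesis of Lemma~\ref{l1} is precisely $S_1\pitchfork K_2$, and the points of $(S_1\times K_2)\cap\delta_{\bR^n}$ correspond bijectively to $p\in S_1\cap K_2$. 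Lemma~\ref{l1} then yields $\deg\mathscr{G}=I(S_1\times K_2,\delta_{\bR^n})$, which is the first claimed equality. Summing the identity of Lemma~\ref{l2} over these points — with $N_1=S_1$, $N_2=K_2$, hence $\dim N_2=n-k-1$ — gives $I(S_1,K_2)=(-1)^{n-k-1}I(S_1\times K_2,\delta_{\bR^n})$, and since $(-1)^{n-k-1}$ is its own inverse this rearranges to the third claimed equality.

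Next I would run the same argument with $L=K_1\times S_2$, again of dimension $k+(n-k)=n$; now the product boundary formula gives $\partial(K_1\times S_2)=(-1)^{\dim K_1}\,K_1\times\partial S_2=(-1)^k(K_1\times K_2)$, and the transversality hypothesis reduces to $K_1\pitchfork S_2$. Lemma~\ref{l1} then gives $I(K_1\times S_2,\delta_{\bR^n})=(-1)^k\deg\mathscr{G}$, which rearranges to the second claimed equality, while Lemma~\ref{l2} with $N_2=S_2$, $\dim N_2=n-k$, gives $I(K_1,S_2)=(-1)^{n-k}I(K_1\times S_2,\delta_{\bR^n})$. Chaining the last two identities produces $\deg\mathscr{G}=(-1)^k(-1)^{n-k}I(K_1,S_2)=(-1)^nI(K_1,S_2)$, the fourth equality, and assembling all four gives the full chain.

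I do not expect a real obstacle: the statement is a direct double application of the two lemmas already in hand, and the only thing demanding genuine care is the orientation bookkeeping — using the Seifert convention $\partial S_i=K_i$ consistently so that $\partial(S_1\times K_2)$ is literally $K_1\times K_2$ with the product orientation, and remembering that the product boundary formula contributes the extra sign $(-1)^{\dim K_1}=(-1)^k$ only in the case $L=K_1\times S_2$. It is also worth recording, though standard, that the transversality conditions $S_1\pitchfork K_2$ and $K_1\pitchfork S_2$ may be arranged by an arbitrarily small perturbation of the Seifert surfaces, which alters neither $\deg\mathscr{G}$ (a homotopy invariant) nor the intersection numbers $I(S_1,K_2)$ and $I(K_1,S_2)$, the latter being moreover independent of the choice of Seifert surface since $\bR^n$ has trivial homology.
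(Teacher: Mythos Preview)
Your proof is correct and follows exactly the approach the paper takes: the paper simply says ``By using the Lemmas~\ref{l1} and~\ref{l2} also for $K_1\times S_2$ one gets the following'', and you have carried out precisely that, supplying the orientation bookkeeping (in particular the $(-1)^k$ from $\partial(K_1\times S_2)$) that the paper leaves implicit.
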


This motivates the following

\begin{definition}\label{deflk} An oriented knot of dimension $k$ an oriented manifold $M^n$ is an oriented compact submanifold $K$ for which there exists an oriented manifold with boundary $S$ and an immersion $\iota:S\ra  M$, such  that
\begin{itemize}
\item[(i)] $\iota^{-1}(K)=\partial S$
\item[(ii)] $\iota\bigr|_{\partial S}$ is an oriented diffeomorphism onto  $K$.
\end{itemize}
 We write $K=\partial S$.

 Let $K_1,K_2$ be oriented knots of dimensions $k$ and $n-k-1$ in $M^n$ and $\iota_i:S_i\ra M$ ($i=1,2$) immersions   such that $K_i:=\partial S_i$, $S_1\pitchfork K_2$ and $S_2\pitchfork K_1$. The linking number $\lk(K_1,K_2)$ of $K_1$ and $K_2$ is defined as any of the following $4$ equal quantities:
\[I(S_1\times K_2,\delta^M)=(-1)^kI(K_1\times S_2,\delta^M)=(-1)^{n-k-1}I(S_1,K_2)=(-1)^nI(K_1,S_2).
\]
\end{definition}
\begin{remark} With this definition, the homology class in $H_*(M)$ of an oriented knot is trivial.  The fact that the  intersection numbers do not depend on the choice of $S_1$ (nor $S_2$) is immediate. If $S_1'$ is another choice which induces the same orientation on  $K_1$ then $S:=S_1\cup_{K_1} S_1'$ is, with a bit of smoothing of the corners, a closed immersed submanifold that intersects $K_2$ transversely. But then the geometric intersection number $I(S,K_2)$ equals the  intersection number $[S]\cdot [K_2]$ in homology, which is $0$ because $K_2$ is a boundary.

In fact, one can define a knot to be just as a closed submanifold whose class is trivial in homology and with a bit of care define the intersection and linking numbers. For a more efficient presentation, we prefered a "boundary-as-a-manifold" approach. 
\end{remark}




\vspace{0.4cm}

In this section we will see that certain solutions $\Omega_1$ and $\Omega$ of the equations (in the sense of currents)
\begin{equation}\label{sec3eq0} d\Omega_1=K_1 \qquad \mbox{ and }\qquad d\Omega=\delta^M-H
\end{equation}
can be used to produce formulas that compute the linking number $\lk(K_1,K_2)$. In (\ref{sec3eq0})
\begin{itemize}
\item $\Omega_1$ is a smooth form on $M\setminus K_1$ with $L^1_{\loc}$ coefficients on $M$.
\item $\Omega$ is a smooth form on $M\times M\setminus \delta^M$ with $L^1_{\loc}$ coefficients on $M\times M$.
\item $H$ is the smooth harmonic representative of the Poincar\'e dual to $\delta^M$ in $M\times M$.
\end{itemize}

Here, our sign convention is that the Poincar\'e duality  
\begin{equation}\label{eq_poincare_duality}
\PD: H^{n-k}_{dR}(M)\simeq H_k(\mathscr{D}'_*(M))   
\end{equation}
(cf. \cite{dR}) is induced by the inclusion
\[
\Omega^*(M)\hookrightarrow \mathscr{D}'_{n-*}(M)\qquad \omega\ra T_{\omega}(\eta):=\int_{M}\omega\wedge \eta
\]

\vspace{0.4cm}

In order to make the connection between (\ref{sec3eq0}) and linking numbers we need a result (see Theorem \ref{th1} below) that gives an equivalent characterization of solutions $\Omega$ of the equation
\[d\Omega=L\]
where $L$ is a oriented closed submanifold  of $M$. In order to be able to state that theorem, we need first a definition for which we recall some standard notions. Let $\nu L$ be the normal bundle of $L$, and for $\gamma>0$ denote by $S_\gamma(\nu L)$ the sphere subbundle of normal vectors with norm $\gamma$. The oriented blow-up  $\Bl_L(M)$ of $L$ in $M$ is a smooth manifold with boundary that comes  with a smooth (blow-down) map  $\Bl:\Bl_L(M)\ra M$ such that 
\begin{itemize}
\item The restriction $\Bl\bigr|_{\Bl_L(M)\setminus \partial \Bl_L(M)}$ is a diffeomorphism onto $M\setminus L$. 
\item There exists a diffeomorphism $\alpha:\partial \Bl_L(M)\simeq S_1(\nu L)$ such that the following diagram is commutative
\[\xymatrix {\partial \Bl_L(M) \ar[rr]^{\alpha}\ar[dr]_{\Bl} & & S_1(\nu L) \ar[dl]^{\pi}\\
 & L&
}\]
where $\pi:S_1(\nu L)\ra L$ is the natural projection of the spherical normal bundle to $L$. 
\end{itemize}

Note that the spherical normal bundle $S_1(\nu L)$ is naturally defined, without reference to a particular metric  as the\emph{ oriented}  projective fiber bundle associated to  $\nu L:= TM\bigr|_{L}/TL$, i.e. as the quotient $\nu L\setminus [0]/\sim$ where $(p,v)\sim (p,w)$ if there exists $\lambda >0$ such that $v=\lambda w$.

The above properties characterize the pair $(\Bl_L(M),\Bl)$ uniquely up to a diffeomorphism which commutes with the blow-down maps to $M$.

 Once a metric is fixed on $M$, the manifold $\Bl_L(M)$ has a concrete realization as the "gluing" of the manifold with boundary $[0,\epsilon)\times S_1(\nu L)$ with $ M\setminus L $ via the following mapping diffeomorphism onto its image
\begin{equation}\label{bleq}
	(0,\epsilon)\times S_1(\nu L)\ra M\setminus L,\qquad  (t,p,v)\ra \exp_p(tv).
\end{equation}
The tubular neighborhood theorem says that for small $\epsilon>0$ this is a diffeomorphism onto its image, an open subset $T \subset M\setminus L$. 

Note also that this concrete realization of the blow-up comes with more structure than what is assumed for $\Bl_L(M)$. Namely one gets also fixed a collar of the boundary $[0,\epsilon)\times S_1(\nu L)$. This gives rise to a natural bigrading of differential forms defined on this collar, the bigrading being the same as on any product manifold.

 \begin{definition} \label{wkext}  A smooth form $\Omega$ of degree $k$ on $M\setminus L$ is said to be (blow-up) extendible if the pull-back $\Bl^*\Omega$ can be extended to a continuous differential form on all of $\Bl_L(M)$.

  A smooth form $\Omega$ of degree $k$ on $M\setminus L$ is said to be weakly (blow-up) extendible if there exists a Riemannian metric for which,  in the concrete realization of $\Bl_L(M)$ described above,  the bidegree $(0,k)$ part of $\Bl^*\Omega$ (with respect to the natural bigrading induced by the collar splitting) extends to a continuous form on $\Bl_L(M)$. 
\end{definition}

\begin{remark} The property of a form of being extendible is a differentiable property, i.e. if there exists a diffeomorphism $\varphi (M_1,L_1)\ra (M,L)$ of pairs and $\Omega\in\Omega^*(M\setminus L)$ is an extendible form then clearly $\varphi^*\Omega$ is an extendible form on $M_1\setminus L_1$. On the other hand the weak extendibility property depends on the choice of a collar for $\partial \Bl_L(M)$, 
\end{remark}
\begin{remark} The two notions of Definition \ref{wkext} make sense on a non-compact Riemannian manifold by adapting the blow-up construction via standard modification of (\ref{bleq}).
\end{remark}
Section  \ref{apB} in the Appendix is dedicated to dealing with alternative characterizations of weak and full-extendibility. For convenience, we state one result which we will need momentarily. Let
\[ \varphi_{\gamma}:S_1(\nu L)\ra S_{\gamma}(\nu L)\qquad (p,v)\ra (p,\gamma v) \]
be a rescaling diffeomorphism. Assuming for the moment that $S_\gamma(\nu L)$ is contained in the open set where the normal exponential is a diffeomorphism, let \[
\hat{\varphi}_{\gamma} \ : \ S_1(\nu L) \to T
\]
be the composition of $\varphi_{\gamma}$ with the tubular neighborhood diffeomorphism. 

The space of forms $\Omega^*(S_1(\nu L))$ is clearly a Banach space, hence a continuous curve $[0,\epsilon]\ra \Omega^*(S_1(\nu L))$ is uniformly continuous. 
\begin{prop} \label{prwex} \begin{itemize} 
\item[(i)] A form $\Omega\in \Omega^*{(M\setminus L)}$ is weakly extendible if and only if the family of forms $\gamma\ra \hat{\varphi}_{\gamma}^*(\Omega)$  is uniformly continuous on some interval $(0,\epsilon]$.
\item[(ii)]  A form $\Omega\in \Omega^*{(M\setminus L)}$ is blow-up extendible if and only if both $\Omega$ and $\iota_{\nabla r }\Omega$ are weakly extendible where $\nabla r$ is the unit radial vector field defined in a tubular neighborhood of $L$.\\
\end{itemize}
\end{prop}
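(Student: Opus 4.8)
Throughout I fix the Riemannian metric entering the concrete realization of $\Bl_L(M)$, so that near $\partial\Bl_L(M)$ there is the collar $[0,\epsilon)\times S_1(\nu L)$ with blow-down map $\Bl(t,p,v)=\exp_p(tv)$, where $t$ is the distance $r$ to $L$ and, since $t\mapsto\exp_p(tv)$ is the unit-speed normal geodesic, $\Bl_*\partial_t=\nabla r$. I read "weakly extendible" relative to this metric; the general, existential version of Definition \ref{wkext} then follows by quantifying over metrics, since every equivalence below holds metric-by-metric. On the interior of the collar I write $\Bl^*\Omega = dt\wedge\alpha+\beta$, where $\alpha$ and $\beta$ are the bidegree $(0,\deg\Omega-1)$ and $(0,\deg\Omega)$ parts (equivalently $\iota_{\partial_t}\alpha=\iota_{\partial_t}\beta=0$). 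Since $\Bl^*\Omega$ is smooth away from $\partial\Bl_L(M)$ and the collar is a neighbourhood of that boundary, blow-up (resp.\ weak) extendibility of $\Omega$ is exactly the statement that $\Bl^*\Omega$ (resp.\ $\beta$) extends continuously across $\{t=0\}$; and, working in a continuous coframe $dt,e^1,\dots,e^m$ adapted to the product, $\Bl^*\Omega$ extends continuously iff both $\beta$ and $dt\wedge\alpha$ — equivalently $\beta$ and $\alpha$ — do.

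For (i), the first step is to identify $\beta$ with the curve $\gamma\mapsto\hat\varphi_\gamma^*\Omega$. Restriction to the slice $\{t=\gamma\}$ is the inclusion $j_\gamma\colon S_1(\nu L)\to\{\gamma\}\times S_1(\nu L)$, and $\Bl\circ j_\gamma=\hat\varphi_\gamma$; since $j_\gamma^*dt=0$, one gets $\hat\varphi_\gamma^*\Omega=j_\gamma^*(\Bl^*\Omega)=\beta|_{\{t=\gamma\}}$. Because $S_1(\nu L)$ is compact, continuity of the pure-vertical form $\beta$ on $[0,\epsilon)\times S_1(\nu L)$ is equivalent to continuity of the associated curve into the Banach space $\Omega^{\deg\Omega}(S_1(\nu L))$ with the $C^0$-norm. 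So weak extendibility of $\Omega$ is precisely the statement that the curve $\gamma\mapsto\hat\varphi_\gamma^*\Omega$ extends continuously to $\gamma=0$. It then remains to invoke the elementary fact that a curve $(0,\epsilon]\to B$ into a complete metric space $B$ extends continuously to $0$ iff it is uniformly continuous: "$\Rightarrow$" because $[0,\epsilon]$ is compact and continuous functions on it are uniformly continuous; "$\Leftarrow$" because uniform continuity forces the Cauchy criterion as $\gamma\to0^+$ and completeness supplies the limit, whence continuity at $0$.

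For (ii), I compute $\iota_{\partial_t}(\Bl^*\Omega)=\iota_{\partial_t}(dt\wedge\alpha)+\iota_{\partial_t}\beta=\alpha$. By naturality of the interior product under the blow-down diffeomorphism off the boundary, together with $\Bl_*\partial_t=\nabla r$, this gives $\alpha=\Bl^*(\iota_{\nabla r}\Omega)$; and $\alpha$ is of pure bidegree $(0,\deg\Omega-1)$ (consistent with $\iota_{\nabla r}\iota_{\nabla r}=0$), so it coincides with its own bidegree-$(0,\ast)$ part. Hence "$\beta$ extends continuously across $\{t=0\}$" $\iff$ "$\Omega$ is weakly extendible" and "$\alpha$ extends continuously across $\{t=0\}$" $\iff$ "$\iota_{\nabla r}\Omega$ is weakly extendible". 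Combining with the reduction in the first paragraph, $\Omega$ is blow-up extendible iff both $\alpha$ and $\beta$ extend iff both $\Omega$ and $\iota_{\nabla r}\Omega$ are weakly extendible. (The forward implication in fact uses the collar of \emph{any} metric, so it matches the existential form of the definition; for the converse one only needs a single metric simultaneously realizing the weak extendibility of $\Omega$ and of its contraction with the corresponding radial field.)

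I expect no serious analytic obstacle here: the proposition is essentially a translation between three descriptions of the same behaviour near $\partial\Bl_L(M)$, and the only "hard" input is the soft completeness argument in (i). The steps that need care are the bookkeeping ones — matching slice restrictions of $\Bl^*\Omega$ with the rescaled pullbacks $\hat\varphi_\gamma^*\Omega$, the passage between a continuous pure-vertical form on the product collar and a continuous curve into the Banach space of forms on the fibre (where compactness of $S_1(\nu L)$ is used), and verifying that $\nabla r$ is the push-forward of $\partial_t$ so that $\iota_{\nabla r}$ commutes with $\Bl^*$.
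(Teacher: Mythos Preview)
Your proof is correct and follows essentially the same approach as the paper's own argument (Proposition~\ref{prwexB} in the appendix): the same product decomposition $\Bl^*\Omega = dt\wedge\alpha+\beta$, the same identification $\Bl\circ j_\gamma=\hat\varphi_\gamma$ to recognize $\beta$ slicewise, and the same naturality computation $\iota_{\partial_t}\Bl^*\Omega=\Bl^*(\iota_{\nabla r}\Omega)$ for (ii). You supply a little more detail than the paper does---the completeness argument for the uniform-continuity equivalence in (i) and the remark on the existential-vs.-fixed-metric reading of weak extendibility---but these are elaborations rather than departures.
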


\begin{proof} See Appendix \ref{apB}, Proposition \ref{prwexB}
\end{proof}

The next result gives a geometric characterization of the equation
 \[d\Omega=L.\]  

We need to fix some conventions first. The embedding $\Omega^k(M)\hookrightarrow \mathscr{D}'_{n-k}(M)$ is given by 
\[\omega\ra \left\{\eta\ra \int_{M}\omega\wedge \eta\right\},
\]
and the exterior derivative $d$ on $\mathscr{D}'_{n-k}(M)$ is defined as to extend $d$ on $\Omega^k(M)$, see Definition \ref{sgnl0} and  Lemma \ref{sgnl}.

An oriented submanifold $N\subset M$  that intersects $L$ transversely at isolated points, is said to be $L$-transversely oriented if at any point of intersection  the following holds:
\[\ori N\wedge\ori L=\ori M.\]

The rest of the section contains a proof and  some relevant consequences for the following:

\begin{theorem} \label{th1} Let $L^k \subset M^n$ be a connected, oriented, closed (as a subset) submanifold of an oriented manifold $M$. Let $f:L\ra \bR$ be a continuous function, $\Omega$ a  $C^1$ form on $M\setminus L$ of degree $n-k-1$ with locally integrable coefficients on $M$ and $H$ a  continuous  form on $M$ of degree $n-k$.
 
Consider the following statements:
 \begin{itemize}
 \item[(a)] $d\Omega =fL-H$ in the sense of currents;
\vspace{0.3cm}

 \item[(b1)] $d\Omega=-H$ as forms on $M\setminus L$;
 \item[(b2)]  \begin{equation}\label{dOH} \int_{B(p)}H+\int_{\partial B(p)}\Omega=f(p),\end{equation}   for any  compact, $L$-transversely oriented submanifold with boundary $B(p)\subset M$ of dimension $n-k$ that intersects transversely $L$ uniquely at a point $p\notin \partial B(p)$ . \end{itemize}
  
Then (a) implies (b1) and (b2) for \emph{any} $B(p)$ with the mentioned properties.

Conversely, under the assumption that $\Omega$ is weakly extendible, if (b1) holds  and (b2) holds for \emph{some} $B(p)$  then (a) is valid. 
  
Moreover, equation (a) implies that $f$ is a constant function along $L$, $H$ is closed if it is $C^1$ and $fL$ and $H$ are Poincar\'e duals to each other.

\end{theorem}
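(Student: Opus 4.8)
The plan is to prove the implications in two rounds, first establishing the local statement $(b1)$ and the averaging identity $(b2)$ from the currential equation $(a)$, and then running the converse under the weak extendibility hypothesis, after which the "moreover" clause follows by inspection.

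For the forward direction $(a)\Rightarrow (b1),(b2)$: Since $fL-H$ is supported away from the smooth locus $M\setminus L$ except for the $H$ term, restricting the currential equation to test forms supported in $M\setminus L$ immediately gives $d\Omega=-H$ there, which is $(b1)$; the only subtlety is to note that a $C^1$ form with $L^1_{\mathrm{loc}}$ coefficients and $d\Omega=-H$ away from $L$ has a well-defined current $T_\Omega$, so this step is just unwinding the embedding $\Omega^{n-k-1}(M\setminus L)\hookrightarrow \mathscr D'_{k+1}$. For $(b2)$, I would fix an $L$-transversely oriented $B(p)$ meeting $L$ transversally at the single point $p$, excise a small blow-up neighborhood, and apply Stokes on $B(p)\setminus \Bl^{-1}$-tube: the boundary contributions are $\int_{\partial B(p)}\Omega$ plus the integral over a shrinking normal sphere $S_\gamma$, and the bulk term is $\int_{B(p)}d\Omega = -\int_{B(p)}H$. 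The key point is that transversality of $B(p)$ with $L$ forces the normal sphere integral $\int_{S_\gamma}\Omega$ to converge, as $\gamma\to0$, to exactly the local degree of the "Gauss map" of the solution — i.e. to $f(p)$ — and this is where the currential equation $(a)$ is genuinely used: testing $(a)$ against a bump form localized near $p$ and comparing with the defining integral of $fL$ pins down the limit. This gives $(b2)$ for \emph{any} admissible $B(p)$.

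For the converse, assume $\Omega$ is weakly extendible, $(b1)$ holds, and $(b2)$ holds for \emph{some} $B(p_0)$. I would first upgrade "$(b2)$ for some $B(p_0)$" to "$(b2)$ for all $B(p)$": given two admissible $B(p),B'(p')$, one connects them through a cobordism of $L$-transverse submanifolds inside $M\setminus$(the rest) — using connectedness of $L$ — and applies Stokes together with $(b1)$; the difference of the two sides of $(b2)$ is an integral of $d\Omega+H=0$ over the cobordism, hence zero. (Weak extendibility is exactly what guarantees the boundary-sphere terms in this cobordism argument behave, via Proposition \ref{prwex}(i): the family $\gamma\mapsto\hat\varphi_\gamma^*\Omega$ being uniformly continuous lets us take the $\gamma\to0$ limit uniformly in the base point $p$.) Then, to prove $(a)$: for an arbitrary test form $\eta\in\Omega^{k+1}(M)$ I must show $\int_M \Omega\wedge d\eta = \pm\big(\int_L f\,\eta - \int_M H\wedge\eta\big)$ with the sign dictated by Definition \ref{sgnl0}. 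I would write $M = (M\setminus T_\gamma)\cup T_\gamma$ for a tubular neighborhood $T_\gamma$ of radius $\gamma$, integrate by parts on $M\setminus T_\gamma$ (legal since $\Omega$ is $C^1$ there) to get $-\int_{M\setminus T_\gamma}d\Omega\wedge\eta + \int_{\partial T_\gamma}\Omega\wedge\eta$, use $(b1)$ to replace $d\Omega$ by $-H$, and let $\gamma\to0$: the interior integral converges to $-\int_M H\wedge\eta$ by $L^1_{\mathrm{loc}}$-integrability, and the boundary term $\int_{\partial T_\gamma}\Omega\wedge\eta$ converges — fiberwise over $L$, by the weak extendibility and a Fubini over the disc bundle, comparing with $(b2)$ applied to the normal discs $B(p)$ — to $\int_L f\,\eta$. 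The main obstacle, and the step I expect to take the most care, is precisely this last limit: controlling $\int_{\partial T_\gamma}\Omega\wedge\eta$ as $\gamma\to 0$ and identifying it with the fiber integral $\int_L f\eta$ requires both halves of the blow-up extendibility picture (the $(0,k)$-part controlled by weak extendibility, and the normal-direction contraction $\iota_{\nabla r}\Omega$ controlled via Proposition \ref{prwex}(ii)) together with a uniform-in-$p$ version of the degree computation in $(b2)$; sorting the bidegrees so that only the "angular volume" component of $\Omega$ survives in the limit is the crux.

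Finally the "moreover" clause is read off: $d(fL-H)=d\,d\Omega=0$ as currents forces $d(fL)=dH$; testing against forms pulled back from $L$ and using connectedness shows $f$ is locally constant on $L$, hence constant; if $H$ is $C^1$ then $dH$ is a smooth current equal to $d(fL)=f\,dL=0$ (since $L$ is closed), so $H$ is closed; and the equation $d\Omega = fL-H$ exhibits $fL$ and $H$ as cohomologous currents, i.e. $[H]=\mathrm{PD}(fL)$ under \eqref{eq_poincare_duality}, which is the Poincaré duality assertion with the stated sign convention.
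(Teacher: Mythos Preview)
Your proposal is correct and follows the same architecture as the paper's proof: Stokes on the complement of a shrinking tube $D_\gamma$ around $L$, with the invariance of $\int_{B(p)}H+\int_{\partial B(p)}\Omega$ under change of $B(p)$ and $p$ playing the pivotal role. Two small corrections are worth making. First, that invariance (your cobordism step) follows from $(b1)$ \emph{alone} via Stokes and the homotopy invariance of closed pairs---this is the paper's Lemma~\ref{lem_invariance}, and it needs no extendibility hypothesis; weak extendibility enters \emph{only} at the final limit in the converse, namely in showing $\lim_{\gamma\to 0}\int_{\partial D_\gamma}\Omega\wedge(\eta-\hat\pi^*\iota^*\eta)=0$. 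Second, since that integral is over the sphere bundle $\partial D_\gamma$, only the tangential $(0,\ast)$-part of $\Bl^*\Omega$ contributes, so you do not need control of $\iota_{\nabla r}\Omega$ (which would be full, not weak, extendibility); your own remark that ``only the angular volume component survives'' already captures this, and the parenthetical about Proposition~\ref{prwex}(ii) should be dropped. For the ``moreover'' clause, the paper gets $f$ constant directly from the invariance lemma rather than from manipulating $d(fL)=dH$; your route is workable but less clean since $f$ is only continuous.
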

Before we give the proof we start with a digression about how  (b1) implies that the quantity
 \[
 \int_{B(p)}H+\int_{\partial B(p)}\Omega
 \]  
 depends neither on the choice of the submanifold $B(p)$ with the property described at (b2) nor on the point $p\in L$.

\begin{definition} Let $(\omega,\eta)\in \Omega^k(M)\oplus \Omega^{k-1}(U)$ be a pair of forms where $U$ is an open subset of $M$. The pair is called closed if $d\omega=0$ on $M$ and $d\eta=-\omega\bigr|_{U}$ on $U$, in other words if 
\[ d(\omega,\eta):=(-d\omega,\iota^*\omega+d\eta)=0\]
where $\iota:U\ra M$ is the inclusion
\end{definition}

\vspace{0.2cm}

\begin{example}
In Theorem \ref {th1}, if  $H$ is $C^1$ then condition (b1) says that $(H,\Omega)$ is a closed pair on $(M,M\setminus L)$. 
\end{example}

\vspace{0.2cm}

\begin{example}
A well-known example of a closed pair is the Pfaffian and the negative of its  transgression on a pair $(E,E\setminus {\bf 0})$ where $E$ is a Riemannian vector bundle of even rank on $M$ and ${\bf 0}$ is its zero-section .
\end{example}

\vspace{0.2cm}

\begin{definition}
The integral of a pair $(\omega,\eta)\in  \Omega^k(M)\oplus \Omega^{k-1}(U)$ (not necessarily closed) on a compact, oriented  submanifold with boundary $(N,\partial N)\subset (M,U)$ where $\dim{N}=k$ is defined to be
\[
\int_{(N,\partial N)}(\omega,\eta):=\int_N\omega+\int_{\partial N} \eta
\]
\end{definition}
\vspace{0.2cm} 

Note that if $N\subset U$ and the pair $(\omega,\eta)$ is closed then the integral is $0$. The following is an analogue for pairs of the homotopy invariance of a top degree form.

\begin{lemma}\label{hompa} Let $(\omega,\eta)\in \Omega^k(M)\oplus \Omega^{k-1}(U)$ be a closed pair of forms. Let \[
	H:I\times (N,\partial N)\ra (M,U) 
\]
be a smooth homotopy of pairs, i.e $H_t$ is a mapping of pairs for all $t \in I$. Then
\[ 
\int_{(N,\partial N)}(H_1^*\omega,H_1^*\eta)= \int_{(N,\partial N)}(H_0^*\omega,H_0^*\eta)
\]
\end{lemma}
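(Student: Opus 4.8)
The plan is to reduce the statement to the ordinary (single-form) homotopy invariance of the integral of a closed top-degree form, applied to the mapping cylinder of the homotopy. First I would set up the cylinder notation: write $W := I \times N$, a compact oriented manifold with boundary $\partial W = (\{1\}\times N) \sqcup (-\{0\}\times N) \sqcup (I \times \partial N)$ (with the usual orientation conventions), and consider the map $H : W \to M$. The key observation is that pulling back a closed pair along a map of pairs again yields a closed pair: if $d\omega = 0$ on $M$ and $d\eta = -\iota^*\omega$ on $U$, then $d(H^*\omega) = 0$ on $W$ and, on the open set $H^{-1}(U) \supseteq I \times \partial N$, we have $d(H^*\eta) = -H^*(\iota^*\omega)$. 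So $(H^*\omega, H^*\eta)$ is a closed pair on $(W, H^{-1}(U))$.

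Next I would apply Stokes' theorem. Since $\deg \omega = k = \dim N$, the form $H^*\omega$ has degree $k$ on the $(k+1)$-dimensional $W$, so $d(H^*\omega)$ would be a $(k+1)$-form; but closedness gives $d(H^*\omega) = 0$, hence $\int_W d(H^*\omega) = 0$ trivially. The real content comes from integrating $H^*\eta$ (a $(k-1)$-form) over the $k$-dimensional piece $I \times \partial N$ and using $d(H^*\eta) = -H^*\omega$ there. Concretely, I would write
\[
0 = \int_{I\times\partial N} d(H^*\eta) + \int_{I\times\partial N} H^*\omega,
\]
wait — more carefully, the clean way is: apply Stokes on $W$ to the $k$-form $H^*\omega$ restricted appropriately is not quite it. Instead, the correct bookkeeping is to note that $\int_{(N,\partial N)}(H_t^*\omega, H_t^*\eta) = \int_N H_t^*\omega + \int_{\partial N} H_t^*\eta$, and to compute the difference at $t=1$ and $t=0$ by two separate Stokes applications: apply Stokes to $H^*\omega$ on $W = I\times N$ to get $\int_{\{1\}\times N}H^*\omega - \int_{\{0\}\times N}H^*\omega = \int_W d(H^*\omega) - \int_{I\times\partial N} H^*\omega = -\int_{I\times \partial N}H^*\omega$ (using $d(H^*\omega)=0$); and apply Stokes to $H^*\eta$ on $I\times\partial N$ to get $\int_{\{1\}\times\partial N}H^*\eta - \int_{\{0\}\times\partial N}H^*\eta = \int_{I\times\partial N}d(H^*\eta) = -\int_{I\times\partial N}H^*\omega$ (using $\partial(I\times\partial N) = \{1\}\times\partial N \sqcup -\{0\}\times\partial N$ since $\partial N$ is closed, and $d(H^*\eta) = -H^*\omega$ on $H^{-1}(U)$). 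Subtracting these two identities makes the $\int_{I\times\partial N}H^*\omega$ terms cancel and yields exactly $\int_{(N,\partial N)}(H_1^*\omega,H_1^*\eta) = \int_{(N,\partial N)}(H_0^*\omega,H_0^*\eta)$.

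The main obstacle — really the only thing requiring care — is getting all the orientation and sign conventions consistent: the induced-boundary orientation on $\partial(I\times N)$, the sign in $d(\omega,\eta) = (-d\omega, \iota^*\omega + d\eta)$, and making sure the two Stokes applications are compatible so that the unwanted boundary terms over $I\times\partial N$ genuinely cancel rather than add. I would handle this by fixing the convention $\partial(I\times X) = (\{1\}\times X) \cup (-\{0\}\times X)$ for $X$ closed (and the analogous formula with the extra $\pm I\times\partial X$ term when $X$ has boundary), and checking the cancellation on a local model. One should also note the hypothesis that $H$ is smooth and that $H_t$ is a map of pairs for \emph{every} $t$ (not just $t=0,1$) is exactly what guarantees $I\times\partial N \subseteq H^{-1}(U)$, so that $H^*\eta$ and the identity $d(H^*\eta) = -H^*\omega$ are defined on all of $I\times\partial N$; without this the second Stokes application would not even make sense. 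Everything else is the routine naturality of pullback and $d$.
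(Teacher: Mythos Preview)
Your approach is essentially identical to the paper's: two applications of Stokes, one to $H^*\omega$ on $I\times N$ and one to $H^*\eta$ on $I\times\partial N$, then combine using the closedness conditions $dH^*\omega=0$ and $dH^*\eta=-H^*\omega$. The paper presents it slightly more compactly by writing the sum directly, but the content is the same.

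One small slip worth flagging: with the signs you wrote, your two displayed identities both have right-hand side $-\int_{I\times\partial N}H^*\omega$, so \emph{subtracting} them gives $\int_N H_1^*\omega - \int_{\partial N}H_1^*\eta = \int_N H_0^*\omega - \int_{\partial N}H_0^*\eta$, which is not the desired conclusion; adding them does not work either. The issue is that the boundary orientation on $I\times\partial N$ as a face of $I\times N$ carries a sign (the paper's version of your first identity reads $\int_N H_1^*\omega - \int_N H_0^*\omega = +\int_{I\times\partial N}H^*\omega$ when $d\omega=0$), after which \emph{adding} yields the result. You correctly identify this sign bookkeeping as the only delicate point, so this is a bookkeeping slip rather than a gap.
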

\begin{proof} For $\omega$ of degree equal to $\dim N$ we use Stokes on the cylinder $I\times N$:
\begin{equation}\label{Stc1}
	\int_{I\times N}dH^*\omega=\int _NH_1^*\omega-\int_{N}H_0^*\omega-\int_{I\times \partial N} H^*\omega
\end{equation}
where $I\times \partial N$ is oriented using the product orientation.  Moreover we have
\begin{equation}\label{Stc2}\int_{I\times \partial N}dH^*\eta=\int_{\partial N}H_1^*\eta-\int_{\partial N}H_0^*\eta.
\end{equation}
Taking the sum of \eqref{Stc1} and \eqref{Stc2} and rearranging we get
\[\int_{(N,\partial N)}(H_1^*\omega,H_1^*\eta)- \int_{(N,\partial N)}(H_0^*\omega,H_0^*\eta)=\int_{I\times N}dH^*\omega+\int_{I\times \partial N}(H^*\omega + dH^*\eta)
\]
and since the pair $(\omega,\eta)$ is closed both integrals on the right hand side vanish.
\end{proof}

As a consequence of the above Lemma, we prove an invariance property for the left-hand side of \eqref{dOH}.

\begin{lemma}\label{lem_invariance} 
	Let $(\Omega,H,f)$ as in the statement of Theorem \ref{th1}. If \tcr{(b1)} holds, i.e., if
\[
d\Omega=-H \quad\mbox{on} \quad M\setminus L,
\] 
then the quantity 
\begin{equation}\label{eq_sum_invariance}
\int_{(B(p),\partial B(p))} (H,\Omega) : = \int_{B(p)}H+\int_{\partial B(p)}\Omega 
\end{equation}
does not depend on the choice of the transverse manifold $B(p)$ of dimension $n-k$ nor on the point $p$. Therefore, if (b2) holds then  $f$ must be a constant function on each connected component of $L$.
\end{lemma}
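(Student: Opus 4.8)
The plan is to deduce Lemma \ref{lem_invariance} directly from Lemma \ref{hompa} by producing an appropriate homotopy of pairs of manifolds connecting any two admissible configurations $(B(p),\partial B(p))$. Since (b1) asserts exactly that $(H,\Omega)$ is a closed pair on $(M, M\setminus L)$, the hypothesis of Lemma \ref{hompa} is met, so the integral $\int_{(N,\partial N)}(H,\Omega)$ is invariant under smooth homotopies $H\colon I\times(N,\partial N)\to (M,M\setminus L)$. Thus everything reduces to the following geometric claim: given two compact $L$-transversely oriented submanifolds with boundary $B(p_0)$ and $B(p_1)$ of dimension $n-k$, each meeting $L$ transversely in a single interior point, there is such a homotopy carrying one onto the other (as maps from a fixed model $(N,\partial N)$, e.g.\ $(D^{n-k},S^{n-k-1})$). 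First I would treat the case where both meet $L$ at the \emph{same} point $p=p_0=p_1$.

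For a fixed $p\in L$, work in a tubular-neighborhood chart around $p$ so that $M$ looks like $\R^k\times\R^{n-k}$ with $L=\R^k\times\{0\}$. An $L$-transversely oriented $B(p)$ meeting $L$ only at $p$ is, near $p$, a graph over a small disk in the normal slice $\{0\}\times\R^{n-k}$; the transverse-orientation condition $\ori B(p)\wedge \ori L=\ori M$ pins down its orientation. I would first shrink $B(p)$ by a straight-line homotopy inside $M\setminus L$ (pushing $\partial B(p)$ toward $p$ along the manifold while keeping the interior intersection point fixed) so that $B(p)$ lies entirely in the tubular-neighborhood chart; this homotopy stays within the required category because the interior point never leaves $L$ and the boundary never touches $L$. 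Once inside the chart, a second homotopy — interpolating the embedding linearly toward the standard affine disk $\{0\}\times D^{n-k}_\varepsilon$ — identifies $B(p)$ with the standard normal slice. The orientation bookkeeping guarantees the two standard models agree (not differ by a sign), so $\int_{(B(p),\partial B(p))}(H,\Omega)$ equals the value on the standard slice; hence it is independent of $B(p)$ for fixed $p$.

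To remove the dependence on $p$, I would use that $L$ is connected: given $p_0,p_1\in L$, pick a path $\gamma\colon I\to L$ from $p_0$ to $p_1$ and drag the standard normal slice along $\gamma$ using a local trivialization of the normal bundle $\nu L$ over a neighborhood of $\gamma(I)$ (covering $I$ by finitely many tubular charts and patching, or simply using parallel transport of the normal bundle along $\gamma$). This produces a smooth homotopy of pairs $H\colon I\times(D^{n-k},S^{n-k-1})\to(M,M\setminus L)$ with $H_0$ the slice at $p_0$ and $H_1$ the slice at $p_1$, again respecting transverse orientations. Applying Lemma \ref{hompa} along this homotopy gives equality of the two integrals, completing the proof that \eqref{eq_sum_invariance} depends on neither $B(p)$ nor $p$. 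The final assertion is then immediate: if (b2) holds, then $f(p)=\int_{(B(p),\partial B(p))}(H,\Omega)$ is a locally constant function of $p\in L$, hence constant on each connected component.

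The main obstacle I anticipate is purely technical rather than conceptual: making the homotopies honestly land in the category of pairs $(M,M\setminus L)$ — i.e.\ ensuring at every time $t$ the moving submanifold still meets $L$ transversely in exactly one interior point and never along its boundary — and verifying that the transverse-orientation hypothesis eliminates the potential sign ambiguity when identifying $B(p)$ with the standard normal disk. One must be a little careful that the "graph over the normal slice" description is valid after the initial shrinking; an alternative, cleaner route avoiding explicit charts is to invoke the tubular neighborhood theorem to reduce to the bundle $\nu L$ from the start, where the normal-slice model and the dragging-along-$\gamma$ homotopy are both manifest. Either way, the analytic content is entirely carried by Lemma \ref{hompa}, and no estimates or limiting arguments are needed here — weak extendibility of $\Omega$ is \emph{not} required for this lemma, only (b1).
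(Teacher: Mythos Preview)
Your strategy --- reduce to a small normal disk at $p$ and then invoke Lemma~\ref{hompa} --- is exactly the paper's. The difference is in how the reduction is carried out, and your version has a gap. You propose to ``shrink $B(p)$ by a straight-line homotopy \ldots pushing $\partial B(p)$ toward $p$ along the manifold'', which tacitly assumes $B(p)$ admits a radial deformation retraction onto a small disk about $p$. That is fine when $B(p)$ is itself a disk, but Theorem~\ref{th1} allows $B(p)$ to be \emph{any} compact $(n-k)$-manifold with boundary meeting $L$ transversely at the single interior point $p$ (and it may well have further components disjoint from $L$). Two admissible $B(p)$'s need not even be diffeomorphic, so there is no common model $(N,\partial N)$ from which to run Lemma~\ref{hompa} directly, and the ``interpolate linearly toward the standard affine disk'' step is likewise undefined in general.

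The paper closes this gap with Stokes rather than homotopy: any component $C\subset B(p)\cap(M\setminus L)$ contributes zero to \eqref{eq_sum_invariance} by (b1) and Stokes, so one may assume $B(p)$ connected; then Stokes on $B(p)\setminus D$, with $D$ a small closed tubular neighborhood of $L$, shows the pair integral over $(B(p),\partial B(p))$ equals that over $(B(p)\cap D,\partial(B(p)\cap D))$, which \emph{is} a small normal disk. From that point on your isotopy argument (dragging normal disks along a path in $L$, using the $L$-transverse orientation to fix signs) coincides with the paper's. So the fix is simply to replace your ``shrinking homotopy'' by this one-line Stokes reduction; the rest of your outline, including the remark that weak extendibility is not needed here, is correct.
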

\begin{proof} 
	First of all, by Stokes Theorem and property \tcr{(b1)} the sum  \eqref{eq_sum_invariance} is zero for any connected component $C$ of $B(p)$ with $C\subset B(p)\cap (M\setminus L)$, if such a component exists. Thus, without loss of generality we can assume that $B(p)$ is connected. Again Stokes Theorem and $d\Omega=-H$ give that the sum equals  $\int_{B(p)\cap D}H+\int_{\partial (B(p)\cap D)}\Omega$ where $D$ is a small closed tubular neighborhood of $L$ with regular boundary. Therefore, if $p_1,p_2 \in L$ and $B_i(p_i)$ are transverse manifolds at $p_i$, by choosing $D$ small enough the intersections $B_i(p_i) \cap D$ are small normal disks. Such disks are isotopic to each other, with the isotopy preserving separately $D \backslash L$ and $L$. Thus, Lemma \ref{hompa} applies to conclude that the integral $\int_{(B(p),\partial B(p))}(H,\Omega)$ does not depend on the choices of $p$ and $B(p)$.

%
\end{proof}

  \begin{proof}[Proof of Theorem \ref{th1}] Let us split the proof into three steps: \\[0.2cm]
  \noindent \textbf{Step 1:} (a) implies (b1) and (b2).\\[0.2cm] 
  By our sign agreement on $d$, we have that $d\Omega=-H$ as currents on $M\setminus L$ is equivalent to $d\Omega=-H$ as forms on $M\setminus L$, hence (b1) holds. To prove (b2), we fix a metric on $M$. Let $\delta>0$ be a small number and let $D_{\delta}$ be a tubular neighborhood of radius $\delta$ around $L$ with $D_{\delta}(p)\subset D_{\delta}$ the normal disk corresponding to $p$ while $S_{\delta}(p):=\partial D_{\delta}(p)$.  By Lemma \ref{lem_invariance} the quantity
\[ 
\sigma =\int_{D_{\delta}(p)}H+\int_{\partial S_{\delta}(p)}\Omega
\]
does not depend on $\delta$ \text{and $p$.}

 Let $r(\cdot):=\dist(\cdot,L)$ be the distance function to $L$ and let $\rho(r)$ be a function which is $1$ on $D_{\epsilon}$ and $0$ outside $D_{2\epsilon}$. We prove that for every form $\eta$ on $L$ the following holds:
\begin{equation}\label{fl0} 
	\int_L (\sigma -f)\eta=0
\end{equation}
from which one  deduces immediately that $\sigma \equiv f$ on $L$. In fact, we prove that
\begin{equation}\label{fl1} \lim_{\gamma\ra 0}\int_L\left[\int_{D_{\gamma}/L}H+\int_{\partial D_{\gamma}/L}\Omega\right]\eta=\int_L f\eta. 
\end{equation}
This is a "fake" limit because  the quantity in square brackets does not depend on $\gamma$. Hence (\ref{fl1}) implies (\ref{fl0}).
 
Let $\hat{\pi}:D_{\gamma}\ra L$ be the projection that the tubular neighborhood diffeomorphism provides.
 Due to the continuity of $H$ we have that:
 \[\lim_{\gamma\ra 0}\int_L\left(\int_{D_{\gamma}/L}H\right)\eta=\lim_{\gamma\ra 0}\int_{D_{\gamma}}H\wedge \hat{\pi}^*\eta=0
 \] 
 Hence, in order to justify (\ref{fl1})  it is enough to prove the following 
 \begin{equation}\label{fl5}\lim_{\gamma \ra 0}\int_L\left( \int_{\partial D_{\gamma}/L}\Omega\right)\eta=\int_L\eta f \qquad \forall \, \eta \in \Omega^k(L).
 \end{equation}
 
 We apply Stokes to $\Omega\wedge\rho\hat{\pi}^*\eta $ on $D_{2\epsilon}\setminus D_{\gamma}$. Recall that $\rho$ is zero outside $D_{2\epsilon}$ and $1$ on $D_{\epsilon}$. Hence
 \begin{equation}\label{fl2}-\int_{D_{2\epsilon}\setminus D_{\gamma}} d(\Omega\wedge \rho\hat{\pi}^*\eta)=\int_{\partial D_{\gamma}}\Omega\wedge \hat{\pi}^*\eta =\int_L\left(\int_{\partial D_{\gamma}/L}\Omega\right)\eta
 \end{equation}
 On the other hand
 \[-\int_{D_{2\epsilon}\setminus D_{\gamma}} d(\Omega\wedge\rho\hat{\pi}^*\eta)=\int_{D_{2\epsilon}\setminus D_{\gamma}}H \wedge (\rho\hat{\pi}^*\eta)+(-1)^{n-k}\int_{D_{2\epsilon}\setminus D_{\gamma}} \Omega\wedge d(\rho\hat{\pi}^*\eta)
 \]
 Owing to the fact that $\Omega$ has $L^1_{loc}$ coefficients and $H$ as well we can pass to limit and obtain
 \begin{equation}\label{fl3}-\lim_{\gamma\ra 0}\int_{D_{2\epsilon}\setminus D_{\gamma}} d(\Omega\wedge \rho\hat{\pi}^*\eta)=\int_{D_{2\epsilon}}H\wedge (\rho\hat{\pi}^*\eta)+(-1)^{n-k}\int_{D_{2\epsilon}}\Omega\wedge d(\rho\hat{\pi}^*\eta)
 \end{equation}
Owing to the fact that $\rho\hat{\pi}^*\eta$ is a form with compact support in $D_{2\epsilon}$ and with our sign conventions we get that (\ref{fl3}) equals:
 \begin{equation}\label{fl4}H(\rho\hat{\pi}^*\eta)+d\Omega(\rho\hat{\pi}^*\eta)=fL(\rho\hat{\pi}^*\eta)=\int_{L}f\rho\eta=\int_Lf\eta
 \end{equation}
 where we used (a) in the first equality. Then (\ref{fl2}) and (\ref{fl4}) imply (\ref{fl5}) and Step 1 is proved.\\[0.2cm]
 \noindent \textbf{Step 2:} (b1)+(b2) and weak extendibility imply (a). \\[0.2cm] 
 Note that (a) rewrites as 
 \[
 (-1)^{n-k} \int_M \Omega \wedge d\eta = \int_L f\eta - \int H \wedge \eta
 \]
 for each test form $\eta \in \Omega_c^k(M)$. Fix a tubular neighborhood $D_{2\epsilon}\supset L$. Writing $\eta$ as $\eta_1 + \eta_2$, where $\eta_1$ has support in $D_{2\epsilon}$ while $\eta_2$ has support away from $L$, observe that for $\eta_2$ item (a) is equivalent with item (b1). Without loss of generality, we can thus assume that $\eta$ is supported in $D_{2\epsilon}$.

 For $\gamma<2\epsilon$ we have that
 \begin{equation}\label{nneq1}\int_{\partial D_{\gamma}}\Omega\wedge \eta=-\int_{D_{2\epsilon}\setminus D_{\gamma}}d(\Omega\wedge \eta)=-\int_{D_{2\epsilon}\setminus D_{\gamma}}d\Omega\wedge \eta+(-1)^{n-k}\int_{D_{2\epsilon}\setminus D_{\gamma}}\Omega\wedge d\eta
 \end{equation}
 Letting $\gamma\ra 0$ in (\ref{nneq1}), by (b1)  we get that
 \[\lim_{\gamma\ra 0}\int_{\partial D_{\gamma}}\Omega\wedge \eta=H(\eta)+ d\Omega(\eta).\]
 
Therefore, since the limit of the  first term in the equation below is obviously $0$ we have
 \[\lim_{\gamma\ra 0}\int_{D_{\gamma}}H\wedge\eta+\int_{\partial D_{\gamma}}\Omega\wedge\eta=H(\eta)+d\Omega(\eta)
 \]
 On the other hand, we already know by (b2) that for all small $\gamma$ the following holds
 \[\int_{D_{\gamma}}H\wedge (\hat{\pi}^*\iota^*\eta)+\int_{\partial D_{\gamma}}\Omega\wedge(\hat{\pi}^*\iota^*\eta)=\int_{L}f\iota^*\eta=fL(\eta)
 \]
 and we also know due to the continuity of $H$ that 
 \[\lim_{\gamma\ra 0}\int_{D_{\gamma}}H\wedge\eta+\int_{D_{\gamma}}H\wedge (\hat{\pi}^*\iota^*\eta)=0\]
 hence if we can prove that
 \begin{equation}\label{eqwex}\lim_{\gamma\ra 0}\int_{\partial D_{\gamma}}\Omega\wedge (\eta-\hat{\pi}^*\iota^*\eta)=0
 \end{equation}
 then we are done. We use that the rescaling maps $\hat{\varphi}_{\gamma}$ satisfy
\[ 
\lim_{\gamma \ra 0} \hat{\varphi}_{\gamma}^*\eta =\pi^*\iota^*\eta \qquad \text{uniformly in } \, S_1(\nu L),
\]
and that the identity $\pi = \hat \pi \circ \hat{\varphi}_{\gamma}$ implies  $\hat{\varphi}_{\gamma}^*\eta = \hat{\varphi}_{\gamma}^*\hat\pi^*\iota^*\eta$ for each (small) $\gamma$. By using the weak extendibility of $\Omega$ as in Proposition \ref{prwex} we therefore get
\[\lim_{\gamma\ra 0}\int_{S_1(\nu L)} \hat{\varphi}_{\gamma}^*\Omega\wedge  \hat{\varphi}_{\gamma}^*(\eta-\hat\pi^*\iota^*\eta)=0\]
which of course is a change of variables in (\ref{eqwex}).

  \end{proof}
  \begin{remark} It does not seem that the  local integrability  of $\Omega$, rather than weak extendibility is  enough to get (\ref{eqwex}). 
\end{remark}
\begin{remark}\label{rth1} The proof of Theorem \ref{th1} can be adapted in a straightforward manner to the case  of a (non-compact, oriented) manifold \emph{with boundary} $M$ provided one imposes that
 \begin{itemize}
\item[(i)] $L\cap \partial M=\emptyset$,
\item[(ii)] $B(p)\cap\partial M=\emptyset$,
\item[(iii)]  the definition of the operator $d$ on currents $T\in\mathscr{D}'_{k}(M)$  which are $C^1$ (i.e. representable by a $C^1$ form) near $\partial M$ is
\[dT(\eta):=(-1)^{n-k+1}T(d\eta)+T\wedge \partial M(\eta),\]
\end{itemize}
The redefinition of $d$ is made to ensure that $dT_{\omega}=T_{d\omega}$ if $T=T_{\omega}$ is represented by a $C^1$-form $\omega$.

In the proof, one always takes the neighborhoods of $L$ not to intersect $\partial M$.
\end{remark}
\begin{remark}\label{r2th1} Another straightforward extension of Theorem \ref{th1} is to the case where $L$ is not connected. Statement (b2) is required to hold for at least one point $p$ in each connected component of $L$. 
\end{remark}

Unfortunately weak extendibility is not a property preserved by transverse pull-backs, however the stronger extendibility property is.
  \begin{prop} \label{P2}   Consider the data of Theorem \ref{th1} but with $f\equiv 1$ and assume $\Omega$ is blow-up extendible.
  
  Let $S$ be an oriented manifold, possibly with boundary and $F:S\ra M$ be a smooth map such that $F\pitchfork L$ and $F^{-1}(L)\cap \partial S=\emptyset$.  Then
  \[ dF^*\Omega=F^*d\Omega:=F^{-1}(L)-F^*H
  \]
in the sense of currents on $S$,  where $d$ is the currential exterior derivative and $F^ {-1}L $ is cooriented first (this is the Guillemin-Pollack convention)\footnote{this means the normal bundle of $F^ {-1}(L)$ receives an orientation from the normal bundle of $L$ and that is used in the first place to get an orientation on the tangent bundle of $F^ {-1}(L)$.}
  \end{prop}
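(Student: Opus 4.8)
\textbf{Proof plan for Proposition \ref{P2}.}
The strategy is to reduce the claim to Theorem \ref{th1} applied on the source manifold $S$. First I would record that both sides are well-defined currents: $F^*\Omega$ is a smooth form on $S\setminus F^{-1}(L)$, and since $F\pitchfork L$ the preimage $F^{-1}(L)$ is a closed oriented submanifold of $S$ whose codimension equals $\codim L$, so $F^{-1}(L)$ defines a current of the expected degree; moreover $F^*H$ is a continuous form on all of $S$. The key local integrability fact is that the pull-back of a blow-up extendible form under a transverse map is again blow-up extendible (hence $L^1_{\loc}$): transversality of $F$ to $L$ gives a map of pairs $(S,F^{-1}(L))\to (M,L)$ which lifts to the oriented blow-ups $\Bl_{F^{-1}(L)}(S)\to \Bl_L(M)$ (a transverse map induces a smooth map on oriented blow-ups, since in Fermi coordinates it is radial-preserving up to higher order), and pulling back the continuous form $\Bl^*\Omega$ along this lift shows $F^*\Omega$ is blow-up extendible on $S$; by the footnote remark this also yields the $L^1_{\loc}$ property. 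Since extendibility implies weak extendibility, the hypotheses of the converse direction of Theorem \ref{th1} are in place on $S$.

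Next I would verify (b1) and (b2) for the triple $(F^*\Omega, F^*H, 1)$ on $S$ relative to $F^{-1}(L)$. Item (b1) is immediate: on $S\setminus F^{-1}(L)$ the map $F$ is smooth into $M\setminus L$, so $d(F^*\Omega)=F^*(d\Omega)=F^*(-H)=-F^*H$ as smooth forms, using (b1) for $\Omega$ on $M\setminus L$. For (b2) I only need to check the normalization for \emph{one} transverse disk $B(q)$ at \emph{one} point $q$ in each connected component of $F^{-1}(L)$ (using Remark \ref{r2th1}). Pick $q\in F^{-1}(L)$ with $p:=F(q)\in L$; near $q$ the map $F$ restricts to a diffeomorphism from a small normal disk $B(q)\subset S$ onto a transverse disk $B(p)=F(B(q))\subset M$ meeting $L$ only at $p$, and by the Guillemin--Pollack coorientation convention this diffeomorphism is orientation-matching on normal bundles, so $B(q)$ is $F^{-1}(L)$-transversely oriented exactly when $B(p)$ is $L$-transversely oriented. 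Then by naturality of integration under the orientation-preserving diffeomorphism $F|_{B(q)}$,
\[
\int_{B(q)} F^*H + \int_{\partial B(q)} F^*\Omega = \int_{B(p)} H + \int_{\partial B(p)} \Omega = 1,
\]
where the last equality is (b2) for $\Omega$ at $p$. This establishes (b2) for $(F^*\Omega,F^*H,1)$.

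Having checked (b1), (b2), and weak extendibility on $S$, the converse direction of Theorem \ref{th1} (in the form of Remarks \ref{rth1} and \ref{r2th1}, to allow $S$ to have a boundary disjoint from $F^{-1}(L)$, and to allow $F^{-1}(L)$ disconnected) yields $d(F^*\Omega) = 1\cdot F^{-1}(L) - F^*H$ in the sense of currents on $S$, which is precisely the assertion $dF^*\Omega = F^*d\Omega$. The one point requiring care, and which I expect to be the main obstacle, is the smoothness of the induced map on oriented blow-ups and the consequent preservation of blow-up extendibility under transverse pull-back: one must argue that in Fermi (normal tubular) coordinates adapted to $L$ and to $F^{-1}(L)$, transversality forces $F$ to have the form $(u,v)\mapsto (\phi(u,v), A(u,v)v + O(|v|^2))$ with $A$ fiberwise invertible, so that it descends to a smooth map $[0,\epsilon)\times S_1(\nu F^{-1}(L)) \to [0,\epsilon)\times S_1(\nu L)$ and hence pulls a continuous form on the target blow-up back to a continuous form on the source blow-up. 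Once this is in hand — and it may be cleaner to invoke Proposition \ref{prwex}(ii), checking that both $F^*\Omega$ and $\iota_{\nabla r_S}F^*\Omega$ are weakly extendible by comparing radial vector fields under $F$ — the rest is bookkeeping with Stokes' theorem already packaged inside Theorem \ref{th1}.
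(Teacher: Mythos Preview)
Your proposal is correct and follows essentially the same route as the paper's proof: establish that blow-up extendibility is preserved under transverse pull-back via the lifted map $\hat{F}:\Bl_{F^{-1}(L)}(S)\to\Bl_L(M)$, then verify (b1) and (b2) for $F^*\Omega$ and invoke the converse direction of Theorem~\ref{th1} together with Remarks~\ref{rth1} and~\ref{r2th1}. Your treatment is in fact more detailed than the paper's---you spell out the local form of $F$ in Fermi coordinates and the orientation bookkeeping for (b2), whereas the paper simply asserts the existence of the smooth lift and deduces (b2) from a change of variables.
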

  \begin{proof} The property of a form of being blow-up extendible is  preserved by transverse pull-back, since there exists a smooth lift of $F$:
\[ \hat{F}: \Bl_{F^{-1}(L)}(S)\ra \Bl_L(M)\]

 By Theorem \ref{th1} (complemented by Remark \ref{rth1}) it is therefore enough to check properties (b1) and (b2) for $F^*\Omega$. Property (b1) is trivial while property (b2) follows from the change of variables, owing to the fact that  the preimage via $F$ of a small submanifold $B(p)$ transverse to $L$  at a point $p\in \Imag F $ is a transverse submanifold to $F^{-1}(L)$ in $S$, of the same dimension as $B(p)$.
  \end{proof}
  
We return now to the equations (\ref{sec3eq0}).
  
  \begin{corollary}\label{corf} Suppose $\Omega_1$, $\Omega_2$, respectively $\Omega$ are smooth forms on $M\setminus K_1$, $M\setminus K_2$ and  $M\times M\setminus \delta^M$ respectively which are solutions of the following equations.
  \[ d\Omega_1=K_1\qquad d\Omega_2=K_2\qquad d\Omega =\delta^M-H.
  \]
 If $\Omega$ is blow-up extendible then
  \[\lk(K_1,K_2)= \int_{K_1\times K_2}\Omega
  \]
Similarly if either $\Omega_1$ or $\Omega_2$ is  blow-up extendible then the corresponding equality below holds
\[\lk(K_1,K_2)=(-1)^ {n-k-1}\int_{K_1}\Omega_2\quad \mbox{ or }\quad \lk(K_1,K_2)=(-1)^ {k(n-k)+n}\int_{K_2}\Omega_1 \]
  \end{corollary}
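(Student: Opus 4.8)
The plan is to reduce everything to the intersection-theoretic definition of the linking number (Definition \ref{deflk}) combined with the pull-back result of Proposition \ref{P2}. For the product formula, start by choosing a Seifert immersion $\iota_1\colon S_1\to M$ with $\partial S_1=K_1$ and $S_1\pitchfork K_2$, and form the map $F=\iota_1\times \id_{K_2}\colon S_1\times K_2\to M\times M$. Since $\delta^M$ is a closed submanifold of $M\times M$ and $S_1\pitchfork K_2$ guarantees $F\pitchfork \delta^M$ with $F^{-1}(\delta^M)$ a finite set of points away from $\partial(S_1\times K_2)=K_1\times K_2$, Proposition \ref{P2} applies to the blow-up extendible form $\Omega$ and gives $dF^*\Omega=F^{-1}(\delta^M)-F^*H$ as currents on $S_1\times K_2$. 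Now integrate over the compact manifold with boundary $S_1\times K_2$: by Stokes (in the currential sense, i.e. pairing with the constant function $1$ — equivalently using the oriented-with-boundary version of Stokes for forms that are only $L^1_{\loc}$, which is legitimate precisely because $F^*\Omega$ is blow-up extendible, hence integrable near the blown-up locus and the boundary integral makes sense) one gets
\[
\int_{K_1\times K_2}\iota^*\Omega \;=\; \int_{S_1\times K_2} dF^*\Omega + \int_{S_1\times K_2} F^*H \;=\; I\big(S_1\times K_2,\delta^M\big) - \int_{S_1\times K_2}F^*H + \int_{S_1\times K_2}F^*H,
\]
wait — more carefully, $\int_{S_1\times K_2}F^*H$ appears with the sign dictated by our convention for $d$ on currents with boundary (Remark \ref{rth1}), so the two $H$-contributions cancel and what survives is exactly $I(S_1\times K_2,\delta^M)$, which by Definition \ref{deflk} equals $\lk(K_1,K_2)$. (The cancellation of the $H$ terms is the reason one must alter the equation to $d\Omega=\delta^M-H$ in the first place: $H$ is closed and in fact the Poincaré dual of $\delta^M$, but here it is enough that $F^*H$ is an honest smooth form so that Stokes applied to $F^*\Omega$ produces precisely $-F^*H$ on the nose.)

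For the two asymmetric formulas, the argument is the same but simpler since only a single Seifert surface is involved. To prove $\lk(K_1,K_2)=(-1)^{n-k-1}\int_{K_1}\Omega_2$, apply Proposition \ref{P2} with $F=\iota_1\colon S_1\to M$, the blow-up extendible form $\Omega_2$ on $M\setminus K_2$, noting $\iota_1\pitchfork K_2$ and $\iota_1^{-1}(K_2)\cap\partial S_1=\emptyset$ (which follows from $K_1\cap K_2=\emptyset$). Then $d\,\iota_1^*\Omega_2=\iota_1^{-1}(K_2)$ as currents on $S_1$ (here $d\Omega_2=K_2$ has no $H$-term, so no harmonic correction appears), and Stokes on $S_1$ gives $\int_{K_1}\iota_1^*\Omega_2=\int_{S_1}d\iota_1^*\Omega_2=I(S_1,K_2)$. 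Definition \ref{deflk} says $\lk(K_1,K_2)=(-1)^{n-k-1}I(S_1,K_2)$, which is the claim. The other formula $\lk(K_1,K_2)=(-1)^{k(n-k)+n}\int_{K_2}\Omega_1$ is obtained symmetrically, taking $F=\iota_2\colon S_2\to M$ and $\Omega_1$; one gets $\int_{K_2}\iota_2^*\Omega_1=I(S_2,K_1)$, and then matches the sign against the identity $\lk(K_1,K_2)=(-1)^nI(K_1,S_2)$ from Definition \ref{deflk} together with the elementary sign $I(S_2,K_1)=(-1)^{k(n-k)}I(K_1,S_2)$ coming from commuting the factors (the intersection points of $S_2$ and $K_1$ have $\dim S_2=k$ and $\codim$ data giving that Koszul sign).

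The main obstacle I anticipate is making the "Stokes on a manifold with boundary" step rigorous at the level of currents, since $F^*\Omega$ (respectively $\iota_i^*\Omega_j$) is only $C^1$ away from the blown-up set and merely $L^1_{\loc}$ across it, and the domain $S_1\times K_2$ has boundary $K_1\times K_2$ sitting at positive distance from the singular locus but still requiring a limiting argument. This is exactly where blow-up extendibility (as opposed to mere local integrability — see the Remark after the proof of Theorem \ref{th1}) does the real work: pulling back to $\Bl_{F^{-1}(\delta^M)}(S_1\times K_2)$ via the lift $\hat F$ turns $F^*\Omega$ into a continuous form on a compact manifold with corners, where ordinary Stokes applies, and one then pushes the identity back down. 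I would phrase the whole corollary as a direct consequence of Theorem \ref{th1}(a)$\Leftrightarrow$(b) for the pulled-back data plus the defining property of $\lk$, which keeps the bookkeeping of the currential $d$ (with its boundary correction from Remark \ref{rth1}) transparent; the only genuinely delicate points are the orientation and Koszul-sign verifications needed to land on the exact exponents $(-1)^{n-k-1}$ and $(-1)^{k(n-k)+n}$, and those are routine given the sign conventions already fixed in Definition \ref{deflk} and in the discussion preceding it.
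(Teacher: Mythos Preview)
Your overall strategy---pull back via Proposition~\ref{P2}, then apply Stokes and match with Definition~\ref{deflk}---is the paper's strategy, and your treatment of the two asymmetric formulas is essentially correct (there is indeed no $H$-term since $K_j$ is exact).

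The gap is in the product formula. You write that ``the two $H$-contributions cancel'', but in fact only \emph{one} $H$-term appears. Pairing the currential identity $dF^*\Omega=F^{-1}(\delta^M)-F^*H$ with the constant function $1$ (equivalently, running Stokes on $S_1\times K_2$ with small balls excised and invoking Theorem~\ref{th1}(b2), exactly as in the paper's derivation of~(\ref{is1})) gives
\[
\int_{K_1\times K_2}\Omega \;=\; I(S_1\times K_2,\delta^M)\;-\;\int_{S_1\times K_2}F^*H,
\]
and the last integral does not vanish for free. What the paper actually uses is the K\"unneth form of the harmonic representative,
\[
H=\sum_{k}(-1)^{kn}\sum_i \pi_1^*(*\omega_{k,i})\wedge\pi_2^*\omega_{k,i},
\]
so that $\int_{S_1\times K_2}F^*H$ factors through the integrals $\int_{K_2}\omega_{n-k-1,i}$; these vanish because each $\omega_{n-k-1,i}$ is closed and $K_2$ is a boundary. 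Your parenthetical (``$H$ is closed and in fact the Poincar\'e dual of $\delta^M$\ldots'') does not supply this; the reason one introduces $H$ is to make the equation $d\Omega=\delta^M-H$ solvable, not to produce a cancellation in the Stokes computation. Add the K\"unneth argument and the proof goes through.
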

  \begin{proof} The following  equalities hold
\begin{equation}\label{ino} I(S_1\times K_2,\delta^M)=\int_{K_1\times K_2}\Omega
\end{equation}
\[I(S_1,K_2)=\int_{K_1}\Omega_2\qquad\qquad I(S_2,K_1)=\int_{K_2}\Omega_1
\]
We include the details just for (\ref{ino})  since the later two are analogous and easier. Consider the embedding
\[ \iota: S_1\times K_2\hookrightarrow M\times M
\]
Use Proposition \ref{P2} to conclude that
\[ d\iota^ *\Omega=(S_1\times K_2)\wedge\footnote{this is the current represented by the geometric intersection} \delta^M- \iota^ *H
\]
  
  Property (b2) of Theorem \ref{th1} and Stokes Theorem  imply that 
  \begin{equation}\label{is1}  I(S_1\times K_2,\delta^M)= \int_{S_1\times K_2}\iota^ *H+\int_{K_1\times K_2}\Omega\end{equation}
Indeed we first get (see also Remark \ref{r2th1}) from Theorem \ref{th1} that
\[I(S_1\times K_2,\delta^M)=\sum_{p \in S_1\times K_2\cap \delta^M}\int_{B(p)}H+\sum_{p \in S_1\times K_2\cap \delta^M}\int_{\partial B(p)} \Omega \]
for some disjoint, coordinate balls $B(p)\subset (S_1\setminus \partial S_1)\times K_2$.  But Stokes Theorem gives
\[\sum_{p \in S_1\times K_2\cap \delta^M}\int_{\partial B(p)} \Omega=\int_{(\partial S_1)\times K_2 } \Omega+\sum_{p \in S_1\times K_2\cap \delta^M}\int_{B(p)^c}H\]
From the last two relations we get immediately (\ref{is1}).

By a well-known fact (emboddied also in Theorem \ref{Dlf} below) the form $H$ can be written as 
   \begin{equation}\label{eqh}\sum_{k=0}^n (-1)^{kn}\sum_{i=1}^{N_k}\pi_1^*(*\omega_{k,i})\wedge \pi_2^* \omega_{k,i} 
\end{equation} where $\omega_{k,i}$ form a basis of (harmonic) forms for $H^k(M)$. Hence
  \[\int_{S_1\times K_2}\iota^ *H=\sum_{i} (-1)^{(n-k-1)n}\int_{S_1}*\omega_{n-k-1,i}\cdot \int_{K_2}\omega_{n-k-1,i}\]
  But $\omega_{k,i}$ are all closed forms and $K_2$ is a boundary hence $\int_{K_2}\omega_{n-k-1,i}=0$ and therefore $\int_{S_1\times K_2}\iota^* H=0$.

In order to get the statements for the linking number just use Definition \ref{deflk} and elementary properties of the intersection number.
  \end{proof}

  The rest of this article is concerned with the existence and explicit construction of forms as in Corollary \ref{corf}.

  \section{Hodge theory and Biot-Savart forms}
  
Let $L^k\subset M^n$ be an oriented closed submanifold of $M$, 
and let $H \in \Omega^{n-k}(M)$ be the harmonic representative of the Poincar\'e dual  $\PD^{-1}([L])$ (see (\ref{eq_poincare_duality})). In this section, we use Hodge theory in order to find explicit solutions $\Omega$ to 
  \[ d\Omega=L-H.
  \]
  
    Classical Hodge theory decomposes orthogonally (with respect to the $L^2$ inner product) the space of smooth forms 
  \[\Omega^k(M)=\Imag \Delta\oplus \Ker \Delta=\Imag (d+d^*)\oplus \Ker (d+d^*)\]
  Every exact form $\omega=d\eta$ is in $\Imag d+d^*=\Imag \Delta$. Let $G:=\Delta^{-1}$ be the Green operator, the inverse of $\Delta$ on $\Imag \Delta$. Then 
  \[ \omega=\Delta(G\omega)=dd^*(G\omega)+d^*d(G\omega)
  \]
  But $\omega-dd^*(G\omega)$ is exact and the images of $d$ and $d^*$ are $L^2$ orthogonal. Alternatively, $G$ commutes with $dd^*$ and $d^*d$. Hence
  \[\omega=dd^*(G\omega)\quad \mbox{and} \quad d^*d(G\omega)=0
  \]
  Therefore, once we know that the equation $d\eta=\omega$ has a solution, Hodge theory provides an explicit such solution:
  \begin{equation}\label{dBS}\eta=d^*G(\omega)=:\BS(\omega)
  \end{equation}
  which we call the \emph{Biot-Savart form} following \cite{BSS1, dTG}. If $\omega$ is just closed we do the following. Let $\omega^h$ be the harmonic representative of the deRham cohomology class of $\omega$. Then 
 \[\omega_0:=\omega-\omega^h\]
  is exact and hence $\omega_0=d(d^*G\omega_0)$. Consequently
  \[\omega=d[d^*G(\omega-\omega^h)]+\omega^h
  \]
  We will substitute $\omega-\omega^h$ by a current of type $L-H$.
  
  \vspace{0.5cm}
  
  We now present the currential set-up.
  
   The extension of the  Green operator to an operator $G_k:\Omega^k(M)\ra \Omega^k(M)$ is defined as  the composition $G\circ P^{\Imag \Delta}$ where  $P^{\Imag \Delta}$ is the $L^2$-orthogonal projection onto $\Imag \Delta$. 
  \begin{definition}\label{sgnl0} Let $O$ be any of the operators $d$, $d^*$, and $G_{n-k}$ acting on $\Omega^{n-k}(M)$. Define $\tilde{O}$ on $\mathscr{D}'_{k}(M)$ by duality:
  \[ \tilde{O}(T)(\omega):=T(O(\omega)).
  \]
  Define the following sign alterations for the same operators when acting on $T\in\mathscr{D}'_{n-k}(M)$:
\[ \epsilon^O(T)=\left\{\begin{array}{ccc}(-1)^ kT & \mbox{if} & O=d\\
(-1)^ {k-1}T & \mbox{if} & O=d^ *\\
T& \mbox{if} & O=G
\end{array}
\right.
\]
Define $O$ via
\[ O(T)=\epsilon^O(\tilde{O}(T)).
\] 
\end{definition}
 
  \begin{remark} According to the next Lemma, these definitions of operators extend the definitions on forms when we view $\Omega^ {n-k}$ as a subspace of $\mathscr{D}' _{k}$. We emphasize that this also means that given $\omega\in L^1_{\loc}(M)$ which is smooth on an open, dense subset $U$ then the forms $d\omega$ resp. $d^*\omega$ will represent the currents $T_{d\omega}$ and $T_{d^*\omega}$ first on $U$  and more generally on $M$ if they have locally integrable coefficients as well. 
\end{remark}
 
  \begin{lemma}\label{sgnl} If $\omega$ is a $k$-form and $\mathscr{D}'_{n-k}(M)\ni T_{\omega}(\eta):=\int_M\omega\wedge\eta$ is the current it induces, then the following relations hold:
  \[  \tilde{d}T_{\omega}=(-1)^{k+1}T_{d\omega}
  \]
  \[ \tilde{d^*}T_{\omega}=(-1)^{k} T_{d^*\omega}
  \]
  \[ G_{k}T_{\omega}=T_{G_k\omega}
  \]
  \end{lemma}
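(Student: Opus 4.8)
The three identities are all of the same nature: each says that one of $d$, $d^*$, $G_{n-k}$ acting on $\Omega^{n-k}(M)$ is, up to the sign $\epsilon^O$ built into Definition \ref{sgnl0}, the transpose of the corresponding operator on forms with respect to the bilinear pairing $(\alpha,\beta)\mapsto\int_M\alpha\wedge\beta$ between $\Omega^\bullet(M)$ and the complementary degree. So the plan is simply to compute each transpose on the closed manifold $M$, where Stokes' theorem contributes no boundary term, and to check that the degrees of the resulting currents match.

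First I would dispatch the $d$ case by a bare Stokes computation: for $\eta\in\Omega^{n-k-1}(M)$ one has $0=\int_M d(\omega\wedge\eta)=\int_M d\omega\wedge\eta+(-1)^k\int_M\omega\wedge d\eta$, hence $\tilde d\,T_\omega(\eta)=T_\omega(d\eta)=\int_M\omega\wedge d\eta=(-1)^{k+1}\int_M d\omega\wedge\eta=(-1)^{k+1}T_{d\omega}(\eta)$, which is the first relation; here $\tilde d\,T_\omega\in\mathscr D'_{n-k-1}(M)$, as is $T_{d\omega}$.

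For $d^*$ I would use either the formula $d^*=(-1)^{n(p+1)+1}*d*$ on $p$-forms or, equivalently, the $L^2$-adjointness $\langle d\alpha,\beta\rangle=\langle\alpha,d^*\beta\rangle$ together with $\alpha\wedge *\beta=\beta\wedge *\alpha$ for forms of equal degree and $**=(-1)^{p(n-p)}$ on $p$-forms. Starting from $\tilde{d^*}T_\omega(\eta)=\int_M\omega\wedge d^*\eta$ for a test $(n-k+1)$-form $\eta$, I would rewrite $d^*\eta$ through $*d*\eta$, recognize $\int_M\omega\wedge *(d*\eta)$ as the inner product $\langle\omega,d(*\eta)\rangle$, transfer $d^*$ onto $\omega$, and convert back to a wedge integral. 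Collecting the Hodge-star signs, the accumulated exponent reduces to $k$ modulo $2$ (using that products of consecutive integers such as $n(n-1)$ and $k(k-1)$ are even), which gives $\tilde{d^*}T_\omega=(-1)^k T_{d^*\omega}$, and $\tilde{d^*}T_\omega\in\mathscr D'_{n-k+1}(M)$ as required. This sign bookkeeping is the one genuinely fiddly point of the proof; everything else is an unwinding of definitions.

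Finally, for $G_k$ — that is, for the operator on $\mathscr D'_{n-k}(M)$ obtained from $O=G_{n-k}$ in Definition \ref{sgnl0}, whose $\epsilon^G$ is trivial — the structural input is that the Hodge star intertwines the Laplacians in complementary degrees, $*\,\Delta_k=\Delta_{n-k}\,*$; consequently $*$ also intertwines the $L^2$-orthogonal projections onto $\Imag\Delta$ (it preserves harmonic forms and $L^2$-orthogonality), so that $*\,G_k=G_{n-k}\,*$. Writing a test $(n-k)$-form as $\eta=*\xi$ with $\xi\in\Omega^k(M)$, I would then compute $\tilde{G}_{n-k}T_\omega(\eta)=\int_M\omega\wedge G_{n-k}(*\xi)=\int_M\omega\wedge *(G_k\xi)=\langle\omega,G_k\xi\rangle=\langle G_k\omega,\xi\rangle=\int_M G_k\omega\wedge\eta=T_{G_k\omega}(\eta)$, the middle equality being the $L^2$-self-adjointness of $G_k$; no sign appears, consistent with the absence of a Stokes boundary term. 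I expect the main obstacle overall to be the sign chase for $d^*$; the rest follows directly from Definition \ref{sgnl0}, the definition of $T_\omega$, and the standard Hodge-theoretic facts recalled at the start of the section.
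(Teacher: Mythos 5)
Your proposal is correct and follows essentially the same route as the paper: Stokes' theorem for $d$, the identity $d^*=(-1)^{nk+n+1}*d*$ together with passing back and forth between wedge integrals and $L^2$ inner products for $d^*$, and the intertwining of $*$ with the Green operator (the paper writes it as $*G_k*=(-1)^{k(n-k)}G_{n-k}$, equivalent to your $*G_k=G_{n-k}*$) combined with self-adjointness of $G_k$ for the third identity. The only step you leave schematic is the parity computation in the $d^*$ case, which the paper carries out explicitly as $\nu_{n,n-k+1}+(k-1)(n-k+1)\equiv k \ (\mathrm{mod}\ 2)$ with $\nu_{n,k}=nk+n+1$, confirming the sign $(-1)^k$ you state.
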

  \begin{proof}  The first one is straightforward:
  \[\tilde{d}T_{\omega}(\eta):=\int_M\omega\wedge d\eta=(-1)^{k}\left(\int_Md(\omega\wedge\eta)-\int_M d\omega\wedge \eta\right)=(-1)^{k+1}T_{d\omega}(\eta)\]
  
  Denote $\nu_{n,k}:=nk+n+1$. It is known that for the operators $*$, $d^*$ and  the Laplacian $\Delta_k$  acting on $\Omega^k$ one has
   \[d^*=(-1)^{\nu_{n,k}}*d*\quad \mbox{and}\quad *^2=(-1)^{k(n-k)}\id.\]
   \begin{equation}\label{delst}*\Delta_k*=(-1)^{k(n-k)}\Delta_{n-k}
   \end{equation}
   In what follows for the form $\omega$ of degree $k$ and $\eta$ of degree $n-k+1$
  
  \[ \widetilde{d^*}T_{\omega}(\eta):=T_{\omega}(d^*\eta)=\int_{M}\omega\wedge d^*\eta=(-1)^{\nu_{n,n-k+1}}\int_{M} \langle \omega, d*\eta\rangle~\dvol_M=\]\[=(-1)^{\nu_{n,n-k+1}}\int_{M} \langle d^*\omega, *\eta\rangle~\dvol_M=(-1)^{\nu_{n,n-k+1}+\nu_{n,k}}\int_M\langle *d*\omega,*\eta\rangle~\dvol_M\]\[=(-1)^{\nu_{n,n-k+1}+\nu_{n,k}}\int_M\langle d*\omega,\eta\rangle~\dvol_M=(-1)^{\nu_{n,n-k+1}+\nu_{n,k}}\int_M\eta\wedge *d*\omega=\]\[=(-1)^{\nu_{n,n-k+1}}\int_M\eta\wedge d^*\omega=(-1)^{\nu_{n,n-k+1}+(n-k+1)(k-1)}\int_M d^*\omega\wedge \eta=(-1)^kT_{d^*\omega}(\eta)
  \]
  since
  \[{\nu_{n,n-k+1}}+(k-1)(n-k+1)=n^2-nk+2n+1+kn-k(k-1)-n+k-1\equiv k~ (mod~2)\]
  
  For the third identity, one either combines the first two or uses that $*G_k*=(-1)^{k(n-k)}G_{n-k}$, consequence of (\ref{delst}) and that $G_k$ is self-adjoint. Hence
  \[G_k(T_{\omega})(\eta)=\widetilde{G_{k}}T_{\omega}(\eta):=\int_{M}\omega\wedge G_{n-k}\eta=(-1)^{k(n-k)}\int_{M}\langle \omega,  *G_{n-k}\eta\rangle=(-1)^{k(n-k)}\int_{M}\langle \omega, G_k*\eta\rangle=
  \]
  \[=(-1)^{k(n-k)} \int_M\langle G_k\omega, *\eta\rangle=\int_{M} (G_k\omega)\wedge \eta=T_{G_k(\omega)}(\eta)
  \]
  \end{proof}

\begin{remark}
	Let us note a basic property of these operators, namely that $\Delta$ commutes with both $d$ and $d^*$ and therefore all operators derived from $\Delta$ via functional calculus like $G$ and $e^ {-t\Delta}$ will also commute with $d$ and $d^*$.
\end{remark}
  
  The following important fact is straightforward. 
  \begin{lemma}\label{limd} Let $T\in\mathscr{D}'_{k}(M)$ be any current.  If $\omega_{\epsilon}$ ($\epsilon>0$) is a family of smooth forms of degree $n-k$ such that 
  \[\lim_{\epsilon\ra 0}T_{\omega_{\epsilon}}=T\]
  in the sense of currents, then 
  \[ \lim_{\epsilon\ra 0}O(T_{\omega_{\epsilon}})=O(T).
  \]
  \end{lemma}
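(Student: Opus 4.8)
The plan is to reduce everything to the definition of convergence of currents and the fact that the operators $O \in \{d, d^*, G_k\}$ are defined by duality together with a fixed sign. First I would recall what needs to be shown: for every test form $\eta$ (of the appropriate degree $n-k\pm 1$ or $n-k$), one has $O(T_{\omega_\epsilon})(\eta) \to O(T)(\eta)$ as $\epsilon \to 0$.

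The key steps, in order. Step one: unwind the definition. By Definition \ref{sgnl0}, $O(T_{\omega_\epsilon}) = \epsilon^O\bigl(\tilde O(T_{\omega_\epsilon})\bigr)$, and $\tilde O(T_{\omega_\epsilon})(\eta) = T_{\omega_\epsilon}\bigl(O(\eta)\bigr)$, where now $O(\eta)$ is the \emph{classical} operator applied to the smooth test form $\eta$. Step two: observe that $O(\eta)$ is itself a fixed smooth form (independent of $\epsilon$), so it is a legitimate test form. Step three: apply the hypothesis $T_{\omega_\epsilon} \to T$, which by definition means $T_{\omega_\epsilon}(\zeta) \to T(\zeta)$ for every smooth test form $\zeta$; taking $\zeta = O(\eta)$ gives $\tilde O(T_{\omega_\epsilon})(\eta) \to \tilde O(T)(\eta)$. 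Step four: multiply through by the fixed sign $\epsilon^O \in \{\pm 1\}$ (which is a scalar depending only on the degree, not on $\epsilon$), so that $O(T_{\omega_\epsilon})(\eta) \to O(T)(\eta)$. Since $\eta$ was arbitrary, this is precisely $O(T_{\omega_\epsilon}) \to O(T)$ in the sense of currents.

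I expect there to be essentially no obstacle: the statement is a formal consequence of "an operator defined by pre-composition with a fixed continuous linear map, up to a fixed scalar, is itself weak-$*$ continuous." The only point requiring the briefest care is bookkeeping of degrees — making sure that if $T \in \mathscr{D}'_k(M)$ then $O(\eta)$ lands in the space of test forms that $T$ (and each $T_{\omega_\epsilon}$) actually pairs with — but this is immediate from the conventions fixed in Definition \ref{sgnl0} and Lemma \ref{sgnl}. No use of the structure of the $\omega_\epsilon$ (smoothness, local integrability) beyond the hypothesis is needed; in particular the result does not require the limiting current $T$ to be representable by a form.
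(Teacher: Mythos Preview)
Your proof is correct and follows essentially the same route as the paper's own argument: apply the weak convergence hypothesis to the fixed test form $O(\eta)$ to get $\tilde O(T_{\omega_\epsilon})(\eta)\to \tilde O(T)(\eta)$, then multiply by the fixed sign $\epsilon^O$. The paper's proof is just a two-line version of exactly this.
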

  \begin{proof} For every test form $\eta$, $O(\eta)$ is another test form in our compact context. Hence
   \[\tilde{O}(T_{\omega_{\epsilon}})(\eta)=T_{\omega_{\epsilon}}(O(\eta))\ra T(O(\eta))=\tilde{O}(T)(\eta).\]
One gets the statement for $O$ by multiplying with the appropriate sign.
  \end{proof}
  
  \begin{prop} Let $S=dT$ be an exact current with $S\in \mathscr{D}'_{k}(M)$. Then in fact
  \[ S= d(d^*G(S)).
  \] 
  \end{prop}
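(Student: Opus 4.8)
The plan is to mimic the smooth-form argument given at the start of the section, but now in the currential category, using the fact established above that $d$, $d^*$ and $G$ on currents extend the corresponding operators on forms (Lemma \ref{sgnl}) and that they still commute. Write $S = dT$ with $S \in \mathscr{D}'_k(M)$. First I would observe that $G(S)$ makes sense as a current because $G_k = G\circ P^{\Imag\Delta}$ on forms dualizes, and the key point is that $S$, being exact, lies in the ``image of $\Delta$'' part of the Hodge decomposition — precisely, $P^{\Imag\Delta}(S) = S$ in the currential sense. This is because for every harmonic test form $h$ one has $S(h) = dT(h) = \pm T(dh) = 0$, so $S$ annihilates the (finite-dimensional) space of harmonic forms, which is exactly the statement that the dual projection fixes $S$. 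Hence $\tilde G(S) = \widetilde{G_k}(S)$ behaves like $G$ of an exact object.

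Next I would run the Hodge identity at the level of currents. Applying $\Delta = dd^* + d^*d$ to $G(S)$ and using that $\Delta G = \id$ on the relevant subspace (again by dualizing $\Delta G_k = P^{\Imag\Delta}$ on forms and the previous paragraph), we get $S = \Delta G(S) = d d^* G(S) + d^* d G(S)$. It remains to kill the second term. Since $G$ commutes with $d$, $d^* d G(S) = d^* G(dS) = d^* G(ddT) = 0$, using $d\circ d = 0$ on currents. Alternatively, and perhaps more cleanly to avoid sign bookkeeping, I would note $dG(S) = G(dS) = G(d\,dT) = 0$ directly, so the term $d^* d G(S)$ vanishes before splitting. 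Either way we are left with $S = d(d^* G(S))$, which is the claim, with $d^*G(S) = \BS(S)$ being the asserted explicit primitive.

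The main obstacle — really the only place where care is needed — is justifying the currential Hodge identity $S = \Delta G(S) + (\text{harmonic part})$ with the harmonic part absent. On forms this is the orthogonal decomposition; on currents one must phrase it via duality: for a test form $\eta$, $\Delta G(S)(\eta) = S(G_{n-k}\Delta_{n-k}\eta)$ — wait, more precisely $S(G(\Delta\eta))$ after moving operators across — and $G\Delta = \Delta G = \id - P^{\Ker\Delta}$ on smooth forms, so $\Delta G(S)(\eta) = S(\eta) - S(P^{\Ker\Delta}\eta)$. The last term is $S$ evaluated on a harmonic form, which vanishes by the computation $S(h) = dT(h) = 0$ above. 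So the decomposition closes. The commutation of $G$ with $d$ on currents, needed to conclude, follows from the Remark after Lemma \ref{sgnl} (all operators built from $\Delta$ by functional calculus commute with $d$ and $d^*$) together with Definition \ref{sgnl0}, by checking the signs match — a routine verification I would not belabor. I would therefore present the proof in three short steps: (1) $S$ annihilates harmonic forms; (2) hence $\Delta G(S) = S$; (3) expand $\Delta = dd^* + d^*d$ and use $dS = 0$ to discard $d^*dG(S)$.
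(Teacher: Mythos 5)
Your proof is correct, but it takes a genuinely different route from the paper's. The paper regularizes: using Federer's smoothing operators it writes $S_{\epsilon}-S=d(H_{\epsilon}S)$ and $T_{\epsilon}-T=d(H_{\epsilon}T)+H_{\epsilon}(S)$, deduces that the smooth forms $S_{\epsilon}=dT_{\epsilon}$ are exact, applies classical Hodge theory to each $S_\epsilon$ to get $S_\epsilon = d(d^*G(S_\epsilon))$, and then passes to the limit using the weak continuity of the dualized operators (Lemma \ref{limd}). You instead prove the currential Hodge identity directly by duality: the only input is that an exact current annihilates harmonic forms ($S(h)=dT(h)=\pm T(dh)=0$), which via $G\Delta = \id - P^{\Ker\Delta}$ on test forms yields $\Delta G(S)(\eta) = S(\eta) - S(P^{\Ker\Delta}\eta) = S(\eta)$, after which $\Delta = dd^*+d^*d$ and $dG(S)=G(dS)=G(d^2T)=0$ finish the job. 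Both arguments are sound. Your approach is more self-contained given the machinery already set up in the section (Definition \ref{sgnl0}, Lemma \ref{sgnl}, and the remark that operators built from $\Delta$ commute with $d$ and $d^*$ dualize consistently), and it isolates the one genuinely relevant hypothesis — exactness kills the harmonic projection. The paper's regularization argument buys the ability to quote classical Hodge theory verbatim without re-deriving any identity at the level of currents, at the cost of importing Federer's homotopy formula; it also has the minor virtue of exhibiting $S$ as a weak limit of smooth exact forms, a fact reused elsewhere (e.g.\ in Corollary \ref{GLH}). The sign bookkeeping you defer is indeed routine: the compositions $dd^*$ and $d^*d$ pick up cancelling signs under the conventions of Definition \ref{sgnl0}, so $\Delta$ dualizes with no sign, exactly as your computation requires.
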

  \begin{proof}  We use the regularization process of currents described by Federer  \cite{Fe} on page 374 (i.e. ch. 4.1.18). For a choice of a mollifier we have smooth forms $S_{\epsilon}$ and $T_{\epsilon}$ such that
 \[S_{\epsilon}-S=d(H_{\epsilon}S)+H_{\epsilon}(dS)
 \] 
and 
 \begin{equation}\label{dsvT} T_{\epsilon}-T=d(H_{\epsilon} T)+H_{\epsilon}(dT)= d(H_{\epsilon} T)+H_{\epsilon}(S)
 \end{equation}
 where $H_{\epsilon}:\mathscr{D}'_m(M)\ra \mathscr{D}'_{m+1}(M)$ is the homotopy operator. Since $S$ is exact we have
 \[S_{\epsilon}-S=d(H_{\epsilon}S)
 \]
Apply $d$ to (\ref{dsvT}) to get
 \[ dT_{\epsilon}-S=d(H_{\epsilon}(S))=S_{\epsilon}-S
 \]
 From here we deduce that  $S_{\epsilon}$ are all exact smooth forms (and equal to $dT_{\epsilon}$). Hence 
 \[S_{\epsilon}=d(d^*G(S_{\epsilon}))\] by standard Hodge theory.
 
 By the previous Lemma when $\epsilon\ra 0$ the left hand side converges to $S$ while the right hands side to $d(d^*G(S))$.
   \end{proof}
   
   \begin{definition}\label{defBS} Given $L$ an oriented closed submanifold of $M$, the current 
   \[ \BS(L):=d^*G(L-H)
   \]
   where $H$ is the harmonic representative of $\PD(L)$ is called the Biot-Savart form of $L$.
   \end{definition}
  
   We will see in the next sections that $\BS(L)$ is indeed represented by a smooth form on $M\setminus L$ with locally integrable coefficients on $M$.

  \section{The case of ambient Euclidean space} \label{rnrn}
Before we get into details on compact manifolds we take the  time to make some explict computations in $\bR^n$ emphasizing the ideas of the previous sections and connecting with the classical Gauss formula. This is self-contained and not needed anywhere else.

We assume $n\geq 3$ in what follows. One has Green operators $G_k$ acting on smooth $k$-forms with compact support.  However, the dualization is a bit more subtle since the Green operator does not preserve the compact support of test functions. But before we tackle this let us say what we aim for in this section.

Schwartz Kernel Theorem in our context says that the Green operator on forms $G_k:\Omega^k_c(\bR^n)\ra \Omega^k(\bR^n)$ is represented by a kernel $\bG$ which is an $(n-k,k)$ form with locally integrable coefficients on $\bR^n\times \bR^n$ such that
\begin{equation}\label{Gko}G_k(\omega)=(\pi_2)_* (\bG_k\wedge \pi_1^*\omega)
\end{equation}
where $\pi_j : M \times M \to M$ is th projection onto the $j$-th factor, and $(\pi_2)_*$ is integration in the first variable, see Appendix \ref{Sc}.
	 
We can now state the following

\begin{theorem}\label{Rn} Let $\delta_{\bR^n}$ be the diagonal in $\bR^n\times \bR^n$. The Biot-Savart form $\BS(\delta_{\bR^n}):=d^*G(\delta_{\bR^n})$ equals
\begin{equation}\label{Rn4}\frac{1}{|\mathbb{S}^{n-1}|}\frac{1}{|x-y|^{n}}\sum_{i=1}^n(-1)^{i-1}(x_i-y_i)(dx_1-dy_1)\wedge\ldots\wedge \widehat{(dx_i-dy_i)}\wedge \ldots\wedge (dx_n-dy_n)\end{equation}
The form $\BS(\delta_{\bR^n})$ is extendible to $\Bl_{\delta_{\bR^n}}(\bR^n\times \bR^n)$, hence a current and it satisfies 
\[d\BS(\delta_{\bR^n})=\delta_{\bR^n}\]
Moreover,  the following identities hold
\begin{itemize}
\item[(i)] $\BS(\delta_{\bR^n}) = \beta^*(\BS(\delta_0))$ where $\beta:\bR^n\times\bR^n\ra \bR^n$ is 
\[\beta(x,y):=x-y\]
and $\BS(\delta_{0})=\frac{1}{|\mathbb{S}^{n-1}|}\frac{1}{|x|^n}\sum_{i=1}^n(-1)^{i-1}x_idx_1\wedge \ldots\widehat{dx_i}\wedge \ldots\wedge dx_n$
\item[(ii)] 
\[
\BS(\delta_{\bR^n}) =  \sum_{k=0}^nd_y^*\bG_k\footnote{For the definition of $d_y^*$ see (\ref{dy})}
\] 
where $\bG_k$ are the kernels of the Green operators $G_k$ acting on $k$-forms in $\bR^n$ and $d_y^*\bG_k$ are the kernels of the Biot-Savart operators $(-1)^nd^*G_k: \Omega^{k}_{c}(\bR^n)\ra \Omega^{k-1}(\bR^n)$. 
\end{itemize}
\end{theorem}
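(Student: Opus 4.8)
The plan is to reduce everything to the scalar fundamental solution on $\bR^n$ via the difference map $\beta(x,y)=x-y$, and to extract the blow-up extendibility from a description of $\BS(\delta_0)$ as a multiple of the pull-back of the round volume form of $\mathbb{S}^{n-1}$.

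First I would treat the point model. By potential theory (convolution with $E$), $G(\delta_0)$ is the $0$-current represented by the $n$-form $E\,\dvol_{\bR^n}$ with $E=\frac{1}{(n-2)|\mathbb{S}^{n-1}|}\,|x|^{2-n}$: indeed $\Delta(E\,\dvol)=\delta_0$, and two decaying solutions of this differ by a harmonic form on $\bR^n$, hence coincide. Since $d^*$ on a top-degree form is, up to sign, $*d\,*$, and $d(E\,\dvol)=0$ for degree reasons, one computes $\BS(\delta_0)=d^*G(\delta_0)=d^*(E\,\dvol)=\frac{1}{|\mathbb{S}^{n-1}|}\,|x|^{-n}\sum_i(-1)^{i-1}x_i\,dx^1\wedge\cdots\wedge\widehat{dx^i}\wedge\cdots\wedge dx^n$, which is the formula for $\BS(\delta_0)$ in item (i), and $d\BS(\delta_0)=\Delta(E\,\dvol)=\delta_0$. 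I would then note that this $(n-1)$-form equals $|\mathbb{S}^{n-1}|^{-1}\rho^*(\dvol_{\mathbb{S}^{n-1}})$ for $\rho\colon\bR^n\setminus\{0\}\to\mathbb{S}^{n-1}$, $\rho(x)=x/|x|$ --- both sides are $(n-1)$-forms on $\bR^n\setminus\{0\}$, homogeneous of degree $1-n$, killed by $\iota_{\partial_r}$, and restricting to $\dvol_{\mathbb{S}^{n-1}}$ on the unit sphere. Because $\rho\circ\Bl=\pi$ for the blow-down $\Bl\colon[0,\infty)\times\mathbb{S}^{n-1}\to\bR^n$, $(t,v)\mapsto tv$, the pull-back $\Bl^*\BS(\delta_0)=|\mathbb{S}^{n-1}|^{-1}\pi^*\dvol_{\mathbb{S}^{n-1}}$ is smooth, so $\BS(\delta_0)$ is blow-up extendible, $d\BS(\delta_0)=0$ away from $0$, and $\int_{\partial D_\gamma(0)}\BS(\delta_0)=1$; in particular $\BS(\delta_0)$ satisfies (b1)--(b2) of Theorem \ref{th1} with $f\equiv1$ and $H=0$, so $d\BS(\delta_0)=\delta_0$ currentially (Remark \ref{rth1}).

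For the diagonal, the key identity is $\delta_{\bR^n}=\pm\,\beta^*\delta_0$ as currents on $\bR^n\times\bR^n$: $\beta$ is a submersion transverse to $\{0\}$ with $\beta^{-1}(0)=\delta_{\bR^n}$, and testing against $n$-forms identifies the cooriented preimage with $\pm\,\delta_{\{0\}}(x-y)\,\bigwedge_{i=1}^n(dx_i-dy_i)$. Consequently $G(\delta_{\bR^n})$, which with the conventions of Section \ref{Sec2} and Appendix \ref{Sc} is the total Green kernel $\bG=\sum_k\bG_k$ (on flat $\bR^n$ the Hodge Laplacian acts coefficient-wise, so $\bG_k=\sum_{|I|=k}\pm\,E(x-y)\,dx^{I^c}\wedge dy^I$, and these assemble to $\pm\,E(x-y)\bigwedge_i(dx_i-dy_i)$), equals $\pm\,\beta^*(E\,\dvol)=\pm\,\beta^*G(\delta_0)$. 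Passing to $u=x-y$ (then $dx_i-dy_i=du_i$, $\beta$ becomes a coordinate projection, and in the coordinates $(u,\tfrac12(x+y))$ the flat metric is diagonal), $d^*$ and $d^*_y$ commute with $\beta^*$ on these basic forms, so $\BS(\delta_{\bR^n})=d^*G(\delta_{\bR^n})=\pm\,\beta^*d^*G(\delta_0)=\pm\,\beta^*\BS(\delta_0)$; pulling back the explicit $(n-1)$-form of the previous step gives exactly (\ref{Rn4}), and at the same time item (i). Blow-up extendibility of (\ref{Rn4}) follows because $\beta$ is transverse to $\{0\}$ and lifts to a smooth map $\Bl_{\delta_{\bR^n}}(\bR^n\times\bR^n)\to[0,\infty)\times\mathbb{S}^{n-1}$ along which one pulls back the smooth form of the previous step (as in Proposition \ref{P2}); hence $\BS(\delta_{\bR^n})$ is a current with $L^1_{\loc}$ coefficients, and $d\BS(\delta_{\bR^n})=d\beta^*\BS(\delta_0)=\beta^*d\BS(\delta_0)=\beta^*\delta_0=\delta_{\bR^n}$ by Proposition \ref{P2}. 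Finally item (ii) is a reformulation via the Schwartz kernel theorem of Appendix \ref{Sc}: $d^*_y\bG_k$ is by definition the kernel of $(-1)^n d^*G_k$, so $\sum_k d^*_y\bG_k$ is the kernel of $(-1)^n d^*G$, which by Definition \ref{sgnl0} and Lemma \ref{sgnl} is (up to the built-in $(-1)^n$) $d^*G(\delta_{\bR^n})=\BS(\delta_{\bR^n})$; equivalently it is read off directly from $\bG=\pm\,E(x-y)\bigwedge_i(dx_i-dy_i)$ established above.

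The genuine work, and the only real obstacle, is the sign-and-constant bookkeeping: fixing the orientation sign in $\delta_{\bR^n}=\pm\,\beta^*\delta_0$, tracking the signs of Definition \ref{sgnl0} and Lemma \ref{sgnl} through the identification of currents with Schwartz kernels and through the various occurrences of $d^*$, and the combinatorial identity relating $\sum_{|I|=k}\pm\,dx^{I^c}\wedge dy^I$ to $\bigwedge_i(dx_i-dy_i)$. None of this is deep, but a single consistent convention has to be chosen and carried through everywhere; the conceptual core --- "$G$ of the diagonal is $\beta^*$ of the scalar fundamental solution, and that solution is radially the round volume form of the sphere" --- is short and is exactly what makes both the formula and the extendibility transparent.
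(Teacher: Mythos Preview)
Your approach is conceptually sound and close in spirit to the paper's, but the route you take to relate $\BS(\delta_{\bR^n})$ to $\BS(\delta_0)$ is genuinely different and hides a nontrivial constant that your ``$\pm$'' notation does not cover.

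The paper does \emph{not} use $\beta$ directly to transport $G$ and $d^*$. Instead it introduces the isometry $\alpha\in SO(2n)$, $\alpha(x,y)=\tfrac{1}{\sqrt 2}(x+y,y-x)$, which sends the linear subspace $\{0\}\times\bR^n$ to $\delta_{\bR^n}$. Because $\alpha$ is an isometry, both $G$ and $d^*$ commute with $\alpha_*$ on the nose, and the computation of $G(\{0\}\times\bR^n)$ (Proposition~\ref{let}) is done by an explicit fiber integration of the Green kernel on $\bR^{2n}$. Only at the very last step, after $d^*$ has been applied, does the paper invoke the scale invariance $\lambda^*\BS(\delta_0)=\BS(\delta_0)$ to pass from $(\alpha^{-1})_1^*\BS(\delta_0)$ to $\beta^*\BS(\delta_0)$.

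Your shortcut---pulling back through $\beta$ before applying $d^*$---runs into the fact that $\beta$ is a submersion but not an isometry. In the coordinates $(u,w)=(x-y,\tfrac12(x+y))$ the flat metric on $\bR^{2n}$ is $\tfrac12\sum du_i^2+2\sum dw_j^2$, so $|du_i|^2=2$, and a direct computation gives $d^*_{\bR^{2n}}\bigl(\beta^*(E\,\dvol)\bigr)=2\,\beta^*\bigl(d^*_{\bR^n}(E\,\dvol)\bigr)$. Likewise your assertion that $G(\delta_{\bR^n})$ ``is'' $\bG$ is unjustified as written and is in fact off by the same factor: one has $\Delta_{\bR^{2n}}\bG=(\Delta_x+\Delta_y)\bG=2\,\delta_{\bR^n}$, hence $G(\delta_{\bR^n})=\tfrac12\bG$ (compare Theorem~\ref{Dlf} in the compact case). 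The two factors of $2$ cancel, so your final formula is correct, but this is not a sign issue and it is not visible in your write-up. The paper's isometry trick buys precisely the avoidance of this bookkeeping: $\alpha$ commutes exactly with $G$ and $d^*$, and the single nontrivial scalar (the $2^{-1/2}$ in $(\alpha^{-1})_1$) is absorbed at the end by the homogeneity of $\BS(\delta_0)$, not of $G(\delta_0)$.

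Your treatment of blow-up extendibility, of $d\BS(\delta_{\bR^n})=\delta_{\bR^n}$ via Proposition~\ref{P2}, and of item~(ii) via $\sum_k\bG_k=\beta^*G(\delta_0)$ matches the paper's.
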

\begin{remark} The $(n-k,k-1)$ component of $\BS(\delta_{\bR^n})$ corresponds exactly to $d_y^*\bG_k$ in the sum of item (ii).
\end{remark}
\begin{corollary} \label{BSd1}  The  form $\BS(\delta_{\bR^n})$  coincides with the pull-back $\psi^*_n\vol_{\mathbb{S}^{n-1}}$ where:
\[\vol_{\mathbb{S}^{n-1}}:=\frac{1}{|\mathbb{S}^{n-1}|}\sum (-1)^{i-1}x_idx_1\wedge \ldots \wedge\widehat{dx_i}\wedge\ldots\wedge dx_n\]
is the normalized volume of the sphere and $\psi$ is the Gauss map
\[\psi_n:\bR^n\times \bR^n\setminus \delta_{\bR^n}\ra \mathbb{S}^{n-1},\qquad (x,y)\ra \frac{x-y}{|x-y|}\]
\end{corollary}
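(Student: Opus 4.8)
The plan is to deduce the corollary from Theorem \ref{Rn} with essentially no new computation, by factoring the Gauss map through the scalar case. Let $\beta\colon\bR^n\times\bR^n\to\bR^n$, $\beta(x,y)=x-y$, and let $\pi\colon\bR^n\setminus\{0\}\to\mathbb{S}^{n-1}$ be the radial projection $\pi(z)=z/|z|$, so that $\psi_n=\pi\circ\beta$. By Theorem \ref{Rn}(i) one already knows $\BS(\delta_{\bR^n})=\beta^*\BS(\delta_0)$, so the whole statement reduces to the single-variable identity $\BS(\delta_0)=\pi^*\vol_{\mathbb{S}^{n-1}}$ on $\bR^n\setminus\{0\}$: pulling this back by $\beta$ and using $\beta^*z_i=x_i-y_i$ and $\beta^*dz_i=dx_i-dy_i$ gives $\psi_n^*\vol_{\mathbb{S}^{n-1}}=\beta^*\BS(\delta_0)=\BS(\delta_{\bR^n})$, with the resulting expression matching \eqref{Rn4} term by term.

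To establish $\BS(\delta_0)=\pi^*\vol_{\mathbb{S}^{n-1}}$, I would recognize the explicit formula for $\BS(\delta_0)$ in Theorem \ref{Rn}(i) as the classical solid-angle form. Set $\omega_0:=\sum_{i=1}^n(-1)^{i-1}x_i\,dx_1\wedge\cdots\wedge\widehat{dx_i}\wedge\cdots\wedge dx_n=\iota_E(dx_1\wedge\cdots\wedge dx_n)$, the contraction of the Euclidean volume form with the Euler vector field $E=\sum_i x_i\partial_{x_i}$. First I would check that the pull-back of $\omega_0$ to the unit sphere is the Riemannian volume form of $\mathbb{S}^{n-1}$ (since $E$ restricts on $\mathbb{S}^{n-1}$ to the outward unit normal), so that $j^*\bigl(\tfrac{1}{|\mathbb{S}^{n-1}|}\omega_0\bigr)=\vol_{\mathbb{S}^{n-1}}$ for the inclusion $j\colon\mathbb{S}^{n-1}\hookrightarrow\bR^n$. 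Then I would note that $\tfrac{1}{|x|^n}\omega_0$ is invariant under dilations (being homogeneous of degree $0$, as $\omega_0$ is homogeneous of degree $n$) and is annihilated by $\iota_E$ (as $\iota_E\iota_E=0$). A form on $\bR^n\setminus\{0\}$ that is dilation-invariant and killed by $\iota_E$ is $\pi$-basic, hence equals $\pi^*$ of its restriction along $j$; therefore $\tfrac{1}{|\mathbb{S}^{n-1}|}\tfrac{1}{|x|^n}\omega_0=\pi^*\vol_{\mathbb{S}^{n-1}}$, and the left-hand side is precisely $\BS(\delta_0)$ by Theorem \ref{Rn}(i).

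An alternative to this basicness step is to brute-force the pull-back: writing $u_i=(x_i-y_i)/|x-y|$, expand $\sum_i(-1)^{i-1}u_i\,du_1\wedge\cdots\wedge\widehat{du_i}\wedge\cdots\wedge du_n$, using $\sum_i u_i\,du_i=0$ to kill the terms containing $d|x-y|$; this recovers \eqref{Rn4} directly but is messier. Either way, the only genuine content beyond Theorem \ref{Rn} is the homogeneity/contraction identification of $|x|^{-n}\omega_0$ with $|\mathbb{S}^{n-1}|\,\pi^*\vol_{\mathbb{S}^{n-1}}$, which is where I expect a careful line is needed; the rest is bookkeeping.
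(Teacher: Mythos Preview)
Your proposal is correct and follows essentially the same route as the paper: both reduce via Theorem \ref{Rn}(i) to the identity $\BS(\delta_0)=\pi^*\vol_{\mathbb{S}^{n-1}}$ on $\bR^n\setminus\{0\}$, and then conclude by pulling back along $\beta$. The only difference is that the paper dismisses this identity as ``straightforward'' (phrasing it as scale invariance plus agreement on $\mathbb{S}^{n-1}$), whereas you spell it out more carefully via the $\pi$-basicness criterion (dilation invariance plus $\iota_E$-annihilation), which is arguably cleaner since scale invariance alone does not force a form to be basic.
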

\begin{proof} The form $\BS(\delta_{0})$ is the unique form on $\bR^{n}\setminus \{0\}$ which is scale invariant and coincides with $\vol_{\mathbb{S}^{n-1}}$ when restricted to $\mathbb{S}^{n-1}$, i.e.
\begin{equation}\label{BSd}
	\BS(\delta_{0})=\pi^*\vol_{\mathbb{S}^{n-1}} \qquad \text{on } \, \mathbb{R}^n \backslash \{0\}
\end{equation}
where $\pi:\bR^{n}\setminus \{0\}\ra \mathbb{S}^{n-1}$ is the radial projection. Relation (\ref{BSd}) is straightforward. Theorem \ref{Rn} item (i) finishes the proof.
\end{proof}
\begin{corollary}[Gauss]\label{Gauss} Let $K_1,K_2\subset \bR^3$ be two disjoint oriented knots and let $j:K_1\times K_2\ra \bR^3\times \bR^3\setminus \delta_{\bR^3}$ be the inclusion map. Then
 \begin{equation}\label{lkK1} \lk(K_1,K_2)=\int_{K_1\times K_2}j^*\BS(\delta_{\bR^3})=\deg(\psi\bigr|_{K_1\times K_2})\end{equation}
where $\psi=\pi\circ\beta\circ j$ with $\pi$ being here the radial projection onto $S^2$.
\end{corollary}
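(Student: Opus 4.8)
The plan is to derive the two asserted equalities separately, each by assembling results already in hand. From Theorem~\ref{Rn} we know that in the ambient $\bR^3\times\bR^3$ the harmonic representative of the Poincar\'e dual of $\delta_{\bR^3}$ vanishes (so the right-hand side of $d\Omega=\delta_{\bR^3}-H$ is just $\delta_{\bR^3}$), that $\BS(\delta_{\bR^3})=d^*G(\delta_{\bR^3})$ is a smooth form on $\bR^3\times\bR^3\setminus\delta_{\bR^3}$ with $L^1_{\loc}$ coefficients which is blow-up extendible, and that $d\BS(\delta_{\bR^3})=\delta_{\bR^3}$ as currents. Corollary~\ref{BSd1} further identifies $\BS(\delta_{\bR^3})$ with the pull-back $\psi_3^*\vol_{\mathbb{S}^2}$ of the normalized volume form of $\mathbb{S}^2$ under the Gauss map $\psi_3$.

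For the right-hand equality, since $\psi=\psi_3\circ j$ we get $j^*\BS(\delta_{\bR^3})=(\psi_3\circ j)^*\vol_{\mathbb{S}^2}=\psi^*\vol_{\mathbb{S}^2}$. As $K_1\times K_2$ is a closed oriented surface and $\int_{\mathbb{S}^2}\vol_{\mathbb{S}^2}=1$, the elementary degree formula for pull-backs of top-degree forms gives
\[
\int_{K_1\times K_2}j^*\BS(\delta_{\bR^3})=\int_{K_1\times K_2}\psi^*\vol_{\mathbb{S}^2}=\deg\bigl(\psi|_{K_1\times K_2}\bigr).
\]

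For the left-hand equality I would run the argument behind Corollary~\ref{corf} in the non-compact ambient $\bR^3\times\bR^3$, which is legitimate by Remark~\ref{rth1} and in fact simpler here because $H=0$. Choose Seifert surfaces $S_1,S_2$ for $K_1,K_2$ with $S_1\pitchfork K_2$; since $K_1\cap K_2=\emptyset$ the inclusion $\iota:S_1\times K_2\hookrightarrow\bR^3\times\bR^3$ is transverse to $\delta_{\bR^3}$ and $\iota^{-1}(\delta_{\bR^3})$ is a finite set of interior points of $S_1\times K_2$, disjoint from $K_1\times K_2=\partial(S_1\times K_2)$. Blow-up extendibility of $\BS(\delta_{\bR^3})$ and Proposition~\ref{P2} give $d\,\iota^*\BS(\delta_{\bR^3})=(S_1\times K_2)\wedge\delta_{\bR^3}$ on $S_1\times K_2$; then property (b2) of Theorem~\ref{th1} together with Stokes on $S_1\times K_2$, exactly as in the proof of Corollary~\ref{corf} but with the $\iota^*H$ term literally zero, yields $I(S_1\times K_2,\delta_{\bR^3})=\int_{K_1\times K_2}j^*\BS(\delta_{\bR^3})$. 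Finally $I(S_1\times K_2,\delta_{\bR^3})=\lk(K_1,K_2)$ by Definition~\ref{deflk}. Alternatively one can avoid the currential machinery altogether and deduce $\lk(K_1,K_2)=\deg(\psi|_{K_1\times K_2})$ directly from Lemma~\ref{l1} applied to $F(x,y)=x-y$ and $L=S_1\times K_2$, combined with Definition~\ref{deflk}; this is precisely relation~\eqref{l10}.

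There is no genuine obstacle here, the statement being essentially a repackaging of Theorem~\ref{Rn} and Corollary~\ref{BSd1}. The only points requiring some care are: (i) checking the transversality and boundary-disjointness hypotheses of Proposition~\ref{P2} and Theorem~\ref{th1}, which hold because $K_1\cap K_2=\emptyset$ and $S_1,S_2$ are honest Seifert surfaces; and (ii) observing that although Corollary~\ref{corf} was stated for compact $M$, its proof uses only Theorem~\ref{th1} — valid for submanifolds that are closed as subsets, hence for $\delta_{\bR^3}\subset\bR^3\times\bR^3$ — and Proposition~\ref{P2}, both of which remain available in the present non-compact, with-boundary setting by Remark~\ref{rth1}.
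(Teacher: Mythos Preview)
Your proposal is correct and follows essentially the same approach as the paper: invoke blow-up extendibility of $\BS(\delta_{\bR^3})$ (which you extract from Theorem~\ref{Rn}, while the paper reproves it on the spot via $\BS(\delta_{\bR^3})=\beta^*\BS(\delta_0)$), then appeal to Corollary~\ref{corf} for the first equality and to Corollary~\ref{BSd1} for the second. The paper compresses the first step into the remark that ``Corollary~\ref{corf} holds also in this context'', whereas you unpack that justification explicitly via Proposition~\ref{P2}, Theorem~\ref{th1}, and Remark~\ref{rth1}; the added alternative via Lemma~\ref{l1} and relation~\eqref{l10} is a nice bonus but not needed.
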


\begin{proof} The form $\BS(\delta_{\bR^3})$ is blow-up extendible to $\Bl_{\delta_{\R^3}}(\bR^3\times \bR^3)$ since it is the pull-back via a map $\beta$  transverse to $\{0\}$ with $\beta^{-1}(0)=\delta_{\bR^3}$  of the form $\BS(\delta_0)$ which is itself blow-up extendible. By Corollary \ref{corf} (which holds also in this context) the first equality of (\ref{lkK1}) holds, while the second follows from Corollary \ref{BSd1}.

\end{proof}
\begin{remark} Note that since $K_1\times K_2$ is a $(1,1)$ current in $\bR^3\times \bR^3$, it is only the $(1,1)$-part of $\BS(\delta_{\bR^3})$ that matters in the first integral of (\ref{lkK1}).  One reads the $(1,1)$ part of $\BS(\delta_{\bR^3})$ from Theorem \ref{Rn} as being 
\begin{equation}\label{BS1}
	\mathbb{BS}_{1,1} :=\sum_{i,j=1}^3b_{ij}dx_i\wedge dy_j\end{equation}
where $B=(b_{ij})_{1\leq i,j\leq 3}$ is an antisymmetric matrix with 
\[b_{12}=\frac{1}{4\pi}\frac{y_3-x_3}{|x-y|},\quad b_{13}=-\frac{1}{4\pi}\frac{y_2-x_2}{|x-y|^3},\quad b_{23}=\frac{1}{4\pi}\frac{y_1-x_1}{|x-y|^3}\]
By item (ii) of Theorem \ref{Rn} with the choices $n=3$ and $k=2$ it holds $\mathbb{BS}_{1,1} = d_y^* \mathbb{G}_2$, the kernel of the operator 
\[
-d^*G_2:
\Omega^2_c(\bR^3)\ra \Omega^1(\bR^3)
\]
With (\ref{BS1}) or with the expression of $\bG_2$ from Lemma \ref{Grk} below, it is straightforward to check that $d^*G_2$ acts on a compactly supported $2$-form $\eta=\eta_1dx_2\wedge dx_3-\eta_2dx_1\wedge dx_3+\eta_3dx_1\wedge dx_2$ as follows:
\[(4\pi)d^*G_2(\eta)= \left(\int_{\bR^3}\frac{y_3-x_3}{|x-y|^3}\eta_2(x)-\frac{y_2-x_2}{|x-y|^3}\eta_3(x)dx\right)dy_1+\]\[+\left(\int_{\bR^3}\frac{y_1-x_1}{|x-y|^3}\eta_3(x)-\frac{y_3-x_3}{|x-y|^3}\eta_1(x)dx\right)dy_2
 +\left(\int_{\bR^3}\frac{y_2-x_2}{|x-y|^3}\eta_1(x)-\frac{y_1-x_1}{|x-y|^3}\eta_2(x)dx\right)dy_3
 \]
With the Vector Analysis identifications,
\[\mathscr{X}(\bR^3)\ra \Omega^2(\bR^3),\qquad V=\eta_1\partial_{x_1}+\eta_2\partial_{x_2}+\eta_3\partial_{x_3}\ra *(\tcr{V_{\flat}})=\eta\]
\[\Omega^1(\bR^3)\ra \mathscr{X}(\bR^3),\qquad \omega\ra \omega^{\sharp}\]
the operator $-d^*G_2$ becomes the classical Biot-Savart operator $\mathscr{X}(\bR^3)\ra \mathscr{X}(\bR^3)$.
\[V\ra \left\{y\ra \frac{1}{4\pi}\int_{\bR^3} U(x,y)\times V(x)~dx\right\}\]
where $U(x,y)=\frac{y-x}{|x-y|^{3}}$. This justifies the choice of nomenclature used in this article.
\end{remark}

 We use the sign conventions  from Appendix \ref{Sc}. We kick off  the proof of Theorem \ref{Rn} with some elementary considerations.

 Every smooth form $\omega\in \Omega^k(\bR^n)$ can be written as 
\[\omega=\sum_{|I|=k}\omega_I dy_I\]
where for every ordered $I=\{i_1<i_2<\ldots <i_k\}$, $\omega_{I}:\bR^n\ra \bR$ is a smooth function and
\[dy_I=dy_{i_1}\wedge\ldots\wedge dy_{i_k}\]
form a basis for $\Lambda^k\bR^n$. It is not difficult to  check that
\[\Delta\omega=\sum_{|I|=k}(\Delta_0\omega_I)dy_I\]
where $\Delta_0$ is the Laplace operator on functions. It follows that if we want to solve 
\[\Delta \omega=\eta\]
we can use the Green operator $G_0$ on functions and solve $\Delta_0\omega_I=\eta_I$ for each $I$. Recall that 
\[G_0(f)(y)=\int_{\bR^n}\frac{1}{(n-2)|\mathbb{S}^{n-1}|}\frac{f(x)}{|x-y|^{n-2}}~dx\]
where $|\mathbb{S}^{n-1}|$ is the volume of the unit sphere $\mathbb{S}^{n-1}$.

\begin{lemma}\label{Grk} The Green kernel on $k$-forms in $\bR^n$ is given by the $(n-k,k)$ double form
\[\bG_k=\frac{1}{(n-2)|\mathbb{S}^{n-1}|}\frac{(-1)^{kn}}{|x-y|^{n-2}}\sum _{|I|=k}(-1)^{\epsilon(I)}dx_{I^c}\wedge dy_I\footnote{Note the decomposition fiber first with respect to $\pi_2$.} \]
where $I^c:=\{1,2\ldots,n\}\setminus I$ and for every $J\subset \{0,1,2\ldots\}$:
\[\epsilon(J):=\sum_{j\in J}j-\sum_{j=1}^{|J|}j\]
\end{lemma}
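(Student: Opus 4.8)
The plan is to reduce everything to the already-known facts that precede the statement, and then to carry out a short wedge-and-push-forward computation. On one hand, since the Laplacian acts coefficientwise in Cartesian coordinates, $\Delta\big(\sum_{|I|=k}\omega_I\,dy_I\big)=\sum_{|I|=k}(\Delta_0\omega_I)\,dy_I$, so
\[
G_k\Big(\sum_{|I|=k}\omega_I\,dy_I\Big)=\sum_{|I|=k}G_0(\omega_I)\,dy_I ,\qquad G_0(f)(y)=\int_{\bR^n}\frac{f(x)}{(n-2)|\mathbb{S}^{n-1}|\,|x-y|^{n-2}}\,dx .
\]
On the other hand, by the Schwartz Kernel Theorem (Appendix \ref{Sc}), $G_k$ is represented by a \emph{unique} $(n-k,k)$ double form $\bG_k$ with $L^1_{\loc}$ coefficients on $\bR^n\times\bR^n$ satisfying (\ref{Gko}); since $|x-y|^{2-n}\in L^1_{\loc}(\bR^n\times\bR^n)$, the double form written in the statement is an admissible candidate, and it is enough to verify (\ref{Gko}) for it.

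So I would take $\omega=\sum_{|J|=k}\omega_J\,dy_J$, write $\pi_1^*\omega=\sum_{|J|=k}\omega_J(x)\,dx_J$, and compute $\bG_k\wedge\pi_1^*\omega$. The summand indexed by $(I,J)$ contains $dx_{I^c}\wedge dy_I\wedge dx_J$, which is nonzero only when $dx_{I^c}\wedge dx_J\neq 0$, i.e. only for $J=I$. For $J=I$ I would bring $dx_I$ to the front using $dy_I\wedge dx_I=(-1)^{k^2}dx_I\wedge dy_I$, then $dx_{I^c}\wedge dx_I=(-1)^{k(n-k)}dx_I\wedge dx_{I^c}$, and finally the elementary identity $dx_I\wedge dx_{I^c}=(-1)^{\epsilon(I)}\,dx_1\wedge\cdots\wedge dx_n$ (an easy inversion count: the number of inversions of $(i_1,\dots,i_k,I^c\text{ sorted})$ is $\sum_l(i_l-l)=\epsilon(I)$). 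Collecting the prefactor $(-1)^{kn}$ of $\bG_k$, the factor $(-1)^{\epsilon(I)}$ occurring in its definition, and the three reordering signs, the total sign of the $I$-summand is $(-1)^{\,kn+k^2+k(n-k)+2\epsilon(I)}=(-1)^{2kn+2\epsilon(I)}=1$. Hence
\[
\bG_k\wedge\pi_1^*\omega=\frac{1}{(n-2)|\mathbb{S}^{n-1}|}\,\frac{1}{|x-y|^{n-2}}\sum_{|I|=k}\omega_I(x)\,\big(dx_1\wedge\cdots\wedge dx_n\big)\wedge dy_I .
\]
Then I would apply $(\pi_2)_*$: the right-hand side is already in ``fibre-first'' form, the $dx$-part being exactly the fibre volume form over which $(\pi_2)_*$ integrates (integration in the $x$-variable), so with the conventions of Appendix \ref{Sc} no extra sign appears and one gets $\sum_{|I|=k}\big(\int_{\bR^n}\tfrac{\omega_I(x)}{(n-2)|\mathbb{S}^{n-1}||x-y|^{n-2}}\,dx\big)\,dy_I=\sum_{|I|=k}G_0(\omega_I)(y)\,dy_I=G_k(\omega)$. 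By uniqueness of the Schwartz kernel, $\bG_k$ is as claimed.

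The only genuine difficulty is the sign bookkeeping: one must check that the global $(-1)^{kn}$, the $(-1)^{\epsilon(I)}$ in the definition of $\bG_k$, the graded-commutativity signs produced when $dx_J$ crosses $dy_I$ and when $dx_{I^c}$ crosses $dx_I$, and whatever sign is built into fibre integration in Appendix \ref{Sc}, all cancel exactly. A secondary point — which is precisely what dictates the ``fibre-first'' ordering $dx_{I^c}\wedge dy_I$ in the statement and the exponent $kn$ in front — is that this ordering is what makes $(\pi_2)_*(\bG_k\wedge\pi_1^*\omega)$ reduce to integration in the $x$-variable with no stray sign; everything else is routine.
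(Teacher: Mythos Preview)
Your proof is correct and follows essentially the same route as the paper: verify directly that $(\pi_2)_*(\bG_k\wedge\pi_1^*\omega)=\sum_{|I|=k}G_0(\omega_I)\,dy_I$ by checking that only the $J=I$ summands survive and that all signs cancel. The paper organizes the sign computation slightly differently, using the identity $\epsilon(I)+\epsilon(I^c)=k(n-k)$ (equivalently $dx_{I^c}\wedge dx_I=(-1)^{\epsilon(I^c)}dx$) rather than first swapping $dx_{I^c}$ and $dx_I$ and then applying $dx_I\wedge dx_{I^c}=(-1)^{\epsilon(I)}dx$ as you do, but the two computations are of course equivalent and lead to the same $(-1)^{2kn}=1$.
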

\begin{proof} The number $\epsilon(I)$ is  the minimal number of elementary moves necessary to bring $dx_{I}\wedge dx_I^c$ to the volume form $dx:=dx_1\wedge\ldots\wedge dx_n$. It also satisfies:
 \[*dx_{I}=(-1)^{\epsilon(I)}dx_{I^c},\quad \mbox{and}\quad \epsilon(I)+\epsilon(I^c)=k(n-k).\]

The local integrability of the coefficients of $\bG_k$ is immediate since, up to a sign, one has only one coefficient namely the kernel of Green operator on function which is well-known (and easy to show) to be integrable



We then have for $\omega=\sum_{|I|=k}\omega_Idx_{I}$:
\[G_k(\omega):=(\pi_2)_*(\bG_k\wedge \pi_1^*\omega)=\]\[=\frac{(-1)^{kn}}{(n-2)|\mathbb{S}^{n-1}|}\sum_{|I|=k}(-1)^{\epsilon(I)+\epsilon(I^c)+k^2}(\pi_2)_*\left( \frac{1}{|x-y|^{n-2}} \omega_I(x)dx\wedge dy_{I}\right)=\]\[=\sum_{|I|=k}\frac{1}{(n-2)|\mathbb{S}^{n-1}|}\left(\int_{\bR^n}\frac{\omega_I(x)}{|x-y|^{n-2}}~dx\right)dy_I=\sum_{|I|=k}G_0(\omega_I)(y)dy_I\]
\end{proof}
\begin{corollary} \label{Corgrker} The Green kernels satisfy the symmetry property
\[ R^*\bG_{n-k}=(-1)^n\bG_k\]
where $R:\bR^n\times \bR^n\ra \bR^n\times \bR^n$ is the reflection $R(x,y)=(y,x)$. Note that $(-1)^n=\det R$.
\[\]
\end{corollary}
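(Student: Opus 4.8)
The plan is to read the symmetry directly off the closed formula for $\bG_k$ proved in Lemma \ref{Grk}: there is no analytic content here, only a sign computation, carried out by pulling back $\bG_{n-k}$ monomial by monomial and comparing with $(-1)^n\bG_k$. First I would record how $R(x,y)=(y,x)$ acts on the building blocks: $R^*dx_i=dy_i$, $R^*dy_i=dx_i$, and $R^*|x-y|^{n-2}=|y-x|^{n-2}=|x-y|^{n-2}$. Thus $R^*$ fixes the scalar prefactor $\tfrac{1}{(n-2)|\mathbb{S}^{n-1}|}\,\tfrac{1}{|x-y|^{n-2}}$ and sends each monomial $dx_{J^c}\wedge dy_J$ of $\bG_{n-k}$ (here $|J|=n-k$) to $dy_{J^c}\wedge dx_J$.

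The second step is a reindexing followed by two elementary reorderings. Writing $J=I^c$ with $|I|=k$ gives $J^c=I$, so the generic term of $R^*\bG_{n-k}$ is, up to the prefactor $(-1)^{(n-k)n}$ coming from Lemma \ref{Grk} evaluated at degree $n-k$, equal to $(-1)^{\epsilon(I^c)}\,dy_I\wedge dx_{I^c}$. Reordering the wedge costs $(-1)^{k(n-k)}$ since $\deg dy_I=k$ and $\deg dx_{I^c}=n-k$, and the complementation identity $\epsilon(I)+\epsilon(I^c)=k(n-k)$ recorded in the proof of Lemma \ref{Grk} turns $(-1)^{\epsilon(I^c)}$ into $(-1)^{k(n-k)}(-1)^{\epsilon(I)}$. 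Collecting, the coefficient of $dx_{I^c}\wedge dy_I$ in $R^*\bG_{n-k}$ acquires the sign $(-1)^{(n-k)n+2k(n-k)}=(-1)^{(n-k)n}=(-1)^{n^2-kn}$, while in $(-1)^n\bG_k$ it is $(-1)^{n+kn}$; since $n^2\equiv n$ and $2kn\equiv 0\pmod 2$ these agree, giving $R^*\bG_{n-k}=(-1)^n\bG_k$, with the global sign $(-1)^n=\det R$ being exactly the orientation factor produced by interchanging the two blocks of coordinates.

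I do not expect a genuine obstacle: the whole point is to keep straight the three independent sign contributions — the prefactor $(-1)^{kn}$ of Lemma \ref{Grk} evaluated at degree $n-k$, the wedge reordering $(-1)^{k(n-k)}$, and the complementation $\epsilon(I)+\epsilon(I^c)=k(n-k)$ — and to notice that they combine into $(-1)^n$. One could instead deduce the identity from the self-adjointness of $G_k$ together with $*G_k*=(-1)^{k(n-k)}G_{n-k}$, but the direct route above is shorter and stays entirely at the level of the explicit kernel.
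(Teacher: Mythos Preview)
Your proof is correct and is exactly the direct sign computation from the explicit formula of Lemma~\ref{Grk} that the paper implicitly intends: the corollary is stated without proof, as an immediate consequence of that lemma, and your argument carries out precisely that verification.
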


We describe next on what currents can the Green operator act. Let $\Omega^k_{\infty}(\bR^n)$ be the space of degree  $k$ smooth forms  which vanish at $\infty$ with all its derivatives. This becomes a Frechet space when endowed with the family of semi-norms
\[\|\omega\|_N:=\sum_{|\alpha|\leq N}\sup_{x\in\bR^n}|\partial^{\alpha}\omega(x)| \]
\begin{prop} The Green operator $G_k:\Omega^k_{\cpt}(\bR^n)\ra\Omega^k_{\infty}(\bR^n)$ is continuous. 
\end{prop}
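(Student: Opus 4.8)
The plan is to reduce everything to the scalar Newtonian potential and then to a single integration by parts. By Lemma \ref{Grk}, for $\omega=\sum_{|I|=k}\omega_I\,dy_I$ one has $G_k(\omega)=\sum_{|I|=k}G_0(\omega_I)\,dy_I$, where $G_0(f)=E_n*f$ with $E_n(z):=\frac{1}{(n-2)|\mathbb{S}^{n-1}|}|z|^{2-n}$, and $E_n\in L^1_{\loc}(\bR^n)$ since $2-n>-n$. Since $\Omega^k_{\cpt}(\bR^n)$ carries the strict inductive limit topology of the subspaces $\Omega^k_K(\bR^n)$ of forms supported in a fixed compact $K$, a linear map out of it is continuous iff its restriction to each $\Omega^k_K(\bR^n)$ is; so it suffices to show that for every compact $K$ and every $N\in\bN$ there is $C=C(K,N)$ with $\|G_k\omega\|_N\le C\|\omega\|_N$ for all $\omega$ supported in $K$, and that $G_k\omega$ actually lands in $\Omega^k_\infty(\bR^n)$. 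This reduces the whole statement to the corresponding assertion for $G_0\colon C^\infty_K(\bR^n)\to C^\infty_\infty(\bR^n)$.

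First I would prove the commutation rule $\partial^\alpha_y G_0 f=G_0(\partial^\alpha_x f)$ for every multi-index $\alpha$ and every $f\in C^\infty_K(\bR^n)$. The hard part — in fact essentially the only delicate point — is that for $|\alpha|\ge 2$ the kernel derivatives $\partial^\alpha_y|x-y|^{2-n}$ are of size $|x-y|^{2-n-|\alpha|}$ and are \emph{not} locally integrable in $x$, so one cannot simply differentiate under the integral sign more than once. The remedy: the first $y$-derivative is legitimate, as $\partial_{y_i}|x-y|^{2-n}=(2-n)(y_i-x_i)|x-y|^{-n}$ has modulus $\le (n-2)|x-y|^{1-n}$, still integrable; then one rewrites $\partial_{y_i}|x-y|^{2-n}=-\partial_{x_i}|x-y|^{2-n}$ and integrates by parts, the boundary terms vanishing because $f$ has compact support and the singularity of $|x-y|^{2-n}$ being integrable, to obtain $\partial_{y_i}G_0f=G_0(\partial_{x_i}f)$; iterating gives the rule for all $\alpha$. (Equivalently one may invoke the standard identity $\partial^\alpha(E_n*f)=E_n*\partial^\alpha f$ valid for $E_n\in L^1_{\loc}$ and $f\in C^\infty_c$.) Dominated convergence with the fixed dominating function $|x-y|^{2-n}\mathbf{1}_K(x)$ then shows that each $\partial^\alpha_y G_0f$ is continuous, hence $G_0f\in C^\infty(\bR^n)$.

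It then remains to bound the sup norm and to check decay at infinity. One has
\[
|G_0 g(y)|\le \frac{\|g\|_{L^\infty}}{(n-2)|\mathbb{S}^{n-1}|}\int_K|x-y|^{2-n}\,dx\le C_K\,\|g\|_{L^\infty},\qquad C_K:=\frac{1}{(n-2)|\mathbb{S}^{n-1}|}\sup_{y\in\bR^n}\int_K|x-y|^{2-n}\,dx,
\]
and $C_K<\infty$: choosing $R$ with $K\subset B_R(0)$, for $y\in B_{2R}(0)$ the integral is dominated by $\int_{|z|\le 3R}|z|^{2-n}\,dz=|\mathbb{S}^{n-1}|(3R)^2/2$, while for $|y|\ge 2R$ it is at most $\vol(K)(|y|-R)^{2-n}$, which is bounded and tends to $0$ as $|y|\to\infty$; in particular $G_0g(y)\to 0$ at infinity for every $g\in C^\infty_K$. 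Combining with the commutation rule, for each $\alpha$,
\[
\sup_{y}|\partial^\alpha_y G_0f(y)|=\sup_{y}|G_0(\partial^\alpha_x f)(y)|\le C_K\,\|\partial^\alpha f\|_{L^\infty}\le C_K\,\|f\|_{|\alpha|},
\]
and $\partial^\alpha_y G_0f=G_0(\partial^\alpha_x f)\to 0$ at infinity, so $G_0f\in C^\infty_\infty(\bR^n)$ with $\|G_0f\|_N\le C_K\|f\|_N$. Summing over the finitely many multi-indices $I$ with $|I|=k$ yields $\|G_k\omega\|_N\le C'_K\|\omega\|_N$, which is the required continuity of $G_k$ on each $\Omega^k_K(\bR^n)$, hence on $\Omega^k_{\cpt}(\bR^n)$.
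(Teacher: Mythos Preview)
Your proof is correct and follows essentially the same route as the paper's: reduce to the scalar Newtonian potential $G_0$, show that $G_0$ maps $C^\infty_{\cpt}$ into $C^\infty_\infty$ via the decay estimate $\int_K|x-y|^{2-n}\,dx\to 0$ as $|y|\to\infty$, and deduce continuity on each fixed-support space. The only cosmetic difference is that the paper handles derivatives by first making the change of variables $u=x-y$ (so that differentiation in $y$ hits $f(u+y)$ rather than the kernel, making the passage under the integral sign immediate), whereas you justify $\partial^\alpha G_0 f=G_0(\partial^\alpha f)$ by a one-step-at-a-time integration by parts---though you also note the convolution identity, which is exactly the paper's shortcut.
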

\begin{proof} We will include some details for $G_0$, the adaptation for $G_k$ being immediate. First $G_0$ takes $C^{\infty}_{\cpt}(\bR^n)$ to $C^{\infty}_{\infty}(\bR^n)$. Let $c_n:=[(n-2)|\mathbb{S}^{n-1}|]^{-1}$. For $f\in C^{\infty}_{\cpt}(\bR^n)$  let $C:=\sup |f(x)|$ and $R$ be such that $\supp f\subset B(0,R)$.
\[|G_0(f)(y)|=c_n\left|\int_{\bR^n}\frac{f(u+y)}{|u|^{n-2}}~du\right|\leq C\int_{B(-y,R)}\frac{1}{|u|^{n-2}}~du  \] 
If $|y|>MR$ for some $M$ big then
 \[|u|\geq |y|-|u+y|>(M-1)R,\qquad \forall u\in B(-y,R)\]
Hence
\[|G_0(f)(y)|\leq C|B^n|R^2\frac{1}{|M-1|^{n-2}}\Rightarrow \lim_{|y|\ra \infty}G_0(f)(y)=0.\]
By induction, since one can differentiate under the integral sign, one shows similarly that
\[\partial^{\alpha} G_0(f)(y)\ra 0\]
The continuity of $G_0$ follows remembering that if $f_n\ra f$ in $C^{\infty}_{\cpt}(\bR^n)$ then there exists an $R>0$ such that $\supp f_n \subset B(0,R)$ and implicitly $\supp f\subset B(0,R)$. Then one repeats the previous inequalities for $f_n-f$.
\end{proof}
We take now the $1$-point compactification $\widehat{\bR^n}$ of $\bR^n$ and consider on it a structure of smooth, compact manifold diffeomorphic to $\mathbb{S}^n$ via the stereographic projections. A form which is $C^{\infty}$ on $\bR^n$ and whose partial derivatives all vanish at $\infty$ extends to a smooth form on $\widehat{\bR^n}$.  Every current $\hat{T}$ on $\widehat{\bR^n}$ restricts to a current on the open subset $\bR^n$. We will denote the set of currents  on $\bR^n$ which are restrictions of flat currents (or alternatively finite order currents) from $\widehat{\bR^n}$ by  $\mathscr{F}(\bR^n)_{\infty}$. For example, every topologically closed oriented submanifold in $\bR^n$ which is the restriction of a submanifold in $\widehat{\bR^n}$ is of this type. Clearly all flat currents with compact support in $\bR^n$ are also of this type.

  We define the Green operator on currents by duality on $\mathscr{F}(\bR^n)_{\infty}$
\[ G:\mathscr{F}(\bR^n)_{\infty}\ra \mathscr{D}'_{*}(\bR^n),\qquad G(T)(\omega):=\hat{T}(G(\omega))\]
where $\hat{T}$ is flat such that $\hat{T}\bigr|_{\bR^n}=T$. Flatness (or finite order) guarantees that this is well-defined. This is because if we take two such currents $\hat{T}_1$ and $\hat{T}_2$ then $\supp(\hat{T}_1-\hat{T}_2)\subset \{\infty\}$. Therefore either section 4.1.15 in \cite{Fe}  (for flat currents) or Theorem 2.3.4 in \cite{H1} implies that $\hat{T}_1-\hat{T}_2$ is a finite sum of Dirac distributions and their derivatives at $\{\infty\}$. But $\partial^{\alpha}(G(\omega))(\infty)=0$.


\vspace{0.4cm}

With this definition we prove
\begin{prop}\label{let} Let $L^k\subset \bR^n$ be a linear subspace. If $n-k\geq 3$, the current $G(L)$ is represented by the $(n-k)$-form on $\bR^n$ with locally integrable coefficients
\begin{equation}\label{GLx} G(L)(x)=\frac{1}{(n-k-2)|\mathbb{S}^{n-k-1}|}\cdot \frac{1}{d(x,L)^{n-k-2}}d\vol_{L^{\perp}}\end{equation} 
where the orientation on $L^{\perp}$ is such that $\ori(L^{\perp})\wedge \ori(L)=\ori(\bR^n)$. 

The form coincides with $\pi_{L^{\perp}}^*(G(\delta_0))$ where $\pi_{L^{\perp}}:\bR^n\ra L^{\perp}$ is the orthogonal projection and $\delta_0$ is here the Dirac $0$-current in $L^{\perp}$.
\end{prop}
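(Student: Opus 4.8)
The plan is to reduce everything to the scalar case via the factorization through the orthogonal projection, and then to identify the scalar Green's function on the lower-dimensional space. Write $\bR^n = L \oplus L^\perp$ with coordinates $x = (x', x'')$, $x' \in L$, $x'' \in L^\perp$, so that $d(x,L) = |x''|$. Since $L$ is a linear subspace, the product structure is global, and the projection $\pi_{L^\perp}\colon \bR^n \to L^\perp$ is a submersion with $\pi_{L^\perp}^{-1}(0) = L$. I would first check that the candidate form on the right of (\ref{GLx}) is a smooth, $L^1_{\loc}$ form on $\bR^n$ (integrability of $|x''|^{-(n-k-2)}$ near $L$ is the same fact used for $G_0$ on $\bR^{n-k}$, since $n-k \geq 3$), so it defines a current; I will call it $\Omega_L$.

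Next, the heart of the matter: show $\Delta \Omega_L = L$ as currents, where $L$ is the $k$-current of integration over the subspace. Here I would use the block-diagonal form of the Laplacian acting on forms in these linear coordinates — $\Delta(\sum \omega_I dx_I) = \sum (\Delta_0 \omega_I) dx_I$, already recorded in the excerpt — together with the fact that $\Omega_L$ has constant-coefficient covector part $d\vol_{L^\perp}$ and a single scalar coefficient $c_{n-k}\,|x''|^{-(n-k-2)}$ depending only on $x''$. Since $\Delta_0$ on $\bR^n$ splits as $\Delta_{x'} + \Delta_{x''}$, and the coefficient is independent of $x'$, we get $\Delta \Omega_L = c_{n-k}(\Delta_{x''}|x''|^{-(n-k-2)})\, d\vol_{L^\perp}$. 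Now $c_{n-k}|x''|^{-(n-k-2)}$ is precisely the fundamental solution $G_0$ of $\Delta$ on $L^\perp \cong \bR^{n-k}$, so $\Delta_{x''}$ applied to it is the Dirac delta $\delta_0$ on $L^\perp$ (in the $x''$-variables). Pairing against a test form, Fubini in the $x'$ and $x''$ variables then gives exactly integration over $L$ against the $d\vol_{L^\perp}$-component of the test form, i.e.\ $\Delta\Omega_L = L$ as currents — the orientation convention $\ori(L^\perp)\wedge\ori(L) = \ori(\bR^n)$ is what makes the sign come out right. Equivalently, and perhaps cleaner to state, this says $\Omega_L = \pi_{L^\perp}^*(G(\delta_0))$ with $G(\delta_0) = c_{n-k}|x''|^{-(n-k-2)}d\vol_{L^\perp}$ the scalar Green current on $L^\perp$, and pull-back by the submersion $\pi_{L^\perp}$ intertwines $\Delta$ (block-diagonal, $x'$-independent) with $\Delta_{L^\perp}$.

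Then I need to pass from "$\Delta\Omega_L = L$" to "$\Omega_L$ represents $G(L)$" in the sense of the duality definition given just before the statement. For this I would invoke that $G$ on currents is defined by $G(T)(\omega) = \hat T(G(\omega))$ via the flat extension $\hat T$ to $\widehat{\bR^n} = \mathbb{S}^n$; note $L$ (a linear subspace) extends to a closed submanifold of $\widehat{\bR^n}$, so $L \in \mathscr{F}(\bR^n)_\infty$ and $G(L)$ is legitimately defined. The identity $\Delta\Omega_L = L$ together with the mapping properties of $G$ (it inverts $\Delta$ on the image, and $\Omega_L$ — being, like $G(L)$, built from the scalar Green function — has the decay/vanishing at $\infty$ needed to be in the appropriate domain, so that it contributes no harmonic error term) forces $G(L) = G(\Delta\Omega_L) = \Omega_L$. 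One must be slightly careful: on $\mathbb{S}^n$ there is no cohomology in the relevant degree for $n-k < n$, so there is no harmonic ambiguity, and the scalar computation above shows the projection onto $\Imag\Delta$ is the identity on the relevant piece.

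The main obstacle I expect is the last bookkeeping step — rigorously matching $\Omega_L$ with $G(L)$ through the one-point-compactification/flat-extension definition of the Green operator on currents, since $\Omega_L$ is not compactly supported and one must verify it decays appropriately at $\infty$ (i.e.\ extends across the point at infinity of $\widehat{\bR^n}$ with the right order of vanishing) so that pairing $\Omega_L$ against $G(\omega)$ and using self-adjointness of $G$ is justified, with no stray distribution supported at $\{\infty\}$. The computation $\Delta\Omega_L = L$ itself is essentially the classical fundamental-solution identity in $\bR^{n-k}$ transported by Fubini, and I would not belabor it.
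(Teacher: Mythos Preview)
Your approach is genuinely different from the paper's. The paper does not verify $\Delta\Omega_L=L$ and then invert; instead it computes $G(L)(\omega)=\hat L(G_k\omega)=\int_L G_k\omega$ directly from the explicit kernel $\bG_{n-k}$ of Lemma~\ref{Grk}: after restricting $\bG_{n-k}$ to $L\times\bR^n$ only one monomial survives, and the fiber integral $\int_{\bR^k}(|v|^2+|\underline y|^2)^{-(n-2)/2}\,dv$ is evaluated by a beta–gamma identity to produce the constant $[(n-k-2)|\mathbb{S}^{n-k-1}|]^{-1}$. This is more computational but entirely self-contained within the paper's framework (the one-point compactification and the continuity $G_k:\Omega^k_c\to\Omega^k_\infty$ are used only to make the pairing $\hat L(G\omega)$ well-defined).

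Your route is cleaner conceptually, and the computation $\Delta\Omega_L=L$ is indeed just Fubini plus the fundamental-solution identity on $L^\perp$. But you correctly locate the real difficulty in the last step, and the sketch you give there does not close. The specific problem is that $\Omega_L$ does \emph{not} decay at infinity: its sole coefficient $|x''|^{-(n-k-2)}$ is constant in the $x'$-directions, so ``extends across $\infty\in\widehat{\bR^n}$ with the right vanishing'' is false as stated. Consequently the integration-by-parts that would trade $\int\Omega_L\wedge\Delta G\omega$ for $\int(\Delta\Omega_L)\wedge G\omega$ has boundary terms on large spheres that meet $L$ (where $\Omega_L$ blows up), and these must be controlled separately from the terms at the singular set. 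One can rescue the argument—e.g.\ by exploiting translation invariance along $L$ to reduce $G(L)-\Omega_L$ to a harmonic current pulled back from $L^\perp$ and then applying Liouville there—but this requires first knowing enough about $G(L)$ (translation invariance, decay in the $L^\perp$-directions) that one is close to the paper's direct computation anyway. The paper's explicit kernel calculation sidesteps all of this at the cost of the beta–gamma bookkeeping.
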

\begin{proof} It is enough to prove the result for $L:=\{0\}\times \bR^k\subset \bR^{n-k}\times \bR^k$. The reason for choosing $\{0\}\times \bR^{k}$ rather than $\bR^{k}\times \{0\}$  is because
 \[\ori(\bR^{n-k}\times \{0\})\wedge \ori(\{0\}\times \bR^k)=\ori(\bR^n)\]
as required by the convention (see Appendix \ref{Sc}) of putting the normal bundle first.

One recovers the result for $L$ by choosing $A\in SO(n)$ such that  $A\bigr|_{\{0\}\times \bR^k}:\{0\}\times \bR^k\ra L$ is an isometry preserving the orientation and so is $A\bigr|_{\bR^{n-k}\times \{0\}}:\bR^{n-k}\ra L^{\perp}$. Clearly $A_*$ leaves $\mathscr{F}_{\infty}$ invariant and the following holds:
 \[G\circ A_*=A_*\circ G\] 
Then 
\[G(L)=G(A_*(\{0\}\times \bR^k))=A_*(G(\{0\}\times \bR^k))=(A^{-1})^*G(\{0\}\times \bR^k)\]
The result for general $L$  then follows from 
\[
(A^{-1})^*(d(x,\{0\}\times \bR^k))=d(x,L) \quad \mbox{and}\quad (A^{-1})^*\dvol_{\bR^{n-k}\times \{0\}}=d\vol_{L^{\perp}}.
\]

Denote by  $(\underline{x},\underline{\underline{x}})$ the coordinates on $\bR^{n-k}\times \bR^{k}$ and set $c_n:=[(n-2)|\mathbb{S}^{n-1}|]^{-1}$. We show that $G(L)$ is represented by the form
\begin{equation}\label{GL}(-1)^{n(n-k)} (\pi_2\bigr|_{L\times \bR^n})_*\left (\bG_{n-k}\bigr|_{L\times \bR^n}\right)=c_n\left(\int_{L}\iota_{L}^*\left(\frac{1}{|(0,\underline{\underline{x}})-(\underline{y},\underline{\underline{y}})|^{n-2}}d\underline{\underline{x}}\right) \right)d\underline{y}\end{equation}

Before proving this claim we note there are two apriori issues with (\ref{GL}):
\begin{itemize}
\item[(i)] the local integrability of $\Omega:=(-1)^{n(n-k)}\bG_{n-k}\bigr|_{L\times \bR^n}=(-1)^{n(n-k)}\iota_{L\times \bR^n}^*\bG_{n-k}$ as a form on $L\times \bR^n$ in order to determine a current in $\bR^n\times \bR^n$;
\item[(ii)] the fiber integrability of $\Omega$ along the fibers of $\pi_2^L:=\pi_2\bigr|_{L\times \bR^n}:L\times \bR^n\ra \bR^n$. 
\end{itemize}
Both integrability problems and the equality of (\ref{GL}) are dealt with below. Assuming that they are true we check that
 $(-1)^{n(n-k)}({\pi}_2^L)_*\bG_{n-k}$  represents $G(L)$ in order to finish the proof of (\ref{GLx}). We will use  $R(x,y)=(y,x)$ as an isomorphism of fiber bundles over $L$ with total spaces $\bR^n\times L$ and $L\times \bR^n$  and Corollary \ref{Corgrker}:
\[G(L)(\omega)=L(G(\omega))=\int_{L}(\pi_2)_*(\bG_k\wedge \pi_1^*\omega)=\int_L\int_{\pi_2}\bG_k\wedge\pi_1^*\omega=\]
\[=(-1)^{n}\int_L\int_{\pi_1}\bG_{n-k}\wedge\pi_2^*\omega=(-1)^{n^2}(-1)^{kn}\int_{L\times \bR^n}\bG_{n-k}\wedge \pi_2^*\omega\]
The last equality is because $\pi_1:L\times \bR^n\ra L$ is oriented base first and  Fubini gets a $(-1)^{kn}$ sign. Note that $\pi_2^{L}:L\times \bR^n\ra \bR^n$ is oriented fiber first and the projection formula  which works for fiber integrable forms as well, gives that
\[G(L)(\omega)=(-1)^{n(n-k)}\int_{\bR^n}\left(\int_{\pi_2^L}\bG_{n-k}\right)\wedge \omega.\]

 Returning to the integrability issues,  the only term in $\bG_{n-k}=\frac{c_n(-1)^{n(n-k)}}{|x-y|^{n-2}}\sum_{|J|=n-k}(-1)^{\epsilon({J})}dx_{J^c}\wedge dy_J$ that survives when restricting to $L\times \bR^n$ is the one with $J=\{1,\ldots,n-k\}$ for which $(-1)^{\epsilon(J)}=1$. Hence
\[\Omega=\frac{c_n}{|(0,\underline{\underline{x}})-(\underline{y},\underline{\underline{y}})|^{n-2}}d\underline{\underline{x}}\wedge d\underline{y}=\frac{c_n}{\sqrt{|\underline{y}|^2+|\underline{\underline{x}}-\underline{\underline{y}}|^2}^{n-2}}d\underline{\underline{x}}\wedge d\underline{y}\]
Ignoring $c_n$ in the numerator, a change of variables $v:=\underline{\underline{x}}-\underline{\underline{y}}$, $w:=\underline{\underline{x}}+\underline{\underline{y}}$  turns the unique coefficient of $\Omega$ into the following function (up to a universal constant):
\[\bR^k\times \bR^{n-k}\times \bR^k,\qquad (v,\underline{y},w)\ra \frac{1}{\sqrt{|v|^2+|\underline{y}|^2}^{n-2}}\]
which  is clearly locally integrable around any point $(0,0,p)\in \bR^k\times \bR^{n-k}\times \bR^k$.

 With respect to the fiber integral we have

\[  \int_{\bR^k}\frac{1}{\sqrt{|\underline{\underline{x}}-\underline{\underline{y}}|^2+|\underline{y}|^2}^{n-2}}d\underline{\underline{x}}=
\int_{\bR^k}\frac{1}{\sqrt{|v|^2+|\underline{y}|^2}^{n-2}}dv=\frac{1}{|\underline{y}|^{n-2}}\int_{\bR^k}{\left(\frac{|v|^2}{|\underline{y}|^2}+1\right)^{-\frac{n-2}{2}}}dv=\]
\[=\frac{1}{|\underline{y}|^{n-k-2}}\int_{\bR^k}\frac{1}{(|u|^2+1)^{(n-2)/2}}du=\frac{|\mathbb{S}^{k-1}|}{|\underline{y}|^{n-k-2}}\int_0^{\infty} \frac{\rho^{k-1}}{(\rho^2+1)^{(n-2)/2}}~d\rho=\]
\[=\frac{|\mathbb{S}^{k-1}|}{|\underline{y}|^{n-k-2}}\int_0^{\frac{\pi}{2}}\cos^{n-k-3}(\theta)\sin^{k-1}(\theta)~d\theta=\frac{|\mathbb{S}^{k-1}|}{|\underline{y}|^{n-k-2}}\frac{\Gamma(\frac{k}{2})\Gamma(\frac{n-k}{2}-1)}{2\Gamma(\frac{n}{2}-1)}\]
where in the last line we used with $z_1=\frac{k}{2}$ and $z_2=\frac{n-k}{2}-1$ the fundamental property of the beta functions
\[B(z_1,z_2)=\frac{\Gamma(z_1)\Gamma(z_2)}{\Gamma(z_1+z_2)}=2\int_{0}^{\frac{\pi}{2}}\sin^{2z_1+1}(\theta)\cos^{2z_2+1}(\theta)~d\theta\]

Finally by using $|\mathbb{S}^{n-1}|=\frac{2\pi^{n/2}}{\Gamma(\frac{n}{2})}$ we see that
\[
\frac{|\mathbb{S}^{k-1}|}{(n-2)|\mathbb{S}^{n-1}|}\frac{\Gamma(\frac{k}{2})\Gamma(\frac{n-k}{2}-1)}{2\Gamma(\frac{n}{2}-1)}=\frac{1}{n-k-2}\frac{1}{|\mathbb{S}^{n-k-1}|}
\]
and therefore (returning the $c_n$):
\[\frac{1}{(n-2)|\mathbb{S}^{n-1}|} \int_{\bR^k}\frac{1}{\sqrt{|\underline{\underline{x}}-\underline{\underline{y}}|^2+|\underline{y}|^2}^{n-2}}d\underline{\underline{x}}=\frac{1}{(n-k-2)|\mathbb{S}^{n-k-1}|}\frac{1}{|\underline{y}|^{n-k-2}}.\]

\end{proof}
\begin{corollary} \label{BSL} Let $n-k\geq 3$ and $L^k$ be a linear subspace  with the same orientation convention as the  previous Proposition. Denote by $r$ the distance function to $L$. Then
\[
\BS(L)=\frac{1}{|\mathbb{S}^{n-k-1}|}\frac{1}{r^{n-k-1}}\iota_{\nabla r}\dvol_{L^{\perp}}=\pi_{L^{\perp}}^*(\BS(\delta_0))
\]
\end{corollary}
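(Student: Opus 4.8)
The plan is to read $\BS(L)$ straight off Proposition \ref{let}. Since $\bR^{n}$ is acyclic there is no harmonic correction in Definition \ref{defBS}, so $\BS(L)=d^{*}G(L)$, and Proposition \ref{let} tells us that $G(L)$ is represented by the $(n-k)$-form
\[
\eta \ := \ \frac{1}{(n-k-2)|\mathbb{S}^{n-k-1}|}\,\frac{1}{r^{\,n-k-2}}\;\pi_{L^{\perp}}^{*}\dvol_{L^{\perp}}, \qquad r:=d(\cdot,L),
\]
which is smooth on $\bR^{n}\setminus L$. Because $\codim L=n-k\geq 3$, both the coefficient of $\eta$ (of size $r^{-(n-k-2)}$) and its gradient (of size $r^{-(n-k-1)}$) are $L^{1}_{\loc}$ near $L$, so $\eta\in W^{1,1}_{\loc}(\bR^{n})$; hence integration by parts against test forms picks up no contribution from $L$, and the currential $d^{*}G(L)$ is represented on all of $\bR^{n}$ by the $(n-k-1)$-form obtained by computing $d^{*}\eta$ classically on $\bR^{n}\setminus L$. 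This is the only point where a little care is needed; everything below is elementary.

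To compute $d^{*}\eta$ I would split $\bR^{n}=L^{\perp}\oplus L$ with Cartesian coordinates $u$ on $L^{\perp}$ and $v$ on $L$, so that $\pi_{L^{\perp}}^{*}\dvol_{L^{\perp}}=du_{1}\wedge\cdots\wedge du_{n-k}$ is parallel, $r=|u|$, and the coefficient of $\eta$ depends on $u$ only. Applying $d^{*}=-\sum_{i}\iota_{E_{i}}\nabla_{E_{i}}$ over the parallel orthonormal coordinate frame $\{E_{i}\}$, every term involving a $\partial_{v_{j}}$ kills $\eta$, and the rest assemble into $d^{*}\eta=-\iota_{\nabla h}\bigl(du_{1}\wedge\cdots\wedge du_{n-k}\bigr)$ with $h$ the coefficient of $\eta$. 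Since $\nabla h=-\frac{1}{|\mathbb{S}^{n-k-1}|}\,r^{\,1-(n-k)}\nabla r$ and $\nabla r=u/|u|$ is tangent to $L^{\perp}$ (so $\iota_{\nabla r}\dvol_{L^{\perp}}$ is a genuine form on $\bR^{n}$), this yields
\[
\BS(L)=d^{*}\eta=\frac{1}{|\mathbb{S}^{n-k-1}|}\,\frac{1}{r^{\,n-k-1}}\,\iota_{\nabla r}\dvol_{L^{\perp}},
\]
which is the first asserted equality.

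For the second equality I would recognize the right-hand side as a pull-back. Theorem \ref{Rn}(i) --- equivalently Corollary \ref{BSd1} --- applied inside the Euclidean space $L^{\perp}\cong\bR^{n-k}$ (legitimate because $n-k\geq 3$) gives $\BS(\delta_{0})=\frac{1}{|\mathbb{S}^{n-k-1}|}\,\rho^{\,1-(n-k)}\,\iota_{\nabla\rho}\dvol_{L^{\perp}}$, the radial pull-back of the normalized volume of $\mathbb{S}^{n-k-1}$, where $\rho=|\cdot|$ on $L^{\perp}$ and $\delta_{0}$ is the Dirac $0$-current there. Since $r=\rho\circ\pi_{L^{\perp}}$ and $\nabla r$ is $\pi_{L^{\perp}}$-related to $\nabla\rho$, pulling this form back by $\pi_{L^{\perp}}$ returns precisely $\frac{1}{|\mathbb{S}^{n-k-1}|}\,r^{\,1-(n-k)}\,\iota_{\nabla r}\dvol_{L^{\perp}}$, i.e. $\BS(L)=\pi_{L^{\perp}}^{*}\BS(\delta_{0})$. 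Alternatively, one can first note that $d^{*}$ commutes with $\pi_{L^{\perp}}^{*}$ on forms pulled back from $L^{\perp}$ --- the same frame computation --- and combine this with the identity $G(L)=\pi_{L^{\perp}}^{*}G(\delta_{0})$ from Proposition \ref{let}.

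I do not expect a genuine obstacle: beyond the $W^{1,1}_{\loc}$ observation that legitimizes the classical computation of $d^{*}\eta$ at the level of currents, the argument is the differentiation $\nabla\bigl(r^{\,2-(n-k)}\bigr)=-(n-k-2)\,r^{\,1-(n-k)}\,\nabla r$ together with the orientation convention $\ori(L^{\perp})\wedge\ori(L)=\ori(\bR^{n})$, which is inherited verbatim from Proposition \ref{let}. In fact this is the verbatim generalization of the proof of Theorem \ref{Rn}(i), which is, up to a harmless rescaling, the case where $L$ is the diagonal $\delta_{\bR^{n}}\subset\bR^{n}\times\bR^{n}$ regarded as a linear subspace.
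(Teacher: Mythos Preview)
Your proof is correct and follows the paper's route: apply $d^*$ to the explicit $G(L)$ from Proposition \ref{let} via the Leibniz rule $d^*(f\omega)=-\iota_{\nabla f}\omega+fd^*\omega$ with $d^*\dvol_{L^\perp}=0$, then identify the result as a pull-back from $L^\perp$; your $W^{1,1}_{\loc}$ remark adds a justification the paper leaves implicit. One caveat on logical order: your first argument for the second equality cites Theorem \ref{Rn}(i), but in the paper Theorem \ref{Rn} is proved \emph{using} this Corollary, so that citation is circular as written---stick with your alternative (combine $G(L)=\pi_{L^\perp}^*G(\delta_0)$ from Proposition \ref{let} with the commutation of $d^*$ and $\pi_{L^\perp}^*$), which is self-contained and is exactly what the paper intends.
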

\begin{proof} By the same argument as in Proposition \ref{let} it is enough  to prove the case $L=\{0\}\times \bR^{k}$. We use Proposition \ref{let}, the fact that $d^*(\dvol_{L^{\perp}})=0$ and the Leibniz relation for $d^*$ to get the first equality. Computing $\pi_{L^{\perp}}^*(\BS(\delta_0))$ is straightforward.
\end{proof}

\begin{proof} \emph{ of Theorem} \ref{Rn} 
Let $L=\{0\}\times \bR^n$.  We use the isometry $\alpha\in SO(2n)$:
\[\alpha:\bR^n\times \bR^n\ra \bR^n\times \bR^n,\qquad (x,y)\ra \frac{1}{\sqrt{2}}(x+y,y-x)\]
Then  $\alpha (\{0\}\times \bR^{n})=\delta_{\bR^n}$. Note that 
\[\alpha^{-1}(z,u)=\frac{1}{\sqrt{2}}(z-u,z+u)\]
The following commutativity properties are immediate:
\begin{itemize}
\item[(1)] $\alpha_* \circ G= G\circ \alpha_*$
\item[(2)] $d^*\circ G=G\circ d^*$ 
\item[(3)] $d^*\circ \alpha_*=\alpha_*\circ d^*$
\end{itemize}  in order to infer via Corollary \ref{BSL}
\[
\begin{array}{lcl}
\BS(\delta_{\bR^n}) & = & \disp d^*G(\alpha_*L)=\alpha_*d^*G(L)=\alpha_*\pi_1^*(\BS(\delta_0)) \\[0.2cm]
& = & \disp (\alpha^{-1})^*\pi_1^*(\BS(\delta_0))=(\alpha^{-1})_1^*(\BS(\delta_0))
\end{array}
\]
where $(\alpha^{-1})_1(z,u)=2^{-1/2}(z-u)$. By the scaling invariance $\lambda^*\BS(\delta_0)=\BS(\delta_0)$ for each $\lambda \in \R^+$ we get that
\[
(\alpha^{-1})_1^*(\BS(\delta_0))=\beta^*(\BS(\delta_0))
\]
This proves both item (i) and formula (\ref{Rn4}) which is an immediate consequence. Moreover it also proves that $d \BS(\delta_{\bR^n})=\delta_{\bR^n}$, since $0$ is a regular value of $\beta$  and we can use Proposition \ref{P2} and $d\BS(\delta_{0})=\delta_{0}$ which is well-known.

\vspace{0.4cm}
In order to prove item (ii) we see from Proposition \ref{let} that $G(\delta_0)=\frac{1}{(n-2)|\mathbb{S}^{n-1}|}\frac{1}{|x|^{n-2}}dx$. We check that
\begin{equation}\label{eqo1}\sum_{k=0}^n\bG_k=\beta^*(G(\delta_{0}))\end{equation}
by comparing the $(n-k,k)$ pieces of both sides. On the right the sign of $dx_{I^c}\wedge dy_I$ (with $|I|=k$) is 
\[ (-1)^k(-1)^{\epsilon(I^c)}=(-1)^{kn}(-1)^{\epsilon(I)}=\quad \mbox{sign on the left side.}\] It follows from (\ref{eqo1}) that
\begin{equation}\label{eqo2}\sum_{k=0}^nd_y^*\bG_k=\frac{1}{(n-2)|\mathbb{S}^{n-1}|}d_y^*\left(\frac{1}{|x-y|^{n-2}} (dx_1-dy_1)\wedge\ldots \wedge (dx_n-dy_n)\right)\end{equation}
One uses that $d_y^*(f\omega)=-\iota_{\nabla^yf}\omega+fd_y^*\omega$ where $\nabla^yf$ is the projection of $ \nabla f$ onto $\{0\}\times \bR^n$ with $f=\frac{1}{|x-y|^{n-2}}$. Note that since $d_y^*(dx_i)=0=d_y^*(dy_i)$ and
\[\nabla^yf=-(n-2)\sum_{i=1}^n\frac{y_i-x_i}{|x-y|^n}\partial_{y_i}\Rightarrow (dx_i-dy_i)(-\nabla^yf)=(n-2)\frac{x_i-y_i}{|x-y|^n}\]
and so the right hand side of (\ref{eqo2}) equals
\[\frac{1}{|\mathbb{S}^{n-1}|}\frac{1}{|x-y|^{n}}\sum_{i=1}^n(-1)^{i-1}(x_i-y_i)(dx_1-dy_1)\wedge\ldots \wedge\widehat{(dx_i-dy_i)}\wedge \ldots \wedge(dx_n-dy_n)=BS(\delta_{\bR^n})\]
The fact that $d_y^*\bG_{k}$ represents the kernel of $(-1)^nd^*G_k$ is a rather straightforward computation.
\end{proof}

  \section{Initial value problems for the heat equation}\label{S5}

 We will consider the Hodge Laplacian on $k$-forms $\Delta=\Delta_k$ on an oriented Riemannian manifold $M$. For this section our main reference is \cite{BGV}.

 Let $S\in \mathscr{D}'_k(M)$.
  \begin{definition} A solution to an initial value problem (IVP) for the heat equation with initial condition $S$ is  a family of smooth differential forms $\omega_t\in \Omega^{n-k}(M)$ for $t>0$  (alternatively $\omega\in \Omega^{(0,n-k)}(0,\infty)\times M$) such that
  \begin{equation}\label{ivp} \left\{\begin{array}{ccc} \left(\frac{\partial}{\partial t}+\Delta\right)\omega_t&=&0\\
   \disp{\lim_{t\searrow 0}} ~\omega_t=S\end{array}\right.
  \end{equation}
  where $\Delta$ is the Laplacian on $k$-forms and the limit is taken in the sense of currents.
  \end{definition}
  
A construction for the heat kernel of any generalized Laplacian is given in \cite{BGV} Chapter 2. Adapting to the Laplacian on $k$-forms we have the following. For every $t>0$, $p_t^k(x,y)$  is an $(n-k,k)$ form on $M\times M$, i.e. a section of $\Lambda^{n-k}\pi_1^*T^*M\otimes \Lambda^{k}\pi_2^*T^*M$ where $\pi_1,\pi_2:M\times M\ra M$ are the projections onto the factors.

For every $t>0$, one has  the heat operator 
\begin{eqnarray} P_t^k:\Omega^k(M)\ra \Omega^k(M),\qquad P_t^k(\omega):=e^{-t\Delta}(\omega):=(\pi_2)_*(p_t^k\wedge\pi_1^*\omega)\qquad \mbox{i.e.}\\
P_t^k(\omega)=\left\{y\ra \int_{
\pi_2}p_t^k(x,y)\wedge\omega(x)=\int_{M}^{dx}p_t^k(x,y)\wedge \omega(x)\right\}\qquad ~\end{eqnarray}
where  $dx$ has the unorthodox meaning of "integration with respect to the first variable".

A "functional calculus" construction of $p_t$ starts with the spectral decomposition of $\Delta_k$, i.e. with the eigenvalues $\sigma(\Delta_k):=\{\lambda_i~|~i\in \bN\}$ with corresponding eigenvectors which are smooth forms $e_i\in \Omega^k(M)$. Then
\begin{equation}\label{ptk}p_t^k(x,y)=(-1)^{kn}\sum_{i\in \bN}e^{-t\lambda_i}\pi_1^*(*e_i)\wedge \pi_2^*(e_i)\footnote{The sign arises from our pull back push forward convention and they are consistent with Section \ref{rnrn}.}
\end{equation}

This however is of limited use. We need more refined properties that come out from the parametrix construction of the heat kernel.  The reference \cite{BGV}  lists the following axiomatic properties for the heat kernels $p_t^k(x,y)$:
\begin{itemize}
\item[(i)] $p_t^k$ is  $C^1$ in $t$ and $C^2$ in $(x,y)$ for all $t>0$.
\item[(ii)] It holds
\[ (\partial_t+\Delta_y)p_t^k=0
\]
where $\Delta_y$ is the laplacian on $k$-forms in the second variable. 
\item[(iii)] For every smooth $k$-form $\omega$ on $M$ the family of $k$-forms $P_t^k(\omega)$  converges uniformly in the  $C^0$ norm to $\omega$ when $t\ra 0$.
\end{itemize} 
The operator $\Delta_y$ and similarly $\Delta_x$ is defined on $\Omega^{i,j}(M\times M)$ as follows
\[\Delta_y(\pi_1^*\omega\wedge\pi_2^*\eta)=\pi_1^*\omega\wedge\pi_2^*(\Delta\eta))\]
\[\Delta_x(\pi_1^*\omega\wedge\pi_2^*\eta)=\pi_1^*(\Delta \omega)\wedge\pi_2^*\eta\]
This definition is valid only for forms of "product type" in $\Omega^{i,j}(M\times M)$. However, they represent a dense subset in the space of all $(i,j)$-forms with the $C^{\infty}$ topology. Hence one extends these operators by continuity to all of $\Omega^{i,j}(M\times M)$. Fur further use we will also need the similarly defined $d_x$,$d_y$ $d^*_x$  and $d^*_y$
\[d_x(\pi_1^*\omega\wedge\pi_2^*\eta):=\pi_1^*(d\omega)\wedge\pi_2^*\eta \qquad d_x^*(\pi_1^*\omega\wedge\pi_2^*\eta):=\pi_1^*(d^*\omega)\wedge\pi_2^*\eta\]
\begin{equation}\label{dy} d_x(\pi_2^*\eta\wedge \pi_1^*\omega):=\pi_2^*(d\eta)\wedge\pi_1^*\omega \qquad  d_y^*(\pi_2^*\eta\wedge \pi_1^*\omega)=\pi_2^*(d^*\eta)\wedge \pi_1^*\omega.\end{equation}
We have the following fundamental relations
\begin{prop}  The following is true on $M\times M$
\begin{itemize}
\item[(i)] $d=d_x+d_y$
\item[(ii)] $d^*=d^*_x+d_y^*$
\item[(iii)] $\Delta=\Delta_x+\Delta_y$ 
\end{itemize}
As a consequence of (iii), $\Delta$ preserves the bidegree since both $\Delta_x$ and $\Delta_y$ do.
\end{prop}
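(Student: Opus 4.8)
The plan is to verify all three identities first on forms of \emph{product type} $\pi_1^*\omega\wedge\pi_2^*\eta$, with $\omega\in\Omega^i(M)$ and $\eta\in\Omega^j(M)$, and then extend to all of $\Omega^{i,j}(M\times M)$ by the density of product type forms (already noted in the text) together with the $C^\infty$-continuity of the differential operators $d$, $d^*$, $\Delta$ and $d_x,d_y,d_x^*,d_y^*$ — this is exactly the passage that was used to \emph{define} $d_x,d_y,\dots$ in the first place. For (i), the graded Leibniz rule gives
\[
d(\pi_1^*\omega\wedge\pi_2^*\eta)=\pi_1^*(d\omega)\wedge\pi_2^*\eta+(-1)^i\,\pi_1^*\omega\wedge\pi_2^*(d\eta),
\]
and a short Koszul-sign computation — moving the $\pi_2$ factor to the front so as to match the convention in \eqref{dy} — identifies the first summand with $d_x(\pi_1^*\omega\wedge\pi_2^*\eta)$ and the second with $d_y(\pi_1^*\omega\wedge\pi_2^*\eta)$, where in particular $d_y$ carries the sign $(-1)^i$. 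Since $d_x$ raises the bidegree by $(1,0)$ and $d_y$ by $(0,1)$, these are precisely the two bidegree components of $d$.

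For (ii), the cleanest route is to pass to $L^2(M\times M)$-adjoints: $M\times M$ is compact, $d^*$ is the formal adjoint of $d$, and on product type forms the $L^2$ inner product on $M\times M$ splits as a product of the $L^2$ inner products on the two factors (this is where the product metric is used). Hence $d_x^*$ as defined in \eqref{dy} is the formal adjoint of $d_x$ and $d_y^*$ that of $d_y$, and taking adjoints in the identity $d=d_x+d_y$ yields $d^*=d_x^*+d_y^*$. (One could equivalently argue from $d^*=\pm *d*$ using that the Hodge star of the product metric sends $\pi_1^*\omega\wedge\pi_2^*\eta$ to $\pi_1^*(*\omega)\wedge\pi_2^*(*\eta)$ up to an explicit sign, reducing again to the single-factor identities.) As before $d_x^*$ lowers the bidegree by $(1,0)$ and $d_y^*$ by $(0,1)$.

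For (iii), I would expand $\Delta=dd^*+d^*d$ using (i) and (ii):
\[
\Delta=(d_xd_x^*+d_x^*d_x)+(d_yd_y^*+d_y^*d_y)+(d_xd_y^*+d_y^*d_x)+(d_yd_x^*+d_x^*d_y).
\]
Evaluating on a product type form, the first bracket sends $\pi_1^*\omega\wedge\pi_2^*\eta$ to $\pi_1^*(\Delta\omega)\wedge\pi_2^*\eta$ (no signs, since $d_x,d_x^*$ act only on the first factor), hence equals $\Delta_x$; likewise the second bracket equals $\Delta_y$. It remains to kill the two cross terms, and here one uses that $d_x$ and $d_y^*$ act on different tensor factors while both have odd total degree ($+1$ and $-1$), so they anticommute: on $\pi_1^*\omega\wedge\pi_2^*\eta$ one computes $d_xd_y^*$ and $d_y^*d_x$ to be $(-1)^i\pi_1^*(d\omega)\wedge\pi_2^*(d^*\eta)$ and $(-1)^{i+1}\pi_1^*(d\omega)\wedge\pi_2^*(d^*\eta)$, which cancel, and similarly $d_yd_x^*+d_x^*d_y=0$. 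Therefore $\Delta=\Delta_x+\Delta_y$, and since $\Delta_x$ and $\Delta_y$ each visibly preserve the bidegree, so does $\Delta$. The computations themselves are routine; the only places that demand care are the Koszul-sign bookkeeping forced by the factor-ordering convention in \eqref{dy} (getting the $(-1)^i$ in $d_y$ and $d_y^*$ right), the cancellation of the cross terms in (iii), and citing correctly the density of product type forms in $\Omega^{i,j}(M\times M)$ in the $C^\infty$ topology, which is what legitimizes the reduction to product type forms throughout.
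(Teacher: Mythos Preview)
Your proposal is correct and follows the paper's approach: verify each identity on product forms $\pi_1^*\omega\wedge\pi_2^*\eta$ and extend by density in the $C^\infty$ topology. The only cosmetic difference is that for (ii) and (iii) the paper works directly from the Hodge star relation $\hat{*}(\pi_1^*\omega\wedge\pi_2^*\gamma)=(-1)^{(n-|\omega|)|\gamma|}\pi_1^*(*\omega)\wedge\pi_2^*(*\gamma)$ (and cites \cite{BGV}, Corollary 3.56, for (iii)), while you pass to $L^2$-adjoints and then expand $\Delta=dd^*+d^*d$ and cancel the cross terms by anticommutation; both routes are standard and equivalent.
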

\begin{proof} All identities are proved on product forms. The first is trivial. The other two need the following straightforward relation between Hodge star $\hat{*}$ on $M\times M$ and $*$ on $M$ 
 
\[\hat{*}\pi_1^*\omega\wedge \pi_2^*\gamma=(-1)^{(n-|\omega|)|\gamma|}\pi_1^*(*\omega)\wedge \pi_2^*(*\gamma)\]

Note that (iii) appears in Corollary 3.56 of \cite{BGV}.
\end{proof}

It turns out that (i)-(iii) can be strengthen to:
\begin{itemize}
\item[(is)] $p_t^k$ is  $C^{\infty}$ on $((0,\infty)\times M\times M)$.

This is a consequence of Lemma 6.1, Ch.III in \cite{LM} for example.
\item[(iis)] 
\[ (\partial_t+\Delta_y)p_t^k=0=(\partial_t+\Delta_x)p_t^k
\]

This follows from the relation $R^*\Delta_x=\Delta_yR^*$  where $R(x,y)=(y,x)$ is the reflection and from
 \begin{equation}\label{Rpt} R^*p_t=(-1)^np_t\end{equation}
where $p_t:=\sum_{k=0}^np_t^k$ is the full heat kernel and (\ref{Rpt}) is an immediate consequence of (\ref{ptk}).
\item[(iiis)]  $P_t^k(\omega)$  converges uniformly to $\omega$ in \emph{every  $C^l$ norm} with $l\geq 0$.

For this, one combines item (2) of Theorem 2.20 in \cite{BGV} and item (2) of Theorem 2.23 in \cite{BGV}. The estimate in the latter coupled with the fact that the operations of wedging with a fixed smooth form and integrating along the fibers of the projections $M\times M\ra M$ are operations which are continuous in the $C^l$ topology proves the uniform convergence for the $C^l$-norm.
\end{itemize} 

\vspace{0.5cm}

If $T\in \mathscr{D}'_{k}(M)$ is an $k$ current then $\pi_2^*T$ is the following $(n,k)$ current on $M\times M$:
\[\pi_2^*T(\phi)=T((\pi_2)_*\phi)=T\left(\int_M^{dx}\phi(x,y) \right)
\]

Because $p_t$ is smooth and $M$ is compact,  the definition of $P_t^k$ extends to an operator $P_t^k: \mathscr{D}'_{k}(M)\ra  \mathscr{D}'_{k}(M)$. 
\[P_t^k(T)(\phi):= T(P_t^k(\phi))\]
This can be written as
\begin{equation}\label{Ptkeq}P_t^k(T)(\phi)=T((\pi_2)_*(p_t^k\wedge\pi_1^*\phi ))=\pi_2^*T( p_t^k\wedge \pi_1^*\phi)=(\pi_1)_*(p_t^k\wedge \pi_2^*T)(\phi)\end{equation}
 where $p_t^k$  is the $(n-k,k)$ kernel of $P_t^k: \Omega^{k}(M)\ra \Omega^{k}(M)$.

\begin{example}
When $T=L$ is a $k$-submanifold, let $\pi_1^L:M\times L\ra M$ be the restriction of $\pi_1$. Then, by Fubini (see (\ref{POm})) for every $\phi\in \Omega^k(M)$:
\[P_t(L)(\phi)=\int_L\int_{\pi_2}p_t^k\wedge \pi_1^*\phi=\int_{M\times L}p_t^k\wedge\pi_1^*\phi=(-1)^{kn}\int_M\left(\int_{\pi_1^L}p_t^k\right)\wedge \phi
\]
Hence $P_t(L)$ is a current represented by the smooth $(n-k)$-form on $M$:
\begin{equation}\label{beq1}{(-1)^{kn}\int_L^{dy}p_t(x,y)=(\pi_1)_*\left(p_t\bigr|_{M\times L}\right)}
\end{equation}
If we consider the dependence in $t$ as well, this is a smooth form on $[0,\infty)\times M\setminus \{0\}\times L$.
\end{example}

\begin{lemma} 
	If $(\xi_{j})_{j\in \mathbb{N}}$ is a sequence of degree $n-k$ forms converging weakly to $T\in\mathscr{D}'_k(M)$ then
\[ P_t^k(T)=\lim_{j\ra \infty}P_t^{n-k}(\xi_j)
\]
also weakly.
\end{lemma}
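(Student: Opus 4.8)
The plan is to recognize that this lemma is nothing but the duality argument already used in the proof of Lemma \ref{limd}, applied with the heat operator $P_t^k$ in the role of $d, d^*, G$. First I would unwind the statement: the hypothesis ``$\xi_j \to T$ weakly'' means that the currents $T_{\xi_j} \in \mathscr{D}'_k(M)$ given by $T_{\xi_j}(\eta) = \int_M \xi_j \wedge \eta$ converge to $T$ in $\mathscr{D}'_k(M)$, and the desired conclusion is $T_{P_t^{n-k}(\xi_j)} \to P_t^k(T)$ weakly. So the proof splits into two steps: (1) the compatibility identity $P_t^k(T_\xi) = T_{P_t^{n-k}(\xi)}$ for every smooth $(n-k)$-form $\xi$ (the heat-operator analogue of the relations $\widetilde{d}T_\omega = (-1)^{k+1}T_{d\omega}$, $G_k T_\omega = T_{G_k\omega}$, etc., from Lemma \ref{sgnl}); and (2) a passage to the limit moving $P_t^k$ onto the test form.

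For step (1), I would fix $\phi \in \Omega^k(M)$ and compute, straight from the definition $P_t^k(T)(\phi) := T(P_t^k(\phi))$, that $P_t^k(T_\xi)(\phi) = T_\xi(P_t^k\phi) = \int_M \xi \wedge P_t^k(\phi)$, then identify this with $\int_M P_t^{n-k}(\xi)\wedge\phi = T_{P_t^{n-k}(\xi)}(\phi)$. The identity $\int_M \xi \wedge P_t^k(\phi) = \int_M P_t^{n-k}(\xi)\wedge\phi$ is self-adjointness of the heat semigroup in disguise: writing $\int_M \alpha\wedge\beta = \langle \alpha, *^{-1}\beta\rangle_{L^2}$ when $\deg\alpha + \deg\beta = n$, it becomes $\langle \xi, *^{-1}P_t^k\phi\rangle_{L^2} = \langle \xi, P_t^{n-k}(*^{-1}\phi)\rangle_{L^2} = \langle P_t^{n-k}\xi, *^{-1}\phi\rangle_{L^2}$, where the middle equality uses the commutation $*\,e^{-t\Delta_{n-k}}*^{-1} = e^{-t\Delta_k}$, obtained by functional calculus from $*\Delta_{n-k}*^{-1} = \Delta_k$ (a restatement of \eqref{delst}), and the last equality is self-adjointness of $P_t^{n-k}$ on $(n-k)$-forms. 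Alternatively, this identity can be read off directly from the symmetry $R^*p_t = (-1)^n p_t$ of the full heat kernel \eqref{Rpt} together with Fubini, which is perhaps the cleanest route and avoids the star bookkeeping.

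Step (2) is then immediate. Fixing $\phi \in \Omega^k(M)$, property (is) guarantees that $P_t^k(\phi)$ is again a smooth $k$-form, hence a legitimate test form — here compactness of $M$ is what makes this work, in contrast with the support issues met in the $\bR^n$ discussion. Consequently $T_{P_t^{n-k}(\xi_j)}(\phi) = P_t^k(T_{\xi_j})(\phi) = T_{\xi_j}(P_t^k\phi) \longrightarrow T(P_t^k\phi) = P_t^k(T)(\phi)$ as $j \to \infty$, the convergence being exactly the weak-convergence hypothesis applied to the test form $P_t^k(\phi)$; since $\phi$ is arbitrary, this is the claim. I do not expect any genuine obstacle: the only point demanding a little care is the sign-and-degree bookkeeping in the compatibility identity of step (1), and observing that $P_t^k$ genuinely maps test forms to test forms, so that the mechanism of Lemma \ref{limd} applies verbatim.
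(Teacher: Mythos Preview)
Your proposal is correct and follows essentially the same approach as the paper: both establish the compatibility $\int_M P_t^{n-k}(\xi)\wedge\phi = \int_M \xi\wedge P_t^k(\phi)$ via self-adjointness of $e^{-t\Delta}$ together with the commutation $P_t^{n-k}*=*P_t^k$ (your $*e^{-t\Delta_{n-k}}*^{-1}=e^{-t\Delta_k}$), then pass to the limit using that $P_t^k(\phi)$ is a legitimate test form. Your alternative route through the kernel symmetry $R^*p_t=(-1)^np_t$ is a valid shortcut the paper does not take, but the core argument is the same.
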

\begin{proof} The commutativity of the Laplacian with the Hodge star $\Delta_{n-k} *=*\Delta_k$ for smooth forms implies  
\[P_t^{n-k}*=*P_t^k.\] We will also use the fact that $e^{-t\Delta}$ is self-adjoint for a fixed $t$.
\[ \int_{M}P_t^{n-k}(\xi_j)\wedge\omega=(-1)^{k(n-k)}\int_M\langle P_t^{n-k}(\xi_j),*\omega\rangle=(-1)^{k(n-k)}\int_M\langle \xi_j,P_t^{n-k}*\omega\rangle=
\]
\[=(-1)^{k(n-k)}\int_M\langle \xi_j,*P_t^{k}\omega\rangle=\int_M\xi_j\wedge P_t^{k}\omega\overset{j\ra \infty}{\longrightarrow} T(P_t^k(\omega))=P_t^k(T)(\omega).
\]
\end{proof}

  \begin{prop}\label{PtT} Let $T\in\mathscr{D}'_k(M)$. For every $t>0$, the current $P_t^k(T)$ is representable by a smooth form. If $T=L$ is an oriented smooth, compact submanifold then the form is (\ref{beq1}).
\end{prop}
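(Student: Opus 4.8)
The plan is to establish the general smoothness statement first, and then to note that for $T=L$ the representing form has already been written down in the Example immediately preceding the statement.

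By definition $P_t^k(T)(\phi)=T(P_t^k(\phi))$ for $\phi\in\Omega^k(M)$, which by (\ref{Ptkeq}) is $(\pi_1)_*(p_t^k\wedge\pi_2^*T)(\phi)$. What the proposition asserts is the classical principle that an operator whose Schwartz kernel is a \emph{smooth} form is a smoothing operator; the smoothness of $p_t^k$ holds for every $t>0$ by property (is), which is exactly the standing hypothesis (and fails as $t\to 0$, where $p_t^k$ concentrates on the diagonal). Since being representable by a smooth form is a local statement on $M$, a partition of unity reduces the proof to showing that, for $\phi$ supported in a coordinate chart, the functional $\phi\mapsto P_t^k(T)(\phi)$ is represented near any $y_0\in M$ by a form with $C^\infty$ coefficients.

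The computation is direct. For $\phi\in\Omega^k(M)$ one has $P_t^k(\phi)(y)=\int_M^{dx}p_t^k(x,y)\wedge\phi(x)$, a $k$-form in $y$ depending smoothly on $y$ (differentiate under the integral sign, $M$ compact). Applying $T$ in the $y$-slot and, formally, commuting it with the $x$-integration gives
\[
P_t^k(T)(\phi)=T\big(P_t^k(\phi)\big)=\int_M\omega_t\wedge\phi,\qquad \omega_t(x):=\big\langle T_y,\ p_t^k(x,y)\big\rangle,
\]
where $\omega_t$ is the $(n-k)$-form in $x$ obtained by letting $T$ act on the $k$-form $y\mapsto p_t^k(x,y)$ (whose coefficients are $(n-k)$-forms in $x$). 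So the whole point is that $\omega_t$ is a \emph{smooth} form. This follows from the local finite order of currents together with the joint smoothness of $p_t^k$: in a chart, for fixed $x$ the form $y\mapsto p_t^k(x,y)$ is a smooth test form, its $x$-derivatives (to any order) are again jointly smooth test forms in $y$, and the finite-order estimate for $T$ applied to these lets one differentiate under the pairing; hence $\omega_t$ has $C^\infty$ coefficients and the commutation above is legitimate. Uniqueness of the representing form then glues the local forms to a global smooth $(n-k)$-form representing $P_t^k(T)$.

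When $T=L$ is an oriented compact $k$-submanifold there is nothing further to do: the Example preceding the statement computes, by Fubini, $P_t(L)(\phi)=(-1)^{kn}\int_M\big(\int_{\pi_1^L}p_t^k\big)\wedge\phi$, so $P_t^k(L)$ is represented by $(\pi_1^L)_*(p_t^k|_{M\times L})$ as in (\ref{beq1}); this is a smooth form because fibre integration along the proper submersion $\pi_1^L:M\times L\to M$, whose fibre $L$ is compact, carries smooth forms to smooth forms. I expect the only genuine obstacle to be the justification that $x\mapsto\langle T_y,p_t^k(x,y)\rangle$ is smooth, i.e. differentiation under the current pairing; once one records the local finite order of currents and the joint $C^\infty$ regularity of $p_t^k$ for $t>0$ this is routine, the remainder being signs and a partition of unity. (An alternative, of comparable length: pick smooth $\xi_j\to T$ weakly, use the Lemma just before the statement to get $P_t^k(T)=\lim_j P_t^{n-k}(\xi_j)$ weakly, then promote this to $C^\infty_{\loc}$ convergence via the smoothing bounds for $e^{-t\Delta}$; the direct argument is preferable since it also exhibits the representing form.)
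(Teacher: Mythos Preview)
Your proposal is correct and follows essentially the same route as the paper: define the candidate representing form as $x\mapsto T$ acting on the smooth $k$-form $y\mapsto p_t^k(x,y)$ (with values in $\Lambda^{n-k}T_x^*M$), invoke smoothness of the kernel plus the finite-order property of currents to conclude this is a smooth $(n-k)$-form (the paper cites H\"ormander, Theorem~2.1.3, which packages exactly your ``differentiate under the pairing'' step), and then justify the commutation of $T$ with the $x$-integration (the paper defers this to Proposition~\ref{Repfor}). The only cosmetic differences are that the paper handles the vector-valued pairing by fixing a basis of $\Lambda^{n-k}T_x^*M$ rather than via a partition of unity, and it tracks the sign $(-1)^k$ explicitly in~(\ref{MTp}).
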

\begin{proof} Let $\iota_x:\{x\}\times M\hookrightarrow M\times M$ be the natural inclusion for a choice of point $x\in M$. Then $p_t\circ\iota_x$ is a smooth  $k$-form on $M$ with values in the vector space $V:=\Lambda^{n-k}T^*_xM$, i.e. a section of $\Lambda^{k}T^*M\otimes V$.

Choose a basis $e_1,\ldots, e_l$ for $V$. Every degree $k$-form $\omega$ with values in $V$ can be written as a 
finite 
\[ \omega=\sum_{i=1}^l \omega_i\otimes e_i\]
for some $k$-forms $\omega_i$. Given $T\in\mathscr{D}'_k(M)$ define 
\[T(\omega):=\sum_i T(\omega_i)e_i\in V\]
It is immediate that this does not depend on the choice of the basis $\{e_1,\ldots e_l\}$. Due to the smoothness of $p_t$ an obvious generalization of  Theorem 2.1.3 from \cite{H1} shows that the correspondence 
\[ x\ra T(p_t\circ\iota_x)
\]
is a \emph{smooth} $(n-k)$-form on $M$, smoothly depending on $t$.

We then check that 
\begin{equation}\label{MTp} (-1)^k\int_M T(p_t\circ \iota_x)\wedge \phi(x)~dx=T\left(\int_Mp_t(\cdot,x)\wedge \phi(x)~dx\right) 
\end{equation}
since this equals $P_t(T)(\phi)$ by (\ref{Ptkeq}). The justification for (\ref{MTp}) is in Proposition \ref{Repfor}.
\end{proof}
  \begin{theorem}\label{exunisol} Let $M$ be a compact, oriented Riemannian manifold. Let $T\in\mathscr{D}'_k(M)$ be a current. Then a smooth solution $\omega_t:= e^{-t\Delta} (T)\in\Omega^{0,n-k}((0,\infty)\times M)$ of the IVP exists and is unique.    
  If $S=L$ is a smooth compact submanifold, the solution is represented by the form (\ref{beq1})  and hence it is smooth on $[0,\infty)\times M\setminus \{0\}\times L$.
    \end{theorem}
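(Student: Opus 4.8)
The plan is to exhibit $\omega_t:=e^{-t\Delta}(T)=P_t^k(T)$ as the solution, check the two defining properties, prove it is the only solution, and then read off the assertions for $T=L$ from results already in place. By Proposition \ref{PtT}, for each $t>0$ the current $P_t^k(T)$ is represented by a smooth $(n-k)$-form depending smoothly on $(t,x)\in(0,\infty)\times M$, so indeed $\omega_t\in\Omega^{0,n-k}((0,\infty)\times M)$. To see that it solves the heat equation I would use the spectral expansion (\ref{ptk}): for $t>0$ that series converges in $C^\infty(M\times M)$ — the $C^l$-seminorms of the eigenforms $e_i$ grow polynomially in $\lambda_i$ by elliptic estimates, and $T(e_i)$ grows polynomially because $T$ has finite order on the compact $M$ — hence $\omega_t=\mathrm{const}\cdot\sum_i e^{-t\lambda_i}\,T(e_i)\,{*}e_i$ converges in $C^\infty$, and term-by-term differentiation together with $\Delta(*e_i)=*\Delta e_i=\lambda_i*e_i$ gives $(\partial_t+\Delta)\omega_t=0$. (Alternatively, one differentiates under the current sign in $\omega_t(x)=\pm\,T(p_t^k\circ\iota_x)$ and invokes $(\partial_t+\Delta_x)p_t^k=0$, i.e. property (iis).) For the initial condition, $\omega_t(\phi)=T(P_t^k(\phi))$ for every $\phi\in\Omega^k(M)$, and since $P_t^k(\phi)\to\phi$ in every $C^l$-norm as $t\to0$ by property (iiis) — hence in the natural Fr\'echet topology of $\Omega^k(M)$ — continuity of the current $T$ yields $\omega_t(\phi)\to T(\phi)$, i.e. $\lim_{t\to0}\omega_t=T$ in $\mathscr{D}'_k(M)$.

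For uniqueness, let $\omega_t,\omega_t'$ be two solutions, put $\eta_t:=\omega_t-\omega_t'$ (so $(\partial_t+\Delta)\eta_t=0$ for $t>0$ and $\eta_t\to0$ weakly), fix $t_0>0$ and $\phi\in\Omega^k(M)$, and set $h(s):=\int_M\eta_s\wedge P_{t_0-s}^k(\phi)$ for $s\in(0,t_0]$. Using $\partial_s\eta_s=-\Delta\eta_s$, $\tfrac{d}{ds}P_{t_0-s}^k(\phi)=\Delta P_{t_0-s}^k(\phi)$, and the self-adjointness of $\Delta$ for the pairing $(\alpha,\beta)\mapsto\int_M\alpha\wedge\beta$ on complementary degrees (which follows from $*\Delta=\Delta*$ and self-adjointness of $\Delta$ for $\langle\cdot,\cdot\rangle$, exactly as in the proof of the Lemma preceding Proposition \ref{PtT}), one computes $h'(s)=0$, so $h$ is constant on $(0,t_0]$. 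On the other hand, the weakly null family $\{\eta_s\}$ is pointwise bounded on each interval $(0,\varepsilon]$, so Banach--Steinhaus on the Fr\'echet space $\Omega^k(M)$ yields $|\eta_s(\psi)|\le C\,\|\psi\|_{C^N}$ with $C,N$ independent of small $s$; combining this with $P_{t_0-s}^k(\phi)\to P_{t_0}^k(\phi)$ in $C^\infty$ and $\eta_s\to0$ weakly gives $\lim_{s\to0^+}h(s)=0$. Hence $h\equiv0$, so $h(t_0)=\int_M\eta_{t_0}\wedge\phi=0$; as $\phi$ and $t_0>0$ are arbitrary, $\eta\equiv0$. The main obstacle is precisely this limit: the identity $h'(s)=0$ is a short computation once the pairing is set up, but passing $s\to0^+$ genuinely requires upgrading the currential convergence $\eta_s\to0$, via Banach--Steinhaus, to a uniform seminorm bound, since otherwise the perturbation $P_{t_0-s}^k(\phi)-P_{t_0}^k(\phi)$ cannot be absorbed against the forms $\eta_s$, which need not be bounded in $C^0$ as $s\to 0$.

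Finally, when $S=L$ is a smooth oriented compact submanifold, Proposition \ref{PtT} already identifies $P_t^k(L)$ with the form (\ref{beq1}), namely $(\pi_1)_*\!\big(p_t^k|_{M\times L}\big)=(-1)^{kn}\int_L^{dy}p_t^k(x,y)$. That this extends to a $C^\infty$ form on $[0,\infty)\times M\setminus\{0\}\times L$ is the content of the Example preceding Proposition \ref{PtT}: $p_t^k$ is $C^\infty$ on $(0,\infty)\times M\times M$, and the standard off-diagonal decay of the heat kernel coming from the parametrix construction of \cite{BGV} ($p_t^k$ and all its derivatives are $O(t^\infty)$ uniformly on $\{d(x,y)\ge\varepsilon\}$) forces $(\pi_1)_*(p_t^k|_{M\times L})$ together with all its derivatives to tend to $0$ as $t\to0^+$, locally uniformly on $\{x : d(x,L)>0\}$; so it extends by $0$ across $t=0$ off $L$. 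Existence being thus an assembly of Proposition \ref{PtT} with properties (is)--(iiis), this completes the plan.
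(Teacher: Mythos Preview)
Your proof is correct and follows essentially the same architecture as the paper's: existence via the representation $\omega_t(x)=\pm T(p_t\circ\iota_x)$ (the paper uses this directly via Proposition~\ref{Repfor} and property (iis), while you give the spectral expansion as the primary route and mention the direct one as an alternative), the initial condition via (iiis) and continuity of $T$, and uniqueness via the ``energy'' function $\theta\mapsto\int_M\eta_\theta\,\#\,P_{t-\theta}(\omega)$ shown to be constant.

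The one genuine difference worth noting is in the uniqueness step. The paper uses the $L^2$-pairing $\int_M\langle\eta_\theta,P_{t-\theta}(\omega)\rangle$ and simply asserts ``$f(0)=0$ due to the weak convergence of $\eta_t$ to $0$'', glossing over the fact that what is needed is $\lim_{\theta\to0^+}f(\theta)=0$, which requires controlling $\int_M\langle\eta_\theta,P_{t-\theta}(\omega)-P_t(\omega)\rangle$ against a family $\{\eta_\theta\}$ that is only known to converge weakly. You correctly identify and fill this gap via Banach--Steinhaus, upgrading weak convergence to a uniform seminorm bound. So your argument is in fact more complete on this point than the paper's.
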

  \begin{proof} Following the proof of  Proposition \ref{PtT}, more exactly (\ref{MTp}) let $\omega_t(x):=(-1)^k T(p_t\circ\iota_x)$. By Proposition \ref{Repfor} and the immediate property that $T$ commutes with $\partial_t$ we get:
  \[ (\partial_t+\Delta) T(p_t\circ\iota_{(\cdot)})=T(((\partial_t+\Delta_x) p_t)\circ \iota_{(\cdot)})=0.
  \]
  The last equality holds due to property (iis) of heat kernels. On the other hand, by property (iiis) of $p_t$ we have
  \[ \lim_{t\ra 0}P_t(\phi)=\phi,\qquad \forall \phi
  \]
 in the $C^{\infty}$ topology. By the continuity of $T$ we get
  \begin{equation}\label{beq11}\lim_{t\ra 0}T(P_t(\phi))=T(\phi)\end{equation}
  By  (\ref{MTp}), $T(P_t(\phi))=(-1)^k\int_M T(p_t\circ \iota_{(\cdot)})\wedge \phi$ and relation (\ref{beq11}) says that $(-1)^kT(p_t\circ \iota_{(\cdot)})$ converges weakly to $T$, i.e. the initial condition is fulfilled. 
  
Uniqueness is essentially Lemma 2.16 in \cite{BGV}.  If $\eta_t$ is a family of smooth forms  that converges weakly to $0$ and  such that 
\[(\partial_{\theta}+\Delta)\eta_{\theta}=0\] then $\eta_t\equiv 0$. Let $t$ be fixed. Let $\omega$ be a smooth $n-k$ form. Consider the function
  \[f(\theta)=\int_M\langle\eta_{\theta},P_{t-\theta}^{n-k}(\omega)\rangle
  \]
  for $\theta\in[0,t)$. We have that $f(t)=\lim_{\theta\searrow t}f(\theta)=\int_M\eta_t\wedge \omega$ due to $\lim_{\theta\ra 0} P_{\theta}(\omega)=\omega$ uniformly. On the other hand $f(0)=0$ due to the weak convergence of $\eta_t$ to $0$ when $t\ra 0$. The claim is proved if we can show that $f'(\theta)=0$. We have
  \[ f'(\theta)=\int_{M}\langle \partial_{\theta}\eta_{\theta},P_{t-\theta}^{n-k}(\omega)\rangle-\langle\eta_{\theta},-\Delta(P_{t-\theta}^{n-k}(\omega))\rangle\]
  where we used that $\partial_t \gamma=-\Delta \gamma$ for $\gamma=P_{t}(\omega)$. But using that $\Delta$ is self-adjoint we get
  \[f'(\theta)=\int_M\langle (\partial_{\theta}+\Delta)\eta_{\theta},P_{t-\theta}^{n-k}(\omega)\rangle=0
  \]
  \end{proof} 

\subsection{The Green and the heat kernels}
 We now recall a definition.
  \begin{definition} A current $T\in\mathscr{D}'_k(M)$ is representable by integration if there exists a Radon measure $\mu$ on $M$ and a section $\xi:M\ra \Lambda^kTM$ which is $\mu$-measurable such that $|\xi|=1$  $\mu$-a.e. and 
  \[T(\omega)=\int_M\omega(\xi)~d\mu,\qquad\forall \omega\in \Omega^k(M)
  \]
  \end{definition}

  \begin{example}\begin{itemize} \label{ex12}
  \item[(1)]  A rectifiable current of finite mass is in particular a current representable by integration.
  \item[(2)] Let $\omega$ be a form with $L^1$ integrable coefficients on the compact manifold $M$. Then $T_{\omega}(\eta):=\int_M\omega\wedge \eta$ is a current representable by integration, since it can be written as
  \[ T_{\omega}(\eta)=\int_M\eta((*\omega)^{\sharp})~\dvol_M \]
where $\sharp$ is the musical isomorphism.
  \end{itemize}
  \end{example}

The following result is standard
 \begin{theorem} \label{prevthm}
 The following relation holds irrespective of $\dim{M}$.
   \[G_k(\omega)=\int_0^ {\infty}e^ {-t\Delta}\omega~ dt\]
   for every $\omega\in L^ 2(\Omega^ k(M))$ such that $\omega\in \Imag \Delta.$ 
   
   The integral can be understood as both the operator $f(\Delta)$ applied to $
   \omega$ for \[f(x)=\left\{\begin{array}{cc}\displaystyle \int_0^ {
   \infty}e^ {-tx}~dt & x>0\\
 0& x\leq 0\end{array}\right.\]
\noindent and as the Bochner integral for the family of forms $t\ra e^ {-t\Delta}\omega$ with values in $L^ 2(\Omega^k(M))$.
 
 When $\omega$ is smooth, $G(\omega)$ is smooth.
 \end{theorem}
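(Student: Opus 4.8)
The plan is to reduce everything to the spectral decomposition of $\Delta=\Delta_k$ on $L^2(\Omega^k(M))$. First I would invoke that, $M$ being compact, $\Delta$ has discrete spectrum $0=\lambda_0\le\lambda_1\le\cdots\to\infty$ with an $L^2$-orthonormal basis of smooth eigenforms $e_i$ (as in the functional-calculus construction recalled around \eqref{ptk}), and — crucially — that the smallest \emph{positive} eigenvalue $\nu:=\min\{\lambda_i:\lambda_i>0\}$ is strictly positive. Writing $\omega=\sum_i c_ie_i$ with $c_i=\langle\omega,e_i\rangle_{L^2}$, the assumption $\omega\in\Imag\Delta=(\Ker\Delta)^\perp$ forces $c_i=0$ whenever $\lambda_i=0$; hence $\omega=\sum_{\lambda_i>0}c_ie_i$ and, for $t>0$,
\[
\|e^{-t\Delta}\omega\|_{L^2}^2=\sum_{\lambda_i>0}e^{-2t\lambda_i}|c_i|^2\le e^{-2\nu t}\,\|\omega\|_{L^2}^2 .
\]

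For the Bochner-integral reading I would argue as follows. The curve $t\mapsto e^{-t\Delta}\omega$ is continuous on $[0,\infty)$ with values in $L^2(\Omega^k(M))$ (indeed smooth for $t>0$), and the displayed estimate gives $\int_0^\infty\|e^{-t\Delta}\omega\|_{L^2}\,dt\le\nu^{-1}\|\omega\|_{L^2}<\infty$, so the curve is Bochner integrable on $(0,\infty)$. Since bounded linear functionals commute with the Bochner integral, pairing $\int_0^\infty e^{-t\Delta}\omega\,dt$ with each $e_i$ gives $\int_0^\infty e^{-t\lambda_i}c_i\,dt=\lambda_i^{-1}c_i$ when $\lambda_i>0$ and $\int_0^\infty\langle\omega,e_i\rangle\,dt=0$ when $\lambda_i=0$; thus $\int_0^\infty e^{-t\Delta}\omega\,dt=\sum_{\lambda_i>0}\lambda_i^{-1}c_ie_i$, which by definition of the Green operator (the inverse of $\Delta$ on $\Imag\Delta$) is precisely $G_k(\omega)$.

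The functional-calculus reading I would then dispatch directly from the spectral theorem: with $f$ as in the statement, $f(\Delta)\omega=\sum_i f(\lambda_i)c_ie_i$ and $f(\lambda_i)=\int_0^\infty e^{-t\lambda_i}\,dt=\lambda_i^{-1}$ for $\lambda_i>0$ while $f(0)=0$, so $f(\Delta)\omega=\sum_{\lambda_i>0}\lambda_i^{-1}c_ie_i=G_k(\omega)$; membership $f(\Delta)\omega\in L^2$ follows again from the spectral gap, since $\sum_{\lambda_i>0}\lambda_i^{-2}|c_i|^2\le\nu^{-2}\|\omega\|_{L^2}^2$. Hence the two interpretations agree and both equal $G_k(\omega)$, which proves the identity with no restriction on $\dim M$.

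For the last assertion I would use elliptic regularity: if $\omega$ is smooth then $P^{\Imag\Delta}\omega=\omega-P^{\Ker\Delta}\omega$ is smooth (the harmonic projection maps into the finite-dimensional space of harmonic forms), and $u:=G_k(\omega)\in L^2$ solves $\Delta u=P^{\Imag\Delta}\omega$ with smooth right-hand side; since $\Delta$ is elliptic this forces $u\in\Omega^k(M)$. I expect the only point requiring a little care to be the commutation of the Bochner integral with the pairings against the $e_i$ (equivalently, with the series expansion of $\Delta^{-1}$); this is guaranteed by Bochner integrability together with the positivity of the spectral gap $\nu$, and it is precisely this gap — a consequence of compactness of $M$ alone — that makes the statement hold irrespective of $\dim M$.
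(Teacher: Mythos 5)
Your argument is correct and is essentially the paper's own approach: the paper simply asserts that the statements follow from the spectral description of $e^{-t\Delta}$ (citing \cite{BGV}, Theorem 2.38 and Proposition 2.37), and what you have written is precisely that spectral argument carried out in full — spectral gap, Bochner integrability, commutation with the pairings against the eigenbasis, and elliptic regularity for the final smoothness claim.
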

 \begin{proof} The statements follow from the spectral description of $e^ {-t\Delta}$. See \cite{BGV}, Theorem 2.38 and Proposition 2.37. 
 \end{proof}

  \begin{prop}\label{511} Let $S=\partial T$ be an exact current such that $T$ is representable by integration. Then 
  \[G(S)=\int_0^{\infty} e^{-t\Delta}(S)~ dt
  \]
  where $\left(\int_0^{\infty} e^{-t\Delta}(S)~ dt\right)(\phi):=\int_0^{\infty} e^{-t\Delta}(S)(\phi)~ dt$.
  \end{prop}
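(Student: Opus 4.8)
The plan is to pair both sides with an arbitrary test form $\phi\in\Omega^k(M)$ and to reduce everything, via $S=\partial T$, to the formula $G(\omega)=\int_0^\infty e^{-t\Delta}\omega\,dt$ for smooth exact $\omega$, which is Theorem \ref{prevthm} (recall that every smooth exact form lies in $\Imag\Delta$). By the definition of the Green operator on currents, $G(S)(\phi)=S(G\phi)$. Writing $S=\partial T$, so that $S(\psi)=T(d\psi)$ for every test form $\psi$, and using that both $G$ and $e^{-t\Delta}$ commute with $d$ (they are functions of $\Delta$ and $\Delta d=d\Delta$), we get
\[
G(S)(\phi)=T\big(d(G\phi)\big)=T\big(G(d\phi)\big),\qquad e^{-t\Delta}(S)(\phi)=S(P_t\phi)=T\big(d(P_t\phi)\big)=T\big(P_t(d\phi)\big).
\]
Thus the proposition amounts to the identity $\int_0^\infty T\big(P_t(d\phi)\big)\,dt=T\big(G(d\phi)\big)$, together with the absolute convergence of the left-hand integral; equivalently, to being allowed to move $T$ inside the integral $\int_0^\infty P_t(d\phi)\,dt$ and to identifying that integral with $G(d\phi)$.

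The technical core is the following integrability estimate for $\omega:=d\phi$, a smooth exact form: the curve $t\mapsto P_t\omega$ is continuous in the Fr\'echet space $\Omega^{*}(M)$ and satisfies $\int_0^\infty\|P_t\omega\|_{C^N}\,dt<\infty$ for every $N$. On $(0,1]$ this is immediate from the strengthened convergence property (iiis) of the heat kernel, which gives $P_t\omega\to\omega$ in every $C^N$-norm as $t\to0$, so $\|P_t\omega\|_{C^N}$ stays bounded there. On $[1,\infty)$ I would write $P_t\omega=P_1\big(P_{t-1}\omega\big)$: since $\omega$ is exact it is $L^2$-orthogonal to $\Ker\Delta$, so the spectral decomposition of $\Delta$ yields $\|P_{t-1}\omega\|_{L^2}\le e^{-\lambda_1(t-1)}\|\omega\|_{L^2}$ with $\lambda_1>0$ the smallest positive eigenvalue of $\Delta$; and because the kernel $p_1$ is smooth (property (is)), $P_1$ maps $L^2$ boundedly into $C^N$, so $\|P_t\omega\|_{C^N}\le C_N\, e^{-\lambda_1(t-1)}\|\omega\|_{L^2}$, which is integrable at infinity.

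With this estimate, the Bochner integral $I:=\int_0^\infty P_t\omega\,dt$ exists in $\Omega^{*}(M)$, and since $C^\infty$-convergence is finer than $L^2$-convergence, Theorem \ref{prevthm} identifies $I$ with the (smooth) form $G(\omega)=G(d\phi)$. As $T$ is a continuous linear functional on the Fr\'echet space $\Omega^{*}(M)$ — here one uses that $T$ is a de Rham current, and representability by integration is more than enough, giving continuity already in the $C^0$ topology — it commutes with the Bochner integral, so $\int_0^\infty T(P_t\omega)\,dt=T(I)=T\big(G(d\phi)\big)$, the left-hand side converging absolutely because $|T(P_t\omega)|\le C\|P_t\omega\|_{C^N}$ for a fixed $N$. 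Combining with the first paragraph gives $\int_0^\infty e^{-t\Delta}(S)(\phi)\,dt=T\big(G(d\phi)\big)=G(S)(\phi)$ for every $\phi$, which is the assertion.

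I expect the integrability estimate of the second paragraph to be the main obstacle: one must control $\|P_t\omega\|_{C^N}$ uniformly as $t\to0$, which genuinely uses the $C^\ell$- (not merely $C^0$-) convergence $P_t\omega\to\omega$, and obtain exponential decay as $t\to\infty$, which combines the spectral gap of $\Delta$ on $(\Ker\Delta)^{\perp}$ with the smoothing property of $P_1$ to upgrade an $L^2$-bound to a $C^N$-bound. Once convergence of $\int_0^\infty P_t(d\phi)\,dt$ is secured in the $C^\infty$ topology rather than just in $L^2$ as in Theorem \ref{prevthm}, interchanging $T$ with the integral is automatic, and the remaining manipulations are bookkeeping of the sign conventions for $\partial$ and $d$ on currents.
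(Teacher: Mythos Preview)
Your proof is correct and follows the same skeleton as the paper: reduce to showing $\int_0^\infty T(P_t(d\phi))\,dt = T(G(d\phi))$ via $S=\partial T$ and the commutation of $d$ with $G$ and $e^{-t\Delta}$, then justify interchanging $T$ with the integral. The difference is in the topology used for that interchange. The paper observes that ``representable by integration'' is precisely the condition that $T$ extends continuously to $L^1(\Omega^{*}(M))$; since Theorem~\ref{prevthm} already gives $\int_0^\infty P_t(d\phi)\,dt = G(d\phi)$ as a Bochner integral in $L^2\hookrightarrow L^1$, the interchange is immediate and no further estimate is needed. You instead upgrade the convergence to every $C^N$ norm via the semigroup smoothing trick $P_t = P_1\circ P_{t-1}$ together with the spectral gap on $(\Ker\Delta)^\perp$. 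This is more work, but it buys something: your argument does not actually use the hypothesis that $T$ is representable by integration---any current, being continuous in some $C^N$, would do. So the paper's proof is shorter and makes the stated hypothesis do the work, while yours is more self-contained and incidentally shows the hypothesis is stronger than necessary.
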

 
 \begin{proof} Due to Riesz Representation Theorem, the currents $T\in\mathscr{D}'_{k+1}(M)$ representable by integration  are also those which extend to continuous linear functionals on $L^1(\Omega^k(M))$. 

Let $\phi\in \Omega^k(M)$. By the property (iii) of heat kernels, the family of forms $t\ra \omega_t:= e^{-t\Delta}(d\phi)$ is continuous in the $L^1$ norm all the way to $t=0$. Moreover, since $d\phi\in\Imag \Delta$, we get by Theorem \ref{prevthm} for the Bochner integral of this family 
\[
\int_0^ 
{\infty}\omega_t~dt= \int_0^{\infty}e^{-t\Delta}(d\phi)~dt=G(d
\phi)
\] 
 Now,  using that $[d,e^{-t\Delta}]=0$ we get
\begin{equation}\label{beq3} e^{-t\Delta}(S)(\phi)=S(e^{-t\Delta}\phi)=T(d(e^{-t\Delta}\phi))=T(e^{-t\Delta}(d\phi))\end{equation}
Since $T$ is an $L^1$-continuous functional, a property of Bochner integrals allows us to interchange the order of integration, hence
\[\int_0^{\infty}T(e^{-t\Delta}(d\phi))~dt=T\left(\int_0^{\infty}e^{-t\Delta}(d\phi)\right)=T(G(d\phi))=T(d(G\phi))=\]\[=S(G(\phi))=G(S)(\phi).\]
 
 Putting together (\ref{beq3}) and the last relation concludes the proof. 
  \end{proof}

  We have the following important
  \begin{corollary}\label{GLH} Let $L$ be a closed oriented submanifold of $M$ and $H$ the harmonic representative of $PD(L)$. Then
  \[ G(L-H)=\int_{0}^{\infty}e^{-t\Delta}(L-H)~dt.\]
  \end{corollary}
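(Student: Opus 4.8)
The plan is to deduce Corollary \ref{GLH} directly from Proposition \ref{511} applied to the current $S:=L-H$. All the work lies in checking the two hypotheses of that proposition: that $S$ is exact, and — the only delicate point — that it is the currential boundary of a current that is representable by integration.

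The homological input is immediate. The current $S$ is a cycle: $dL=0$ because the submanifold $L$ is closed with empty boundary, and $dT_{H}=T_{dH}=0$ because the harmonic form $H$ is in particular closed. It is moreover exact as a current: by construction $H$ is a representative of the de Rham class $\PD^{-1}([L])\in H^{n-k}_{dR}(M)$, and with our sign convention $\PD$ is exactly the map on homology induced by the inclusion $\Omega^{*}(M)\hookrightarrow\mathscr{D}'_{n-*}(M)$; hence $T_{H}$ is cohomologous to $L$ inside the complex of currents, i.e. $S=L-T_{H}=dW$ for some $W\in\mathscr{D}'_{k+1}(M)$.

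To produce such a primitive $W$ that is representable by integration, I would regularize $S$ exactly as in the proof of the Proposition preceding Definition \ref{defBS}. Federer's regularization \cite{Fe} gives smooth forms $S_{\epsilon}$ and a homotopy operator $H_{\epsilon}$ with $S_{\epsilon}-S=d(H_{\epsilon}S)+H_{\epsilon}(dS)=d(H_{\epsilon}S)$, the last equality because $dS=0$. The form $S_{\epsilon}$ is closed, since $dS_{\epsilon}=d(S_{\epsilon}-S)+dS=d^{2}(H_{\epsilon}S)=0$, and being cohomologous in $\mathscr{D}'_{*}(M)$ to the exact current $S$ it is itself a boundary in the complex of currents, hence exact as a smooth form by de Rham's theorem \cite{dR}: write $S_{\epsilon}=d\beta_{\epsilon}$ with $\beta_{\epsilon}\in\Omega^{n-k-1}(M)$. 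Then
\[
S=d\beta_{\epsilon}-d(H_{\epsilon}S)=d\bigl(T_{\beta_{\epsilon}}-H_{\epsilon}S\bigr)=:dW .
\]
Now $T_{\beta_{\epsilon}}$ is a smooth form, hence representable by integration, while $H_{\epsilon}S$ has finite mass, since the regularization homotopy increases mass by at most a factor $C\epsilon$ and $S=L-H$ has finite mass (a compact submanifold plus a smooth form). A finite-mass current is representable by integration and the class is stable under finite sums, so $W$ is representable by integration. Proposition \ref{511} then yields $G(S)=\int_{0}^{\infty}e^{-t\Delta}(S)\,dt$, i.e. $G(L-H)=\int_{0}^{\infty}e^{-t\Delta}(L-H)\,dt$, which is the assertion.

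I expect the only genuine obstacle to be this last construction: an arbitrary currential primitive of $L-H$ carries no regularity, so one really must manufacture the primitive inside the class of currents representable by integration in order to feed Proposition \ref{511}; everything else is de Rham's theorem together with the sign bookkeeping of Definition \ref{sgnl0}.
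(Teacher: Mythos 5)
Your proposal is correct and follows essentially the same route as the paper: both reduce the statement to Proposition \ref{511} by exhibiting $L-H$ as the boundary of a current representable by integration, constructed via Federer's regularization $S_\epsilon - S = d(H_\epsilon S)$ together with the observation that $S_\epsilon$ is a smooth exact form and $H_\epsilon S$ has finite mass. Your write-up merely makes explicit two points the paper leaves implicit (the exactness of $L-H$ via the sign convention for $\PD$, and the stability of representability by integration under sums), so there is nothing to add.
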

  \begin{proof} To see that $L-H$ can be written as $\partial T$ with $T$ representable by integration one uses the regularization process (see 4.1.18 in \cite{Fe}) in order to write
  \[ (L-H)-(L-H)_{\epsilon}=dH_{\epsilon}(L-H)\]
  Now $(L-H)_{\epsilon}$ is a smooth exact form being in the same (currential) homology class as $L-H$ hence it is a $d\eta$ for some smooth form $\eta$. On the other hand the finiteness of the mass of $L-H$ implies the finiteness of the mass of  $H_{\epsilon}(L-H)$ as follows from the same section in \cite{Fe}.
  \end{proof}
 
  As a consequence of this we have that (see Definition \ref{defBS})
  \[\BS(L)=d^*\int_0^{\infty}e^{-t\Delta}(L-H)~dt\]
  
In order to prove the extendability of the Biot-Savart form $\BS(L)$ it is fundamental to understand the small time behaviour of the solution to the initial value problem. Small time asymptotics for the heat kernel of functions are known since the work of Minakshisundaram and Pleijel \cite{MP} with further elaboration by Berger, Gauduchon and Mazet \cite{BGM} and in the case of the Hodge-Laplacian for forms by Patodi\cite{Pa}.  In  Section \ref{IVPs}, we adapt the general results of \cite{BGV} to initial value problems.
\subsection{Green kernels and the diagonal}

  \begin{remark} Let us note an interesting consequence of relation (\ref{beq3}). If we let $t\ra \infty$ in that equality, we notice  that $e^ {-t\Delta}(d
  	\phi)$ converges to $0$ in the $L^2$ norm (hence also $L^ 1$ due to the compactness of $M$) since, by functional calculus, $e^{-t\Delta}$ converges strongly (see Theorem VIII.5 (d) in \cite{RS}) to the orthogonal projection onto the space of harmonic forms  and exact forms project to $0$, by Hodge decomposition. Hence
  	\[\lim_{t\ra \infty}e^ {-t\Delta}S(\phi)=\lim_{t\ra \infty}T(e^{-t\Delta}(d\phi))=0\]
  	for every exact current $S$ as in Proposition \ref{511}. In particular, we have that 
  	\[\lim_{t\ra\infty}e^{-t\Delta}(L-H)=0\]
  	But $e^{-t\Delta}H=H$ since $H$ is harmonic. Hence
  	\[\lim_{t\ra\infty}e^{-t\Delta}L=H.\]
  	By the semi-group property, if $L_s:=e^{-s\Delta}L$ then 
  	\[\lim_{t\ra\infty}e^{-t\Delta}L_s=H\]
  	But a similar argument as above shows, since $L_s$ is a closed smooth form, that $\lim_{t\ra \infty}e^{-t\Delta}L_s$ converges to the harmonic representative of $L_s$. It follows that all forms $L_s$ are cohomologous to each other and in the same cohomology class as $H$ and thus Poincar\'e duals to $L$. In other words, the regularization via heat-kernels preserves the cohomology class. Clearly the same argument works for any closed current $L$, not only a closed oriented submanifold. 
  \end{remark}

This has consequences. 
\begin{prop}\label{H+D} Let $T$ be a closed current, $H$ a harmonic form and $\Omega$ a form with locally integrable coefficients such that
\begin{equation}\label{H+} H+\Delta \Omega =T.\end{equation}
Then $H$ is the harmonic representative of the Poincar\'e dual of $T$ and $d\Omega=0$.
\end{prop}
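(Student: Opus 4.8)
The plan is to differentiate the identity $H+\Delta\Omega=T$, use that $H$ and $T$ are closed, and then run a short heat--flow argument to deduce $d\Omega=0$; the Poincaré--duality statement will then be essentially immediate.

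First I would apply $d$ to $H+\Delta\Omega=T$. Since $H$ is harmonic, $dH=0$, and $dT=0$ by hypothesis; moreover $\Delta=dd^*+d^*d$ commutes with $d$ on currents (the currential operators being defined by duality, exactly as on forms, cf. the Remark following Lemma~\ref{sgnl}). Hence
\[
\Delta(d\Omega)=d(\Delta\Omega)=d(T-H)=0
\]
in the sense of currents. It thus remains to prove (i) $d\Omega=0$, and then (ii) that $H$ is the harmonic representative of the Poincaré dual of $T$.

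For (i), observe that for every test form $\phi$ one has $e^{-t\Delta}\phi-\phi=-\Delta\psi_t$ with $\psi_t:=\int_0^t e^{-s\Delta}\phi\,ds$ a smooth form, so any current $S$ with $\Delta S=0$ is fixed by the heat semigroup: $(e^{-t\Delta}S)(\phi)=S(e^{-t\Delta}\phi)=S(\phi)-S(\Delta\psi_t)=S(\phi)$. Applying this to $S=d\Omega$ gives $e^{-t\Delta}(d\Omega)=d\Omega$ for all $t>0$. On the other hand $e^{-t\Delta}$ commutes with $d$ on currents, so $e^{-t\Delta}(d\Omega)=d\big(e^{-t\Delta}\Omega\big)$, and by Proposition~\ref{PtT} the current $e^{-t\Delta}\Omega$ is represented by a smooth form $\Omega_t$. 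Therefore $d\Omega=d\Omega_t$ is a smooth form which is $\Delta$--closed, hence harmonic, so in particular $d^*(d\Omega)=0$; and it is exact. Consequently
\[
\|d\Omega\|_{L^2}^2=\langle d\Omega_t,\,d\Omega\rangle=\langle \Omega_t,\, d^*(d\Omega)\rangle=0,
\]
so $d\Omega=0$.

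With $d\Omega=0$ we get $\Delta\Omega=dd^*\Omega$, hence $T-H=\Delta\Omega=d(d^*\Omega)$ is an exact current ($d^*\Omega$ being a well-defined current since $\Omega$ has $L^1_{\loc}$ coefficients). Thus $T$ and the current associated to $H$ represent the same class in $H_k(\mathscr{D}'_*(M))$; since $H$ is closed (being harmonic), the definition of Poincaré duality in \eqref{eq_poincare_duality} gives $\PD([H])=[T]$, and by uniqueness of the harmonic representative inside a de Rham class, $H$ is the harmonic representative of $\PD^{-1}([T])$, i.e. of the Poincaré dual of $T$. The only genuinely delicate points are the commutation of $e^{-t\Delta}$ with $d$ and its fixing of $\Delta$--closed currents (both coming from the duality definition of the operators) and the smoothing provided by Proposition~\ref{PtT}; once these are in place the argument is routine.
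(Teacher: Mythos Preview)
Your proof is correct and uses the same circle of ideas as the paper (heat-kernel smoothing plus an $L^2$ orthogonality argument), but the organization is a bit different and arguably more direct. The paper first treats the case where $T$ and $\Omega$ are smooth by an $L^2$ computation (showing $\|T-H-dd^*\Omega\|^2=0$ via integration by parts, whence $d^*d\Omega=0$ and $[T]=[H]$), and then reduces the general case to the smooth one by applying $e^{-\epsilon\Delta}$ to the whole equation $H+\Delta\Omega=T$ and passing to the limit $\epsilon\to 0$ using Lemma~\ref{limd} and the cohomology-preservation Remark just before the Proposition.

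Your route instead differentiates first to obtain $\Delta(d\Omega)=0$, then observes that any $\Delta$-closed current is fixed by the heat semigroup (your identity $e^{-t\Delta}\phi-\phi=-\Delta\psi_t$), so $d\Omega=d(e^{-t\Delta}\Omega)$ is in fact a \emph{smooth} form by Proposition~\ref{PtT}; being harmonic and exact it must vanish. This bypasses the separate ``smooth case then limit'' step: you never need to take $\epsilon\to 0$, because you have shown $d\Omega$ itself is smooth. The $L^2$ argument you give at the end is essentially the observation that a harmonic exact form is zero. Both approaches rely on the commutation of $e^{-t\Delta}$ with $d$ and on the smoothing of Proposition~\ref{PtT}; yours packages them slightly more efficiently for this particular conclusion.
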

\begin{proof} Suppose first that $T$ and $\Omega$ are smooth. Then write 
\[ d^*d\Omega=T-H-dd^*\Omega\]
We have
\[\|T-H-dd^*\Omega\|^2=\int_M\langle T-H-dd^*\Omega, T-H-dd^*\Omega\rangle~ d\vol_M=\int_M\langle d^*d\Omega,T-H-dd^*\Omega\rangle~d\vol_M=\]\[=\int_M\langle d\Omega, d(T-H-dd^*\Omega) \rangle~d\vol_M=0\]
Hence $T-H-dd^*\Omega=0$ which means that $T$ and $H$ represent the same cohomology class. Also $d^*d\Omega=0$ which immediately implies that $d\Omega=0$.

In general, denote $\Omega_{\epsilon}:=e^{-\epsilon\Delta}\Omega$  and $T_{\epsilon}:=e^{-\epsilon \Delta}T$. Since $e^{-\epsilon \Delta}H=H$, by applying $e^{-\epsilon\Delta}$ to (\ref{H+}) we get
\[H+\Delta\Omega_{\epsilon}=T_{\epsilon}\]
By the first part we get that $T_{\epsilon}$ and $H$ are in the same cohomology class and $d\Omega_{\epsilon}=0$. By the Remark preceding the Proposition we have that $T$ and $T{\epsilon}$ are Poincar\'e duals. Hence $H$ is the harmonic representative of $PD^{-1}([T])$.

 On the other hand, by Lemma \ref{limd} we get $d\Omega=0$. 
\end{proof}
 
We apply this to a concrete situation. Recall the heat kernel (\ref{ptk}). Let $\sigma(\Delta)^*:=\{\lambda_i~|~\lambda_i\neq 0\}$.  The following holds
\begin{lemma}\label{expGk} On an oriented compact manifold $M$, the $(n-k,k)$ form
\[\bG_{k}:=(-1)^{kn}\sum_{\lambda_i\in \sigma(\Delta)^*}\lambda_i^{-1}\pi_1^*(*e_i)\wedge\pi_2^*(e_i)\]
is the  kernel of the Green operator $\bG_k:\Omega^k(M)\ra\Omega^k(M)$.
\end{lemma}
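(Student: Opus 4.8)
The plan is to produce $\bG_k$ by integrating the spectral formula (\ref{ptk}) for the heat kernel in the time variable. Write $P^{\Ker\Delta}=I-P^{\Imag\Delta}$ for the $L^2$-orthogonal projection onto harmonic $k$-forms; since there are only finitely many harmonic forms, $P^{\Ker\Delta}$ is represented by the smooth double form $p^k_\infty:=(-1)^{kn}\sum_{\lambda_i=0}\pi_1^*(*e_i)\wedge\pi_2^*(e_i)$, in the sense $P^{\Ker\Delta}\omega=(\pi_2)_*(p^k_\infty\wedge\pi_1^*\omega)$. On smooth forms one has $G_k=G\circ P^{\Imag\Delta}$, the heat semigroup fixes harmonic forms, and $e^{-t\Delta}$ is represented by $p^k_t$; so applying Theorem \ref{prevthm} to $P^{\Imag\Delta}\omega\in\Imag\Delta$ gives, for every $\omega\in\Omega^k(M)$,
\[
G_k\omega=\int_0^\infty e^{-t\Delta}\bigl(P^{\Imag\Delta}\omega\bigr)\,dt=\int_0^\infty\bigl(e^{-t\Delta}\omega-P^{\Ker\Delta}\omega\bigr)\,dt=\int_0^\infty(\pi_2)_*\bigl((p^k_t-p^k_\infty)\wedge\pi_1^*\omega\bigr)\,dt .
\]
The remaining task is to pull the $t$-integral inside the push-forward and to recognise $\int_0^\infty(p^k_t-p^k_\infty)\,dt$ as the series defining $\bG_k$.

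First I would verify that $\bG^0_k:=\int_0^\infty(p^k_t-p^k_\infty)\,dt$ is a genuine double form with $L^1_{\loc}$ coefficients, smooth on $M\times M\setminus\delta^M$. Splitting the integral at $t=1$: on $[1,\infty)$, formula (\ref{ptk}) shows that $p^k_t-p^k_\infty=(-1)^{kn}\sum_{\lambda_i>0}e^{-t\lambda_i}\pi_1^*(*e_i)\wedge\pi_2^*(e_i)$ converges in every $C^l$-norm and decays exponentially as $t\to\infty$ uniformly with all derivatives (Weyl's law controls the $C^l$-norms of the summands), so $\int_1^\infty(p^k_t-p^k_\infty)\,dt$ is smooth on all of $M\times M$; on $[0,1]$ I would invoke deRham's local description of the Green kernel \cite{dR}, by which the coefficients of $\int_0^1 p^k_t\,dt$ are smooth away from the diagonal and of type $\dist(x,y)^{2-n}$ near it, hence $L^1_{\loc}$, while $p^k_\infty$ is smooth. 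With this regularity in hand, Fubini applied to the coefficient functions converts the display above into $G_k\omega=(\pi_2)_*(\bG^0_k\wedge\pi_1^*\omega)$, so $\bG^0_k$ is a kernel for $G_k$. To identify $\bG^0_k$ with the series $\bG_k$ as currents on $M\times M$, it suffices to test both against forms of product type $\pi_1^*\alpha\wedge\pi_2^*\beta$ (with $\deg\alpha=k$, $\deg\beta=n-k$), which are dense among test $n$-forms; using (\ref{ptk}), $(p^k_t-p^k_\infty)(\pi_1^*\alpha\wedge\pi_2^*\beta)$ equals, up to a sign depending only on $n$ and $k$, the series $\sum_{\lambda_i>0}e^{-t\lambda_i}a_ib_i$ with $a_i:=\int_M(*e_i)\wedge\alpha$ and $b_i:=\int_M e_i\wedge\beta$, and by Cauchy--Schwarz $\sum_{\lambda_i>0}\lambda_i^{-1}|a_i||b_i|\le\|G_k\alpha\|_{L^2}\|\beta\|_{L^2}<\infty$, so Tonelli's theorem permits integrating term by term; since $\int_0^\infty e^{-t\lambda_i}\,dt=\lambda_i^{-1}$, this yields $\bG^0_k(\pi_1^*\alpha\wedge\pi_2^*\beta)=\bG_k(\pi_1^*\alpha\wedge\pi_2^*\beta)$. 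Hence $\bG_k=\bG^0_k$; in particular the series converges to an $L^1_{\loc}$ double form, smooth off $\delta^M$, which is the kernel of $G_k$.

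The step I expect to be the real obstacle is pulling the $t$-integral inside $(\pi_2)_*$ in the middle paragraph: the kernel $p^k_t$ is singular at $t=0$ along the diagonal, so this is not a formal manipulation, and it is precisely deRham's estimate on the local structure of the Green kernel that supplies the $L^1_{\loc}$ bound on the coefficients and thereby legitimises Fubini. Once that regularity is granted, the rest — the spectral identity (\ref{ptk}), the exponential decay for large $t$, and the termwise integration — is routine.
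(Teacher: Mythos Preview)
Your argument is correct, but it takes a much longer route than the paper. The paper's proof is a two-line verification on the eigenbasis: for each eigenform $e_j$ with $\lambda_j\neq 0$ one computes directly, using the orthonormality $\int_M(*e_i)\wedge e_j=\delta_{ij}$, that $(\pi_2)_*(\bG_k\wedge\pi_1^*e_j)=\lambda_j^{-1}e_j=G_k(e_j)$, while for $\lambda_j=0$ the same orthogonality gives $(\pi_2)_*(\bG_k\wedge\pi_1^*e_j)=0=G_k(e_j)$. Since the eigenforms span a dense subspace, this suffices. Your approach instead integrates the spectral formula for the heat kernel in $t$, which forces you to confront the $t\to 0$ singularity and appeal to deRham's regularity estimate to justify Fubini; as a byproduct you obtain that the series converges to an $L^1_{\loc}$ double form smooth off the diagonal, information the paper does not extract at this point (and does not need for the lemma as stated). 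So your route buys extra regularity at the cost of considerable machinery, whereas the paper's route is pure spectral algebra and avoids analysis entirely.
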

\begin{proof} One checks easily that for every $j$ such that $\lambda_j\in \sigma(\Delta)^*$ the following holds
\[(\pi_2)_*(\bG_k\wedge \pi_1^*e_j)=\lambda_j^{-1}e_j=G_k(e_j)\]
while for $e_j$ that corresponds to $\lambda_j=0$
\[[(\pi_2)_*(\bG_k\wedge \pi_1^*e_j) =0\]
due to the orthogonality of $\{e_i\}$.
\end{proof}
Let now $\delta^{k,n-k}$ be the $(k,n-k)$ part of the diagonal $\delta$ in $M\times M$. Let
 \[\langle\langle \omega,\gamma\rangle\rangle:=\int_M\omega \wedge *\gamma\] denote the inner product of smooth $k$-forms on $M\times M$. The following  is straightforward.
\begin{lemma}\label{dknk} For every smooth $k$-forms $\omega$ and $\gamma$ we have
\[\delta^{k,n-k}(\pi_1^*\omega\wedge\pi_2^*(*\gamma))=\langle \langle \omega,\gamma \rangle\rangle=\sum_{i\in \bN}\langle \langle \omega,e_i \rangle\rangle \langle \langle e_i,\gamma \rangle\rangle=\sum_{i\in \bN} \int_M\langle \omega,e_i\rangle\int_M\langle e_i,\gamma\rangle\]
\end{lemma}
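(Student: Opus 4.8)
The plan is to verify the three equalities in turn; none is deep, the content being a dictionary between the diagonal current, the $L^{2}$ inner product on $M$, and the spectral expansion in the orthonormal eigenbasis $\{e_{i}\}$ of $\Delta_{k}$.

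First I would treat the leftmost equality. By definition $\delta^{k,n-k}$ is the bidegree $(k,n-k)$ part of the integration current $\delta$ over the diagonal $\Delta_{M}=\{(x,x)\}\subset M\times M$, so that for a test form $\phi$ of total degree $n$ one has $\delta^{k,n-k}(\phi)=\delta(\phi^{k,n-k})=\int_{M}\iota_{\delta}^{*}\phi^{k,n-k}$, where $\iota_{\delta}\colon M\to M\times M$, $x\mapsto(x,x)$. Applying this with $\phi=\pi_{1}^{*}\omega\wedge\pi_{2}^{*}(*\gamma)$: since $\omega\in\Omega^{k}(M)$ and $*\gamma\in\Omega^{n-k}(M)$, this form is already of pure bidegree $(k,n-k)$, and from $\pi_{1}\circ\iota_{\delta}=\pi_{2}\circ\iota_{\delta}=\id_{M}$ one gets $\iota_{\delta}^{*}\big(\pi_{1}^{*}\omega\wedge\pi_{2}^{*}(*\gamma)\big)=\omega\wedge *\gamma$. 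Hence $\delta^{k,n-k}(\pi_{1}^{*}\omega\wedge\pi_{2}^{*}(*\gamma))=\int_{M}\omega\wedge *\gamma=\langle\langle\omega,\gamma\rangle\rangle$. I would cross-check this bidegree bookkeeping against the conventions already fixed in the paper (the $\hat{*}$ identity on $M\times M$ and the explicit form of $p^{k}_{t}$ in $(\ref{ptk})$), but with the ``fiber-first/base-first'' conventions in place this is routine.

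The middle equality is Parseval's identity: by Hodge theory $\{e_{i}\}$ is a complete orthonormal system of $L^{2}(\Omega^{k}(M))$, so $\omega=\sum_{i}\langle\langle\omega,e_{i}\rangle\rangle\,e_{i}$ and $\gamma=\sum_{i}\langle\langle\gamma,e_{i}\rangle\rangle\,e_{i}$ in $L^{2}$, whence $\langle\langle\omega,\gamma\rangle\rangle=\sum_{i}\langle\langle\omega,e_{i}\rangle\rangle\,\langle\langle e_{i},\gamma\rangle\rangle$, using that the inner product is symmetric so $\langle\langle\gamma,e_{i}\rangle\rangle=\langle\langle e_{i},\gamma\rangle\rangle$, the series converging absolutely by Cauchy--Schwarz. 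The rightmost equality is then merely a rewriting of notation: $\langle\langle\omega,e_{i}\rangle\rangle=\int_{M}\omega\wedge *e_{i}=\int_{M}\langle\omega,e_{i}\rangle\,\dvol_{M}$, and likewise for $\langle\langle e_{i},\gamma\rangle\rangle$. There is no genuine obstacle here; the only points deserving a line of care are the sign/bidegree identification in the first equality and recording explicitly that the $e_{i}$ are chosen orthonormal, so that Parseval applies verbatim.
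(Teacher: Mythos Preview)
Your proof is correct and is precisely the ``straightforward'' argument the paper has in mind; the paper does not write out a proof of this lemma. The only minor remark is that in your first step you might simply say $\delta^{k,n-k}$ is the diagonal current acting on the $(k,n-k)$ component of $\Omega^{n}(M\times M)$, since $\pi_1^*\omega\wedge\pi_2^*(*\gamma)$ is already of pure bidegree $(k,n-k)$ there is nothing to project, and the pullback along $\iota_\delta$ gives $\omega\wedge *\gamma$ immediately.
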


We denote by $f_i^k$, $1\leq i\leq N_k$ an orthonormal basis of harmonic forms of degree $k$ on $M$. Let
\begin{equation}\label{Hk} H^{k,n-k}:=(-1)^{kn}\sum_{i=1}^{N_k}\pi_1^*(*f_k)\wedge \pi_2^*f_k\end{equation}
\begin{theorem}\label{Dlf} The form $H^{k,n-k}$ is harmonic and the following holds on $M\times M$:
\begin{equation}\label{dlf}\Delta\left(\frac{1}{2}\bG_k\right)=\delta^{k,n-k}-H^{k,n-k}\end{equation}
Consequently, $\bG:=\sum_{k=0}^n\bG_k$ is a closed current and $H:=\sum_{k=0}^nH^{k,n-k}$ is the harmonic representative of the Poincar\'e dual of $\delta$ in $M\times M$.
\end{theorem}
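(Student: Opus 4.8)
The plan is to reduce the identity (\ref{dlf}) to the spectral expansions (\ref{ptk}) of the heat kernel and (\ref{Hk}) of $H^{k,n-k}$, together with the expansion of $\bG_k$ in Lemma \ref{expGk} and the ``reproducing'' Lemma \ref{dknk}, and then to obtain the ``consequently'' clause as a direct application of Proposition \ref{H+D}. First, for the harmonicity of $H^{k,n-k}$: the Hodge star intertwines the Laplacians in complementary degrees, $*\Delta_k=\Delta_{n-k}*$ (a consequence of (\ref{delst})), so each $*f_i^k$ is again harmonic; writing $H^{k,n-k}$ as in (\ref{Hk}) as a finite sum of product forms $\pi_1^*(*f_i^k)\wedge\pi_2^*f_i^k$ and using the splitting $\Delta=\Delta_x+\Delta_y$ on $M\times M$, each summand is killed separately by $\Delta_x$ and by $\Delta_y$, whence $\Delta H^{k,n-k}=0$.

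For the identity itself, the same computation shows that $\Delta$ acts on the $i$-th term $\pi_1^*(*e_i)\wedge\pi_2^*e_i$ of (\ref{ptk}) as multiplication by $2\lambda_i$: one factor $\lambda_i$ from $\Delta_y$ acting on $\pi_2^*e_i$, and one from $\Delta_x$ acting on $\pi_1^*(*e_i)$ via $\Delta(*e_i)=\lambda_i\,{*}e_i$. Feeding this into Lemma \ref{expGk} — and using that $\Delta$ on currents is defined by duality, hence commutes with the weakly convergent series because $\Delta\phi$ is again a test form — I get
\[
\Delta\Big(\tfrac12\bG_k\Big)=(-1)^{kn}\sum_{\lambda_i\neq 0}\pi_1^*(*e_i)\wedge\pi_2^*e_i
\]
as currents on $M\times M$, the $1/2$ cancelling the $2$. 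It remains to identify this with $\delta^{k,n-k}-H^{k,n-k}$. Since finite sums of product forms $\pi_1^*\omega\wedge\pi_2^*(*\gamma)$ are dense in $\Omega^n(M\times M)$ and currents are continuous, it is enough to test both sides against such forms: a routine sign count gives $(-1)^{kn}(\pi_1^*(*e_i)\wedge\pi_2^*e_i)(\pi_1^*\omega\wedge\pi_2^*(*\gamma))=\langle\langle\omega,e_i\rangle\rangle\langle\langle e_i,\gamma\rangle\rangle$, so the left-hand side evaluates to $\sum_{\lambda_i\neq 0}\langle\langle\omega,e_i\rangle\rangle\langle\langle e_i,\gamma\rangle\rangle$, while by Lemma \ref{dknk} and (\ref{Hk}) the right-hand side evaluates to $\langle\langle\omega,\gamma\rangle\rangle-\sum_{\lambda_i=0}\langle\langle\omega,e_i\rangle\rangle\langle\langle e_i,\gamma\rangle\rangle$; these agree by Parseval. (Equivalently: write $\bG_k=\int_0^\infty(p_t^k-H^{k,n-k})\,dt$, use $\Delta p_t^k=\Delta_xp_t^k+\Delta_yp_t^k=-2\partial_tp_t^k$ from property (iis), and integrate in $t$, reading off the boundary values $\lim_{t\to0^+}p_t^k=\delta^{k,n-k}$ and $\lim_{t\to\infty}p_t^k=H^{k,n-k}$, the latter being the spectral convergence recalled in the Remark preceding Proposition \ref{H+D}.)

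Summing (\ref{dlf}) over $k=0,\dots,n$ and using that the diagonal current is the sum of its bidegree pieces, $\delta=\sum_{k=0}^n\delta^{k,n-k}$, yields $H+\Delta(\tfrac12\bG)=\delta$, where $H=\sum_kH^{k,n-k}$ is harmonic, $\delta$ is a closed current (a closed, boundaryless submanifold of $M\times M$), and $\tfrac12\bG$ has locally integrable coefficients on $M\times M$ (by deRham's description of the Green kernel its coefficients blow up like $d(x,y)^{2-n}$ near $\delta$, which is integrable since $\delta$ has codimension $n$). Proposition \ref{H+D} then applies verbatim and gives simultaneously that $d\bG=0$, i.e.\ $\bG$ is a closed current, and that $H$ is the harmonic representative of the Poincar\'e dual of $\delta$.

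The step I expect to require the most care — though it is not deep — is making the chain ``$\Delta(\tfrac12\bG_k)=(-1)^{kn}\sum_{\lambda_i\neq 0}\pi_1^*(*e_i)\wedge\pi_2^*e_i=\delta^{k,n-k}-H^{k,n-k}$'' rigorous as an identity of currents, i.e.\ controlling the interchange of $\Delta$ with the spectral sum and, above all, recognizing the reproducing kernel $(-1)^{kn}\sum_i\pi_1^*(*e_i)\wedge\pi_2^*e_i$ as precisely the diagonal current $\delta^{k,n-k}$ rather than as some a priori unidentified current supported on the diagonal; this is exactly what Lemma \ref{dknk}, the density of product forms, and Parseval's identity take care of. Everything else is bookkeeping of the $(-1)^{kn}$-type signs.
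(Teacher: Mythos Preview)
Your proof is correct and follows essentially the same route as the paper's: both use the spectral expansion of $\bG_k$ from Lemma \ref{expGk}, the reproducing Lemma \ref{dknk} against product test forms, and then Proposition \ref{H+D} for the ``consequently'' clause. The only cosmetic difference is that the paper establishes $\Delta_x\bG_k=\delta^{k,n-k}-H^{k,n-k}$ and $\Delta_y\bG_k=\delta^{k,n-k}-H^{k,n-k}$ separately and adds, whereas you compute $\Delta=\Delta_x+\Delta_y$ on each spectral term in one stroke to produce the factor $2\lambda_i$ directly.
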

\begin{proof} The fact that $H^{k,n-k}$ is harmonic is immediate from $\Delta=\Delta_x+\Delta_y$ on $M\times M$ and $\Delta *=*\Delta$ on $M$.

We prove equality (\ref{dlf}). Since $\Delta=\Delta_x+\Delta_y$ we get for a form $\omega$  of degree $k$ and $\eta$ of degree $n-k$:
 \begin{equation}\label{Tbgk} T_{\bG_k}(\Delta (\pi_1^*\omega\wedge\pi_2^*\eta)=\int_{M\times M}\bG_k\wedge \pi_1^*(\Delta \omega)\wedge \pi_2^*\eta+\int_{M\times M}\bG_k\wedge \pi_1^*(\omega)\wedge \pi_2^*(\Delta\eta)\end{equation}
We deal with the first integral to begin with.
\[\int_{M\times M}\bG_k\wedge \pi_1^*(\Delta \omega)\wedge \pi_2^*\eta=\sum_{\lambda_i\neq 0}\lambda_i^{-1}\int_{M\times M}\pi_1^*(\Delta \omega\wedge *e_i)\wedge \pi_2^*(e_i\wedge \eta)=\]\[=\sum_{\lambda_i\neq 0}\lambda_i^{-1}\int_M\left(\int_{\pi_2}\pi_1^*(\Delta \omega\wedge *e_i)\right)\cdot e_i\wedge \eta\]
Now $\pi_1^*(\Delta \omega\wedge *e_i)$ is the same form when restricted to the fibers of $\pi_2$ hence in every fiber one gets
\[\int_{\pi_2}\pi_1^*(\Delta \omega\wedge *e_i)=\int_M\Delta \omega\wedge *e_i=\langle\langle  \Delta \omega,e_i\rangle\rangle=\langle\langle  \omega,\Delta e_i\rangle\rangle=\lambda_i\langle\langle \omega,e_i\rangle\rangle\]
and so
\[\int_{M\times M}\bG_k\wedge \pi_1^*(\Delta \omega)\wedge \pi_2^*\eta=\sum_{\lambda_i\neq 0}\langle\langle \omega,e_i\rangle\rangle\int_{M\times M}e_i\wedge\eta =(-1)^{k(n-k)}\sum_{\lambda_i\neq 0}\langle\langle \omega,e_i\rangle\rangle \langle\langle e_i,*\eta\rangle\rangle\]
Note however that according to  Lemma \ref{dknk} we have
\[(-1)^{k(n-k)}\sum_{i\in \bN}\langle\langle \omega,e_i\rangle\rangle \langle\langle e_i,*\eta\rangle\rangle=(-1)^{k(n-k)} \langle\langle \omega,*\eta\rangle\rangle=(-1)^{k(n-k)}\delta^{k,n-k}(\pi_1^*\omega\wedge \pi_2^*(**\eta))=\]
\[=\delta^{k,n-k}(\pi_1^*\omega\wedge \pi_2^*\eta)\]
Note that
\[\delta^{k,n-k}(\pi_1^*\omega\wedge \pi_2^*\eta)-T_{\bG_k}(\pi_1^*\omega\wedge \pi_2^*\eta)=(-1)^{k(n-k)}\sum_{\lambda_i=0}\langle\langle \omega,e_i\rangle\rangle \langle\langle e_i,*\eta\rangle\rangle=T_{H^{k,n-k}}(\pi_1^*\omega\wedge\pi_2^*\eta) \]
the last equality being a simple check. We have thus proved
\[\Delta_x \bG_k=\delta^{k,n-k}-H^{k,n-k}\]
Analogously one shows that $\Delta_y \bG_k=\delta^{k,n-k}-H^{k,n-k}$ and thus using (\ref{Tbgk}) one concludes (\ref{dlf}).

It is obvious that (\ref{dlf}) implies that 
\[\Delta\left(\frac{1}{2}\bG\right)=\delta-H\]
and the last conclusions follow from Proposition \ref{H+D}.
\end{proof}

We list now some symmetry relations for the Green kernels. Let $R:M\times M\ra M\times M$ be the reflection in the diagonal $R(x,y)=(y,x)$. 
\begin{lemma} The  following hold
\begin{itemize}
\item[(i)] $R^*\bG_k=(-1)^n\bG_{n-k}$
\item[(ii)] $*\bG_k=(-1)^{n-k}\bG_{n-k}$
\item[(iii)] $d_x\bG_k=-d_y\bG_{k-1}$
\item[(iv)] $d_x^*\bG_{k}=d_y^*\bG_{k+1}$
\end{itemize}

\end{lemma}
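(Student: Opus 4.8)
The plan is to read everything off two closed-form descriptions already at hand: the spectral formula for $\bG_k$ in Lemma \ref{expGk}, and the fact (Theorem \ref{Dlf}) that the full kernel $\bG=\sum_{k=0}^n\bG_k$ is a closed current. Throughout I use three elementary facts, with sign conventions as fixed in Lemma \ref{sgnl}: (a) the Hodge star restricts to a linear isometry of the $\lambda$-eigenspace of $\Delta_k$ onto the $\lambda$-eigenspace of $\Delta_{n-k}$, so if $\{e_i\}$ is an $L^2$-orthonormal eigenbasis of $\Omega^k(M)$ then $\{*e_i\}$ is one for $\Omega^{n-k}(M)$, with $*(*e_i)=(-1)^{k(n-k)}e_i$; (b) $\pi_1^*\alpha\wedge\pi_2^*\beta=(-1)^{|\alpha||\beta|}\pi_2^*\beta\wedge\pi_1^*\alpha$; and (c) the factorwise action of the Hodge star of $M\times M$, namely $\hat{*}(\pi_1^*\omega\wedge\pi_2^*\gamma)=(-1)^{(n-|\omega|)|\gamma|}\pi_1^*(*\omega)\wedge\pi_2^*(*\gamma)$, established in the proof of the identities $d=d_x+d_y$, $d^*=d_x^*+d_y^*$, $\Delta=\Delta_x+\Delta_y$.

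For (i): apply $R^*$ to $\bG_k=(-1)^{kn}\sum_i\lambda_i^{-1}\pi_1^*(*e_i)\wedge\pi_2^*(e_i)$, which interchanges $\pi_1^*$ with $\pi_2^*$; reorder the two factors by (b), then reindex the sum via the isometry in (a) so as to recognise the spectral formula for $\bG_{n-k}$. Collecting the signs $(-1)^{kn}$, $(-1)^{k(n-k)}$ (reordering) and $(-1)^{k(n-k)}$ (from $**$) against the normalisation $(-1)^{(n-k)n}$ of $\bG_{n-k}$, the total exponent is $kn-(n-k)n\equiv n\pmod 2$, which gives $R^*\bG_k=(-1)^n\bG_{n-k}$. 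Part (ii) is the same computation with $R^*$ replaced by $\hat{*}$ and (b) replaced by (c); there the signs combine to $(-1)^{n^2-k^2}\equiv(-1)^{n-k}$. (Alternatively, (i) and (ii) follow by integrating over $t\in(0,\infty)$ the corresponding identities for the heat kernels $p_t^k$; these come out of the spectral representation (\ref{ptk}) in exactly the way (\ref{Rpt}) does, once the harmonic parts $H^{k,n-k}$ of (\ref{Hk}) are subtracted off so that $\bG_k=\int_0^\infty\big(p_t^k-H^{k,n-k}\big)\,dt$.)

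For (iii) and (iv) I switch to the closedness of $\bG$. Since $\bG$ has locally integrable coefficients on $M\times M$ and is smooth on $M\times M\setminus\delta$ (deRham, \cite{dR}), the currential identity $d\bG=0$ restricts on $M\times M\setminus\delta$ to the pointwise identity $d\bG=0$ between smooth forms (this is the content of the remark following Definition \ref{sgnl0}). Writing $d=d_x+d_y$ and $d\bG=\sum_k(d_x\bG_k+d_y\bG_k)$ and extracting the component of bidegree $(n-k+1,k)$, which is $d_x\bG_k+d_y\bG_{k-1}$, yields (iii). For (iv): on the $2n$-dimensional manifold $M\times M$ one has $d^*=-\hat{*}\,d\,\hat{*}$ in every degree, so $\ker d^*=\hat{*}(\ker d)$ and hence $d^*(\hat{*}\bG)=0$ on $M\times M\setminus\delta$. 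By (ii) one has $\hat{*}\bG=\sum_k(-1)^{n-k}\bG_{n-k}$, and with $d^*=d_x^*+d_y^*$ the component of $d^*(\hat{*}\bG)$ of bidegree $(n-k-1,k)$ works out to $(-1)^k\big(d_x^*\bG_k-d_y^*\bG_{k+1}\big)$; setting it to zero is (iv).

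The only delicate points are the sign arithmetic in (i)--(ii), which is routine but must be matched carefully against the conventions $d^*=(-1)^{\nu_{n,k}}*d*$ and $**=(-1)^{k(n-k)}$, and, for (iii)--(iv), the passage from the currential identity $d\bG=0$ to a pointwise identity away from the diagonal; the latter rests on deRham's smoothness of $\bG_k$ off $\delta$ together with its local integrability, both already invoked in the paper. Everything else is bidegree bookkeeping on $M\times M$.
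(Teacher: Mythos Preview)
Your argument for (i)--(iii) is correct and follows the paper's approach exactly: (i) and (ii) are read off the spectral formula of Lemma \ref{expGk}, and (iii) is the bidegree-$(n-k+1,k)$ component of the currential identity $d\bG=0$ from Theorem \ref{Dlf}. Your sign bookkeeping checks out against the conventions of Lemma \ref{sgnl}.

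Your proof of (iv) is correct but takes a genuinely different route from the paper. The paper introduces auxiliary factorwise star operators $*_x$, $*_y$ (defined by $*_x(\pi_1^*\omega\wedge\pi_2^*\eta)=\pi_1^*(*\omega)\wedge\pi_2^*\eta$ and analogously for $*_y$), establishes their commutation relations with $d_x$, $d_y$ and with each other, and then derives (iv) by a chain of half a dozen sign identities that ultimately rewrite $d_x^*\bG_k$ in terms of $d_y^*\bG_{k+1}$ via (ii) and (iii). Your argument is more conceptual and more symmetric with the proof of (iii): since $d^*=-\hat{*}\,d\,\hat{*}$ in every degree on the $2n$-manifold $M\times M$, the closedness $d\bG=0$ immediately gives $d^*(\hat{*}\bG)=0$; then by (ii), $\hat{*}\bG=\sum_j(-1)^j\bG_j$, and extracting the $(n-k-1,k)$ component of $0=\sum_j(-1)^j(d_x^*\bG_j+d_y^*\bG_j)$ yields $(-1)^k d_x^*\bG_k+(-1)^{k+1}d_y^*\bG_{k+1}=0$. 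This avoids the $*_x$, $*_y$ machinery entirely and makes transparent that (iii) and (iv) are Hodge-dual twins. The paper's approach, by contrast, is more hands-on and self-contained at the level of the individual $\bG_k$, without appealing to coclosedness of the full $\hat{*}\bG$.
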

\begin{proof} The first two items are immediate consequences of Lemma \ref{expGk}. Item (iii) is a consequence of the fact that $\bG$ is a closed current, according to Theorem \ref{Dlf}. Item (iv) needs a proof.

We define 
 \begin{eqnarray} *_x(\pi_1^*\omega\wedge\pi_2^*\eta):=(\pi_1^*(*\omega)\wedge\pi_2^*\eta) \\ *_y(\pi_2^*\eta\wedge\pi_1^*\omega):=\pi_2^*(*\eta)\wedge\pi_1^*(\omega)\end{eqnarray}
so that $d_y^*=(-1)^{nj+n+1}*_yd_y*_y$ on forms of bidegree $(i,j)$.
We will use the following immediate commutativity relations
\[*_x*_y=(-1)^n*_y*_x,\quad d_x*_y=(-1)^n*_yd_x,\quad d_y*_x=(-1)^n*_xd_y.\]
We also need that Hodge star $*$ on forms of bidegree $(i,j)$ works as follows
\[*=(-1)^{(n-i)(n-j)}*_y*_x=(-1)^{ij+ni+nj}*_x*_y\]
from which we deduce that
\[*_x=(-1)^{ij+ni+j}**_y=(-1)^{(n-i-j)(n-j)}*_y*.\]
We make use of item (ii) and (iii) and the previous relations in the next lines:
\[d_x^*\bG_k=(-1)^{n(n-k)+n+1}*_xd_x(*_x\bG_k)=(-1)^{n(n-k)+n+1}*_xd_x*_y(*\bG_k)=(-1)^{(n+1)(k-1)}*_xd_x*_y\bG_{n-k}=\]
\[=(-1)^{(n+1)(k-1)}*_y*_xd_x\bG_{n-k}=(-1)^{(n+1)(k-1)+1}*_y*_xd_y\bG_{n-k-1}=(-1)^{nk+k}*_yd_y(*_x\bG_{n-k-1})=\]
\[=(-1)^{nk+k}*_yd_y*_y(*\bG_{n-k-1})=(-1)^{nk+1}*_yd_y*_y\bG_{k+1}=d_y^*\bG_{k+1}\]

\end{proof}
We get immediately from item (iv) and $d^*=d_x^*+d_y^*$ the next statement.
\begin{corollary}\label{Dlf2} The following holds on $M\times M$:
\[d^*\left(\frac{1}{2}\bG\right)=d_y^*\bG\]
\end{corollary}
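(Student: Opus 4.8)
The plan is simply to expand $d^*\bG$ along the bidegree decomposition $d^*=d_x^*+d_y^*$ established in the Proposition right after the definitions of $d_x,d_y$ (item (ii) there), and then to absorb the $d_x^*$-part into the $d_y^*$-part using relation (iv) of the preceding Lemma, namely $d_x^*\bG_k=d_y^*\bG_{k+1}$, after a harmless reindexing.

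Concretely, I would first write
\[
d^*\bG=\sum_{k=0}^n d^*\bG_k=\sum_{k=0}^n d_x^*\bG_k+\sum_{k=0}^n d_y^*\bG_k .
\]
For the first sum, relation (iv) gives $\sum_{k=0}^n d_x^*\bG_k=\sum_{k=0}^n d_y^*\bG_{k+1}=\sum_{j=1}^{n+1}d_y^*\bG_j$. Now $\bG_{n+1}=0$ (the kernels $\bG_k$ are only defined, and nonzero, for $0\le k\le n$), and $\bG_0$ has bidegree $(n,0)$, so $d_y^*\bG_0=0$ for trivial degree reasons. Hence $\sum_{j=1}^{n+1}d_y^*\bG_j=\sum_{j=0}^{n}d_y^*\bG_j=d_y^*\bG$, and therefore
\[
d^*\bG=d_y^*\bG+d_y^*\bG=2\,d_y^*\bG,
\]
which is exactly the assertion after dividing by $2$.

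There is no genuine obstacle in this argument; it is a one-line consequence of item (iv). The only point that needs a moment's care is the bookkeeping of the two "boundary" terms in the reindexing, i.e. the $k=0$ and $k=n$ ends of the sum, both of which vanish because of the bidegree constraint on $\bG_k$ (degree $n-k$ in the first factor, degree $k$ in the second).
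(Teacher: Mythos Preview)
Your argument is correct and is exactly the approach the paper takes: it states that the corollary follows ``immediately from item (iv) and $d^*=d_x^*+d_y^*$'', and your reindexing with the boundary checks $d_y^*\bG_0=0$ and $\bG_{n+1}=0$ is precisely the filling-in of that one-line proof.
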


Putting together Theorem \ref{Dlf} and Corollary \ref{Dlf2} we get the analogous result of Theorem \ref{Rn} (ii) for a compact manifold $M$.
\begin{theorem}\label{Gradiag} Let $M$ be a compact, oriented Riemannian manifold.  Then the Biot-Savart form $\BS(\delta)$ of the diagonal $\delta$ in $M\times M$ is the form $d_y^*\bG$ where $\bG$ is the total Green kernel and $(-1)^nd_y^*\bG$ is the kernel of the total Biot-Savart operator
 \[d^*G:\Omega^*(M)\ra \Omega^{*-1}(M).\] 
\end{theorem}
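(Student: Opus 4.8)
The plan is to feed Theorem~\ref{Dlf} and Corollary~\ref{Dlf2} into Definition~\ref{defBS}. By definition $\BS(\delta)=d^*G(\delta-H)$, where $H$ is the harmonic representative of the Poincar\'e dual of $\delta$ in $M\times M$; Theorem~\ref{Dlf} identifies this $H$ with $\sum_{k=0}^n H^{k,n-k}$. The first and main step is to show that the Hodge-theoretic primitive of $\delta-H$ is exactly $\frac{1}{2}\bG$, i.e.\ that $G(\delta-H)=\frac{1}{2}\bG$. Granting this, Corollary~\ref{Dlf2} gives at once
\[
\BS(\delta)=d^*G(\delta-H)=d^*\!\left(\frac{1}{2}\bG\right)=d_y^*\bG .
\]

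To prove $G(\delta-H)=\frac{1}{2}\bG$ I would argue as follows. By Theorem~\ref{Dlf} we have $\Delta\!\left(\frac{1}{2}\bG\right)=\delta-H$ on $M\times M$, so $\delta-H$ is an exact current; since $\delta$ is a closed oriented submanifold of $M\times M$, Corollary~\ref{GLH} applied with ambient manifold $M\times M$ and $L=\delta$ gives $G(\delta-H)=\int_0^\infty e^{-t\Delta}(\delta-H)\,dt$. Now $e^{-t\Delta}(\delta-H)=e^{-t\Delta}\Delta\!\left(\frac{1}{2}\bG\right)=-\partial_t\!\left[e^{-t\Delta}\!\left(\frac{1}{2}\bG\right)\right]$, so the integral telescopes to $\frac{1}{2}\bG-\lim_{t\to\infty}e^{-t\Delta}\!\left(\frac{1}{2}\bG\right)$, and the last limit is the $L^2$-projection of $\frac{1}{2}\bG$ onto the harmonic forms of $M\times M$. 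That projection vanishes: by $\Delta=\Delta_x+\Delta_y$ every summand $\pi_1^*(*e_i)\wedge\pi_2^*(e_i)$ of $\bG_k$ with $\lambda_i\neq 0$ is a $\Delta_{M\times M}$-eigenform of eigenvalue $2\lambda_i>0$, hence orthogonal to $\Ker\Delta_{M\times M}$, whose elements on a product are spanned by $\pi_1^*f_a\wedge\pi_2^*f_b$ with $f_a,f_b$ harmonic on $M$ and against which the $L^2$-pairing of product forms factors. This yields $G(\delta-H)=\frac{1}{2}\bG$, hence $\BS(\delta)=d_y^*\bG$.

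For the last assertion, that $(-1)^n d_y^*\bG$ is the Schwartz kernel of $d^*G:\Omega^*(M)\to\Omega^{*-1}(M)$, I would mimic the computation done for $\bR^n$ in the proof of Theorem~\ref{Rn}(ii). By Lemma~\ref{expGk}, $\bG=\sum_k\bG_k$ is the kernel of $G=\sum_k G_k$, so $G\omega=(\pi_2)_*(\bG\wedge\pi_1^*\omega)$; applying $d^*$, commuting it past $(\pi_2)_*$ at the cost of the sign $(-1)^n$ fixed by our push-forward conventions, then writing $d^*=d_x^*+d_y^*$ and noting that the $d_x^*$-contribution has fibre degree $n-1$ and so integrates to zero along the $n$-dimensional $\pi_2$-fibres, while $d_y^*$ passes through $\pi_1^*\omega$ by \eqref{dy}, one gets $d^*G\omega=(-1)^n(\pi_2)_*\big((d_y^*\bG)\wedge\pi_1^*\omega\big)$, which is exactly the claim. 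The genuinely delicate points are the sign bookkeeping in this last step and the justification that $G$, extended to currents, inverts $\Delta$ on $\frac{1}{2}\bG$; the remainder is a direct assembly of results already in hand, and I expect no further obstacle.
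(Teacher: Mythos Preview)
Your approach is correct and is essentially the one the paper has in mind: the paper's entire argument is the single sentence ``Putting together Theorem~\ref{Dlf} and Corollary~\ref{Dlf2}'', and you have unpacked exactly that, supplying the missing link $G(\delta-H)=\tfrac{1}{2}\bG$ that the paper leaves implicit. Your heat-kernel telescoping argument for this step is sound (the orthogonality of $\bG$ to harmonic forms on $M\times M$ follows cleanly from K\"unneth and the $L^2$-orthogonality of distinct eigenspaces on $M$, as you indicate), and your kernel computation for $(-1)^n d_y^*\bG$ parallels the $\bR^n$ case just as the paper intends; the sign and the vanishing of the $d_x^*$-contribution under $(\pi_2)_*$ are indeed the only places requiring care.
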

 
\begin{remark} This result together with the fact that $\BS(\delta)$ is blow-up extendible when $M$  has dimension $3$ (Theorem \ref{codim3} below) implies that Vogel's differential linking form \cite{Vo} for real cohomology spheres of dimension $3$ coincides with ours, modulo the different sign conventions.
\end{remark}

   \section{Small time asymptotics for IVPs} \label{IVPs}

  In this section we discuss the construction of a parametrix in the sense of Hadamard for an initial value problem. The scheme follows in its initial steps the construction of the parametrix for the heat kernel, in other words one starts by looking for a formal solution of the heat equation  which leads to a recurrence relation that takes the form of an ODE.
  \begin{definition} A parametrix for the initial value problem (\ref{ivp}) is any smooth form  $\Psi\in \Omega^{0,n-k}((0,\infty)\times M)$ such that 
  \begin{itemize}
  \item[(i)] $H\Psi : = (\partial_t+\Delta)\Psi$ extends to a continuous form on $[0,\infty)\times M$. 
  \item[(ii)] $\disp\lim_{t\ra 0}\Psi_t=S$ weakly.
  \end{itemize}
  \end{definition}
  
  We follow the construction of a parametrix for the heat kernel in order to construct a parametrix for an IVP when $S=L$ is a compact oriented $k$-dimensional submanifold, $k \le n-1$. Let $T$ be a tubular neighbourhood of $L$ where the normal exponential map 
\[
\exp^\perp \ \ : \ \ D_{\eps}(\nu L) \doteq \big\{ v \in \nu L : |v|< \eps\big\} \to T, \qquad v \mapsto \exp_{\pi(v)}(v)
\]
is a diffeomorphism. Via $\exp^\perp$, we identify $T$ with $D_{\eps}(\nu L)$ endowed with the pulled-back metric $g$, and $L$ with the zero-section  ${\bf 0}$. Then, the distance function writes on $D_{\eps}(\nu L)$ as 
\[
r(v) = |v|.
\]
Notice that $r$ is a $1$-Lipschitz function and $g(\nabla r, \gamma') = 1$ along the integral curves $\gamma(t) = tv/|v|$ of the unit vector field $\partial_r$, thus $\nabla r = \partial_r$. 

For later use, it will be useful to define another metric on $D_{\eps}(\nu L)$ (in fact, on the entire $\nu L$), that will be compared  to $g$ in due time: the Sasaki metric $\bar g$. We recall its construction in local coordinates. Denote by $\pi : \nu L \to L$ the bundle projection, let $\{x^i\}$, $1 \le i \le k$ be a local system of coordinates on $U \subset L$ and write $g = g_{ij} dx^i \otimes dx^j$ for the induced metric on $L$. Up to shrinking $U$, we can assume that $U$ supports a local orthonormal basis of sections $\{\nu_\alpha\}$ for $\nu L$. Let $\nabla^\perp$ be the normal connection, and write
\[
\nabla^\perp_{e_j} \nu_\beta = \Gamma^\alpha_{\beta j} \nu_\alpha, \qquad \Gamma^\alpha_{\beta j} \in C^\infty(U).
\]
Defining $y^i = x^i \circ \pi$ and $y^\alpha(v) = g(v, \nu_\alpha)$, then in the chart $\{y^i,y^\alpha\}$ on $\pi^{-1}(U)$ the metric $\bar g$ writes as
\[
\bar g = \bar g_{ij} dy^i \otimes dy^j + \delta_{\alpha\beta} (dy^\alpha + y^\gamma \bar{\Gamma}^\alpha_{\gamma i} dy^i) \otimes (dy^\beta + y^\delta \bar{\Gamma}^\beta_{\delta j} dy^j).
\]
where $\bar g_{ij} = g_{ij}\circ \pi$, $\bar{\Gamma}^\alpha_{\gamma i} = \Gamma^\alpha_{\gamma i} \circ \pi$. Up to renaming, we can assume that the form $dy^1 \wedge \ldots \wedge dy^n$ respects the natural orientation on $\nu L$ making $\exp^\perp$ orientation preserving.

\begin{lemma}\label{lem_prop_Sasaki}
The Sasaki metric $\bar g$ on $\nu L$ has the following properties: 
\begin{itemize}
	\item[(i)] The volume form $\bar\omega$ has the local expression
\[
\bar\omega = \sqrt{\det[\bar g_{ij}]} \ dy^1 \wedge \ldots \wedge dy^n.
\]
	\item[(ii)] The vector field $r \partial_r$ satisfies 
\begin{equation}\label{eq_diver_sasaki}
	\overline{\diver}(r \partial_r) = n-k \qquad \text{on } \, \nu L.
\end{equation}
\end{itemize}
\end{lemma}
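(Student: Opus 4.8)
The plan is to establish (i) by a short determinant computation in an adapted coframe, and then to read off (ii) as an almost immediate consequence of (i).

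For (i), I would work on $\pi^{-1}(U)$ with the $1$-forms $\theta^\alpha := dy^\alpha + y^\gamma\bar\Gamma^\alpha_{\gamma i}\,dy^i$, so that $\bar g = \bar g_{ij}\,dy^i\otimes dy^j + \delta_{\alpha\beta}\,\theta^\alpha\otimes\theta^\beta$. In the coframe $(dy^1,\dots,dy^k,\theta^1,\dots,\theta^{n-k})$ the Gram matrix of $\bar g$ is block diagonal with blocks $[\bar g_{ij}]$ and $[\delta_{\alpha\beta}]$, so its determinant is $\det[\bar g_{ij}]$ and the Riemannian volume form is $\sqrt{\det[\bar g_{ij}]}\;dy^1\wedge\dots\wedge dy^k\wedge\theta^1\wedge\dots\wedge\theta^{n-k}$. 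Since each $\theta^\alpha$ differs from $dy^\alpha$ only by a linear combination of the $dy^i$ with $1\le i\le k$, wedging $\theta^1\wedge\dots\wedge\theta^{n-k}$ against $dy^1\wedge\dots\wedge dy^k$ reproduces $dy^1\wedge\dots\wedge dy^n$; this gives (i), the labelling having been fixed so that this form is positively oriented.

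For (ii), the first step is to identify $r\partial_r$ explicitly: on each fibre the coordinates $y^\alpha(v) = g(v,\nu_\alpha)$ are linear, the ray $s\mapsto sv$ has velocity $y^\alpha(v)\partial_{y^\alpha}$, so the outward radial field is the Euler field $E := y^\alpha\partial_{y^\alpha}$, which is smooth on all of $\nu L$ and vanishes precisely on ${\bf 0}$. Using $\theta^\alpha(E) = y^\alpha$ and $dy^i(E) = 0$ one gets $\bar g(E,E) = \sum_\alpha (y^\alpha)^2 = r^2$, hence $|E|_{\bar g} = r$ and $r\partial_r = E$ on all of $\nu L$. Then I would apply the divergence formula relative to $\bar\omega$: since $E$ has no $\partial_{y^i}$-component,
\[
\overline{\diver}(r\partial_r) = \overline{\diver}(E) = \frac{1}{\sqrt{\det[\bar g_{ij}]}}\,\frac{\partial}{\partial y^\alpha}\!\Big(\sqrt{\det[\bar g_{ij}]}\; y^\alpha\Big),
\]
and by (i) the density $\sqrt{\det[\bar g_{ij}]}$, with $\bar g_{ij} = g_{ij}\circ\pi$, depends only on the base coordinates $y^i$, so it passes outside $\partial/\partial y^\alpha$ and the right-hand side collapses to $\partial y^\alpha/\partial y^\alpha = n-k$, the number of fibre coordinates. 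Chart-independence of this identity between smooth functions, together with the fact that the charts $\pi^{-1}(U)$ cover $\nu L$, then yields \eqref{eq_diver_sasaki} globally.

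I do not expect a serious obstacle here: the only points that need a little care are that $r\partial_r$, although built from the non-smooth field $\partial_r$, extends smoothly across the zero-section as the Euler field $E$, and that the clean value $n-k$ is exactly the manifestation of (i) — the Sasaki volume density carrying no dependence on the fibre coordinates — so (i) really must be proved first.
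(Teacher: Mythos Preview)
Your proof is correct. For (i) you and the paper do essentially the same thing: work in the adapted coframe $(dy^i,\theta^\alpha)$ where $\bar g$ is block diagonal, then observe that the $\theta^\alpha$ differ from $dy^\alpha$ only by base $1$-forms so the top wedge reduces to $dy^1\wedge\dots\wedge dy^n$.

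For (ii) the routes genuinely diverge. The paper argues via the flow: $\Phi_t(v)=e^t v$ integrates $r\partial_r$, and from (i) one sees $\Phi_t^*\bar\omega=e^{(n-k)t}\bar\omega$; differentiating at $t=0$ gives $L_{r\partial_r}\bar\omega=(n-k)\bar\omega$. You instead identify $r\partial_r$ with the Euler field $y^\alpha\partial_{y^\alpha}$ and plug into the coordinate divergence formula, using (i) to see that the density $\sqrt{\det[\bar g_{ij}]}$ is fibrewise constant so the computation collapses to $\partial_{y^\alpha}y^\alpha=n-k$. Your approach makes the dependence on (i) more explicit and dispatches the zero-section smoothness cleanly via the Euler-field description; the paper's flow argument is coordinate-free and makes the scaling origin of the constant $n-k$ transparent. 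Both are short and standard.
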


\begin{proof}
Extracting an orthonormal coframe $\theta^i$ out of the forms $\{dx^i\}$ on $U \subset L$, and letting $\gamma^\alpha_{ij}$ the components of $\nabla^\perp$ in the basis dual to $\theta^i$, by construction of the Sasaki metric the frame
\[
\bar \theta^i = \pi^* \theta^i, \qquad \bar \theta^\alpha = dy^\alpha + y^\beta \bar \gamma^\alpha_{\beta i} dy^i
\]
is orthonormal for $\bar g$, where $\bar \gamma^\alpha_{\beta i} = \gamma^\alpha_{\beta i} \circ \pi$. The volume form is therefore $\bar \theta^1 \wedge \ldots \wedge \bar \theta^n$. Since $\bar \theta^1 \wedge \ldots \wedge \bar\theta^k = \sqrt{\det(\bar g_{ij})} dy^1 \wedge \ldots dy^k$ and 
\[
\bar\theta^{k+1} \wedge \ldots \wedge \bar \theta^n = dy^{k+1} \wedge \ldots \wedge dy^n + \ \text{ terms with some } \, dy^i,
\]
the identity in (i) follows. To prove (ii), let $\Phi_t$ be the flow of $r \partial_r$, that is, $\Phi_t(v) = v e^t$. Then, $\Phi_t^* \bar{\omega} = e^{(n-k)t} \bar \omega$ and therefore,
differentiating and recalling that 
\[
\overline{\diver}(r \partial_r)\bar\omega = L_{r \partial_r} \bar\omega = \left.\frac{d}{dt}\right|_{t=0} \Phi_t^* \bar{\omega}
\]
we deduce \eqref{eq_diver_sasaki}.	
\end{proof}

We begin to construct our parametrix. We will use the following result:

\begin{prop}\label{uspr} If $\Delta_0 = - \diver(\nabla)$ is the Laplacian on functions, then \begin{equation}\label{weitz}
		\Delta(f\eta)=\Delta_0 f\cdot \eta+f\Delta \eta-2\nabla_{\nabla f}\eta\end{equation}
where $\nabla$ is the Levi-Civita connection.
\end{prop}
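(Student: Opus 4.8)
The plan is to deduce this Leibniz-type identity from the Weitzenb\"ock (Bochner) decomposition of the Hodge Laplacian. Recall that on $\Lambda^k T^*M$ one has
\[
\Delta = \nabla^*\nabla + \mathcal{R}_k,
\]
where $\nabla$ is the Levi-Civita connection (extended to forms), $\nabla^*\nabla$ is the associated connection (rough) Laplacian, and $\mathcal{R}_k \in \Gamma(\End(\Lambda^k T^*M))$ is the Weitzenb\"ock curvature endomorphism. Since $\mathcal{R}_k$ is a bundle endomorphism it is $C^\infty(M)$-linear, so $\mathcal{R}_k(f\eta) = f\,\mathcal{R}_k(\eta)$ for every $f\in C^\infty(M)$ and $\eta\in\Omega^k(M)$. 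Hence it suffices to prove the Leibniz rule for the rough Laplacian,
\[
\nabla^*\nabla(f\eta) = (\Delta_0 f)\,\eta + f\,\nabla^*\nabla\eta - 2\nabla_{\nabla f}\eta,
\]
and then add $\mathcal{R}_k(f\eta)=f\mathcal{R}_k(\eta)$ to both sides, using $\Delta=\nabla^*\nabla+\mathcal{R}_k$ once more on the right-hand side.

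To prove the displayed rough-Laplacian identity it is enough to verify it at an arbitrary point $p\in M$. Choose a local orthonormal frame $\{e_i\}_{i=1}^n$ of $TM$ which is geodesic at $p$, i.e. $(\nabla_{e_i}e_j)(p)=0$ for all $i,j$. Then at $p$ one has $\nabla^*\nabla = -\sum_i \nabla_{e_i}\nabla_{e_i}$, and applying the Leibniz rule for $\nabla$ twice gives, at $p$,
\[
\nabla_{e_i}\nabla_{e_i}(f\eta) = e_i(e_i f)\,\eta + 2\,e_i(f)\,\nabla_{e_i}\eta + f\,\nabla_{e_i}\nabla_{e_i}\eta.
\]
Summing over $i$ and changing sign, and using that at $p$ one has $-\sum_i e_i(e_i f) = -\diver(\nabla f) = \Delta_0 f$ and $\sum_i e_i(f)\,\nabla_{e_i}\eta = \nabla_{\nabla f}\eta$ (because $\nabla f = \sum_i e_i(f)\,e_i$), we obtain the claimed identity at $p$. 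As $p$ was arbitrary it holds on all of $M$, and combining it with the Weitzenb\"ock formula yields $\Delta(f\eta) = \Delta_0 f\cdot\eta + f\Delta\eta - 2\nabla_{\nabla f}\eta$.

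There is essentially no genuine obstacle here; the only points requiring a little care are the sign conventions --- we use the positive geometers' Laplacians $\Delta_0 = -\diver(\nabla)$ and $\Delta = dd^*+d^*d$, so that the curvature term enters with a plus sign --- and the observation that, the Levi-Civita connection being metric and torsion-free, $\mathcal{R}_k$ is genuinely tensorial. If one prefers not to invoke the Weitzenb\"ock formula, the same identity follows directly from $d(f\eta)=df\wedge\eta+f\,d\eta$, $d^*(f\eta)=f\,d^*\eta-\iota_{\nabla f}\eta$, and Cartan's identity $d\,\iota_{\nabla f}+\iota_{\nabla f}\,d = L_{\nabla f}$, which together give $\Delta(f\eta)=f\Delta\eta+df\wedge d^*\eta+d^*(df\wedge\eta)-L_{\nabla f}\eta$; the remaining terms then reassemble into $(\Delta_0 f)\eta - 2\nabla_{\nabla f}\eta$ once one notes that the two algebraic-in-$\eta$ correction terms --- the one coming from $L_{\nabla f}$ and the one coming from $d^*(df\wedge\cdot)$, both built from the Hessian of $f$ --- coincide by the symmetry of $\Hess f$ and hence cancel.
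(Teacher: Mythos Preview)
Your proof is correct and follows essentially the same approach as the paper: invoke the Weitzenb\"ock formula $\Delta=\nabla^*\nabla+R$, note that the curvature term is $C^\infty(M)$-linear, and reduce to the Leibniz rule for the rough Laplacian (which the paper simply declares immediate, referring to \cite{BGV}, whereas you verify it explicitly in a geodesic frame).
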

\begin{proof} This is standard. Probably, the fastest way to see it is to use Weitzenb\"ock formula (see (3.16) in \cite{BGV})
\[\Delta=\nabla^*\nabla+R\]
The identity (\ref{weitz}) is immediate for the Bochner Laplacian $\nabla^*\nabla$ on forms (see also Proposition 2.5 \cite{BGV}).
\end{proof}

Let 
\[ 
f_t(r):= \frac{1}{(4\pi t)^a}e^{-\frac{r^2}{4t}}, \qquad t,r>0, \qquad \text{where } \, a= \frac{n-k}{2}.
\]
For $N \in \mathbb{N}$ define 
\[
\begin{array}{c}
\disp \eta_t:=\eta_0+\eta_1t+\ldots+\eta_Nt^N,  \qquad \eta_i \in \Omega^{n-k}(M), \\[0.4cm]
\Psi_t := f_t(r)\eta_t
\end{array}
\]
and denote with $H:=\partial_t +\Delta$ the heat operator. We compute using Proposition \ref{uspr}
\[
H \Psi = (\partial_t f_t(r))\eta_t+f_t(r)(\partial_t\eta_t)+\Delta_0[f_t(r)]\cdot\eta_t-2 \nabla_{\nabla f_t(r)} \eta_t +f_t(r)\Delta\eta_t.
\]

We denote with $'$ the derivative in the variable $r$, so that
\[
\nabla f_t(r)=f'_t(r)\nabla r
\]
We compute
\[ 
\Delta_0[f_t(r)]=-\diver(\nabla f_t(r))=f'_t(r)\Delta_0r-f''_t(r),
\]
where we use that $|\nabla r|^2=1$. Moreover, away from $r=0$ we have 
\begin{equation}\label{eq2} 
f_t'(r)=-\frac{r}{2t}f_t(r)\qquad f_t''(r)=\frac{r^2-2t}{4t^2}f_t(r)\qquad \partial_t f_t(r) =\left(-\frac{a}{t}+\frac{r^2}{4t^2}\right)f_t(r).
\end{equation}

Therefore  by using (\ref{eq2}):
\begin{equation}\label{eq_Hpsi}
\begin{array}{lcl}
\disp H \Psi & =& \disp f_t(r)\left[\left(-\frac{a}{t}+\frac{r^2}{4t^2}-\frac{r\Delta_0r}{2t}-\frac{r^2-2t}{4t^2}\right)\eta_t+\frac{r}{t} \nabla_{\nabla r}\eta_t +(\partial_t+\Delta)\eta_t\right] \\[0.5cm]
& = & \disp f_t(r)\left[\frac{1}{2t}\left(1-2a-r\Delta_0r\right)\eta_t+\frac{r}{t}\nabla_{\nabla r} \eta_t +H\eta_t\right].
\end{array}
\end{equation}
We have
\[
H\eta_t=t^N\Delta\eta_N+\sum_{i=0}^{N-1}(\Delta\eta_i+(i+1)\eta_{i+1})t^i,
\]
so that, setting $\eta_{{-1}}:=0$, we can write 
\begin{equation}\label{eq_Hpsi_2}
\disp H \Psi = f_t(r)t^N \Delta \eta_N + f_t(r) \sum_{i=-1}^{N-1} B_i t^i,
\end{equation}
where\footnote{one encounters in Peter Li the same expression bar the sign of the Laplacian for the function Laplacian}
\[
B_i : = \frac{1-2a-r\Delta_0r}{2}\eta_{i+1}+\nabla_{_{r\nabla r}}\eta_{i+1}+\Delta\eta_i+(i+1)\eta_{i+1}
\]
It is useful to use the function
\[
\xi=\frac{1}{2}r^2
\] 
Then
\[
\nabla \xi =r\nabla r = r \partial_r, \qquad \Delta_0 \xi =-\diver(r\nabla r)=r\Delta_0 r-1,
\]
and we rewrite $B_i$ as
\begin{equation}\label{eq30}
\begin{array}{rcl}
B_i &=& \disp \left(-a+(i+1)-\frac{\Delta_0 \xi}{2}\right)\eta_{i+1}+\nabla_{_{\nabla \xi}}\eta_{i+1}+\Delta\eta_i \qquad \text{for } \, 0 \le i \le N-1 \\[0.5cm]
B_{-1} &=& \disp \left(-a-\frac{\Delta_0 \xi}{2}\right)\eta_0+\nabla_{_{\nabla \xi}}\eta_0 
\end{array}
\end{equation}
We shall choose $\eta_j$ smoothly on $D_{\eps}(\nu L)$ in such a way that $B_i = 0$ for $-1 \le i \le N-1$. To this aim, we shall study the function
\[
-a-\frac{\Delta_0 \xi}{2} = \frac{k-n - \Delta_0 \xi}{2}.
\]
\begin{lemma}\label{lem_chiave}
The following holds 
\begin{equation}\label{eq_delta0xi}
\frac{k-n - \Delta_0 \xi}{2} = \nabla \xi(\log \sqrt{j}).
\end{equation}
where $j \in C^\infty(D_{\eps}(\nu L))$ is the determinant of the exponential map $\exp^\perp : ( D_{\eps}(\nu L),\bar g) \to (T,\metric)$ and $\bar g$ is the Sasaki metric. In particular, $j \equiv 1$ on the zero-section .
\end{lemma}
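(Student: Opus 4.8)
The plan is to prove \eqref{eq_delta0xi} by a direct comparison of the divergence operators of the two metrics $g$ (the pulled-back metric on $D_\eps(\nu L)$) and the Sasaki metric $\bar g$; no serious obstacle is expected here, and the only point requiring care is keeping the normalization of the Jacobian $j$ straight — specifically that the determinant of $\exp^\perp:(D_\eps(\nu L),\bar g)\to(T,\metric)$ is exactly the positive function $j$ with $\dvol_g = j\,\dvol_{\bar g}$ (positivity coming from the orientation normalization stated just before Lemma~\ref{lem_prop_Sasaki}).

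First I would rewrite the left-hand side of \eqref{eq_delta0xi} in divergence form. Since $\xi=\tfrac12 r^2$ and $|\nabla r|=1$, we have $\nabla\xi=r\nabla r=r\partial_r$, and, as already computed in the lines preceding the statement, $\Delta_0\xi=-\diver(r\partial_r)$, the divergence being taken with respect to $g$. Hence
\[
\frac{k-n-\Delta_0\xi}{2}=\frac{(k-n)+\diver(r\partial_r)}{2},
\]
so it suffices to establish the pointwise identity $\diver(r\partial_r)=\nabla\xi(\log j)+(n-k)$ on $D_\eps(\nu L)$.

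The main step is the elementary change-of-volume formula. For an arbitrary vector field $X$, combining $L_X\dvol_g=\diver(X)\,\dvol_g$ with $L_X\dvol_{\bar g}=\overline{\diver}(X)\,\dvol_{\bar g}$ and the Leibniz rule for the Lie derivative applied to $\dvol_g=j\,\dvol_{\bar g}$ gives
\[
\diver(X)\,j\,\dvol_{\bar g}=L_X\bigl(j\,\dvol_{\bar g}\bigr)=(Xj)\,\dvol_{\bar g}+j\,\overline{\diver}(X)\,\dvol_{\bar g},
\]
and therefore $\diver(X)=X(\log j)+\overline{\diver}(X)$. Specializing to $X=r\partial_r=\nabla\xi$ and invoking Lemma~\ref{lem_prop_Sasaki}(ii), which says $\overline{\diver}(r\partial_r)=n-k$, we get $\diver(r\partial_r)=\nabla\xi(\log j)+(n-k)$; substituting into the previous display yields
\[
\frac{k-n-\Delta_0\xi}{2}=\frac{\nabla\xi(\log j)}{2}=\nabla\xi(\log\sqrt{j}),
\]
which is \eqref{eq_delta0xi}.

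It remains to note that $j\equiv 1$ on the zero-section $\mathbf{0}$. This is because $d\exp^\perp$ is the identity along $\mathbf{0}$ with respect to the splitting $T_{(p,0)}\nu L=T_pL\oplus\nu_pL$, while, from the local expression of $\bar g$ (with $y^\alpha=0$ on $\mathbf{0}$, killing the connection terms $y^\gamma\bar\Gamma^\alpha_{\gamma i}$), the Sasaki metric restricted to $\mathbf{0}$ is precisely the orthogonal sum of the metric of $L$ with the fibre metric of $\nu L$ — which is exactly $g$ at points of $\mathbf{0}$. Hence $\dvol_g=\dvol_{\bar g}$ there, so $j=1$ on $\mathbf{0}$.
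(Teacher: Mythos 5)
Your proof is correct and follows essentially the same route as the paper: both arguments compute $L_{r\partial_r}$ of $\dvol_g = j\,\dvol_{\bar g}$ and invoke Lemma \ref{lem_prop_Sasaki}(ii) to identify $\overline{\diver}(r\partial_r)=n-k$. The only cosmetic difference is that you state the change-of-divergence formula $\diver(X)=X(\log j)+\overline{\diver}(X)$ for a general $X$ before specializing, whereas the paper carries out the Lie-derivative computation directly for $X=r\partial_r$.
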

\begin{proof}
Denote by $\omega$ and $\bar \omega$, respectively, the volume forms of $g = (\exp^\perp)^*\metric$ and of $\bar g$. Then, $\omega = j \bar \omega$. By Lemma \ref{lem_prop_Sasaki}, we get 
\[
\begin{array}{lcl}
(-\Delta_0 \xi)\omega & = & \disp \diver(r\partial_r) \omega = L_{r \partial_r}\omega \\[0.2cm]
& = & \disp L_{r \partial_r}(j \bar\omega) = r(\partial_r j) \bar\omega + \overline{\diver}(r \partial_r)\bar\omega \\[0.2cm]
& = & \disp L_{r \partial_r}(j \bar\omega) = r(\partial_r j) \bar\omega + j(n-k)\bar \omega \\[0.2cm]
& = & \disp \frac{\nabla \xi(j)}{j} \omega + (n-k)\omega = \left(n-k + \nabla \xi(\log j)\right)\omega,
\end{array}
\]
and rearranging we get \eqref{eq_delta0xi}.
\end{proof}
In view of the above Lemma, system \eqref{eq30} rewrites as
\begin{equation}\label{eq30_rearranged}
\begin{array}{lcl}
B_{-1} &=& \disp \nabla \xi(\log \sqrt{j})\eta_0+\nabla_{_{\nabla \xi}}\eta_0 = j^{-\frac{1}{2}} \nabla_{\nabla \xi}\left( j^{\frac{1}{2}}\eta_0 \right) \\[0.5cm]
B_i &=& \disp \left(\nabla \xi(\log \sqrt{j}) + (i+1)\right)\eta_{i+1}+\nabla_{_{\nabla \xi}}\eta_{i+1}+\Delta\eta_i \\[0.4cm]
 &=& \disp r^{-i-1}j^{-\frac{1}{2}} \left\{ \nabla_{\nabla \xi}\left(r^{i+1}j^{\frac{1}{2}} \eta_{i+1}\right) + r^{i+1}j^{\frac{1}{2}}\Delta\eta_i\right\} \qquad \text{for } \, 0 \le i \le N-1.
\end{array}
\end{equation}
Again since $\nabla \xi, = r \partial_r$, finding $\{\eta_i\}$ so that $B_i = 0$ for $-1 \le i \le N-1$ amounts to solve
\begin{equation}\label{eq_iteration}
\begin{array}{ll}
(i) & \disp \nabla_{\partial_r}\left( j^{\frac{1}{2}}\eta_0 \right) = 0, \\[0.4cm]
(ii) & \disp \nabla_{\partial_r}\left(r^{i+1}j^{\frac{1}{2}} \eta_{i+1}\right) = - r^{i}j^{\frac{1}{2}}\Delta\eta_i \qquad \text{for } \, 0 \le i \le N-1 
\end{array}
\end{equation}
Note that the equations (ii) correspond to singular linear ODE if the zero-section  is taken into account, os opposed to initial value problems.

Notice that $j^{1/2}$ is smooth on the entire $D_{\eps}(\nu L)$, with value $1$ along ${\bf 0}$. The system can be solved as follows: first, we choose the initial data
\begin{equation}\label{eq_eta0_L}
\eta_0 = dy^{k+1} \wedge \ldots \wedge dy^n \qquad \text{on } \, {\bf 0}.
\end{equation}
Let $i : L \to D_{\eps}(\nu L)$ be the zero-section , fix a local basis $\{\psi^I\}$ of sections of $i^* \Lambda^{n-k}(T^*D_{\eps}(\nu L))$ on $U \subset L$, and extend them by parallel translation along normal geodesics to get a basis for $\Omega^{n-k}(\pi^{-1}(U))$, still called $\{\psi^I\}$. Smooth dependence on initial data guarantee that $\psi^I$ is smooth on $D_{\eps}(\nu L)$. Writing 
\[
\eta_i = A_{i,I} \psi^I, \quad \Delta \eta_i = B_{i,I}\psi^I, \qquad A_{i,I}, B_{i,I} \in C^\infty(\pi^{-1}(U)), 
\]
identities $(i)$ and $(ii)$ become 
\[
\partial_r \big(j^{\frac{1}{2}}A_{0,I}\big) = 0, \qquad \partial_r\left(r^{i+1}j^{\frac{1}{2}} A_{i+1,I}\right) = - r^{i}j^{\frac{1}{2}}B_{i,I} \qquad \text{for each } \, I. 
\]
The first has solution 
\[
A_{0,I}(v_p) = A_{0,I}(0_p) j^{\frac{1}{2}}(0_p)j^{-\frac{1}{2}}(v_p) \in C^\infty(\pi^{-1}(U)),
\]
the second can be solved inductively by defining 
\[
A_{i+1,I} = - r^{-i-1} j^{-\frac{1}{2}} \int_0^r t^{i}j^{\frac{1}{2}}B_{i,I} dt,
\]
namely, 
\[
\begin{array}{lcl}
A_{i+1,I}(v) & = & \disp \disp - |v|^{-i-1} j(v)^{-\frac{1}{2}} \int_0^{|v|} t^{i}j\left(\frac{tv}{|v|}\right)^{\frac{1}{2}}B_{i,I}\left(\frac{tv}{|v|}\right) dt \\[0.4cm]
& = & \disp - j(v)^{-\frac{1}{2}} \int_0^{1} s^i j(sv)^{\frac{1}{2}}B_{i,I}(sv) ds,
\end{array}
\]
hence $A_{i+1,I}$ is smooth on the entire $D_{\eps}(\nu L)$ once so is $B_{i,I}$. System \eqref{eq_iteration} therefore admits a smooth solution $\eta_0, \ldots, \eta_{N}$.

 Note that the solution $\Psi$ is of class $C^{\infty}$ on the open subset $[0,\infty)\times T_{\epsilon}\setminus \{0\}\times L$ since this is where the function $f_t(r)$ is $C^{\infty}$. 

 Equation \eqref{eq_Hpsi_2} becomes
\[
H \Psi = f_t(r) t^N \Delta \eta_N = (4\pi)^{\frac{k-N}{2}} t^{N - \frac{n-k}{2}} e^{-\frac{r^2}{4t}} \Delta \eta_N.
\]
Choose $\zeta \in C^\infty_c([0,\eps))$ such that $\zeta \equiv 1$ on $[0, \eps/2]$, and consider the function $\hat \Psi = \zeta(r) \Psi$. Then, using Proposition \ref{uspr} again we get
\begin{equation}\label{eqn123}
H \hat \Psi =\zeta(r)(4\pi)^{\frac{k-N}{2}} t^{N - \frac{n-k}{2}} e^{-\frac{r^2}{4t}} \Delta \eta_N-   2\zeta'(r) \nabla_{\nabla r}\Psi + (-\zeta''(r) + \zeta'(r)\Delta_0 r)\Psi  ,
\end{equation}
In particular, since $\zeta' = \zeta'' = 0$ for $r \le \eps/2$ we get that for $N > \frac{n-k}{2}$ condition (i) in the definition of parametrix holds for $\hat{\Psi}$ with $(H\hat{\Psi})_0=0$.

 Regarding condition (ii) we prove it now.

\begin{prop}
The following holds
\[
\hat\Psi_t \to L \qquad \text{as $t \to 0$, in the sense of currents.}
\]
\end{prop}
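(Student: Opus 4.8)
The plan is to test $\hat\Psi_t$ against an arbitrary $\phi\in\Omega^k(M)$ and to show that $T_{\hat\Psi_t}(\phi)=\int_M\hat\Psi_t\wedge\phi\to\int_L\iota^*\phi=L(\phi)$ as $t\to0$, where $\iota\colon L\hookrightarrow M$ is the inclusion (this is exactly weak convergence in $\mathscr{D}'_k(M)$). Since $\hat\Psi_t=\zeta(r)f_t(r)\eta_t$ is supported in $T\cong D_\eps(\nu L)$, the whole argument takes place on the normal disk bundle; I will use the bundle projection $\pi\colon D_\eps(\nu L)\to L$, whose fibres are the normal disks $D_\eps(\nu_pL)$, together with the fact (fixed just before Lemma \ref{lem_prop_Sasaki}) that $\exp^\perp$ is orientation preserving for the chosen orientation of $\nu L$ and the product orientation $\ori(\nu L)\wedge\ori(L)$.

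First I would split $\eta_t=\eta_0+\sum_{i=1}^{N}t^i\eta_i$ and dispose of the tail. On the support of $\zeta$, which has compact closure in $D_\eps(\nu L)$, each $\eta_i$ is bounded, so
\[
\Bigl|\int_M\zeta(r)f_t(r)\,t^i\,\eta_i\wedge\phi\Bigr|\ \le\ C\,t^i\int_{D_\eps(\nu L)}f_t(r)\,\dvol\ \le\ C'\,t^i ,
\]
the middle integral being bounded uniformly in $t$ because $\int_{\R^{n-k}}(4\pi t)^{-\frac{n-k}{2}}e^{-|v|^2/(4t)}\,dv=1$ and $L$ is compact; thus $\sum_{i\ge1}$ contributes $O(t)\to0$, and only the $\eta_0$ term remains.

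For that term I would integrate first along the fibres of $\pi$. By \eqref{eq_eta0_L}, $\eta_0=dy^{k+1}\wedge\cdots\wedge dy^n$ is purely vertical along ${\bf 0}$, so the unique local coefficient of the top form $\eta_0\wedge(\phi-\pi^*\iota^*\phi)$ vanishes along ${\bf 0}$; hence the concentration estimate below makes $\int_M\zeta(r)f_t(r)\,\eta_0\wedge(\phi-\pi^*\iota^*\phi)\to0$, and it suffices to treat $\int_M\zeta(r)f_t(r)\,\eta_0\wedge\pi^*\iota^*\phi$. Fubini along $\pi$ --- and here the orientation normalizations mentioned above are exactly what is needed to avoid any spurious sign --- rewrites this integral as $\int_L g_t\,\iota^*\phi$, where $g_t$ is the function on $L$ given by $g_t(p)=\int_{D_\eps(\nu_pL)}\zeta(|v|)f_t(|v|)\,\eta_0\big|_{\pi^{-1}(p)}$. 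Along each fibre $\eta_0\big|_{\pi^{-1}(p)}$ is an $(n-k)$-form equal at $v=0$ to the Euclidean volume form of $(\nu_pL,\bar g)$, so $g_t(p)=\int_{\R^{n-k}}\zeta(|v|)f_t(|v|)a_p(v)\,dv$ for a smooth coefficient $a_p$ with $a_p(0)=1$. Since $f_t(|v|)\,dv$ has total mass $1$ and concentrates at the origin, $\zeta\equiv1$ near $0$, and the family $\{a_p\}_{p\in L}$ is equicontinuous at $0$ (by compactness of $\overline{D_\eps(\nu L)}$ and uniform continuity of $j^{1/2}$ and of the coefficients of $\eta_0$ in the parallel frame $\{\psi^I\}$), the standard splitting at $|v|=\delta$ --- with the outer tail controlled by $\int_{|v|>\delta}f_t\to0$ --- shows $g_t\to1$ uniformly on $L$. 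Consequently $\int_L g_t\,\iota^*\phi\to\int_L\iota^*\phi$, and combined with the tail estimate this yields $T_{\hat\Psi_t}(\phi)\to L(\phi)$.

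I expect the work to be almost entirely bookkeeping. The two points demanding care are the uniformity in $p$ of the concentration limit (obtained from compactness of $L$ and of $\overline{D_\eps(\nu L)}$ together with uniform continuity) and checking that the orientation conventions make the fibrewise integration produce $+1$ rather than a sign --- this is forced by $\ori(\nu L)\wedge\ori(L)=\ori(M)$ and by $\eta_0|_{{\bf 0}}=dy^{k+1}\wedge\cdots\wedge dy^n$. There is no genuine analytic obstacle: the whole content is that $f_t(r)\,dv$ is a Gaussian approximation of the Dirac mass carried by $L$ and that $\eta_0$ restricts on $L$ to its normal volume form.
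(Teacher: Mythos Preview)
Your proof is correct and follows essentially the same strategy as the paper's: both isolate the $\eta_0$ term and exploit the Gaussian concentration of $f_t(r)$ at $L$ together with $\eta_0|_{{\bf 0}}$ being the normal volume form. The only cosmetic difference is that the paper performs the rescaling $\varphi_t(p,v)=(p,2\sqrt{t}v)$ on the full top form $\eta_0\wedge\omega$ and invokes dominated convergence, whereas you first replace $\phi$ by $\pi^*\iota^*\phi$ (using that $\eta_0\wedge(\phi-\pi^*\iota^*\phi)$ vanishes along ${\bf 0}$) and then fiber-integrate to reduce to a scalar family $g_t\to 1$ on $L$.
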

\begin{proof}
By construction, $\hat\Psi_t \to 0$  as $t\ra 0$ uniformly on compacts contained  in  $M\setminus L$. Hence it is enough to check $\hat\Psi_t \to L$ by integrating test functions supported in $T$. For $\delta >0$, define $T_{\delta}:=\{v\in T~|~ |v|\leq \delta\}$.
By the definition of Poincar\'e dual, $\hat{\Psi}_t \to L$ if and only if  
\[
\lim_{t\ra \infty}\int_{T} \hat\Psi_t \wedge \omega \to \int_{L} \iota^*\omega,\qquad \forall \omega\in \Omega_c^k(T)
\]
For $\epsilon$ small enough
\[\hat{\Psi}=\Psi=f_t(r)(\eta_0+t\eta_1+\ldots+t^N\eta_N)\]
We will show that
\[ \lim_{t\ra 0}\int_{T} f_t(r) \eta_0\wedge \omega=\int_{L}\iota^*\omega,\qquad \lim_{t\ra 0}t\int_{T} f_t(r) \eta_i\wedge \omega=0,~~ \forall i>0
\]

We work within $\nu L$ and use the change of variables 
\[
\varphi_t \ : \ T_{\frac{\epsilon}{2\sqrt{t}}}\ra T, \qquad \varphi_t(p,v)=(p,2\sqrt{t}v)
\]
Then 
\begin{equation}\label{Ldt}\int_{T} f_t(r) \eta_0\wedge \omega=\frac{1}{\pi^{(n-k)/2}}\int_{T_{\frac{\epsilon}{2\sqrt{t}}}} e^{-r^2}(\eta_0\wedge \omega)_{(p,2\sqrt{t}v)}\end{equation}
The last formula makes sense for the following reason. We can use a linear Ehresmann connection on $\nu L$ to identify 
\[
T\nu L\simeq \pi^*\nu L\oplus \pi^*TL
\]
With this splitting a few good things are obvious. One is that we can identify the tangent spaces at points $(p, v)$ and $(p,2\sqrt{t}v)$. Another one is that the differential of the rescaling map is diagonal with respect to this splitting being the direct sum of multiplication by $2\sqrt{t}$ in the $\nu L$ directions with the identity morphism in the $TL$ directions. Hence
\[\varphi_t^*(\eta_0\wedge \omega)=2^{n-k}t^{(n-k)/2}(\eta_0\wedge \omega)_{(p,2\sqrt{t}v)}\] 

We have a $C^{\infty}$ uniform convergence on compacts of $\nu L$ when $t \ra 0$:
\begin{equation}\label{Ldt2}(\eta_0\wedge \omega)_{(p,2\sqrt{t}v)}\ra  \{(p,v)\ra \eta_0\wedge \omega_{(p,0)}\}=\pi^*( \eta_0\wedge \omega\bigr|_{[0]}).\end{equation}
where $\pi:\nu L\ra L$ is the projection and the symbol $\bigr|_{[0]}$ means restriction to the zero-section  and not pull-back.

Since  $\eta_0\bigr|_{[0]}=\vol_{\nu L/L}$ we therefore get from (\ref{Ldt2}) that 
\[(\eta_0\wedge \omega)_{(p,2\sqrt{t}v)}\ra \vol_{\nu L/L}\wedge \iota^*\omega\]
where $\iota:L\ra \nu L$ is the zero-section  inclusion.

Hence by Lebesgue dominated convergence we get from (\ref{Ldt})
\[\lim_{t\ra 0}\frac{1}{\pi^{(n-k)/2}}\int_{T_{\epsilon/2\sqrt{t}}} e^{-r^2}(\eta_0\wedge \omega)_{(p,2\sqrt{t}v)}=\frac{1}{\pi^{(n-k)/2}}\int_{\nu L} e^{-r^2}\pi^*(\vol_{\nu L/L}\wedge \iota^*\omega)= \]
\[=\int_{L}\left(\frac{1}{\pi^{(n-k/2)}}\int_{\nu L/L}e^{-r^2}\vol_{\nu L/L}\right)\wedge \iota^*\omega=\int_L\iota^*\omega\]
where we used in the last line the well-known fact that
\[\frac{1}{\pi^{(n-k)/2}}\int_{\bR^{n-k}}e^{-|v|^2}~dv=1\]

The same procedure shows that
\[
\lim_{t\ra 0}t\int_{T} f_t(r) \eta_i\wedge \omega=0\qquad \forall i>0.
\]

\end{proof}

\section{The parametrix and the genuine solution}

Let $\omega:=\omega_t$ be a family of smooth forms of degree $n-k$ which is continuous down to $t=0$ in the uniform norm and let $p^{n-k}_t$ be the heat kernel for the Laplacian on $n-k$ forms. We know that $p_t^{n-k}$ is an $(k,n-k)$-form on $M\times M$. 
\begin{lemma}\label{flc7} The following convolution operation is well-defined
\[ 
(p*\omega)_t:=\int_0^t(\pi_2)_*(p_{t-\theta}^{n-k}\wedge \pi_1^*\omega_{\theta})~d\theta=\int_0^tP_{t-\theta}^{n-k}(\omega_{\theta})~d\theta=\int_0^te^{-(t-\theta)\Delta}(\omega_{\theta})~d\theta
\] 
and produces a  continuous family $t\ra (p* \omega)_t$ of degree $n-k$ forms which is $0$ when $t=0$.
\end{lemma}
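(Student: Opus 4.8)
The plan is to show that the integrand $\theta \mapsto e^{-(t-\theta)\Delta}(\omega_\theta)$ is a continuous $L^2$-valued (in fact $C^0$-valued) function on $[0,t]$ for each fixed $t$, so that the Bochner integral defining $(p*\omega)_t$ exists, and then to establish joint continuity in $t$ by an $\eps/3$-argument. First I would fix $t>0$ and set $M := \sup_{\theta\in[0,t]}\|\omega_\theta\|_{C^0}$, which is finite by the hypothesis that $\theta\mapsto\omega_\theta$ is continuous in the uniform norm down to $\theta=0$. By property (iiis) of the heat kernel (uniform convergence of $P_s^{n-k}$ to the identity in every $C^l$-norm as $s\to 0$), together with the semigroup property and the smoothing estimates of \cite{BGV} (Theorem 2.20 and Theorem 2.23), the operator norm of $P_s^{n-k}: \Omega^{n-k}(M)\to\Omega^{n-k}(M)$ with respect to the $C^0$-norm is bounded by a constant $C$ uniformly for $s\in[0,t]$ (for $s$ near $0$ it converges to $1$; for $s$ bounded away from $0$ one uses the smoothness of the kernel on a compact set). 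Hence $\|e^{-(t-\theta)\Delta}(\omega_\theta)\|_{C^0}\le CM$ for all $\theta\in[0,t]$, so the integrand is uniformly bounded.

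Next I would argue continuity of $\theta\mapsto e^{-(t-\theta)\Delta}(\omega_\theta)$ in $\Omega^{n-k}(M)$ with the $C^0$-topology. Writing the difference
\[
e^{-(t-\theta')\Delta}\omega_{\theta'} - e^{-(t-\theta)\Delta}\omega_{\theta}
= e^{-(t-\theta')\Delta}(\omega_{\theta'}-\omega_{\theta}) + \big(e^{-(t-\theta')\Delta}-e^{-(t-\theta)\Delta}\big)\omega_{\theta},
\]
the first term is controlled by $C\|\omega_{\theta'}-\omega_\theta\|_{C^0}$, which is small by uniform continuity of $\theta\mapsto\omega_\theta$; the second term is handled by the strong continuity of the heat semigroup $s\mapsto e^{-s\Delta}$ in the $C^0$-operator topology away from a fixed endpoint, or, when $\theta$ and $\theta'$ are both close to $t$, by absorbing the near-identity factor as in (iiis). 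This gives a continuous, bounded integrand, so the Bochner integral $(p*\omega)_t = \int_0^t e^{-(t-\theta)\Delta}(\omega_\theta)\,d\theta$ is well-defined and satisfies $\|(p*\omega)_t\|_{C^0}\le CMt$; in particular $(p*\omega)_0 = 0$.

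Finally, for joint continuity in $t$: given $t_0>0$ and $t$ near $t_0$ (say $t\ge t_0$, the other case being symmetric), split
\[
(p*\omega)_t - (p*\omega)_{t_0} = \int_{t_0}^{t} e^{-(t-\theta)\Delta}(\omega_\theta)\,d\theta + \int_0^{t_0}\big(e^{-(t-\theta)\Delta} - e^{-(t_0-\theta)\Delta}\big)(\omega_\theta)\,d\theta.
\]
The first integral has $C^0$-norm at most $CM|t-t_0|$. For the second, write $e^{-(t-\theta)\Delta} = e^{-(t-t_0)\Delta}e^{-(t_0-\theta)\Delta}$ and use that $e^{-(t-t_0)\Delta}\to \mathrm{id}$ in the $C^0$-operator topology as $t\to t_0$, applied to the forms $e^{-(t_0-\theta)\Delta}(\omega_\theta)$ which are uniformly $C^0$-bounded; a dominated-convergence argument then shows this integral tends to $0$. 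For $t_0 = 0$ continuity is immediate from the bound $\|(p*\omega)_t\|_{C^0}\le CMt$. The main obstacle is the bookkeeping near the diagonal endpoint $\theta = t$, where $e^{-(t-\theta)\Delta}$ degenerates to the identity and one cannot use a naive smoothing estimate; this is precisely where property (iiis) — uniform convergence $P_s^{n-k}\to\mathrm{id}$ in every $C^l$-norm — is needed in place of a quantitative heat-kernel bound.
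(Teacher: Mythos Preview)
Your argument is correct and follows essentially the same approach as the paper. The paper compresses everything into the single observation that $(t,\theta)\mapsto P_{t-\theta}^{n-k}(\omega_\theta)$ is continuous in the uniform norm on the closed triangle $\{0\le\theta\le t\}$, with continuity at the diagonal $\theta=t$ coming from property (iii) of heat kernels; well-definedness, continuity in $t$, and the value $0$ at $t=0$ then follow from standard integration-over-a-varying-interval arguments that the paper does not spell out, whereas you do.
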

\begin{proof} 
The family of forms 
\[ (t,\theta)\ra P_{t-\theta}^{n-k}(\omega_{\theta})\]
is continuous in the uniform norm on $\{(t,\theta)\in \bR^2~|~ 0 \le \theta\leq t\}$ with the continuity along the diagonal being a consequence of property (iii) of heat kernels.

\end{proof}

We also have the following Calculus result.
\begin{lemma}\label{diflem} Let $(t,s)\ra F(t,s)$ be a continuous function of two variables on the set $\{(t,s)\in \bR^2~|~t\geq s\geq 0\}$ and $C^1$ in the first variable. 
Then
\[\frac{\partial}{\partial t}\int_0^tF_t(s)ds\biggr|_{t=t_0}=\int_0^{t_0} \frac{\partial F}{\partial t}(t_0,s)~ ds+F(t_0,t_0)
\]\end{lemma}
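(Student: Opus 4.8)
The plan is to compute the difference quotient of $G(t):=\int_0^t F(t,s)\,ds$ at $t=t_0$ and pass to the limit, separating the two ways in which the variable $t$ enters: through the upper limit of integration and through the integrand. One clean way to organize this is the chain rule applied along the diagonal $u=t$ to the two-variable function
\[
\Phi(u,t):=\int_0^u F(t,s)\,ds, \qquad 0\le u\le t,
\]
so that $G(t)=\Phi(t,t)$. The fundamental theorem of calculus, using continuity of $F$, gives $\partial_u\Phi(u,t)=F(t,u)$, while differentiation under the integral sign — legitimate because $\partial_t F$ is continuous — gives $\partial_t\Phi(u,t)=\int_0^u\partial_tF(t,s)\,ds$; both partials are continuous in $(u,t)$, so the chain rule yields $G'(t_0)=\partial_u\Phi(t_0,t_0)+\partial_t\Phi(t_0,t_0)$, which is exactly the claimed identity.

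To make this self-contained and to deal cleanly with the fact that $\Phi$ is only defined for $u\le t$, I would instead argue directly with difference quotients. For $h>0$ small (the case $h<0$ being symmetric, with the intervals $[0,t_0]$ and $[0,t_0+h]$, resp. $[t_0,t_0+h]$ and $[t_0+h,t_0]$, swapped) write
\[
G(t_0+h)-G(t_0)=\int_0^{t_0}\!\big[F(t_0+h,s)-F(t_0,s)\big]\,ds+\int_{t_0}^{t_0+h}\!F(t_0+h,s)\,ds.
\]
Dividing by $h$: the mean value theorem in the first variable turns the first integrand into $\partial_tF(\tau_{h,s},s)$ with $\tau_{h,s}\in(t_0,t_0+h)$, and uniform continuity of $\partial_t F$ on the compact triangle $\{0\le s\le t\le t_0+1\}$ shows this converges uniformly in $s\in[0,t_0]$ to $\partial_t F(t_0,s)$, so the first term tends to $\int_0^{t_0}\partial_tF(t_0,s)\,ds$; the second term is an average of $F$ over points converging to $(t_0,t_0)$ and tends to $F(t_0,t_0)$ by joint continuity of $F$. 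Adding these limits proves the formula.

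The only genuine point requiring care — the ``hard part'', modest as it is — is the uniform passage to the limit in the first term, which uses continuity (not mere pointwise existence) of $\partial_t F$; without it the statement can fail. Accordingly I read ``$C^1$ in the first variable'' as including continuity of $\partial_t F$ on the closed region $\{t\ge s\ge0\}$. In the intended application, where $F(t,s)=e^{-(t-s)\Delta}(\omega_s)$ so that $\partial_t F(t,s)=-\Delta\,e^{-(t-s)\Delta}(\omega_s)$ extends continuously down to $t=s$ by the smoothing properties of the heat semigroup recorded in Section \ref{S5}, this hypothesis holds, so it is harmless. Everything else is a routine computation.
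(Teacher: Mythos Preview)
Your proof is correct and follows essentially the same approach as the paper: you introduce the two-variable function $\Phi(u,t)=\int_0^u F(t,s)\,ds$ (the paper calls it $G(t,x)$), compute its partials via the Fundamental Theorem of Calculus and differentiation under the integral sign, and then apply the chain rule along the diagonal. Your additional direct difference-quotient argument and the remark on the needed continuity of $\partial_t F$ are more explicit than the paper's one-line sketch, but the underlying idea is identical.
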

\begin{proof}   Let $G(t,x):=\int_0^x F(t,s)~ds$. One has to compute the derivative of
 \[t
 \ra G(t,t)
 \] 
 at $t=t_0$. The Fundamental Theorem of Calculus on one hand, and the commutativity of the integral (with fixed ends) in one variable and the partial derivative in another variable, on the other hand, do the job. 
\end{proof}
In our case, the function we want to consider is 
\[ (t,\theta)\ra P_{t-\theta}(\omega_{\theta})\]
where now $\theta\ra \omega_{\theta}$ is a family of smooth forms which is continuous in the $C^2$ norm on $M$ all the way down to $\theta=0$. We have for $\theta<t$:
\[
\partial_t\left(P_{t-\theta}(\omega_{\theta})\right)=\partial_t(e^{-(t-\theta)\Delta}\omega_{\theta})=-e^{-(t-\theta)\Delta}(\Delta\omega_\theta)=-P_{t-\theta}(\Delta\omega_{\theta})\]
and this is continuous along the diagonal $\theta=t$ in the uniform norm. As a consequence this makes the function 
\[(t,\theta)\ra P_{t-\theta}(\omega_{\theta})\]
  $C^1$ in the first variable. We can therefore use Lemma \ref{diflem} in order to prove 
  \begin{lemma}[Duhamel principle]\label{lem_duhamel}
  	 Let $t\ra \omega_t$ be a family of degree $n-k$ smooth forms continuous (in $t$) in the $C^2$ topology. Let $H=\partial_t+\Delta$ be the heat operator on forms of degree $n-k$. Then
  \[
  [H(p*\omega)]_t=\omega_t
  \]
\end{lemma}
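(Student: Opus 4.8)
The plan is to compute $[H(p*\omega)]_t = (\partial_t + \Delta)(p*\omega)_t$ directly, splitting it as the sum of the $\partial_t$-term and the $\Delta$-term, and showing that the boundary contribution from differentiating the upper limit of the convolution integral produces exactly $\omega_t$, while the remaining integrand terms cancel because $p_{t-\theta}^{n-k}$ solves the heat equation.

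First I would apply Lemma \ref{diflem} to $F(t,\theta) = P_{t-\theta}(\omega_\theta)$, which is legitimate since, as noted just above the statement, $F$ is continuous on $\{t \ge \theta \ge 0\}$ and $C^1$ in $t$ with $\partial_t F(t,\theta) = -P_{t-\theta}(\Delta\omega_\theta)$ extending continuously to the diagonal. This gives
\[
\partial_t (p*\omega)_t = \int_0^t \partial_t\big(P_{t-\theta}(\omega_\theta)\big)\, d\theta + F(t,t) = -\int_0^t P_{t-\theta}(\Delta\omega_\theta)\, d\theta + \lim_{\theta \to t} P_{t-\theta}(\omega_\theta).
\]
By property (iii) of the heat kernel (more precisely (iiis)), $\lim_{\theta \to t} P_{t-\theta}(\omega_\theta) = \omega_t$ (one uses joint continuity: $P_{t-\theta}(\omega_\theta) - \omega_t = P_{t-\theta}(\omega_\theta - \omega_t) + (P_{t-\theta}(\omega_t) - \omega_t)$, and both terms go to $0$ uniformly). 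So $\partial_t(p*\omega)_t = \omega_t - \int_0^t P_{t-\theta}(\Delta \omega_\theta)\, d\theta$.

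Next I would compute $\Delta (p*\omega)_t$. Since $\Delta$ is a fixed differential operator acting in the space variable and the convolution integral converges in $C^2$ (hence $\Delta$ passes under the integral sign), and since $\Delta$ commutes with $P_{t-\theta} = e^{-(t-\theta)\Delta}$ (functional calculus), we get
\[
\Delta (p*\omega)_t = \int_0^t \Delta P_{t-\theta}(\omega_\theta)\, d\theta = \int_0^t P_{t-\theta}(\Delta\omega_\theta)\, d\theta.
\]
Adding the two contributions, the integrals cancel and $[H(p*\omega)]_t = \partial_t(p*\omega)_t + \Delta(p*\omega)_t = \omega_t$, as claimed.

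The main obstacle is justifying the two analytic interchanges rigorously: first, that Lemma \ref{diflem} genuinely applies, which requires knowing $\partial_t F$ is continuous up to the diagonal $\theta = t$ — this is exactly the content of the paragraph preceding the lemma, resting on the $C^2$-continuity of $\theta \mapsto \omega_\theta$ together with the smoothing estimates (iiis) for $P_s$ as $s \to 0$; and second, differentiating under the integral sign for the spatial Laplacian, which again needs the uniform $C^2$ (in fact enough derivatives) control of $(t,\theta) \mapsto P_{t-\theta}(\omega_\theta)$ near $\theta = t$, guaranteed by property (iiis). Once these two points are in place the algebraic cancellation is immediate, so I expect the write-up to consist mostly of carefully invoking Lemma \ref{diflem}, property (iiis) of the heat kernel, and the commutation $[\Delta, e^{-s\Delta}] = 0$.
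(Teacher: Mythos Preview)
Your proof is correct and follows essentially the same route as the paper: both apply Lemma~\ref{diflem} to $F(t,\theta)=P_{t-\theta}(\omega_\theta)$, pass $\Delta$ under the integral, and use the heat equation for $e^{-(t-\theta)\Delta}$ together with the boundary term $F(t,t)=\omega_t$. The only cosmetic difference is that the paper combines the $\partial_t$ and $\Delta$ pieces into $H$ inside the integral before observing the cancellation, whereas you cancel the two integrals after computing them separately.
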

\begin{proof}  Partial derivatives in the spatial direction commute with integration in the time variable hence we have:
\begin{equation}\label{DelH} \Delta \int_0^tP_{t-\theta}(\omega_{\theta})~d\theta=\int_0^t\Delta P_{t-\theta}(\omega_{\theta})~d\theta.
\end{equation}
Therefore
\[ [H(p*\omega)]_t=H\int_0^tP_{t-\theta}(\omega_{\theta})~d\theta=\frac{\partial}{\partial t}\int_0^tP_{t-\theta}(\omega_{\theta})~ d\theta+\int_0^t\Delta P_{t-\theta}(\omega_{\theta})~ d\theta=\]\[=\int_0^tH(e^{-(t-\theta)\Delta}\omega_{\theta})~ d\theta+ P_0(\omega_t)=\omega_t\] since 
\[
(\partial_t +\Delta)(e^{-(t-\theta)\Delta}(\omega_{\theta}))=-\Delta e^{-(t-\theta)\Delta}\omega_{\theta}+\Delta e^{-(t-\theta)\Delta}\omega_{\theta}=0.
\]

\end{proof}
Let $N>\frac{n-k}{2}$ and let $\Psi_t^N$ be a formal solution of the initial value problem as constructed in the previous section, where $N$  is the order where we cut the formal series. 

Let $\zeta$ be a cut-off function on $M$ which is $1$ when $r<\epsilon/2$  and $0$ when $r\geq\epsilon$, for some small $\epsilon$  and let  
\[\Psi=\Psi_t:=\zeta\Psi_t^N=\zeta(r) f_t(r)\eta_t^N.\] 
\begin{lemma} \label{lemconvH} Let  $l\geq 0$ be an integer such that $l < N-(n-k)/2$. Then $t\ra H(\Psi)_t$ is a family of smooth forms which is continuous in the $C^l$ topology all the way down to $t=0$. 
\end{lemma}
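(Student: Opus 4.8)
The plan is to read $H\Psi$ off the explicit formula \eqref{eqn123} and estimate each resulting summand in the $C^l$ topology as $t\searrow 0$. With the notation of this section, where $\Psi=\zeta(r)\Psi^N_t$ and $\Psi^N_t=f_t(r)\eta^N_t$, Proposition \ref{uspr} gives
\[
H\Psi \;=\; \zeta(r)\,(4\pi)^{\frac{k-N}{2}}\,t^{N-\frac{n-k}{2}}\,e^{-\frac{r^2}{4t}}\,\Delta\eta_N \;-\; 2\zeta'(r)\,\nabla_{\nabla r}\Psi^N_t \;+\; \big(-\zeta''(r)+\zeta'(r)\Delta_0 r\big)\,\Psi^N_t .
\]
For every fixed $t>0$ all three summands are smooth forms on $M$, so $t\mapsto (H\Psi)_t$ is automatically continuous into $C^l$ on $(0,\infty)$; the real content is to show $\|(H\Psi)_t\|_{C^l}\to 0$ as $t\searrow 0$, which also identifies the boundary value as the zero form (in accordance with the $C^0$ statement already noted after \eqref{eqn123}).

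First I would dispose of the two summands carrying $\zeta'$ or $\zeta''$. Both are supported in the compact annular region $A=\{\epsilon/2\le r\le\epsilon\}$, which is bounded away from $L$; on $A$ the functions $\zeta',\zeta'',\Delta_0 r$, the unit field $\nabla r$ and the polynomial-in-$t$ form $\eta^N_t$ are all smooth and bounded in $C^l(A)$ uniformly for $t\in[0,1]$, while every spatial derivative of $f_t(r)=(4\pi t)^{-\frac{n-k}{2}}e^{-r^2/4t}$ is bounded on $A$ by $C\,t^{-M}e^{-\epsilon^2/16t}$ for some $M=M(l)$, since on $A$ one has $r\ge\epsilon/2$ and each derivative of $e^{-r^2/4t}$ produces only bounded rational factors in $(r,1/t)$. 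The exponential factor beats any negative power of $t$, so these two summands tend to $0$ in $C^l$ faster than any power of $t$.

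The substantive term is the first one, $(4\pi)^{\frac{k-N}{2}}\,g_t(r)\,\big(\zeta(r)\Delta\eta_N\big)$ with $g_t(r)=t^{N-\frac{n-k}{2}}e^{-r^2/4t}$. The form $\zeta(r)\Delta\eta_N$ is a fixed smooth compactly supported $(n-k)$-form — here one uses that $\eta_N$, hence $\Delta\eta_N$, is smooth on the whole tubular neighbourhood $D_\epsilon(\nu L)$, as constructed in the previous section — so its $C^l$-norm is a finite constant independent of $t$. It remains to bound $\|g_t\|_{C^l}$. Since $r^2$ is a genuinely smooth function on $M$ (it is $|v|^2$ in the identification $T\simeq D_\epsilon(\nu L)$) with all derivatives bounded on $\supp\zeta$, applying one spatial derivative to $e^{-r^2/4t}$ costs at most a factor $C/t$; hence, using $e^{-r^2/4t}\le 1$ and Leibniz, $\|g_t\|_{C^l}\le C_l\,t^{\,N-\frac{n-k}{2}-l}$. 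By the hypothesis $l<N-\frac{n-k}{2}$ this exponent is strictly positive, so this summand also goes to $0$ in $C^l$ as $t\searrow 0$. Combining the three estimates yields the claim. The only point demanding a little care — the "main obstacle", modest as it is — is precisely the bookkeeping of powers of $t$ in $\|g_t\|_{C^l}$: one must verify that each spatial derivative of $e^{-r^2/4t}$ costs only $t^{-1}$, which is exactly what forces the threshold $l<N-\frac{n-k}{2}$ in the statement; a sharper count, absorbing a Gaussian factor and using that $s^a e^{-s}$ is bounded for $s\ge 0$, would in fact give decay down to $l<2N-(n-k)$, but the crude bound suffices for the subsequent applications.
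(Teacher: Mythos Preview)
Your proof is correct and follows essentially the same approach as the paper: split $H\Psi$ via \eqref{eqn123} into the cutoff-supported terms (handled by exponential decay on the annulus $\{\epsilon/2\le r\le\epsilon\}$) and the main term $t^{N-(n-k)/2}e^{-r^2/4t}\Delta\eta_N$, then use that each spatial derivative of the Gaussian costs a factor $t^{-1}$ because $r^2$ is smooth. Your write-up is in fact a bit more careful than the paper's terse sketch, in particular in making explicit that one differentiates $r^2$ rather than $r$.
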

\begin{proof} We have already seen that $t\ra H(\Psi)_t$ is a parametrix, hence uniformly continuous at $t=0$. Recall (\ref{eqn123}) which reads with the new notation:
\[H  \Psi =\zeta(4\pi)^{\frac{k-N}{2}} t^{N - \frac{n-k}{2}} e^{-\frac{r^2}{4t}} \Delta \eta_N-   2\zeta'\left(\frac{\partial f_t}{\partial r} \eta_t^N+f_t\nabla_{\nabla r}\eta_t^N\right) + (-\zeta'' + \zeta'\Delta_0 r)f_t\eta_t^N \]

The claim follows from an estimate
\[ \| H(\Psi)_t\|_l\leq O (t^{N-\frac{n-k}{2}-l
})
\] 
which is clear on compacts at a distance bigger than $\epsilon/2$ than $L$ since then
\[e^{-\frac{r^2}{4t}}|Q|<e^{-\frac{\epsilon^2}{16t}}|Q|<C(t^{N-\frac{n-k}{2}-{l}})\]
for any Laurent polynomial $Q$ in $t$ when $t\ra 0$.   Close to $L$, one has $H\Psi= f_t(r) \Delta \eta_N$. Then  for any vector field $X$ one has
\begin{equation}\label{hkeqn1} X(e^{-r^2/t})=-e^{-r^2/t}\frac{r}{t}2X(r)=O(t^{-1})\end{equation}
 This accounts for the drop in regularity. 
\end{proof}

\begin{theorem}\label{teo_exiHeat} 
	For $N>\frac{n-k}{2} +2$, the family of forms  $L_t:=\Psi-p*H(\Psi)$ is the  solution to the IVP
\[ 
\left\{\begin{array}{ccc}
H(L_t)&=&0\\
\disp\lim_{t\ra 0}L_t&=&L
\end{array}
\right.
\]
\end{theorem}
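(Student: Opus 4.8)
The plan is to verify directly the two defining properties of the IVP solution --- the equation $H(L_t)=0$ and the initial condition $L_t\to L$ --- using Duhamel's principle (Lemma~\ref{lem_duhamel}) together with the regularity estimate for the parametrix (Lemma~\ref{lemconvH}) and the basic properties of the convolution (Lemma~\ref{flc7}). Throughout, $\Psi = \zeta(r) f_t(r)\eta_t^N$ is the truncated formal solution built in Section~\ref{IVPs}, i.e.\ the form denoted $\hat\Psi$ there; note that $\Psi_t$ is a smooth form on $M$ for each $t>0$, since $r^2$ is smooth on the tubular neighbourhood of $L$ and $\zeta(r)\equiv 1$ near $L$.

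First I would check the equation. By linearity, $H(L_t) = H(\Psi)_t - \bigl[H\bigl(p*H(\Psi)\bigr)\bigr]_t$. The hypothesis $N>\tfrac{n-k}{2}+2$ is exactly what is needed to take $l=2$ in Lemma~\ref{lemconvH}, so that $\theta\mapsto H(\Psi)_\theta$ is a family of smooth $(n-k)$-forms which is continuous in the $C^2$ topology all the way down to $\theta=0$; in particular it is an admissible input both for the convolution $p*(\cdot)$ and for Duhamel's principle. Applying Lemma~\ref{lem_duhamel} with $\omega := H(\Psi)$ gives $\bigl[H\bigl(p*H(\Psi)\bigr)\bigr]_t = H(\Psi)_t$, whence $H(L_t)=0$ for every $t>0$.

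Next I would verify the initial condition, in the sense of currents. By the Proposition in Section~\ref{IVPs}, $\Psi_t\to L$ weakly as $t\to 0$. On the other hand $H(\Psi)$ is a continuous family of forms (indeed $(H\Psi)_0=0$), so Lemma~\ref{flc7} gives $\bigl(p*H(\Psi)\bigr)_t\to 0$ uniformly, hence weakly. Subtracting, $L_t\to L$. It remains to observe that $L_t$ is a smooth form for each $t>0$: $\Psi_t$ is smooth, and $\bigl(p*H(\Psi)\bigr)_t$ is smooth by the smoothing property of $e^{-s\Delta}$ for $s>0$ --- or, more economically, $L_t$ is a continuous distributional solution of the heat equation and hence smooth by hypoellipticity of $\partial_t+\Delta$. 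Thus $L_t$ solves the IVP, and by the uniqueness part of Theorem~\ref{exunisol} it is \emph{the} solution, i.e.\ $L_t = e^{-t\Delta}(L)$.

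I expect the main obstacle to be the regularity bookkeeping: one must be sure that $H(\Psi)$ is continuous in a topology strong enough ($C^2$) to feed into Duhamel's principle and to differentiate the convolution in $t$ via Lemma~\ref{diflem}, and this is precisely what upgrades the requirement $N>\tfrac{n-k}{2}$ (enough for $\Psi$ to be a parametrix) to $N>\tfrac{n-k}{2}+2$. A minor secondary point is to justify smoothness of the convolution term for $t>0$ rather than mere continuity; invoking hypoellipticity of the heat operator on $L_t$ once $H(L_t)=0$ is known is the cleanest way to dispose of this.
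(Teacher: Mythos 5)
Your proposal is correct and follows essentially the same route as the paper: Lemma \ref{lemconvH} with $l=2$ (which is exactly where $N>\frac{n-k}{2}+2$ enters) feeds $H(\Psi)$ into the Duhamel principle of Lemma \ref{lem_duhamel} to kill the heat operator, and Lemma \ref{flc7} together with the parametrix property $\Psi_t\to L$ gives the initial condition. The extra remarks on smoothness of $L_t$ via hypoellipticity are a harmless addition not present in the paper's argument.
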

\begin{proof} 
	By Lemma \ref{lemconvH}, $ t \mapsto H(\Psi)_t$ in continuous in $C^2$ up to $t=0$. We can therefore apply the Duhamel principle in Lemma \ref{lem_duhamel} to deduce
\[ H(\Psi-p*H(\Psi))=H(\Psi)-H(p*H(\Psi))=H(\Psi)- H(\Psi)=0
\]
Be Lemma \ref{flc7},  $t\ra (p* H(\Psi))_t$ is a continuous family of forms which is $0$ at $t=0$, while any formal solution $\Psi_t$ has the property that
\[ \lim_{t\ra 0}\Psi_t=L.\]
This concludes the proof.
\end{proof}

The difference between the formal solution and the genuine solution of the IVP satisfies the following.
\begin{theorem}\label{fundest5} For $l \tcr{<}  N-\frac{n-k}{2}$, the following holds in the  $C^l$ topology:
\[ \lim_{t\ra 0}\partial_t^j(p* H(\Psi)) =0,\qquad \forall 0\leq j< N-\frac{n-k}{2}-l.
\]
\end{theorem}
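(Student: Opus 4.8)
The plan is to express $\partial_t^j\bigl(p*H(\Psi)\bigr)$ as a heat convolution of the $j$-th time derivative of $H(\Psi)$ by iterating the differentiated Duhamel identity, and then to bound that convolution in $C^l$ using only the elementary Gaussian estimates for $H(\Psi)$ already contained in \eqref{eqn123}. Throughout write $\omega_\theta:=H(\Psi)_\theta$ and $\omega^{(i)}_\theta:=\partial_\theta^i\omega_\theta$ for $\theta>0$. First I would record the basic estimates on $\omega$ and its time derivatives. By \eqref{eqn123}, near $L$ one has $H\Psi=c\,t^{N-\frac{n-k}{2}}e^{-r^2/(4t)}\Delta\eta_N$, while for $r\ge\eps/2$ the form $H\Psi$ and all its derivatives are $O(e^{-c/t})$. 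Differentiating $i$ times in $t$ produces a finite sum of terms $e^{-r^2/(4t)}\,r^{2m}\,t^{N-\frac{n-k}{2}-i-m}$ with smooth bounded coefficients; using $\sup_{r\ge 0}r^{2m}e^{-r^2/(4t)}=O(t^m)$ and that each spatial derivative of $e^{-r^2/(4t)}$ costs at most a power $t^{-1}$, one gets
\[
\|\omega^{(i)}_\theta\|_{C^l}\le C_{i,l}\,\theta^{\,N-\frac{n-k}{2}-i-l},\qquad \theta\in(0,1].
\]
In particular $\omega^{(i)}_\theta\to 0$ in $C^0$ as $\theta\to 0$ whenever $i<N-\frac{n-k}{2}$, so in that range $\omega^{(i)}_0=0$ and $\theta\mapsto\omega^{(i)}_\theta$ extends continuously by $0$ to $\theta=0$.

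Next I would set up the differentiated Duhamel identity. After the substitution $s=t-\theta$ we have $(p*\omega)_t=\int_0^t e^{-s\Delta}(\omega_{t-s})\,ds$, and since $\theta\mapsto\omega'_\theta$ is continuous down to $\theta=0$ by the previous step, Lemma \ref{diflem} applies to $(t,s)\mapsto e^{-s\Delta}(\omega_{t-s})$ and gives
\[
\partial_t(p*\omega)_t=e^{-t\Delta}(\omega_0)+(p*\omega')_t .
\]
(Its equality with the unsubstituted computation $\partial_t(p*\omega)=\omega-p*(\Delta\omega)$ is exactly Lemma \ref{lem_duhamel}.) Iterating this identity on $p*\omega,\ p*\omega^{(1)},\ldots$, and using $\omega^{(i)}_0=0$ for $0\le i\le j-1$ — valid once $j-1<N-\frac{n-k}{2}$, which holds under the hypothesis $j<N-\frac{n-k}{2}-l$ — every boundary term drops out and one obtains
\[
\partial_t^j(p*\omega)_t=\bigl(p*\omega^{(j)}\bigr)_t=\int_0^t e^{-s\Delta}\bigl(\omega^{(j)}_{t-s}\bigr)\,ds .
\]
At each stage one only needs the integrand to be continuous and $C^1$ in $t$, which is precisely what the identity itself records; the hypotheses of Lemma \ref{diflem} are met because $\omega^{(i)}$ is continuous down to $0$ for all $i\le j$.

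For the $C^l$ bound I would use that $\{e^{-s\Delta}\}_{s\ge 0}$ is a strongly continuous semigroup on $C^l(M;\Lambda^{n-k})$ — which is the content of property (iiis) at $s=0$ together with the semigroup law — so $\|e^{-s\Delta}\|_{C^l\to C^l}\le K_l$ uniformly for $s\in[0,1]$. Hence, for $t\le 1$,
\[
\bigl\|\partial_t^j(p*\omega)_t\bigr\|_{C^l}\le\int_0^t\bigl\|e^{-s\Delta}\omega^{(j)}_{t-s}\bigr\|_{C^l}\,ds\le K_l\int_0^t\|\omega^{(j)}_\theta\|_{C^l}\,d\theta\le C\int_0^t\theta^{\,N-\frac{n-k}{2}-j-l}\,d\theta .
\]
Under the hypothesis $j<N-\frac{n-k}{2}-l$ the exponent $N-\frac{n-k}{2}-j-l$ is strictly positive, so the last integral equals a constant times $t^{\,N-\frac{n-k}{2}-j-l+1}$, which tends to $0$ as $t\to 0$. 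This gives the assertion.

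The main obstacle is the uniform $C^l\to C^l$ bound on $e^{-s\Delta}$ for small $s$: one must avoid the lossy route $\|e^{-s\Delta}\xi\|_{C^l}\lesssim\|\xi\|_{C^{l+m}}$ with $m>n/2$, since combined with the $\theta^{-(l+m)}$ loss in Step 1 that would force a strictly stronger hypothesis on $j$ than the one stated. The clean justification is that $e^{-s\Delta}$ is a $C_0$-semigroup on $C^l$ (equivalently: commute $e^{-s\Delta}$ past $\nabla^l$ modulo curvature terms via Weitzenböck and close the estimate with Gronwall), which gives local boundedness of the operator norm with no loss of derivatives. Everything else — the power counting in Step 1 and verifying the hypotheses of Lemma \ref{diflem} at each iteration — is routine bookkeeping.
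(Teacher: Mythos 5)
Your proposal is correct and follows essentially the same route as the paper: the same $O\big(t^{N-\frac{n-k}{2}-l-j}\big)$ power counting for $\partial_t^j H(\Psi)$ in the $C^l$ norm, the same reduction $\partial_t^j(p*H(\Psi))=p*\partial_t^j H(\Psi)$ via Lemma \ref{diflem} and the vanishing at $t=0$ of the lower-order time derivatives, and the same conclusion that the resulting convolution tends to $0$. The only cosmetic difference is in the last step, where you close with an explicit integral bound requiring a uniform $C^l\to C^l$ bound on $e^{-s\Delta}$ for small $s$, while the paper instead invokes Lemma \ref{flc7} in the $C^l$ topology; both arguments rest on the same (largely implicit) local boundedness of the heat semigroup on $C^l$.
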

\begin{proof} By property (iii) of heat kernels  (see Section \ref{S5}), for every smooth form $\eta$
\begin{equation}\label{clnorm} \lim_{t\searrow 0} P_t(\eta)=\eta\end{equation} in the $C^l$ norm.

For $j=0$, by Lemma \ref{lemconvH}  we know that $t\ra H(\Psi)_t$ is continuous in the $C^l$ norm. A straightforward adaptation of Lemma \ref{flc7} proves that in fact $p*H(\Psi)$ is continuous in the $C^l$ norm with $[p*H(\Psi)]_0=0$.

In order to prove the result for $j>0$ one notes that:
\[\|[\partial_t^j H(\Psi)]_t\|_l=O(t^{N-\frac{n-k}{2}-l-j}). \]
The fact that the time derivatives of $H(\Psi)$ diminish the regularity only by $t^{-1}$ and not $t^{-2}$ is based on the fact that $y\ra e^{-y^2}y$ is a bounded function where we take $y=\frac{r}{\sqrt{t}}$. 

This implies  that $[\partial_t^j H(\Psi)]_0\equiv 0$ and by Lemma \ref{diflem} that 
\[\partial_t^j(p*H)=p*\partial_t^j H\]
Another application of Lemma \ref{flc7} in the $C^l$ topology finishes the proof.

 \end{proof}
 
 We finish with an application of what was done in this section.
 \begin{lemma}\label{Lem2}
 	Let $S$ be an exact current, representable by integration on $M$. Then 
 	\[\int_1^{\infty}e^ {-t\Delta}S~ dt\]
 	is a smooth form.
 \end{lemma}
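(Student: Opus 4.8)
The plan is to exploit the spectral decomposition of $e^{-t\Delta}$ together with the smoothing estimates already available in the excerpt. Write $S = dT$ with $T$ representable by integration; by Corollary \ref{GLH}-style reasoning (more precisely by Proposition \ref{511}) we already know $G(S) = \int_0^\infty e^{-t\Delta}S\, dt$ converges as a current, so the tail $\int_1^\infty e^{-t\Delta}S\, dt$ certainly makes sense as a current. The point is to upgrade "current" to "smooth form". I would first observe, as in \eqref{beq3}, that for any test form $\phi$,
\[
e^{-t\Delta}(S)(\phi) = T\big(e^{-t\Delta}(d\phi)\big) = T\big(d(e^{-t\Delta}\phi)\big).
\]
Using the semigroup property, split $e^{-t\Delta}S = e^{-1\cdot\Delta}\big(e^{-(t-1)\Delta}S\big)$ for $t\ge 1$. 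Since $S$ is exact, $e^{-(t-1)\Delta}S$ is an exact \emph{smooth} form for every $t>1$ (by Proposition \ref{PtT}, $e^{-(t-1)\Delta}$ applied to a current is representable by a smooth form, and exactness is preserved since $e^{-s\Delta}$ commutes with $d$).

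Next I would control the decay. By functional calculus, on the orthogonal complement of the harmonic forms $e^{-s\Delta}$ has operator norm $\le e^{-\lambda_1 s}$ where $\lambda_1>0$ is the smallest nonzero eigenvalue; and since $S$ is exact it is $L^2$-orthogonal to the harmonic forms (as a current paired against harmonic test forms, $S(h) = dT(h)=\pm T(dh)=0$... more carefully, $e^{-s\Delta}S$ for $s\ge 1$ is already a smooth exact form so it is genuinely $L^2$-orthogonal to $\ker\Delta$). Hence for $t\ge 2$,
\[
\| e^{-t\Delta}S \|_{C^l} = \big\| e^{-\Delta}\big(e^{-(t-1)\Delta}S\big)\big\|_{C^l} \le C_l\, \| e^{-(t-1)\Delta}S\|_{L^2} \le C_l\, e^{-\lambda_1(t-2)} \| e^{-\Delta}S\|_{L^2},
\]
where the first inequality is the standard fact that $e^{-\Delta}:L^2(\Omega^{n-k})\to C^l(\Omega^{n-k})$ is bounded for every $l$ (elliptic regularity / the heat kernel at fixed time $1$ is smooth on the compact $M\times M$, cf. properties (is) and (iiis) of the heat kernel recalled in Section \ref{S5}). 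Thus the integrand $t\mapsto e^{-t\Delta}S$, viewed as a curve in the Fréchet space $\Omega^{n-k}(M)$, is continuous on $[1,\infty)$ and exponentially decaying in every $C^l$ seminorm, so the Bochner integral $\int_1^\infty e^{-t\Delta}S\, dt$ converges in $\Omega^{n-k}(M)$ and defines a smooth form. Finally I would note that this Bochner integral represents the current $\int_1^\infty e^{-t\Delta}S\,dt$: for each test form $\phi$, by continuity of evaluation one may interchange the integral with pairing against $\phi$, giving $\big(\int_1^\infty e^{-t\Delta}S\,dt\big)(\phi) = \int_1^\infty (e^{-t\Delta}S)(\phi)\,dt$, which is the currential definition in the statement.

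The main obstacle is the mild regularity of $S$ itself: one cannot directly estimate $\|e^{-t\Delta}S\|_{C^l}$ for $t$ near $1$ using $\|S\|_{L^2}$ because $S$ is only a current (representable by integration, hence dual to $L^1$), not an $L^2$ form. The trick that resolves it is precisely the semigroup splitting $e^{-t\Delta} = e^{-\epsilon\Delta}\circ e^{-(t-\epsilon)\Delta}$ with a fixed $\epsilon\in(0,1)$: the inner factor $e^{-(t-\epsilon)\Delta}$ already turns the current $S$ into a smooth form with $L^2$ (indeed $C^0$) bounds that are uniform for $t$ in compact subintervals of $[1,\infty)$ — here one uses that $(t,x,y)\mapsto p_t^{n-k}(x,y)$ is smooth and $M$ compact, so $\sup_{t\in[1-\epsilon,\,\text{anything}]}$ of the relevant kernel quantities is controlled — and the outer factor $e^{-\epsilon\Delta}$ then supplies $C^l$ bounds. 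Once $S$ has been regularized this way, everything reduces to the elementary spectral estimate $\|e^{-s\Delta}u\|_{L^2}\le e^{-\lambda_1 s}\|u\|_{L^2}$ for $u$ exact, exactly as in the Remark preceding Proposition \ref{H+D}. With the exponential decay in hand, dominated convergence for Bochner integrals finishes the argument.
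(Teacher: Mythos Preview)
Your argument is correct, but the paper proceeds by a much shorter route. After observing (as you do) that $S_1:=e^{-\Delta}S$ is a smooth exact form, the paper simply computes
\[
\int_1^{\infty}e^{-t\Delta}S\,dt=\int_1^{\infty}e^{-(t-1)\Delta}S_1\,dt=\int_0^{\infty}e^{-s\Delta}S_1\,ds=G(S_1),
\]
and then invokes the standard fact (Theorem~\ref{prevthm}) that the Green operator takes smooth forms to smooth forms. So all of your spectral-gap estimates, $C^l$ bounds via the smoothing property of $e^{-\Delta}$, and Bochner-integral bookkeeping are subsumed in the single identification ``tail integral $=$ Green operator applied to a smooth form''. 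What your approach buys is an explicit quantitative decay $\|e^{-t\Delta}S\|_{C^l}\le C_l e^{-\lambda_1 t}$, which is more than the lemma asks for; what the paper's approach buys is brevity and the avoidance of any direct analysis near $t=1$ (your ``main obstacle'' simply disappears once the integral is recognized as $G(S_1)$).
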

 \begin{proof}
 	Let $S_1:=e^{-\Delta}S$. By Theorem \ref{exunisol}, $S_1$ is a smooth form on $M$. Due to the commutativity of $d$ and $e^ {-t\Delta}$ for every $t$ we get that $S_1$ is an exact form (see e.g. \ref{beq3}). We then have by the semi-group property of  the heat kernel that
 	\[\int_1^{\infty}e^ {-t\Delta}S~dt=\int_1^ {\infty}e^{-(t-1)\Delta}S_1~ dt=\int_0^{\infty}e^{-s\Delta}S_1~ ds=G(S_1)\]
 	and the latter is a smooth form.
 \end{proof}
  
  \begin{theorem}\label{lt1}
  	Let $L$ be smooth closed submanifold of $M$ and $H$ be the harmonic representative of its Poincar\'e dual. Let $\hat{L}_t:=e^{-t\Delta}(L-H)=L_t-H$. Then the current $G(L-H)$ is represented by the following form on $M$ with locally integrable coefficients which is smooth on $M\setminus L$:
  	\[\int_0^{\infty}L_t~ dt.\]
  \end{theorem}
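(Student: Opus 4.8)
The plan is to read off the identity $G(L-H)=\int_0^\infty \hat L_t\,dt$ at the level of currents from Corollary \ref{GLH}, split the time integral at $t=1$, and treat the two pieces separately: the tail is a genuinely smooth form, and the head carries the claimed local integrability. Concretely, by Corollary \ref{GLH} one has $G(L-H)(\phi)=\int_0^\infty \hat L_t(\phi)\,dt$ for every test form $\phi$, where for $t>0$ the current $\hat L_t=e^{-t\Delta}(L-H)=L_t-H$ is represented by a smooth form (Theorem \ref{exunisol}, Proposition \ref{PtT}). For the tail, $L-H$ is exact and, having finite mass, is representable by integration, so Lemma \ref{Lem2} applies with $S=L-H$ and yields that $\phi\mapsto\int_1^\infty\hat L_t(\phi)\,dt$ is the current $T_{G(\hat L_1)}$ attached to the \emph{smooth} form $G(\hat L_1)$, $\hat L_1:=e^{-\Delta}(L-H)$.

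For the head I would write $\int_0^1\hat L_t\,dt=\int_0^1 L_t\,dt-H$ and analyse $\int_0^1 L_t\,dt$. Away from $L$ there is nothing to do: by Theorem \ref{teo_exiHeat}, near $L$ one has $L_t=\Psi_t-(p*H\Psi)_t$ with $\Psi_t=\zeta(r)f_t(r)\eta_t^N$ exponentially flat at $t=0$ once $r>0$, while — taking the order $N$ of the formal series large and invoking Theorem \ref{fundest5} — $(p*H\Psi)_t$ is jointly smooth on $[0,1]\times M$; hence $L_t$ is jointly smooth on $([0,1]\times M)\setminus(\{0\}\times L)$ and $\int_0^1 L_t\,dt$ is smooth on $M\setminus L$. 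Near $L$, the term $\int_0^1(p*H\Psi)_t\,dt$ is again smooth (integral of a $C^l$‑continuous family), so the entire local integrability question reduces to
\[
\int_0^1\Psi_t\,dt=\sum_{i=0}^N\eta_i\,\zeta(r)\int_0^1 t^i f_t(r)\,dt,\qquad f_t(r)=(4\pi t)^{-\frac{n-k}{2}}e^{-r^2/4t},
\]
where the $\eta_i$ are smooth (so bounded near $L$) and $r$ is the distance to $L$.

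To control $\int_0^1 t^i f_t(r)\,dt$ I would substitute $s=r^2/(4t)$, which turns it into $(4\pi)^{-\frac{n-k}2}(r^2/4)^{\,i+1-\frac{n-k}2}\int_{r^2/4}^{\infty}s^{\frac{n-k}2-i-2}e^{-s}\,ds$, an incomplete‑Gamma integral. Estimating it as $r\to 0$ gives: $O\!\big(r^{\,2i+2-(n-k)}\big)$ when $2i+2<n-k$, $O(\log(1/r))$ when $2i+2=n-k$, and boundedness when $2i+2>n-k$. In all cases the order of the singularity is at most $(n-k)-2$, strictly below the codimension $n-k$ of $L$, so $\int_0^1 t^i f_t(r)\,dt$ lies in $L^1_{\loc}$ of the tubular neighbourhood; hence $\int_0^1\Psi_t\,dt$, and therefore $\int_0^1 L_t\,dt$, has $L^1_{\loc}$ coefficients on $M$ and is smooth on $M\setminus L$.

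It then remains to identify $\int_0^\infty\hat L_t\,dt=\big(\int_0^1 L_t\,dt-H\big)+G(\hat L_1)$ — which by the above is an $L^1_{\loc}$ form, smooth off $L$ — with the current it induces. For the tail this is Lemma \ref{Lem2}; for the head one passes from $\int_0^1\!\big(\int_M\hat L_t\wedge\phi\big)dt$ to $\int_M\!\big(\int_0^1\hat L_t\,dt\big)\wedge\phi$ by Fubini, which is legitimate since $\int_0^1|L_t(x)|\,dt\le C+C\sum_i\int_0^1 t^i f_t(r(x))\,dt\in L^1_{\loc}$ and $|H|$ is bounded, so $\int_0^1\!\int_M|\hat L_t\wedge\phi|<\infty$. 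Adding the two contributions gives $T_{\int_0^\infty\hat L_t\,dt}=G(L-H)$. I expect the main obstacle to be the near‑$L$ estimate of $\int_0^1 t^i f_t(r)\,dt$: one must split the time integral at a finite value (the full $\int_0^\infty$ diverges at $t=\infty$) and then extract the exact singular behaviour of an incomplete‑Gamma‑type integral, which is precisely what governs both the local integrability here and the blow‑up analysis behind Theorem \ref{TA}; a secondary, routine point is verifying — via Theorems \ref{teo_exiHeat} and \ref{fundest5}, with $N$ chosen large — that the Duhamel correction $(p*H\Psi)_t$ contributes a smooth form after integration in $t$.
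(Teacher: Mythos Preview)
Your proposal is correct and follows essentially the same route as the paper: invoke Corollary \ref{GLH}, peel off the tail via Lemma \ref{Lem2}, replace $L_t$ by the parametrix $\Psi_t$ using Theorems \ref{teo_exiHeat} and \ref{fundest5}, and estimate $\int_0^1 t^i f_t(r)\,dt$; your direct incomplete-Gamma computation via $s=r^2/4t$ is exactly the content of Lemma \ref{Hfunc0}, which the paper quotes instead. One cosmetic point: for fixed $N$ the Duhamel remainder $(p*H\Psi)_t$ is only $C^l$ up to $t=0$ with $l<N-\tfrac{n-k}{2}$, not jointly smooth; since $\int_0^1 L_t\,dt$ is independent of $N$ you recover full smoothness on $M\setminus L$ by letting $N$ (hence $l$) be arbitrary, which is the argument you implicitly have in mind.
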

  \begin{proof} We already know by Corollary \ref{GLH} that 
  	\[
  	G(L-H)(\phi)=\int_0^{\infty} \hat{L}_t(\phi) dt 
  	\]
thus using Lemma \ref{Lem2} it is enough to prove that
  	 \[
  	 \int_0^ 1\hat{L}_t~ dt
  	 \]
  	 has locally integrable coefficients. Pick a parametrix $\Psi = \zeta \Psi^N$ with $N> \frac{n-k}{2} + 2$. By Theorem \ref{teo_exiHeat} and since the heat kernel acts as the identity on the space of harmonic forms, we have
  	 \[
  	 \hat{L}_t=(e^ {-t\Delta}L)-H = \Psi - p * H(\Psi) - H.
  	 \]
  	 By Theorem \ref{fundest5}, we need only look at the extendibility of $\Psi$, i.e. of forms of the type
  	 \begin{equation}\label{neweq}
  	 \int_0^{1} e^{-\frac{r^2}{4t}}t^{-j/2}\eta ~ dt
  	 \end{equation}
  	 with $\eta$ a smooth form on $M$ and $j \le n-k$ an integer.
%
%
Lemma \ref{Hfunc0} below proves that \eqref{neweq} is a $C^1$-smooth form on $M$ for $j \le 0$, it is continuous on $M$ for $j=1$, it is of type $O(r^{2-j})$ close to $L$ for $j\geq 3$ while it is of type $O(\ln(r))$ for $j=2$. 
In any case, since $j\leq n-k$ one easily shows in a tubular neighborhood that both $r^{2-j}$ and $\ln(r)$ are locally integrable functions at all points along $L$.
\end{proof}



We conclude this section with a couple of technical propositions quoted in the above proof, which will be useful in the next section. For $j \in \R$ define
\[
H_j \ : \ [0,\infty) \to 
[0,\infty), \qquad H_j(r) := \int_0^1e^{-\frac{r^2}{4t}}t^{-j/2}~ dt
\]
	\begin{lemma} \label{Hfunc0} The following holds: 	
		\begin{itemize}
			\item[(i)] if $j \le 0$, then $H_j \in C^1([0,\infty))$ and $H_j'(0)=0$;
			\item[(ii)] if $j=1$, 
			\[H_1(r)=2e^{-\frac{r^2}{4}}-r\Theta(\frac{r^2}{4})\]
			where $\Theta(x)=\int_x^{\infty}e^{-u} u^{-1/2}~du$, in particular $\Theta(0)=\Gamma(1/2)=\sqrt{\pi}$. Note that $r\ra \Theta(r^2)$ is a smooth function of $r$ on $[0,\infty)$;
			\item [(iii)] if $j=2$, 
			\[
			H_2(r)=-\ln\left(\frac{r^2}{4}\right)e^{-\frac{r^2}{4}}+\Upsilon(\frac{r^2}{4})
			\]
			where $\Upsilon(x)=\int_x^{\infty}\ln(u)e^{-u}~du$. Note that $r\ra \Upsilon(r^2)$ is a $C^1$ function of $r$ on $[0,\infty)$.
			\item[(iv)] For $j\geq 3$ an integer,
			\[H_j(r)=2^{j-2}r^{2-j}F_j(\frac{r^2}{4})\]
			where $F_j(x)$ is a primitive of $-e^{-x}x^{j/2-2}$ such that  $F_j(0)=\Gamma(j/2-1)$. Note that $r\ra F_j(r^2)$ is smooth on $[0,\infty)$. 
		\end{itemize}
	\end{lemma}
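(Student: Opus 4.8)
The plan is to reduce all four cases to the single substitution $u = r^2/(4t)$, supplemented by a direct estimate in the case $j\le 0$ where that substitution degenerates at the origin. For $r>0$ one computes $t = r^2/(4u)$, $dt = -\tfrac{r^2}{4u^2}\,du$, and as $t$ runs over $(0,1]$ the variable $u$ runs over $[r^2/4,\infty)$, giving
\[
H_j(r) = \left(\frac{r^2}{4}\right)^{1-\frac{j}{2}} \int_{r^2/4}^\infty e^{-u} u^{\frac{j}{2}-2}\,du =: \left(\frac{r^2}{4}\right)^{1-\frac{j}{2}} G_j\!\left(\frac{r^2}{4}\right), \qquad G_j(x) := \int_x^\infty e^{-u}u^{\frac{j}{2}-2}\,du .
\]
The integral $G_j$ converges at $u=0$ exactly when $j\ge 3$; in that case $G_j$ extends continuously to $[0,\infty)$ with $G_j(0) = \Gamma(j/2-1)$, and setting $F_j := G_j$ together with the identity $(r^2/4)^{1-j/2} = 2^{j-2}r^{2-j}$ gives (iv) at once. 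For $j=2$ and $j=1$ the plan is to integrate by parts in $G_j$ to isolate the singular part at $0$: for $j=2$ one takes the antiderivative $\ln u$ and obtains $G_2(x) = -e^{-x}\ln x + \Upsilon(x)$, which after $x = r^2/4$ is exactly (iii) (here the prefactor $(r^2/4)^{1-j/2}$ is $1$); for $j=1$ one uses the antiderivative $-2u^{-1/2}$ and gets $G_1(x) = 2x^{-1/2}e^{-x} - 2\Theta(x)$, so that multiplication by $(r^2/4)^{1/2} = r/2$ yields $H_1(r) = 2e^{-r^2/4} - r\Theta(r^2/4)$, with $\Theta(0) = \int_0^\infty e^{-u}u^{-1/2}\,du = \Gamma(1/2) = \sqrt{\pi}$.

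The regularity at the origin of the auxiliary functions $r\mapsto F_j(r^2)$, $r\mapsto\Theta(r^2)$, $r\mapsto\Upsilon(r^2)$ is then handled uniformly via the chain rule: if $\Phi$ denotes one of $F_j,\Theta,\Upsilon$, then $\Phi'(x)$ equals $-e^{-x}x^{j/2-2}$, $-e^{-x}x^{-1/2}$, or $-e^{-x}\ln x$ respectively, so $\frac{d}{dr}\Phi(r^2) = 2r\,\Phi'(r^2)$ equals $e^{-r^2}$ times, in turn, $-2r^{j-3}$ (a genuine polynomial since $j-3\ge 0$ for $j\ge 3$), the constant $-2$, or $-4r\ln r$. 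In the first two cases $\frac{d}{dr}\Phi(r^2)$ is $C^\infty$ on $[0,\infty)$, hence so is $\Phi(r^2) = \Phi(0) + \int_0^r \frac{d}{ds}\Phi(s^2)\,ds$; in the last case $\frac{d}{dr}\Upsilon(r^2)\to 0$ as $r\to 0^+$, so $\Upsilon(r^2)$ is $C^1$ (but no better), as claimed.

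For $j\le 0$ the change of variables is degenerate at $r=0$, so one argues directly on $H_j(r) = \int_0^1 e^{-r^2/(4t)}t^{-j/2}\,dt$. Since $t^{-j/2}\le 1$ on $(0,1]$ the integrand is bounded by $1$, so $H_j$ is continuous on $[0,\infty)$ by dominated convergence. Differentiating the integrand gives $-\tfrac{r}{2t}e^{-r^2/(4t)}t^{-j/2}$, and writing $\tfrac{r}{2t}e^{-r^2/(4t)} = \tfrac{1}{2\sqrt t}\cdot\tfrac{r}{\sqrt t}e^{-(r/\sqrt t)^2/4}$ and using the elementary bound $ye^{-y^2/4}\le C$ for $y\ge 0$ one obtains the $r$-independent estimate $\left|\tfrac{r}{2t}e^{-r^2/(4t)}t^{-j/2}\right|\le \tfrac{C}{2}\,t^{-(1+j)/2}$, which is integrable on $(0,1)$ since $-(1+j)/2 > -1$ for $j\le 0$. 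Hence differentiation under the integral sign is legitimate, $H_j'$ is continuous (again by dominated convergence), and $H_j'(0) = 0$ because the integrand vanishes at $r=0$; this is (i). The only delicate points throughout are the bookkeeping of boundary terms in the integrations by parts for $j=1,2$ and the verification that the dominating function in the case $j\le 0$ is simultaneously $r$-independent and integrable near $t=0$ — everything else is a routine check.
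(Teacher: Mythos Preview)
Your proof is correct and follows the same route as the paper: the substitution $u=r^{2}/(4t)$ to get $H_j(r)=2^{j-2}r^{2-j}\int_{r^{2}/4}^{\infty}e^{-u}u^{j/2-2}\,du$, from which (iv) is immediate and (ii), (iii) follow by integration by parts, together with a direct dominated-convergence argument for (i). Your treatment is in fact a bit more explicit than the paper's: you spell out the chain-rule computation that justifies the smoothness of $r\mapsto F_j(r^2)$, $r\mapsto\Theta(r^2)$ and the $C^1$ regularity of $r\mapsto\Upsilon(r^2)$, and for (i) you use the simpler bound $ye^{-y^2/4}\le C$ (the paper uses $y^\beta e^{-y^2}\le C_\beta$ with $0<\beta<1$, which also works but is unnecessary for $j\le 0$).
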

	\begin{proof} 
		(i). It is enough to consider $r$ in a bounded interval of $[0,\infty)$. Writing $g(r,t)=e^{-\frac{r^2}{4t}}t^{-j/2}$, we have $g(r,t)\leq 1$ for $t\in(0,1]$ and for any $0<\beta<1$
		\begin{equation}\label{neweq1}
			\left|\frac{\partial g}{\partial r}\right|=\frac{1}{2}e^{-\frac{r^2}{4t}}rt^{-\frac{j}{2}-1}=e^{-\left(\frac{r}{2\sqrt{t}}\right)^2}\left(\frac{r}{2\sqrt{t}}\right)^{\beta}\left(\frac{r}{2}\right)^{1-\beta}t^{- \frac{j}{2}+\beta/2-1}\leq C_{\beta}r^{1-\beta}t^{-\frac{j}{2}+\beta/2-1}\end{equation}
		where we used the fact that $y\ra e^{-y^2}y^{\beta}$ is a bounded function on $[0,\infty)$.
		 Since $\beta>0$, $t^{-\frac{j}{2}+\beta/2-1}$ is  integrable and thus, by Theorem 2.27 in \cite{Fo}, we can differentiate under the integral sign:
		\[
		H_j'(r)=-\frac{r}{2}\int_0^1e^{-\frac{r^2}{4t}}t^{-\frac{j}{2}-1}~dt
		\]
		By using again (\ref{neweq1}) one gets that $\lim_{r\ra 0}H_j'(r)=0$.\\
		(ii) The change of variables $u=\frac{r^2}{4t}$ gives
		\[H_j(r)=2^{j-2}r^{2-j}\int_{\frac{r^2}{4}}^{\infty}e^{-u}u^{j/2-2}~du\]
		and we immediately get (iv) with $F_j(x)=\int_x^{\infty}e^{-u}u^{j/2-2}~du$. In order to get (iii) and (iv) one integrates by parts.
	\end{proof}
	%
	The following Corollary is of interest for the next section. 
	\begin{corollary}\label{corHj} Let $r$ be the distance function to the submanifold $L$ and suppose that $j$ is a positive integer. Then for points where $r>0$ one has
		\begin{equation}\nabla [H_j(r)]=r^{1-j}\tilde{H}_j(r)\nabla r\end{equation}
		where $\tilde{H}_j(r)$ is smooth on $[0,\infty)$ if $j \neq 2$, while it is $C^1$ on $[0,\infty)$ if $j=2$.
	\end{corollary}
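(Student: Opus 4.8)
The plan is to reduce the whole statement to the one-step recursion
\[
H_j'(r) \;=\; -\frac{r}{2}\,H_{j+2}(r), \qquad r>0,
\]
and then feed it into Lemma \ref{Hfunc0}(iv). First I would establish this recursion. It is essentially the computation already carried out in the proof of Lemma \ref{Hfunc0}(i), where $H_j'(r) = -\frac r2 \int_0^1 e^{-r^2/(4t)} t^{-j/2-1}\,dt$ was obtained by differentiating under the integral sign; the integrand here is exactly $e^{-r^2/(4t)} t^{-(j+2)/2}$, so the right-hand side is $-\frac r2 H_{j+2}(r)$. The only point needing care is that in Lemma \ref{Hfunc0}(i) the dominating bound had to be uniform all the way down to $r=0$, which forced $j\le 0$; here $j$ is positive, but we only need the identity for $r>0$. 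On a compact subinterval $[a,b]\subset(0,\infty)$ one has $e^{-r^2/(4t)} \le e^{-a^2/(4t)}$, which decays faster than any power of $t$ as $t\to 0^+$, so $\partial_r\big(e^{-r^2/(4t)}t^{-j/2}\big)$ is dominated by a function of $t$ that is integrable on $(0,1]$, uniformly in $r\in[a,b]$; differentiation under the integral sign is therefore legitimate and the recursion follows.

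With the recursion in hand, the rest is bookkeeping. Since $j\ge 1$ is an integer we have $j+2\ge 3$, so Lemma \ref{Hfunc0}(iv) applies to $H_{j+2}$ and gives, for $r>0$,
\[
H_{j+2}(r) \;=\; 2^{j}\,r^{-j}\,F_{j+2}\!\Big(\tfrac{r^2}{4}\Big),
\]
with $r\mapsto F_{j+2}(r^2)$ — hence also $r\mapsto F_{j+2}(r^2/4)$ — smooth on $[0,\infty)$. Combining with the chain rule $\nabla[H_j(r)] = H_j'(r)\,\nabla r$ (valid wherever $r>0$, since there $r$ is a smooth function of the point and $H_j$ is smooth on $(0,\infty)$), I get
\[
\nabla[H_j(r)] \;=\; -\frac r2\cdot 2^{j}\,r^{-j}\,F_{j+2}\!\Big(\tfrac{r^2}{4}\Big)\,\nabla r \;=\; r^{1-j}\,\tilde H_j(r)\,\nabla r, \qquad \tilde H_j(r) := -2^{j-1}F_{j+2}\!\Big(\tfrac{r^2}{4}\Big),
\]
so $\tilde H_j$ extends smoothly to $[0,\infty)$ for every positive integer $j$; in particular it is $C^1$ when $j=2$, as claimed. (For $j=2$ one can also see this directly: $F_4(x)=e^{-x}$, whence $\tilde H_2(r) = -2e^{-r^2/4}$; alternatively, differentiating the closed form for $H_2$ in Lemma \ref{Hfunc0}(iii) exhibits the $C^1$ regularity.)

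The main — indeed the only nontrivial — obstacle is the differentiation-under-the-integral step: one must pin down an integrable ($t$-)dominating function, and it is precisely the impossibility of doing this uniformly down to $r=0$ (for $j>0$) that explains why the conclusion is phrased for $r>0$ with $\tilde H_j$ merely \emph{extending} smoothly (resp.\ $C^1$) to the closed half-line, rather than as a statement about $H_j$ being differentiable at $r=0$. After that, everything is a direct substitution into the formulas already proven in Lemma \ref{Hfunc0}.
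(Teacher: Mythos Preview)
Your proof is correct, and in fact it yields a cleaner and slightly stronger conclusion than the paper's own (implicit) argument. The paper states the corollary without proof; the intended derivation is visibly case-by-case, differentiating the explicit formulas for $H_1$, $H_2$, $H_j$ ($j\ge 3$) in Lemma~\ref{Hfunc0}(ii)--(iv). That is why the statement singles out $j=2$ as merely $C^1$: the formula for $H_2$ involves $\Upsilon$, and $r\mapsto \Upsilon(r^2)$ is only $C^1$ at $r=0$. Your recursion $H_j' = -\tfrac{r}{2}H_{j+2}$ bypasses the case split entirely, funnelling every $j\ge 1$ into the single case (iv) of the lemma (since $j+2\ge 3$), and produces the explicit $\tilde H_j(r) = -2^{j-1}F_{j+2}(r^2/4)$, which is smooth on $[0,\infty)$ for \emph{all} positive integers $j$. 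In particular for $j=2$ you recover $\tilde H_2(r) = -2e^{-r^2/4}$, so the $C^1$ claim in the paper is in fact an understatement --- the $\Upsilon$-contribution to $H_2'$ cancels exactly against the $\ln$-term, something the case-by-case route obscures. The only point requiring care, which you handle correctly, is that differentiation under the integral is justified locally on $(0,\infty)$ but not uniformly down to $r=0$; this matches the phrasing of the corollary.
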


\section{Extendibility of the Biot-Savart form}

We are interested in the regularity properties of the smooth form on $M\setminus L$:
\[
\BS(L):=d^*G(L-H)
\]

Following Theorem \ref{lt1}, we write
\[BS(L)=d^ *\int_0^ {\infty}e^{-t\Delta}(L-H)~dt=d^ *\int_0^ 1e^{-t\Delta}(L)~ dt+d^ *\int_1^{\infty}e^ {-t\Delta}(L-H)~dt.\]
 Since the second term in this sum is smooth on $M$ by Lemma \ref{Lem2}, we restrict attention to
  \[d^ *\int_0^ 1e^{-t\Delta}(L)~ dt=d^*\int_{[0,1]}dt\wedge L_t.\]
If $\Psi_t$ is a parametrix chosen for $N$ sufficiently large then, by Lemma \ref{lemconvH}, the difference $ \Psi_t-L_t= [p*H(\Psi)]_t$ satisfies 
\[\int_0^1 [p*H(\Psi)]_t~dt\in C^1(M)\]
and therefore it is enough to prove that $d^*\int_0^1\Psi_t~ dt$ is blow-up extendible. Of course, we can just restrict to a tubular neighborhood $U$ of $L$ where 
\[\Psi_t=f_t(r)\left(\sum_{i=0}^{N}\eta_it^i\right)\qquad f_t(r)=\frac{1}{(4\pi  t)^{(n-k)/2}}e^{-\frac{r^2}{4t}}\]
for some smooth forms $\eta_i \in \Omega^{n-k}(M)$,
so we need to look at forms of type
\[
d^*\left[\left(\int_0^1f_t(r)t^i~dt\right)\eta_i\right]
\]
By Lemma \ref{Hfunc0}, if the power of the monomial in $t$ inside the integral $f_i(r):=\int_0^1f_t(r)t^i~dt$ is non-negative then $f_i(r)$ is a $C^1$ function of $r$ including at $r=0$ and this is enough to conclude the extendibility to the blow-up of 
\[
d^*(f_i(r)\eta_i)=-f_i'(r)\iota_{\nabla r }\eta_i+f_i(r)d^*\eta_i
\]
since $\nabla r$ extends smoothly to the blow-up, as it is the pushforward of a vector field in a neighbourhood of $\partial \Bl_L(M)$. Thus, we need to restrict attention to terms in the Laurent polynomial $\Psi_t$ with a negative power of $t$. We start with an easy case:
\begin{theorem} 
	Let $n\geq 3$. If $k=0$ then the form $\BS({\pt})$ is blow-up extendible at the point $L=\pt\in M$. 
\end{theorem}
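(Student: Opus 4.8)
The plan is to continue the reduction carried out just before the statement and to exploit the special feature of codimension $n$: the forms $\eta_i$ in the parametrix are of \emph{top degree} on $M$, so pulling them back under the blow-down map $\Bl$ forces a vanishing of order $\rho^{n-1}$ in the radial variable $\rho = \Bl^* r$, and this vanishing exactly kills the worst radial singularity produced by the $t$-integration. First I would recall that, after discarding the terms already known to be $C^1$ on $M$ (the contribution of $p*H(\Psi)$, the integral $\int_1^\infty e^{-t\Delta}(L-H)\,dt$, and the monomials $t^i$ with $i\ge n/2$), it suffices to prove that each form
\[
d^*\bigl(f_i(r)\eta_i\bigr), \qquad \eta_i \in \Omega^{n}(M) \ \text{smooth near } p, \quad i<n/2, \quad f_i(r) = \int_0^1 f_t(r)\,t^i\,dt = \frac{1}{(4\pi)^{n/2}}\,H_{j}(r),
\]
with $j := n-2i$ (so $1\le j\le n$), is blow-up extendible. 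By the Leibniz rule for $d^*$ this form equals $-f_i'(r)\,\iota_{\nabla r}\eta_i + f_i(r)\,d^*\eta_i$.

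Next I would pass to geodesic polar coordinates $(\rho,v)\in[0,\eps)\times S^{n-1}$ around $p$, in which $\Bl(\rho,v)=\exp_p(\rho v)$, $\Bl^* r = \rho$ and $\Bl_*\partial_\rho = \nabla r$. Writing $\eta_i = g_i\, dx^1\wedge\cdots\wedge dx^n$ in normal coordinates, the pullback of $dx^1\wedge\cdots\wedge dx^n$ under $(\rho,v)\mapsto \rho v$ is $\rho^{n-1}$ times a smooth top form on $[0,\eps)\times S^{n-1}$, so $\Bl^*\eta_i = \rho^{n-1}\beta_i$ with $\beta_i$ smooth; the same bookkeeping of the $\rho$-powers shows $\Bl^*(d^*\eta_i) = \rho^{n-2}\gamma_i$ with $\gamma_i$ smooth. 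On the radial side, Corollary \ref{corHj} gives $f_i'(r) = (4\pi)^{-n/2}\,\rho^{1-j}\tilde H_j(\rho)$ with $\tilde H_j$ at least $C^1$ on $[0,\eps)$, while Lemma \ref{Hfunc0} gives $H_j(\rho) = O(\rho^{2-j})$ for $j\ge 3$, $H_2(\rho) = O(\log\rho)$, and $H_j$ bounded for $j\le 1$. Combining these,
\[
\Bl^*\bigl(f_i'(r)\,\iota_{\nabla r}\eta_i\bigr) = \frac{1}{(4\pi)^{n/2}}\,\rho^{1-j}\tilde H_j(\rho)\cdot\rho^{n-1}\iota_{\partial_\rho}\beta_i = \frac{1}{(4\pi)^{n/2}}\,\rho^{\,2i}\,\tilde H_j(\rho)\,\iota_{\partial_\rho}\beta_i,
\]
which extends continuously to $\rho=0$ since $2i\ge 0$ (and $\tilde H_j$ is continuous; when $j=2$ one has $i\ge 1$, so the factor $\rho^{2i}$ vanishes at $\rho=0$ anyway). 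Likewise
\[
\Bl^*\bigl(f_i(r)\,d^*\eta_i\bigr) = \frac{1}{(4\pi)^{n/2}}\,H_j(\rho)\,\rho^{n-2}\gamma_i,
\]
and $\rho^{n-2}H_j(\rho)$ is $O(\rho^{2i})$ for $j\ge 3$, is $O(\rho^{n-2}\log\rho)$ for $j=2$ (in which case $n-2 = 2i\ge 2>0$ because $n\ge 3$), and is bounded for $j\le 1$; in every case it extends continuously to $\rho=0$. Summing over $i$ and restoring the smooth remainders, $\Bl^*\BS(\pt)$ extends to a continuous form on $\Bl_{\pt}(M)$, which is precisely blow-up extendibility in the sense of Definition \ref{wkext}.

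The only genuinely substantive point is the power count: the factor $\rho^{n-1}$ (resp.\ $\rho^{n-2}$) acquired upon pulling back the top-degree form $\eta_i$ (resp.\ the $(n-1)$-form $d^*\eta_i$) dominates the singularity $\rho^{1-j} = \rho^{\,1-n+2i}$ (resp.\ $\rho^{2-j}$) coming from the radial integral, leaving only non-negative powers of $\rho$. This is exactly why codimension $n$ is the ``easy case'' of Theorem \ref{TA} and requires no information on $\eta_0$ beyond its smoothness; everything else is routine, relying only on the elementary estimates of Lemma \ref{Hfunc0} and Corollary \ref{corHj}. (Alternatively, one could verify this through Proposition \ref{prwex}, checking weak extendibility of $\BS(\pt)$ and of $\iota_{\nabla r}\BS(\pt)$ via the rescaling maps, but the direct computation above is self-contained.)
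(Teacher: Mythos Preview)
Your proof is correct and follows essentially the same approach as the paper: both exploit that $\eta_i$ is a top-degree form (so its blow-up pullback picks up $\rho^{n-1}$, and $d^*\eta_i$ picks up $\rho^{n-2}$), which exactly cancels the radial singularity of $H_j$ and $H_j'$ from Lemma~\ref{Hfunc0} and Corollary~\ref{corHj}. The only cosmetic difference is that the paper phrases this via the rescaling criterion of Proposition~\ref{prwexR} (checking uniform continuity of $r^{n-1}(\Omega_j)_0(rv)$ and $r^{n-2}\iota_{\nabla r}\Omega_j(rv)$), whereas you compute $\Bl^*$ directly in geodesic polar coordinates; these are the same computation.
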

\begin{proof}
 We argue that all forms $d^*(H_j(r)\eta_{(n-j)/2})$ are extendible where $j \le n-k$ 
 and $H_j$ are defined in Lemma \ref{Hfunc0}. This is enough because the other terms that make up  the sum of $d^*\int_0^1\Psi_t~ dt$ are obviously extendible. Now 
\[\eta_i=f_i\vol_M\]
and according to Corollary \ref{corHj}
\begin{equation}\label{eqHj} \Omega_j:=d^*(H_j(r)\eta_{(n-j)/2})=-r^{1-j}\tilde{H}_j(r)f_{\frac{n-j}{2}}\iota_{\nabla r}(\vol_M)+H_j(r)d^*(f_{\frac{n-j}{2}}\vol_M)\end{equation}
We argue that both terms of the sum in (\ref{eqHj}) are extendible. Since all the forms $\Omega_j$ have degree $n-1$ the criterion of Proposition \ref{prwexB} (see also Proposition \ref{prwexR})   says that it is enough that the families parametrized by $r$ of forms on a geodesic sphere around $\pt$:
 \begin{equation} \label{creq1} v\ra r^{n-1}\Omega_j(rv)\quad\mbox{ and }\quad r^{n-2}\iota_{\nabla r}\Omega_j(rv)\end{equation}  be uniformly continuous at $r=0$.

Now, multiplying (\ref{eqHj}) with $r^{n-1}$, the first term of the sum is 
\[r^{n-1}r^{1-j}\tilde{H}_j(r)f_{\frac{n-j}{2}}\iota_{\nabla r}(\vol_M)=r^{n-j}\tilde{H}_j(r)f_{\frac{n-j}{2}}\cdot \iota_{\nabla r}(\vol_M)\]
which is clearly uniformly continuous for all $j\leq n$ on a geodesic sphere owing to the fact $\iota_{\nabla r}(\vol_M)$ is the volume form of the geodesic sphere. On the other, since $r^{n-1}H_j(r)$ extends and $d^*(f_{\frac{n-j}{2}}\vol_M)$ is already continuous, we conclude that the first piece of (\ref{creq1}) is uniformly continuous. 

Note now that contraction with $\nabla r$ of the first piece in the sum of (\ref{eqHj}) is $0$. 

As for the second term in (\ref{eqHj}) note that Lemma \ref{Hfunc0} gives that, for $n\geq 3$, $r^{n-2}H_j(r)$ extends continuously at $r=0$ when $j\leq n$ even when $j=2$. Clearly $\iota_{\nabla r}(d^*(f_{(n-j)/2}\vol_M))$ is extendible. This finishes the proof.
\end{proof}

\begin{theorem} If $n-k=1$ the form $\BS(L)$ is blow-up extendible along $L$. 
\end{theorem}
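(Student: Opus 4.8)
Since $\codim L=1$, the parameter $a=(n-k)/2=\tfrac12$, and near $L$ the parametrix is $\Psi_t=\zeta(r)f_t(r)\sum_{i=0}^N\eta_i t^i$ with $f_t(r)=(4\pi t)^{-1/2}e^{-r^2/4t}$. Following the reduction already carried out just before the statement — using Lemma~\ref{Lem2} and Theorem~\ref{lt1} for the tail $\int_1^\infty$, and Lemma~\ref{lemconvH}, Theorem~\ref{fundest5} for the correction $p*H(\Psi)$ — it suffices to show that $d^*\int_0^1\Psi_t\,dt$ is blow-up extendible, and this may be done in the tubular neighbourhood $U$ of $L$ where $\zeta\equiv1$. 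There $\int_0^1\Psi_t\,dt=\sum_{i=0}^N f_i(r)\eta_i$ with $f_i(r):=\int_0^1 f_t(r)t^i\,dt=(4\pi)^{-1/2}H_{1-2i}(r)$ in the notation of Lemma~\ref{Hfunc0}. The monomial $f_t(r)t^i=(4\pi)^{-1/2}e^{-r^2/4t}t^{\,i-1/2}$ has nonnegative power of $t$ for every $i\ge1$, so by the discussion preceding the theorem all those summands yield blow-up extendible forms. Hence the entire content of the statement is the single term $i=0$, i.e.\ the extendibility of $d^*\!\big(f_0(r)\eta_0\big)$ with $f_0(r)=(4\pi)^{-1/2}H_1(r)$.

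The next step is to invoke the precise regularity of $H_1$. By Lemma~\ref{Hfunc0}(ii), $H_1(r)=2e^{-r^2/4}-r\,\Theta(r^2/4)$ with $r\mapsto\Theta(r^2)$ smooth on $[0,\infty)$; consequently $H_1\in C^\infty([0,\infty))$. This is exactly where codimension $1$ is special: the ``worst'' Laurent term produces a Gaussian/error-function coefficient rather than the $r^{2-j}$ or $\log r$ singularities of Lemma~\ref{Hfunc0}(iii)--(iv). Moreover Corollary~\ref{corHj} with $j=1$ gives $\nabla[H_1(r)]=\tilde H_1(r)\nabla r$ with $\tilde H_1\in C^\infty([0,\infty))$ (indeed $\tilde H_1(r)=-\Theta(r^2/4)$). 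By the Leibniz rule for the codifferential,
\[
d^*\!\big(f_0(r)\eta_0\big)=\frac{1}{(4\pi)^{1/2}}\Big(H_1(r)\,d^*\eta_0-\tilde H_1(r)\,\iota_{\nabla r}\eta_0\Big),
\]
which is a $0$-form (degree $n-k-1=0$) on $M\setminus L$.

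Finally I would check that this $0$-form extends continuously over $\Bl_L(M)$. Pulling back by the blow-down map $\Bl$: since $r$ lifts to a smooth function on $\Bl_L(M)$ and $H_1,\tilde H_1$ are smooth on $[0,\infty)$, both $\Bl^*[H_1(r)]$ and $\Bl^*[\tilde H_1(r)]$ are smooth; $\Bl^*(d^*\eta_0)$ is smooth because $d^*\eta_0$ is smooth on $U\supset L$; and $\Bl^*(\iota_{\nabla r}\eta_0)=(\Bl^*\eta_0)(\widetilde{\nabla r})$ is smooth near $\partial\Bl_L(M)$ because, as recalled in the excerpt, $\nabla r$ is the pushforward of a smooth vector field $\widetilde{\nabla r}$ defined near $\partial\Bl_L(M)$. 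Hence $\Bl^*\,d^*\!\big(f_0(r)\eta_0\big)$ extends to a smooth (a fortiori continuous) function on $\Bl_L(M)$, so $d^*\int_0^1\Psi_t\,dt$, and therefore $\BS(L)$, is blow-up extendible; for a $0$-form weak and full extendibility coincide since $\iota_{\nabla r}$ annihilates it, so no separate verification is needed. The only real work is the bookkeeping isolating the unique problematic monomial and quoting the sharp behaviour of $H_1$ and its gradient; there is no analytic obstacle in this codimension.
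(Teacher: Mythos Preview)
Your proof is correct and follows the same approach as the paper's own argument: reduce to the single term $d^*(H_1(r)\eta_0)$, expand via the Leibniz rule using Corollary~\ref{corHj} to get $-\tilde H_1(r)\iota_{\nabla r}\eta_0+H_1(r)d^*\eta_0$, and observe that both summands extend because $H_1,\tilde H_1$ are smooth on $[0,\infty)$ while $d^*\eta_0$ is smooth on $M$ and $\nabla r$ lifts smoothly to the blow-up. Your write-up is more detailed than the paper's three-line version (you spell out the smoothness of $H_1$ from Lemma~\ref{Hfunc0}(ii), identify $\tilde H_1(r)=-\Theta(r^2/4)$ explicitly, and check each factor separately on the blow-up), but the mathematical content is identical.
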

\begin{proof} In this case, $\BS(L)$ is a function.  We need to look at $d^*(H_1(r)\eta_0)$. By Corollary \ref{corHj}:
\[ \Omega:=d^*(H_1(r)\eta_0)=-\tilde{H}_1(r)\iota_{\nabla r}\eta_0+H_1(r)d^*\eta_0\]
where $\tilde{H}_1$ depends smoothly on $r$.  Since $d^*\eta_0$ is smooth everywhere, by Lemma \ref{Hfunc0} this  extends.
\end{proof}
\begin{remark}\label{lr} According to Appendix \ref{apB}, the analysis of extendibility in higher codimension has to take into account the decomposition of $\eta_0$ into its components of various vertical degrees. The vertical-horizontal decomposition is induced by a choice of a \emph{radial trivialization} (see Definition \ref{fibtr}) of the tangent bundle $TU$. A radial trivialization is a bundle isomorphism
\begin{equation}\label{TUpiL} TU\simeq \pi^* \nu L\oplus \pi^*TL\end{equation}
where $\pi:U\ra L$ is the closest point projection with certain good properties with respect to the gradient vector field $\nabla r$  (hence "radial"). Due to (\ref{TUpiL}), a vector field on $U$ gets  decomposed into a vertical (i.e. from $\pi^*\nu L$) and a horizontal component (i.e. from $\pi^*TL$). Similarly, a $k$-differential form  on $U$ decomposes into a sum of sections of bundles like $\Lambda^i\pi^*\nu ^*L\otimes \Lambda^{k-i}\pi^*T^*L$, $i=0,\ldots,k$. If all the components with $i<k$ vanish we call the form on $U$ \emph{purely vertical}. 

There are many choices of such splittings.  Appendix \ref{apB} is built around this kind of structure and familiarity with it might be a good idea to proceed. 

According to Section \ref{IVPs} the form $j^{1/2}\eta_0$ where $j$ is a smooth function is obtained by parallel transporting the normal volume form along normal geodesics. This suggests we use the splitting on $TU$ obtained by parallel transporting with the Levi-Civita connection along normal (i.e. radial) geodesics the following decomposition 
\[TU\biggr|_{L}=TL\oplus \nu L\]
valid at $L$. Clearly, with this choice of splitting, $j^{1/2}\eta_0$ and so $\eta_0$ are  purely vertical forms. 

Fortunately, this splitting is radial (see Example \ref{Partrc}). For the next results we use the criteria of blow-up extendibility as described in Theorem \ref{thmainA} and Corollary \ref{maincorA}.  

\end{remark}

  \begin{theorem}\label{codim3} If $n-k=3$, the form $\BS(L)$ is blow-up extendible along $L$.
\end{theorem}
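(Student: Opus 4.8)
The plan is to continue the reduction already made: near $L$ it suffices to show that $d^*\int_0^1\Psi_t\,dt$ is blow-up extendible, where $\Psi_t=f_t(r)\sum_{i=0}^N\eta_i t^i$, $f_t(r)=(4\pi t)^{-3/2}e^{-r^2/4t}$ and the $\eta_i$ are smooth. Setting $f_i(r):=\int_0^1 f_t(r)t^i\,dt$, for $i\ge 2$ the integrand carries a strictly positive power of $t$, so $f_i\in C^1([0,\infty))$ by Lemma \ref{Hfunc0}(i), and then $d^*(f_i(r)\eta_i)=-f_i'(r)\iota_{\nabla r}\eta_i+f_i(r)d^*\eta_i$ lifts continuously to $\Bl_L(M)$ since $\nabla r$ and the smooth forms do. For $i=1$ the coefficient $f_1$ is a constant multiple of $H_1(r)$, which is in fact smooth in $r\in[0,\infty)$ by Lemma \ref{Hfunc0}(ii), so the same argument applies. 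Everything therefore reduces to the single term $d^*(H_3(r)\eta_0)$, where $f_0$ is a constant multiple of $H_3(r)=2r^{-1}F_3(r^2/4)$ with $F_3$ smooth and $F_3(0)=\Gamma(1/2)=\sqrt{\pi}$ (Lemma \ref{Hfunc0}(iv)); note that the resulting singular coefficient of $\eta_0$, namely $\frac{1}{4\pi}r^{-1}$, is exactly the Euclidean model of Proposition \ref{let} in codimension $3$.

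Next I would expand, by the Leibniz rule $d^*(u\alpha)=-\iota_{\nabla u}\alpha+u\,d^*\alpha$ and Corollary \ref{corHj} with $j=3$,
\[
d^*(H_3(r)\eta_0)=-r^{-2}\tilde{H}_3(r)\,\iota_{\nabla r}\eta_0+H_3(r)\,d^*\eta_0,\qquad \tilde{H}_3\ \text{smooth on }[0,\infty).
\]
I then pass to the radial trivialization of Remark \ref{lr}: there $\eta_0$ is purely vertical of vertical degree $n-k=3$ and $\nabla r=\partial_r$ is vertical, so $\iota_{\nabla r}\eta_0$ is purely vertical of vertical degree $2$. Hence the first summand is $r^{-2}$ times a purely vertical form of vertical degree $2$ — precisely the type of term covered by Theorem \ref{thmainA} and Corollary \ref{maincorA} — and it is moreover annihilated by $\iota_{\nabla r}$ since $\iota_{\partial_r}\iota_{\partial_r}=0$; so it is blow-up extendible.

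The crux is the remaining term $H_3(r)\,d^*\eta_0=2r^{-1}F_3(r^2/4)\,d^*\eta_0$. Writing $\eta_0=j^{-1/2}\psi$, with $\psi$ the parallel transport along normal geodesics of the normal volume form of $L$ (purely vertical, vertical degree $3$) and $j$ the exponential-map Jacobian (smooth, $j\equiv 1$ on $L$), I split $d^*\eta_0=-\iota_{\nabla(j^{-1/2})}\psi+j^{-1/2}d^*\psi$. The first summand is purely vertical of vertical degree $2$ — only the vertical component of $\nabla(j^{-1/2})$ contracts nontrivially with the purely vertical top-degree form $\psi$ — hence of the controlled type $r^{-1}\cdot(\text{vertical degree }2)$. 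For $d^*\psi$ the decisive structural fact is that, since $\psi$ has the \emph{maximal} vertical degree $n-k$, its codifferential $d^*\psi$ has vertical degree at least $n-k-2$, which for $n-k=3$ is at least $1$: it has no purely horizontal component. As a consequence, under the rescaling maps $\hat{\varphi}_{\gamma}$ every component of $r^{-1}d^*\psi$ gains at least one compensating factor of $\gamma$; the one delicate point is the purely horizontal part of $\iota_{\nabla r}(r^{-1}d^*\psi)$, which must be shown to stay bounded — equivalently, that the relevant component of $d^*\psi$ vanishes to first order along $L$ — and this I would extract from the parallelism of $\psi$ along normal geodesics together with the near-$L$ behaviour of $\nabla\partial_r$ in the vertical directions. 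Plugging these estimates into Theorem \ref{thmainA} and Corollary \ref{maincorA} gives weak extendibility of both $d^*(H_3(r)\eta_0)$ and $\iota_{\nabla r}d^*(H_3(r)\eta_0)$, hence blow-up extendibility by Proposition \ref{prwex}(ii). I expect this last analysis of the vertical-degree components of $d^*\psi$ near $L$ — in particular ruling out a surviving horizontal contribution to $\iota_{\nabla r}d^*\psi$ — to be the main obstacle; it is exactly the step that fails in codimension $2$, where $d^*\eta_0$ does pick up a purely horizontal component, the one encoding the mean curvature of $L$, which obstructs extendibility unless $L$ is minimal.
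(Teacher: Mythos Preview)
Your reduction to $d^*(H_3(r)\eta_0)$ and your treatment of the first summand $r^{-2}\tilde H_3(r)\,\iota_{\nabla r}\eta_0$ are correct and match the paper. Your Leibniz observation that $d^*\psi$ has no component of vertical degree $0$ (bidegree $(0,2)$) anywhere---because $\nabla_X\psi$ for a purely vertical top-degree $\psi$ can only have bidegrees $(3,0)$ and $(2,1)$, and contraction drops at most one vertical index---is a nice strengthening: the paper only checks that this component vanishes \emph{on $L$}.

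The gap is the step you yourself flag as ``the main obstacle'': showing that the bidegree $(1,1)$ component of $d^*\psi$ (equivalently, of $d^*\eta_0$) vanishes along $L$. Your sketch (``extract from the parallelism of $\psi$ along normal geodesics together with the near-$L$ behaviour of $\nabla\partial_r$'') is too vague to count as a proof, and the required vanishing is only \emph{on $L$} (order zero), not ``to first order'' as you write. The paper carries out exactly this computation as Proposition~\ref{below}: fixing $p\in L$ and a normal orthonormal frame $f_1,\ldots,f_{n-k}$ extended by radial parallel transport to $\tilde f_a$, one has $\nabla_{f_j}\tilde f_a=0$ at $p$ for all normal $f_j$ (since every normal direction at $p$ is a radial direction), hence $\nabla_{f_j}\hat\eta_0=0$ at $p$; the tangential contributions $\iota_{e_i}\nabla_{e_i}\hat\eta_0$ for $i>n-k$ then reduce to $-\sum_a(-1)^{a-1}\langle f_a,\nabla_{e_i}e_i\rangle\,f^*_{\{a\}^c}$, which is purely vertical of degree $n-k-1=2$. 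This is the explicit frame argument your sketch is missing.

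Two minor corrections. First, by Corollary~\ref{maincorA} the case $(i,j,a)=(1,0,1)$ requires only $\gamma_1|_L=0$, not first-order vanishing. Second, your diagnosis of codimension $2$ is slightly off: Proposition~\ref{below} shows $d^*\eta_0|_L=\iota_X\vol_{\nu L}$ is always purely vertical of degree $n-k-1$, so it never ``picks up a purely horizontal component'' on $L$. What obstructs extendibility in codimension $2$ is that this purely vertical degree-$1$ form equals $\tfrac12\iota_H\vol_{\nu L}$ (using $\nabla j|_L=-H$, Proposition~\ref{below1}) and is therefore nonzero unless $L$ is minimal.
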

  
\begin{proof} By Lemma \ref{Hfunc0} we only need to look at the terms in the asymptotic approximation of $BS(L)$ that correspond to negative powers of $t$. Using the notation of Lemma \ref{Hfunc0} we therefore need to analyse the extendibility of
\begin{equation}\label{eqMT} d^*(H_3(r)\eta_0)+d^*(H_1(r)\eta_1).\end{equation}

By Lemma \ref{Hfunc0}  (a) (see also Corollary \ref{corHj}) we conclude that the second term in (\ref{eqMT})
\[d^*(H_1(r)\eta_1)=-\tilde{H}_1(r)\iota_{\nabla r}\eta_1+H_1(r)d^*\eta_1\] is blow-up extendible.

For the first term in (\ref{eqMT}) we will ignore $F_3(\frac{r^2}{4})$ which is a smooth function in Lemma \ref{Hfunc0} (c) and look only at
\begin{equation}\label{eqsum1} d^*(r^{-1}\eta_0)=-\iota_{\nabla( r^{-1})}(\eta_0)+r^{-1}d^*\eta_0\end{equation}

We will show that both terms in the sum (\ref{eqsum1}) are blow-up extendible. 

By Remark \ref{lr}, $\eta_0$ is purely vertical. 
 Since $\nabla(r^{-1})$ is vertical we get that $\iota_{\nabla (r^{-1})}(\eta_0)$ is a vertical form of degree two. We use Corollary \ref{maincorA}, case $(i,j,a)=(2,1,2)$ for 
\[\omega=-\iota_{\nabla( r^{-1})}(\eta_0)=r^{-2}\iota_{\nabla r}(\eta_0).\]  in order to conclude that $\iota_{\nabla(r^{-1})}\eta_0$ is blow-up extendible.

We turn to the second term of (\ref{eqsum1}). We decompose
\[d^*\eta_0=\gamma_0+\gamma_1+\gamma_2\]
into the components of vertical degree $0$, $1$ and $2$. We use again Corollary \ref{maincorA} for each $r^{-1}\gamma_i$, $i=0,1,2$.
\begin{itemize}
 \item[(i)]   If $\gamma_0(0)=0$, i.e. $\gamma_0\equiv 0$ along $L$ then $r^{-1}\gamma_0$ extends along $L$ which implies the blow-up extendibility of $r^{-1}\gamma_0$ (Corollary \ref{maincorA} case $(i,j,a)=(0,0,1)$).
 \item[(ii)]  If $\gamma_1(0)=0$, i.e. $\gamma_1\equiv 0$ along $L$ then $r^{-1}\gamma_1$ extends continuously along $L$. This implies the blow-up extendibility of $r^{-1}\gamma_1$ (Corollary \ref{maincorA} case $(i,j,a)=(1,0,1)$).
 \item[(iii)] The blow-up extendibility of $r^{-1}\gamma_2$ is automatic by Corollary \ref{maincorA} case $(i,j,a)=(2,0,1)$ which gives $a<i+j$.
 \end{itemize} 
 
 In conclusion, if we can prove that the following equality holds
 \[d^*\eta_0(0)=(d^*\eta_0)_{[2]}(0)\] 
 i.e. that $d^*\eta_0$ restricted to $L$ is a purely vertical form then we have guaranteed the extendibility. 
This is the object of Proposition \ref{below} which finishes the proof.
\end{proof}

The next statement is valid for general $k$ and $n$.  Denote by 
\[H:L\ra \nu^* L,\qquad H(p)=\sum_{i=n-k+1}^{n}P^{\nu_pL}(\nabla_{e_i}e_i)\]
the mean curvature of $L$ where $e_{n-k+1},\ldots,e_{n}$ is an orthonormal basis of $T_pL$ and $\nabla$ is the Levi-Civita connection of $M$. Recall also the volume density $j$ of Lemma \ref{lem_chiave}.
 \begin{prop}\label{below} The form $d^*\eta_0$ when restricted to $L$ is a purely vertical form, i.e. the components of vertical degree $i$ with $0\leq i\leq n-k-1$  vanish along $L$,  more precisely
\begin{equation}\label{fjfi2} d^*\eta_0\bigr|_{L}=\iota_X\vol_{\nu L}\end{equation}
with $X=\frac{1}{2}\nabla j+H$.
 \end{prop}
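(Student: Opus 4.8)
The statement concerns the value of $d^*\eta_0$ restricted to $L$. The key structural fact, from Section \ref{IVPs}, is that $\eta_0$ solves equation $(i)$ of \eqref{eq_iteration}, namely $\nabla_{\partial_r}(j^{\frac12}\eta_0) = 0$, with initial value $\eta_0 = \vol_{\nu L/L}$ along $L$ (equivalently $\eta_0 = dy^{k+1}\wedge\ldots\wedge dy^n$ on ${\bf 0}$ in the local normal coordinates). So the plan is to compute $d^*\eta_0$ at a point $p \in L$ directly, using an adapted orthonormal frame and the known first-order behaviour of $\eta_0$ in the radial directions.

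First I would fix $p \in L$ and work in Fermi (normal) coordinates adapted to $L$: choose an orthonormal frame $e_1, \ldots, e_{n-k}$ of $\nu_p L$ and $e_{n-k+1}, \ldots, e_n$ of $T_p L$, extend the normal ones by parallel transport along normal geodesics (this is exactly the radial trivialization of Remark \ref{lr}, under which $\eta_0$ is purely vertical), and extend the tangential ones appropriately; let $\{\theta^a\}$ be the dual coframe. Write $\eta_0 = u\, \theta^1 \wedge \ldots \wedge \theta^{n-k}$ where, by the ODE $\nabla_{\partial_r}(j^{\frac12}\eta_0)=0$ together with $\eta_0|_L$ being the unit normal volume form, one has $u = j^{-1/2}$ along each normal geodesic (here $j$ is the determinant of $\exp^\perp$ with respect to the Sasaki metric, $j|_L \equiv 1$, as in Lemma \ref{lem_chiave}). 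Then $d^*\eta_0 = -\sum_a \iota_{e_a}\nabla_{e_a}\eta_0$ evaluated at $p$. The normal-direction terms $\sum_{\alpha=1}^{n-k}\iota_{e_\alpha}\nabla_{e_\alpha}\eta_0$ are controlled by $\nabla_{e_\alpha}\eta_0|_p$: the radial derivative $\nabla_{\partial_r}\eta_0|_p$ is $-\frac12 (\partial_r \log j)|_p \,\eta_0 = 0$ since $j\equiv 1$ on $L$ and hence $\nabla j$ is normal to $L$ with the radial component vanishing to the relevant order — more carefully, $\nabla_{e_\alpha}\eta_0|_p = -\frac12 e_\alpha(\log j)|_p\,\eta_0|_p$, contributing $\frac12 \iota_{\nabla^\perp j}\vol_{\nu L}$ where $\nabla^\perp j$ is the normal-bundle part; and the contraction $\iota_{e_\alpha}$ of the vertical top-form picks out exactly the corresponding coefficient. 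The tangential-direction terms $\sum_{i=n-k+1}^n \iota_{e_i}\nabla_{e_i}\eta_0|_p$ are where the mean curvature enters: since $\eta_0$ is purely vertical, $\nabla_{e_i}\eta_0$ acquires a horizontal-index component governed by how the normal frame $\{e_\alpha\}$ fails to be parallel in tangential directions, i.e. by the second fundamental form; contracting with $e_i$ and summing over $i$ produces precisely $\iota_{H}\vol_{\nu L}$, with $H$ the mean curvature vector (viewed via the metric as the covector in $\nu^*L$ appearing in the statement). Assembling the two contributions gives $d^*\eta_0|_L = \iota_{\frac12 \nabla j + H}\vol_{\nu L}$.

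The main obstacle, I expect, is bookkeeping the precise constants and signs in the tangential computation: one must carefully relate $\nabla_{e_i} e_\alpha$ at $p$ (for $e_i$ tangent to $L$, $e_\alpha$ normal) to the shape operator / second fundamental form, track which wedge slot each contraction removes, and verify that the trace over $i$ assembles to the mean curvature covector $H(p) = \sum_{i=n-k+1}^n P^{\nu_p L}(\nabla_{e_i} e_i)$ rather than to its negative or to twice it. A clean way to organize this is to use the Voss–Weyl-type formula $d^*\eta_0 = -(\dvol)^{-1}$-weighted divergence applied to the vector field dual to $\iota_{(\cdot)}\vol_M$ corresponding to $\eta_0$, or equivalently to note that $*\eta_0$ restricted near $L$ is (up to $j^{1/2}$ normalization) the pullback of $\vol_L$ under the closest-point projection, so that $d^*\eta_0 = \pm * d * \eta_0$ reduces to differentiating $\pi^*\vol_L$; the derivative of the projection $\pi$ in normal directions is controlled by the second fundamental form (Riccati/Jacobi equation along normal geodesics), and at $r=0$ this yields exactly the mean curvature term, while the $\frac12\nabla j$ term comes from differentiating the $j^{1/2}$ factor. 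I would present the computation in the adapted frame as the primary argument and remark that the $\dim L = 0$ (i.e. $k=0$) and minimal ($H=0$) cases are immediate specializations consistent with the earlier codimension theorems.
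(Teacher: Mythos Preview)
Your proposal is correct and follows essentially the same route as the paper's proof. Both arguments use the formula $d^*\eta_0=-\sum_a\iota_{e_a}\nabla_{e_a}\eta_0$ at a point $p\in L$ in a frame adapted so that the normal vectors $e_1,\ldots,e_{n-k}$ are extended by parallel transport along normal geodesics; the normal-direction terms then see only the factor $j^{-1/2}$ (producing $\tfrac12\iota_{\nabla j}\vol_{\nu L}$, since $\nabla j$ is automatically normal along $L$), while the tangential-direction terms pick up the second fundamental form and sum to $\iota_H\vol_{\nu L}$. The only organizational difference is that the paper first applies the Leibniz identity $d^*(j^{-1/2}\hat\eta_0)=-\iota_{\nabla(j^{-1/2})}\hat\eta_0+j^{-1/2}d^*\hat\eta_0$ to isolate the $\nabla j$ contribution, and then computes $d^*\hat\eta_0$ with the radially parallel form $\hat\eta_0=j^{1/2}\eta_0$; you merge these two steps into one, which is fine since $e_i(j)=0$ for $e_i$ tangent to $L$.
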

 \begin{proof} Let $\hat{\eta}_0:=j^{1/2}\eta_0$. By Lemma \ref{lem_chiave}, $\hat{\eta}_0$ is parallel along normal geodesics. On the other hand, $j$ is smooth and constante qual to $1$ along $L$, hence the gradient satisfies $\nabla j\biggr|_{L}\in \nu L$, i.e.  $\nabla j$ is vertical along the zero-section .  From 
\begin{equation} \label{fjfi1} d^*\eta_0=-\iota_{\nabla (j^{-1/2})}\hat{\eta}_0+j^{-1/2}d^*\hat{\eta}_0=\frac{1}{2}j^{-3/2}\iota_{\nabla j}\hat{\eta}_0+j^{-1/2}d^*\hat{\eta}_0\end{equation}
we conclude that to prove the statement we only need to prove that $d^*\hat{\eta}_0$ is purely vertical along $L$. 

We fix a point $p\in L$ and use the following
 \begin{equation}\label{fjfi0} d^*\hat{\eta}_0=-\sum_{i=1}^n\iota_{e_i}(\nabla_{e_i}\eta_0)\end{equation}
 for an orthonormal basis $\{e_i\}$ of $T_pM$ at the point $p\in L$.
 
 We choose the basis such that $e_1,\ldots, e_{n-k}$ span $\nu_p L$. Let $f_i:=e_i$, $1\leq i\leq n-k$.  Extend $f_i$ to  $\tilde{f}_i$ via parallel transport along normal geodesics. Then 
 \begin{equation}\label{fjfi} (\nabla_{f_j}\tilde{f}_i)=0 \qquad \mbox{ at } p.
 \end{equation}
  We know from Section \ref{IVPs} that $\hat{\eta}_0=\tilde{f}_1^*\wedge \ldots\wedge \tilde{f}_{n-k}^*$ where $*$ means here dual basis. It follows from (\ref{fjfi}) and Leibniz rule that
 \begin{equation} \nabla_{f_j}\hat{\eta}_0=0  \qquad \mbox{ at } p\label{naf}
\end{equation}
 
 On the other hand for $i>n-k$
 \[\iota_{e_i}(\nabla_{e_i}\hat{\eta}_0)=\iota_{e_i}((\nabla_{e_i}\tilde{f}_1^*)\wedge \ldots \wedge f_{n-k}^*)+\ldots+\iota_{e_i}({f_1^*}\wedge \ldots^*\wedge (\nabla_{e_i}\tilde{f}_{n-k}^*))\]
 and,  since $\iota_{e_i}(f_j^*)=0$ we get for $i>n-k$
 \begin{equation}\label{naf1}\iota_{e_i}(\nabla_{e_i}\hat{\eta}_0)=\sum_{a=1}^{n-k} (-1)^{a-1}[(\nabla_{e_i} \tilde{f}_a^*)(e_i)]f_{\{a\}^c}^*=-\sum_{a=1}^{n-k} (-1)^{a-1}\langle f_a, \nabla_{e_i}e_i\rangle f_{\{a\}^c}^*
 \end{equation}
where $f_{\{a\}^c}^*=f_1^*\wedge\ldots \wedge\widehat{f_a^*}\wedge \ldots \wedge f_{n-k}^*$.

Then (\ref{naf}) and (\ref{naf1}) say that $\iota_{e_i}(\nabla_{e_i}\hat{\eta}_0)$ is purely vertical at any $p\in L$ for every $i=1,\ldots, n$ and by combining with (\ref{fjfi0})  we see that
\[ (d^*\hat{\eta}_0)\bigr|_{L}=\iota_H\vol_{\nu L}\]
Then relation (\ref{fjfi2}) follows from (\ref{fjfi1}).
 \end{proof}
There are other good uses of Proposition \ref{below} when combined with the following computation, which holds in any codimension.
\begin{prop}\label{below1} The following holds along $L$:
\[\nabla j\bigr|_{L}=-H\]
\end{prop}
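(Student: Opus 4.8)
The plan is to compute the first variation of the volume density $j$ of the normal exponential map along a normal geodesic, evaluated at the zero-section, and to identify the resulting derivative in the radial direction with (minus) the mean curvature. Since $j\equiv 1$ on $L$ and $\nabla j|_L \in \nu L$ (because $j$ is constant along $L$), it suffices to show that for every normal unit vector $u \in \nu_p L$ one has $\partial_r j|_{(p,0)}(u) = \langle H(p), u\rangle$, where here $\partial_r$ means the derivative along the normal geodesic $\gamma_u(r) = \exp_p(ru)$; equivalently $u(j)|_p = -\langle H(p), u\rangle$ once one is careful about the sign convention in the definition of $H$ given just before Proposition~\ref{below} (note $H(p) = \sum_{i} P^{\nu_p L}(\nabla_{e_i} e_i)$ with $\{e_i\}$ a basis of $T_pL$, which is the ``downward'' mean curvature vector in the sense that it points opposite to where $L$ curves).

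First I would set up a Fermi-type coordinate/frame computation in the tubular neighbourhood. Fix $p \in L$ and an orthonormal basis $e_{n-k+1},\ldots,e_n$ of $T_pL$ together with an orthonormal basis $\nu_1,\ldots,\nu_{n-k}$ of $\nu_pL$; extend the $\nu_\alpha$ to a local orthonormal frame of $\nu L$ near $p$ by $\nabla^\perp$-parallel transport, and use the construction from Section~\ref{IVPs}: the vectors $\partial_{y^\alpha}$ (normal directions) and $\partial_{y^i}$ (lifted tangential directions) together with the identification of $T$ with $D_\eps(\nu L)$. By the Gauss lemma and the structure of the normal exponential map, $j(v)$ is the ratio of the Riemannian volume form $g = (\exp^\perp)^*\langle\,,\rangle$ to the Sasaki volume form $\bar\omega$, and its radial derivative is governed by the $2$nd fundamental form of $L$ plus curvature terms that vanish to first order. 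Concretely, along the geodesic $\gamma_u(r)$, the matrix of Jacobi fields splits into a ``tangential'' block (whose $r$-derivative at $r=0$ is the shape operator $A_u$ with $\mathrm{tr}(A_u) = \langle H(p), u\rangle$, up to the sign convention) and a ``normal'' block (which is the identity to first order since the fibres are totally geodesic for the metric comparison being made). Taking $\log\det$ and differentiating at $r=0$ gives $\partial_r \log j|_{r=0} = -\mathrm{tr}(A_u) = -\langle H(p), u\rangle$; since $j(p,0)=1$ this is exactly $u(j)|_p = -\langle H(p), u\rangle$, i.e. $\nabla j|_L = -H$.

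An alternative and perhaps cleaner route, which avoids explicit Jacobi field bookkeeping, is to reuse Lemma~\ref{lem_chiave}: it states
\[
\frac{k-n-\Delta_0\xi}{2} = \nabla\xi(\log\sqrt{j}), \qquad \xi = \tfrac12 r^2, \quad \nabla\xi = r\partial_r,
\]
so that $r\,\partial_r(\log j) = (k-n) - \Delta_0\xi = (k-n) - (r\Delta_0 r - 1)$. I would then expand $\Delta_0 r$ near $L$: for the distance function to a submanifold one has the classical asymptotic $\Delta_0 r = \dfrac{n-k-1}{r} - \langle H, \partial_r\rangle + O(r)$ as $r\to 0$ (the first term being the contribution of the ``flat'' normal sphere, the second the mean curvature correction). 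Substituting, $r\Delta_0 r = (n-k-1) - r\langle H,\partial_r\rangle + O(r^2)$, hence $r\,\partial_r(\log j) = (k-n) - (n-k-1) + 1 + r\langle H,\partial_r\rangle + O(r^2)$. The constant terms cancel (consistent with $j$ being smooth and $\equiv 1$ on $L$), leaving $\partial_r(\log j) = \langle H, \partial_r\rangle + O(r)$ — wait, this produces $+H$, so one must track the sign in the stated definition of $H$ carefully; the point is that the sign discrepancy between ``$r\Delta_0 r$'' expansions in the literature and the convention $H(p)=\sum_i P^{\nu}(\nabla_{e_i}e_i)$ here is precisely what turns this into $\nabla j|_L = -H$. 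I would pin this down by testing on the model case $L = S^{k}_\rho \subset \mathbb{R}^n$ (a round sphere of radius $\rho$ in a coordinate subspace), where everything is computable in closed form and the sign is unambiguous.

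The main obstacle I anticipate is bookkeeping the sign convention and the precise meaning of ``restriction to $L$'' versus ``pull-back'', together with making the asymptotic $\Delta_0 r = \frac{n-k-1}{r} - \langle H,\partial_r\rangle + O(r)$ rigorous rather than quoting it — this requires the tube formula / Riccati equation for the shape operator of the level sets $\{r = \mathrm{const}\}$ of the distance function, $\partial_r S + S^2 + R_{\partial_r} = 0$ with $S(0^+)$ having the $\frac{1}{r}$ blow-up in the sphere directions and the mean curvature of $L$ as the finite part. Since Lemma~\ref{lem_chiave} already packages most of the analytic content (smoothness of $j$, value $1$ on $L$, the identity relating $\Delta_0\xi$ to $\nabla\xi(\log\sqrt j)$), the residual work is genuinely just the first-order expansion of $\Delta_0 r$ near $L$ and a careful sign check; I would present it via the Riccati comparison to keep it self-contained, and cross-check against the Euclidean sphere model.
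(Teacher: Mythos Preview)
Your first route---a Fermi/Jacobi-field computation of the first variation of the volume density---is essentially what the paper does. The paper works in the coordinates $\{y^i,y^\alpha\}$ on $D_\eps(\nu L)$ introduced in Section~\ref{IVPs}, writes $\log j = \log\sqrt{\det g_{ab}} - \log\sqrt{\det(g_{ij}\circ\pi)}$, and computes $\partial_{y^\alpha}\log j$ on the zero-section by showing $\partial_{y^\alpha} g_{ij} = -2A^\alpha_{ij}$ (this is your ``tangential block whose $r$-derivative is the shape operator'') and $\partial_{y^\alpha} g_{\beta\beta}=0$ via the normal geodesic equation $\Gamma^c_{\alpha\beta}=0$ on ${\bf 0}$ (this is your ``normal block is the identity to first order''). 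So the content is the same, only the packaging differs: the paper's coordinate derivatives of the metric versus your Jacobi-field language.

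Your second route via Lemma~\ref{lem_chiave} and the small-$r$ expansion of $\Delta_0 r$ is a genuinely different and more indirect argument. It is valid in principle, but the sign puzzle you flagged is not merely cosmetic: in this paper $\Delta_0 = -\diver\nabla$, so the ``classical'' expansion $\Delta_{\mathrm{std}}\, r = \tfrac{n-k-1}{r} + \ldots$ reads here as $\Delta_0 r = -\tfrac{n-k-1}{r} - \ldots$, and the mean-curvature correction likewise flips relative to most references on tube formulas. Once you carry that through (and match it to the paper's convention $H=\sum_i P^{\nu}(\nabla_{e_i}e_i)$), the computation closes with the correct sign; your idea of testing against a round sphere is exactly the right way to nail it down. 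The paper's direct coordinate computation avoids this bookkeeping entirely, which is its main advantage; your Riccati route, on the other hand, makes the connection with the tube Laplacian explicit and would generalize more readily if one wanted higher-order terms of $j$.
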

\begin{proof} Let $D_{\eps}(\nu L),T,\nu_\alpha, g,\bar g, \nu_\alpha$ and coordinates $\{y^i,y^\alpha\}$ be defined as at the beginning of Section \ref{IVPs}. In particular, $L$ is identified with the image of the zero-section $L \to \nu L$, which we name ${\bf 0}$. By construction, 
\[
\exp^\perp_* \left( \frac{\partial}{\partial y^\alpha} \right) = \nu_\alpha \qquad \text{along } \, {\bf 0}.
\]
From $j \equiv 1$ on ${\bf 0}$ we deduce that $\nabla j$ is perpendicular to ${\bf 0}$. Therefore, along ${\bf 0}$ one has
\[
\nabla j = \disp g\left( \frac{\partial}{\partial y^\alpha}, \nabla j \right) \frac{\partial}{\partial y^\alpha} = \frac{\partial j}{\partial y^\alpha} \frac{\partial}{\partial y^\alpha}.  
\]
If $g_{ab}$ are the components of $g$ in coordinates $\{y^i,y^\alpha\}$, by Lemma \ref{lem_prop_Sasaki} the volume densities of of $g$ and $\bar g$ are, respectively,  
\[
\sqrt{\det(g_{ab})} \qquad \text{and} \qquad \sqrt{\det(g_{ij} \circ \pi)}.
\] 
It follows that
\[
\log j = \log \sqrt{\det(g_{ab})} -\log \sqrt{\det(g_{ij} \circ \pi)}. 
\]
Hence, taking into account that the second term does not depend on $y^\alpha$,
\[
\begin{array}{lcl}
\disp \frac{\partial \log j}{\partial y^\alpha} & = & \disp \frac{\partial}{\partial y^\alpha} \log \sqrt{\det(g_{ab})} = \frac{1}{2} g^{ab} \frac{\partial g_{ab}}{\partial y^\alpha}
\end{array}	
\]
(this last identity is not restricted to ${\bf 0}$). However,  on  ${\bf 0}$ we have  $g^{i\alpha} = 0$, $g^{\alpha\beta} = \delta^{\alpha \beta}$, 
hence 
\begin{equation}\label{eq_log j}
\disp \frac{\partial \log j}{\partial y^\alpha} =\frac{1}{2}\left( g^{ij} \frac{\partial g_{ij}}{\partial y^\alpha} + \sum_\beta \frac{\partial g_{\beta \beta}}{\partial y^\alpha}\right).
\end{equation}
Metric compatibility and symmetry of the Levi-Civita connection give
\[
\frac{\partial g_{ij}}{\partial y^\alpha} = -2 A^\alpha_{ij},
\qquad \frac{\partial g_{\beta \beta}}{\partial y^\alpha} = 2  g\left( \nabla_{\frac{\partial}{\partial y^\alpha}} \frac{\partial}{\partial y^\beta}, \frac{\partial}{\partial y^\beta} \right) = 2 \Gamma_{\beta \alpha}^\delta g_{\delta \beta},
\]
where $A = A^{\alpha}_{ij} dy^i \otimes dy^j \otimes \frac{\partial }{\partial y^\alpha}$ is the second fundamental form of ${\bf 0} \hookrightarrow (D_{\eps}(\nu L),g)$ and $\Gamma^a_{bc}$ are the Christoffel symbols of $\nabla$. Since, for each fixed $p \in L$ and $w = w^\alpha \nu_\alpha \in T_p L$, the curve $\gamma : (-\eps,\eps) \to D_{\eps}(\nu L)$ with components $\gamma^a = y^a \circ \gamma$ satisfying $\gamma^j = x^j(p)$, $\gamma^\alpha = w^\alpha t$ is a geodesic, the geodesic equation gives
\[
0 = \nabla_{\dot\gamma}{\dot\gamma} = w^\alpha w^\beta \Gamma_{\alpha \beta}^c \frac{\partial}{\partial y^c} \qquad \text{at } \, \gamma(t).
\] 
Evaluating at $t=0$, by the arbitrariness of $w$ and $p$ we deduce $\Gamma_{\alpha \beta}^c = 0$ along ${\bf 0}$. Inserting into \eqref{eq_log j} we obtain 
\[
	\disp \frac{\partial \log j}{\partial y^\alpha} = -\frac{1}{2}2 g^{ij}A^\alpha_{ij} = - H^\alpha,
\]
so the desired conclusion follows since $j \equiv 1$ on ${\bf 0}$. 
\end{proof}

 We return to the  codimension $2$ situation.
\begin{theorem} \label{codim2}  If $n-k=2$ and $L$ is a minimal submanifold, the form $\BS(L)$ is blow-up extendible along $L$. 
\end{theorem}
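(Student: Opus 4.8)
The plan is to run the proof of Theorem \ref{codim3} essentially verbatim, but with $n-k=2$, keeping track of the fact that the worst singularity is now a single logarithm instead of an $r^{-1}$. As in the previous proofs, after discarding the smooth tail $d^*\int_1^\infty e^{-t\Delta}(L-H)\,dt$ (Lemma \ref{Lem2}) and the cut-off, we must analyse $d^*\int_0^1 \Psi_t\,dt$, where in the tubular neighbourhood $\Psi_t=f_t(r)\sum_{i=0}^N\eta_i t^i$ with $f_t(r)=(4\pi t)^{-1}e^{-r^2/4t}$. Hence $\int_0^1\Psi_t\,dt=\frac1{4\pi}\sum_i H_{2-2i}(r)\,\eta_i$, and by Lemma \ref{Hfunc0}(i) the exponent $2-2i\le 0$ for $i\ge1$ gives $C^1$ coefficients, so $d^*(H_{2-2i}(r)\eta_i)=-H_{2-2i}'(r)\iota_{\nabla r}\eta_i+H_{2-2i}(r)d^*\eta_i$ is blow-up extendible because $\nabla r$ extends smoothly to $\Bl_L(M)$. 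Everything therefore reduces to the single term $d^*(H_2(r)\eta_0)$, and by Lemma \ref{Hfunc0}(iii) we have $H_2(r)=-\ln(r^2/4)e^{-r^2/4}+\Upsilon(r^2/4)$, with $r\mapsto\Upsilon(r^2/4)$ of class $C^1$ up to $r=0$; so only the logarithmic piece needs care.

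Next, using Corollary \ref{corHj} I would write
\[
d^*(H_2(r)\eta_0)=-r^{-1}\tilde H_2(r)\,\iota_{\nabla r}\eta_0+H_2(r)\,d^*\eta_0,
\]
with $\tilde H_2$ of class $C^1$ on $[0,\infty)$, and treat the two summands separately. By Remark \ref{lr}, with the radial trivialization obtained by parallel transport of $TU|_L=TL\oplus\nu L$ along normal geodesics, $\eta_0$ is purely vertical of (vertical) degree $n-k=2$ and $\eta_0|_L=\vol_{\nu L}$; hence $\iota_{\nabla r}\eta_0$ is purely vertical of degree $1$, its further contraction by $\nabla r$ vanishes, and — as one checks already in the flat model, where $\iota_{\nabla r}\vol_{\nu L}=r^{\,n-k-1}d\sigma$ — multiplication by $r^{-1}$ leaves a form whose pullback to the blow-up vanishes to order $n-k-2=0$, i.e. extends continuously. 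This is exactly the codimension-$2$ analogue of the case $(i,j,a)=(2,1,2)$ invoked in the proof of Theorem \ref{codim3}, and the first summand is blow-up extendible by the corresponding (borderline) case of Corollary \ref{maincorA}.

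For the second summand, $H_2(r)\,d^*\eta_0$, I would decompose $d^*\eta_0=\gamma_0+\gamma_1$ into its components of vertical degree $0$ and $1$. Proposition \ref{below} gives $d^*\eta_0|_L=\iota_X\vol_{\nu L}$ with $X=\tfrac12\nabla j+H$; since $\iota_X\vol_{\nu L}$ is purely vertical of degree $1$, this already forces $\gamma_0|_L=0$ and identifies $\gamma_1|_L=\iota_X\vol_{\nu L}$. Now Proposition \ref{below1} gives $\nabla j|_L=-H$, hence $X|_L=\tfrac12 H$; because $L$ is minimal, $H\equiv0$, so $X|_L=0$ and thus $\gamma_1|_L=0$ as well. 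Consequently $\gamma_0$ and $\gamma_1$ are smooth forms vanishing along $L$, so their pullbacks to $\Bl_L(M)$ vanish to first order at $\partial\Bl_L(M)$, while $H_2(r)=O(\ln r)$; since $r\ln r\to0$ the products $H_2(r)\gamma_i$ (and likewise their $\iota_{\nabla r}$-contractions, which are again $O(r\ln r)$ or vanish identically), together with the $C^1$ pieces $\Upsilon(r^2/4)\gamma_i$, extend continuously to the blow-up — formally by the relevant cases of Corollary \ref{maincorA}. Combining the two summands completes the proof.

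The main obstacle is precisely the step where minimality enters: without $H=0$ one has $\gamma_1|_L=\iota_{\frac12 H}\vol_{\nu L}\neq0$ in general, and then $\ln(r)\gamma_1$ diverges logarithmically along the exceptional boundary, so it is exactly the vanishing of the mean curvature — fed in through Propositions \ref{below} and \ref{below1} — that rescues extendibility. The remaining points (that a single $\ln r$, rather than $r^{-1}$ as in codimension $3$, is the only bad term, and that the $r^{-1}\iota_{\nabla r}\eta_0$ contribution sits exactly on the edge of the extendibility range for $n-k=2$) are routine bookkeeping once the criteria of Appendix \ref{apB} are available.
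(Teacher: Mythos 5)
Your proposal is correct and follows essentially the same route as the paper: reduce to the single singular term $d^*(H_2(r)\eta_0)$, split it into $-r^{-1}\tilde H_2(r)\iota_{\nabla r}\eta_0$ (handled by the case $a<i+j$ of Corollary \ref{maincorA}, i.e.\ $(i,j,a)=(1,1,1)$) and $H_2(r)\,d^*\eta_0$, and then use Propositions \ref{below} and \ref{below1} together with minimality to get $d^*\eta_0|_L=\iota_{\frac12 H}\vol_{\nu L}=0$, so that the cases $(0,0,1)$ and $(1,0,1)$ of Corollary \ref{maincorA} apply. The only cosmetic difference is that you phrase the logarithmic bound as $r\ln r\to 0$ where the paper writes $H_2(r)=r^{-1}(rH_2(r))$ to fit the hypotheses of the corollary literally; the substance is identical.
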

\begin{proof}
For $n-k=2$ we still have only one singular term apriori, namely 
\[\Omega:=d^*(H_2(r)\eta_0)=-r^{-1}\tilde{H}_2(r)\iota_{\nabla r}\eta_0+H_2(r)d^*\eta_0\]
 as Lemma \ref{Hfunc0} takes care of the rest of the terms.

We use Corollary \ref{maincorA} for the radial trivialization as described in Remark \ref{lr}.

 For $\omega=r^{-1}\tilde{H}_2(r)\iota_{\nabla r}\eta_0$ we use Corollary \ref{maincorA} in the case $(i,j,a)=(1,1,1)$, i.e. $a<i+j$.

As for the term $H_2(r)$ note  that $H_2(r)$ is singular at $r=0$ but $rH_2(r)$ extends continuously at $0$. 

We write
 \[H_2(r)d^*\eta_0=r^{-1}(rH_2(r))[(d^*\eta_0)_0+(d^*\eta_0)_1]\]
 By Proposition \ref{below} and \ref{below1} we have for a minimal submanifold $L$ that $d^*\eta_0\bigr|_{L}=0$, therefore $r^{-1}(d^*\eta_0)_0$ and $r^{-1}(d^*\eta_0)_1$ extend along $L$. We use Corollary \ref{maincorA} for the cases $(i,j,a)=(0,0,1)$ and $(i,j,a)=(1,0,1)$  to conclude that $H_2(r)d^*\eta_0$ is also blow-up extendible. This finishes the proof.
\end{proof}

\begin{corollary}\label{extcod2} The form $BS(\delta^S)$ where  $\delta^S\subset S\times S$ is the diagonal  of a compact, oriented Riemann surface $S$ extends to the blow-up.
\end{corollary}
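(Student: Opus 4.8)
The plan is to recognize Corollary \ref{extcod2} as a direct instance of Theorem \ref{codim2}. If $S$ is a compact oriented Riemannian surface, equip $M := S\times S$ with the product metric $g\oplus g$; then $\dim M = 4$ and the diagonal $\delta^S$ is a closed, oriented submanifold of dimension $2$, so that in the notation of Theorem \ref{codim2} one has $n - k = 4 - 2 = 2$. Thus the only hypothesis to verify before quoting that theorem is that $\delta^S$ is a minimal submanifold of $(S\times S, g\oplus g)$.

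To check this I would show that $\delta^S$ is in fact totally geodesic. The diagonal embedding $\iota : S \to S\times S$, $\iota(x) = (x,x)$, carries a geodesic $\gamma$ of $S$ to the curve $t \mapsto (\gamma(t),\gamma(t))$; since the Levi-Civita connection of a product metric is the direct sum of the Levi-Civita connections of the factors, the acceleration of this curve is $(\ddot\gamma,\ddot\gamma) = 0$, so it is a geodesic of $S\times S$. Hence the second fundamental form of $\iota$ vanishes identically, $\delta^S$ is totally geodesic, and in particular its mean curvature is zero, i.e. $\delta^S$ is minimal.

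With minimality established, Theorem \ref{codim2} applies verbatim and yields that $\BS(\delta^S) = d^*G(\delta^S - H_{\delta^S})$ is blow-up extendible along $\delta^S$, which is exactly the claim. I do not expect any genuine obstacle here: the whole substance is the codimension-$2$ extendibility theorem already proved, and the single side condition it requires — minimality of the diagonal in a Riemannian product — is an elementary and classical fact.
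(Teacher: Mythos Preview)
Your proposal is correct and follows the same strategy as the paper: verify that the diagonal is totally geodesic (hence minimal) in the product metric and then invoke Theorem \ref{codim2}. The only cosmetic difference is that the paper obtains total geodesicity by observing that $\delta^S$ is the fixed-point set of the isometry $R(x,y)=(y,x)$ and citing Kobayashi's theorem, whereas you give the direct elementary argument that diagonal images of geodesics are geodesics; both arguments are standard and equally valid.
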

\begin{proof}
The diagonal $\delta^S$ is the fixed point set of the isometry 
\[ R:S\times S\ra S\times S,\qquad R(x,y)=(y,x).\] Hence by a Theorem of Kobayashi \cite{Ko} (Theorem 5.1 Ch.2), $\delta^S$ is totally geodesic, hence minimal. 
\end{proof}

\begin{definition}\label{defMc} Let $M$ be a compact, oriented manifold of dimension $n+1$. Let $N$ be an oriented $n$-dimensional compact manifold. Let $F:N\ra M$ be a smooth map such that $F_*(N)=0$ in $H_n(M)$. Let $p_0,p_1\in M\setminus F(N)$. The $2$-points winding number of $w(F,p_0\ra p_1)$ is defined as the intersection number $I(F,C)$ of any closed curve $C$ connecting $p_0$ and $p_1$ that intersects $F$ transversely with $F$. 
\end{definition}
 The one point winding number notion needs $M$ non-compact. For more on this see \cite{OR}.

Definition \ref{defMc} is very similar to the one of the linking number when one of the submanifolds has dimension $0$. Corollary \ref{extcod2} implies the next simple result.

\begin{corollary}  Let $S$ be a compact, oriented Riemann surface. The smooth $1$-form $\mathscr{W}:=\BS(\delta^S)$ on $S\times S\setminus \delta^S$ has the property that for every oriented immersed, closed curve $\iota: C\hookrightarrow S$ with $\iota_*[C]=0$ in $H_1(S)$ and for every pair of points $p_0,p_1\in S\setminus \iota(C)$ one has
\[\int_C \Psi_{p_1}^*\mathscr{W}-\int_C \Psi_{p_0}^*\mathscr{W}=w(C,p_0\ra p_1)\]
where for any point $p\in S$, $\Psi_p:C\ra S\times S$ is the natural map $\Psi_p(x)=(\iota(x),p)$.  
\end{corollary}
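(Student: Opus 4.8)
The plan is to mimic, almost verbatim, the proof of Corollary \ref{corf}, with the Riemann-surface diagonal $\delta^S$ playing the role of the diagonal in $M\times M$ and with the $0$-dimensional ``knot'' $\{p_1\}-\{p_0\}$, bounded by a connecting path $\gamma$, playing the role of $K_2$. First I would fix a smooth path $\gamma\colon[0,1]\to S$ with $\gamma(0)=p_0$, $\gamma(1)=p_1$, chosen generic so that $\gamma$ meets the immersion $\iota$ transversely in finitely many points; by Definition \ref{defMc}, $w(C,p_0\ra p_1)=I(\iota,\gamma)$ is the signed count of these intersections. Then I would form the product map
\[
\Phi:=\iota\times\gamma\colon C\times[0,1]\longrightarrow S\times S,\qquad \Phi(x,s)=(\iota(x),\gamma(s)),
\]
observe that $\Phi\pitchfork\delta^S$ (a pointwise check using $\gamma\pitchfork\iota$, as in the setup of Lemmas \ref{l1}--\ref{l2}) and that $\Phi^{-1}(\delta^S)\cap\partial(C\times[0,1])=\emptyset$, since $\iota(x)=\gamma(s)$ is impossible for $s\in\{0,1\}$ because $p_0,p_1\notin\iota(C)$. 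Finally, $\Phi|_{C\times\{i\}}=\Psi_{p_i}$ for $i=0,1$, and $C$ being closed, $C\times[0,1]$ is a compact oriented surface with boundary $C\times\{0\}\sqcup C\times\{1\}$ (no corners).

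By Corollary \ref{extcod2} the form $\mathscr{W}=\BS(\delta^S)=d^*G(\delta^S-H_{\delta^S})$ is blow-up extendible, so Proposition \ref{P2} (stated precisely for a domain with boundary, with the required hypothesis $\Phi^{-1}(\delta^S)\cap\partial(C\times[0,1])=\emptyset$ met here; cf.\ Remark \ref{rth1}) gives
\[
d\,\Phi^*\mathscr{W}=\Phi^{-1}(\delta^S)-\Phi^*H_{\delta^S}
\]
as currents on $C\times[0,1]$. Excising small disjoint coordinate disks around the points of $\Phi^{-1}(\delta^S)$ and applying Stokes' theorem on the complement --- exactly the computation of the proof of Corollary \ref{corf}, with Theorem \ref{th1}(b2) and Lemma \ref{lem_invariance} controlling the boundary integrals over the small spheres, and with $d\Phi^*\mathscr{W}=-\Phi^*H_{\delta^S}$ away from $\Phi^{-1}(\delta^S)$ by (b1) --- I would obtain
\[
\int_C\Psi_{p_1}^*\mathscr{W}-\int_C\Psi_{p_0}^*\mathscr{W}=\pm\Big(I(\Phi,\delta^S)+\int_{C\times[0,1]}\Phi^*H_{\delta^S}\Big),
\]
the overall sign being the one dictated by the product-orientation conventions of Definition \ref{deflk}; here $I(\Phi,\delta^S)=I(\iota\times\gamma,\delta^S)$ is the signed count of $\Phi^{-1}(\delta^S)$.

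The term $\int_{C\times[0,1]}\Phi^*H_{\delta^S}$ vanishes: by Theorem \ref{Dlf}, $H_{\delta^S}=\sum_{k,i}(\pm1)\,\pi_1^*(*\omega_{k,i})\wedge\pi_2^*(\omega_{k,i})$ for an orthonormal basis $\{\omega_{k,i}\}$ of harmonic forms on $S$, so $\Phi^*$ of each summand is $q_1^*(\iota^**\omega_{k,i})\wedge q_2^*(\gamma^*\omega_{k,i})$ (with $q_1,q_2$ the projections off $C\times[0,1]$); since $*\omega_{k,i}$ is harmonic hence closed and $\iota_*[C]=0$ in $H_1(S)$, the factor $\int_C\iota^**\omega_{k,i}$ is zero, and Fubini finishes it. It then remains to identify $I(\iota\times\gamma,\delta^S)$ with $w(C,p_0\ra p_1)=I(\iota,\gamma)$: as $\iota$ and $\gamma$ are embeddings near each intersection point, Lemma \ref{l2} applies pointwise and yields $I(\iota,\gamma)=(-1)^{\dim\gamma}I(\iota\times\gamma,\delta^S)=-\,I(\iota\times\gamma,\delta^S)$; matching this factor against the product-orientation sign in the displayed identity --- the same bookkeeping that turns $I(S_1\times K_2,\delta^M)$ into $\lk(K_1,K_2)$ in Corollary \ref{corf} --- gives exactly $\int_C\Psi_{p_1}^*\mathscr{W}-\int_C\Psi_{p_0}^*\mathscr{W}=w(C,p_0\ra p_1)$.

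The conceptual content is entirely contained in the earlier sections: blow-up extendibility of $\BS(\delta^S)$ (Corollary \ref{extcod2}), the transverse pull-back compatibility of Proposition \ref{P2}, and the geometric characterization of Theorem \ref{th1}. The only genuine work, and the step where I expect to have to be most careful, is the orientation/sign bookkeeping --- keeping track of the product orientation on $C\times[0,1]$, the Guillemin--Pollack coorientation of $\Phi^{-1}(\delta^S)$ carried over by Proposition \ref{P2}, the boundary orientation of $C\times\{0\}\sqcup C\times\{1\}$, and the factor $(-1)^{\dim\gamma}$ from Lemma \ref{l2} --- so that all signs conspire to produce the clean statement with no sign in front of $w$. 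Equivalently, one may bypass the ad hoc path $\gamma$ and simply invoke Corollary \ref{corf} with $K_1=C$ and $K_2=\{p_1\}-\{p_0\}$ viewed as a nullhomologous oriented $0$-cycle with $\gamma$ as its oriented bounding $1$-chain, so that $w(C,p_0\ra p_1)=\lk(C,\{p_1\}-\{p_0\})$ by definition and the sign is automatic; this requires only checking that Definition \ref{deflk} and Corollary \ref{corf} remain valid for a $0$-dimensional knot.
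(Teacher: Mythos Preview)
Your proposal is correct and follows exactly the template the paper intends: the paper itself gives no detailed proof here, stating only that Corollary \ref{extcod2} implies the result, and your argument is precisely the elaboration one would expect---repeating the Stokes/transverse-pull-back computation of Corollary \ref{corf} with $K_1=C$ and $K_2=\{p_1\}-\{p_0\}$, using the blow-up extendibility of $\BS(\delta^S)$ from Corollary \ref{extcod2} to invoke Proposition \ref{P2}, and killing the harmonic term via $\iota_*[C]=0$. Your caveat about the sign bookkeeping is apt; the cleanest way to discharge it is indeed your alternative of directly appealing to Corollary \ref{corf} once you observe that Definition \ref{deflk} applies verbatim to the $0$-dimensional knot $\{p_1\}-\{p_0\}$ with Seifert ``surface'' $\gamma$.
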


  \appendix
    \section{Sign conventions and properties of forms} \label{Sc}

\subsection{Sign conventions - summary}
We make the following conventions in the category of oriented manifolds. The  oriented manifold $M$ will have dimension $n$ in what follows.
\begin{itemize}
\item[(1)] the orientation of the boundary in Stokes: exterior normal first, this is standard.
\item[(2)] the  choice of decomposition order  in integration along fiber: fiber first, base second  (Halperin-Vanstone use the opposite, Nicolaescu uses the same). 
\item[(3)] the orientation of the total bundle of a fiber bundle $P\ra M$: fiber first - base second. Note that this is independent of the choice in (2).

  The fiber first convention for $T_pP$ says that if $g_1,\ldots,  g_f$ is a (positively) oriented  basis for $\Ker d_p\pi$ then $\{g_1,g_2,\ldots g_f,e_1,\ldots,e_n\}$ is an oriented basis for $T_pP$ if and only if $\{d_p\pi(e_1),\ldots,d_p\pi(e_n)\}$ is an oriented basis for $T_{\pi(p)}M$. 

\item[(4)] the choice of the embedding $\Omega^{n-k}(M)\ra \mathscr{D}'_{k}(M)$:
 \[\omega\ra T_{\omega}(\eta)=\int_{M}\omega\wedge\eta\]
 (3) and (4) combine well to guarantee the following equality for a fiber bundle $\pi:P\ra M$ and $\omega\in\Omega^*_{\cpt}(P)$:
\[\pi_*(T_{\omega})=T_{\int_{\pi}\omega}\]
\item[(5)] the definition of $d:\mathscr{D}'_k(M)\ra\mathscr{D}'_{k-1}(M)$:
\[dT(\eta)=(-1)^{n-k-1}T(d\eta)\]
This has the advantage of recovering $d$ on currents induced by smooth forms. But beware that for $\alpha:M^m\ra N^n$ a proper map
\[d\alpha_*=(-1)^{m-n}\alpha_*d\]
\item[(6)] the orientation convention for a submanifold: normal bundle first; this means that the orientation of the ambient manifold is the orientation of the normal bundle wedge the orientation of the submanifold; this can be seen as an orientation convention for the submanifold or for the normal bundle, depending on necessity. Note that (6) and (3) fit with the exponential map and (1), (3) and (6) make the two orientations of the fiber of the  spherical normal bundle coincide
\item[(7)] the pull-back push-forward convention for a kernel $\bK\in \mathscr{D}'_{k,p}(M\times M)$:  the operator $K:\Omega^k_{\cpt}(M)\ra \mathscr{D}'_{p}(M)$ it determines is defined by 
\begin{equation}\label{Ko} K(\omega)(\eta):=(\pi_2)_*(\bK\wedge \pi_1^*\omega)(\eta)=(\bK\wedge \pi_1^*\omega)(\pi_2^*\eta)=\bK(\pi_1^*\omega\wedge \pi_2^*\eta)
\end{equation}
 Because $M\times M$ has the canonical product orientation only $\pi_2 $ (of the two projections onto the factors) satisfies (3), which makes it the candidate for push-forward. The wedging of a current and a smooth form in (\ref{Ko}) is always well-defined and in the case when $\bK\in L^1_{\loc}\Omega^*(M\times M)$ this is truly $\bK\wedge \pi_1^*\omega$.
\end{itemize}

One of the fundamental operations with forms is integration along the fibers of a fiber bundle $\pi:P\ra M$ with typical fiber a manifold $F$ of dimension $f$. It is important to understand what signs this operation brings on. We work in the category of \emph{oriented fiber bundles}, i.e. $VP:=\Ker d\pi$ is endowed with an orientation and fiber oriented diffeomorphisms.

Fix $\omega\in \Omega^k_{\cpt}(P)$. If one chooses a chart $U\subset M$ and a (structure) diffeomorphism $\varphi:P\bigr|_{U}\ra F\times  U$ then the form $(\varphi^{-1})^*\omega$ can be written as a finite sum
\begin{equation}\label{ord1}\sum_{i=0}^k\gamma_i\wedge \eta_{k-i}\end{equation}
where $\eta_{k-i}:F\times U\ra \pi_1^*\Lambda^{k-i} T^*U$ and $\gamma_i:F\times U\ra \pi_2^*\Lambda^{i}T^*F$ are purely horizontal (i.e. base) and purely vertical (i.e. fiber) forms respectively. This is what we mean by the sign convention number (2) above, that we choose to write the decomposition of $\omega$ by writing the base component first. In fact, if $U$ is chosen diffeomorphic to some $\bR^n$ one can refine (\ref{ord1}) and write $(\varphi^{-1})^*\omega$ as
\[\sum_{i=0}^k\gamma_i\wedge \pi_1^*\eta_{k-i}\] 
where $\eta_{k-i}\in \Omega^{k-i}(U)$ and $\gamma_i:F\times U\ra \pi_2^*\Lambda^{i}T^*F$ is a family of forms with compact support on $F$ parametrized by $U$, i.e. one can treat $\gamma_i:U\ra \Omega^{i}(F)$. Assume first for convenience that $\supp \omega\subset P\bigr|_{U}$. Then the definition of the fiber integral is
\[\int_{\pi}\omega:=\left(\int_{F}\gamma_f\right)\eta_{k-f}\in \Omega^{k-f}(U)\]
where $\int_{F}\gamma_f$ is a function on $U$. One shows that this definition does not depend on the choices made and it extends via a partition of unity to forms $\omega\in \Omega^k_{\cpt}(P)$ to give a well-defined operation
\[ \int_{\pi}=\int_{P/M} \ : \ \Omega^{k}_{\cpt}(P)\ra \Omega^{k-f}(U),\qquad \omega\ra \int_{\pi}\omega\]
with the following  properties
\begin{itemize} 
\item[(1)] Projection formula:
\begin{equation}\label{Pom1}\int_{\pi} \omega_1\wedge \pi^*\omega_2=\left(\int_{\pi}\omega_1\right)\wedge\omega_2\end{equation}
\item[(2)] (Fubini) If $P$ and $M$ are also oriented and the fiber first orientation convention holds for the orientation on $P$,  then for a top degree form $\Omega$
\begin{equation}\label{POm} \int_P\Omega=\int_{M}\int_{\pi}\Omega\end{equation}
A particular case of this for $|\omega_2|\leq\dim M$ and $|\omega_1|=\dim P-|\omega_2|$ is:
\[\int_P\omega_1\wedge\pi^*\omega_2=\int_{M}\int_{\pi}\omega_1\wedge\pi^*\omega_2=\int_M\left(\int_{\pi}\omega_1\right)\wedge \omega_2\]
where we use (\ref{POm}) for the first and (\ref{Pom1}) for the second equality.
 If, on the contrary, the orientation on $P$ satisfies a base first orientation convention, then the left hand side of (\ref{POm}) gets a $(-1)^{fn}$ while the right hand side stays unchanged.
\item[(3)] If $\varphi:P_1\ra P_2$ is a smooth mapping between two fiber bundles $\pi_i:P_i\ra M$ ($i=1,2$) which is an oriented fiberwise diffeomorphism then for every $\Omega\in\Omega^{*}(P_2)$
\[\int_{\pi_1}\varphi^* \Omega=\int_{\pi_2}\Omega\]
\item[(4)]  If $\omega\in \Omega^k_{\cpt}(P)$ then $\pi_*(T_\omega)$ is a current represented by $\int_{\pi}\omega$, i.e.
\[\pi_*(T_{\omega})(\eta):=T_{\omega}(\pi^*\eta)=\int_P\omega\wedge \pi^*\eta=\int_{M}\left(\int_{\pi}\omega\right)\wedge \eta.\]
This is the main reason for choosing the fiber first convention.
\end{itemize}

\begin{example} Let $M$ be a compact oriented manifold and let $M\times M$ be endowed with the canonical product orientation. Let $R(x,y)=(y,x)$ and $\Omega\in \Omega^{2n}(M\times M)$. Then
\[\int_{\pi_1}R^*\Omega=\int_{\pi_2}\Omega\]
and, since $\pi_2$ is fiber first oriented and $\pi_1$ is base first we have
\[\int_M\int_{\pi_2}\Omega=\int_{M\times M}\Omega=(-1)^{n^2}\int_{M\times M}R^*\Omega=\int_M\int_{\pi_1}R^*\Omega\]
\end{example}

Now the choice of fiber first orientation convention (3)  for a fiber bundle, almost immediately requires the normal directions first convention (6) when orienting a submanifold, because of the normal exponential which gives the tubular isomorphism theorem. This convention fits well with transversality.

\vspace{0.4cm}
\subsection{A representation formula}
Let $M$ be an oriented compact manifold.
We want to prove now a representation formula for the current $P(T)$ when $P:\Omega^k(M)\ra \Omega^k(M)$ is a smoothing operator  and $T\in\mathscr{D}'_k(M)$ is a current.

We define $P(T)$ by duality:
\[P(T)(\phi):=T(P(\phi)),\qquad\]

 Let $p\in \Omega^{n-k,k}(M\times M)$ be the smooth kernel of $P:\mathscr{D}'_k(M)\ra \mathscr{D}'_k(M)$. On one hand, the pull-back $\pi_2^*T$ is well-defined for all  currents $T$
\[\pi_2^*T(\eta):=T\big((\pi_2)_*\eta\big)
\]
Then
\[P(T)(\phi)=T(P(\phi))=T((\pi_2)_*(p\wedge \pi_1^*\phi))=(\pi_2^*T)(p\wedge \pi_1^*\phi)=(\pi_1)_*(p\wedge \pi_2^*T)(\phi)\]
We claim that $(\pi_1)_*(p\wedge \pi_2^*T)$ is representable by a $n-k$-form on $M$ which we now describe.

For every $x\in M$ the inclusion 
  \[ \iota_x: M\hookrightarrow M\times M,\qquad y\ra (x,y)\]
  induces a form $p\circ \iota_x\in\Omega^k(M)$ with values in $V_x:=\Lambda^{n-k}T^*_xM$, i.e. an element of $\Omega^k(M)\otimes V_x$. Given $T\in\mathscr{D}_k'(M)$  one considers 
 \[ (T\otimes \id_{V_x})(p\circ \iota_x)\in V_x.\]  
and this correspondence gives an $n-k$ form $T(p\circ \iota)$ on $M$:
  \[x\ra  (T\otimes \id_{V_x})(p\circ \iota_x) \]  An adaptation of Hormander  \cite{H1} Theorem 2.1.3  shows that this form is smooth. In order to state the next result we need a definition.  If $L:\Omega^{n-k}(M)\ra \Omega^{j}(M)$ is a differential operator then for every $l$ define $L_x:\Omega^{n-k,l}(M\times M)\ra \Omega^{j,l}(M\times M)$:
\[L_x(\pi_1^*\omega\wedge \pi_2^*\eta):=\pi_1^*(L\omega)\wedge \pi_2^*(\eta)\]
Since $\Omega^{n-k}(M)\otimes \Omega^{l}(M)$ is dense in the $\Omega^{n-k,l}(M\times M)$ this extends by continuity to all of $\Omega^{n-k,l}(M\times M)$. 
  \begin{prop}\label{Repfor} For every $T\in\mathscr{D}' _k(M)$ and every $\phi\in \Omega^k(M)$ the following holds
  \[(-1)^k\int_MT(p\circ \iota)\wedge \phi=T\left(\int_{\pi_2}p\wedge \pi_1^*\phi \right)=(\pi_1)_*(p\wedge \pi_2^*T)(\phi)
  \] Moreover if $L:\Omega^{n-k}(M)\ra \Omega^l(M)$ is a differential operator, then 
\begin{equation}\label{Lpi}L[(\pi_1)_*(p\wedge \pi_2^*T)]=(\pi_1)_*((L_xp)\wedge \pi_2^*T)\end{equation}
  \end{prop}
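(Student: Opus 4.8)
The plan is to establish the displayed identity by reducing, via a density and continuity argument, to kernels of product type, and then to obtain \eqref{Lpi} as a corollary. The second equality $T\big(\int_{\pi_2}p\wedge\pi_1^*\phi\big)=(\pi_1)_*(p\wedge\pi_2^*T)(\phi)$ is purely formal: unwinding the definitions preceding the statement together with convention (7) of Appendix~\ref{Sc}, one has $(\pi_1)_*(p\wedge\pi_2^*T)(\phi)=(p\wedge\pi_2^*T)(\pi_1^*\phi)=(\pi_2^*T)(p\wedge\pi_1^*\phi)=T\big((\pi_2)_*(p\wedge\pi_1^*\phi)\big)=T\big(\int_{\pi_2}p\wedge\pi_1^*\phi\big)$, the only thing to check being that the signs built into $\pi_2^*T$, the wedge of a current with a smooth form, and $(\pi_1)_*$ are mutually consistent, which holds by construction.

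The first equality $(-1)^k\int_M T(p\circ\iota)\wedge\phi=T\big(\int_{\pi_2}p\wedge\pi_1^*\phi\big)$ is the substantive point. For fixed $\phi$, both sides are linear in $p$ and continuous for the $C^\infty$ topology on $\Omega^{n-k,k}(M\times M)$. On the right this is clear, since wedging with the fixed form $\pi_1^*\phi$ and integration along the fibres of $\pi_2$ are continuous in the $C^\infty$ topology and $T$ is a (finite order, as $M$ is compact) continuous functional on $\Omega^k(M)$. On the left, the key point is that $p\mapsto T(p\circ\iota)$ is continuous from $\Omega^{n-k,k}(M\times M)$ to $\Omega^{n-k}(M)$ with its $C^\infty$ topology; this is the quantitative form of the adapted Hörmander theorem (\cite[Theorem~2.1.3]{H1}) already used to deduce smoothness of $x\mapsto T(p\circ\iota_x)$ in Proposition~\ref{PtT}, after which wedging with $\phi$ and integrating over the compact $M$ is continuous. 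Now forms of the type $\sum_j \pi_1^*\alpha_j\wedge\pi_2^*\beta_j$, $\alpha_j\in\Omega^{n-k}(M)$, $\beta_j\in\Omega^k(M)$, are dense in $\Omega^{n-k,k}(M\times M)$ in the $C^\infty$ topology (a fact used repeatedly in Section~\ref{S5}; it can be proved, e.g., by regularizing a kernel on $M\times M$ with the heat semigroup $e^{-t\Delta}\otimes e^{-t\Delta}$, or from the nuclearity of $C^\infty(M)$). Hence it suffices to verify the identity for $p=\pi_1^*\alpha\wedge\pi_2^*\beta$. For such $p$, restriction to $\{x\}\times M$ gives $p\circ\iota_x=\beta\cdot\alpha(x)$, a $V_x$-valued $k$-form with $\alpha(x)$ a constant coefficient, so $T(p\circ\iota)=T(\beta)\,\alpha$; on the other side $p\wedge\pi_1^*\phi=(-1)^k\pi_1^*(\alpha\wedge\phi)\wedge\pi_2^*\beta$, whence by the projection formula \eqref{Pom1} and Fubini \eqref{POm} (fibre-first convention) $\int_{\pi_2}(p\wedge\pi_1^*\phi)=(-1)^k\big(\int_M\alpha\wedge\phi\big)\,\beta$, so that $T\big(\int_{\pi_2}p\wedge\pi_1^*\phi\big)=(-1)^k\,T(\beta)\int_M\alpha\wedge\phi=(-1)^k\int_M T(p\circ\iota)\wedge\phi$. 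This proves the identity on a dense set, hence in general; in particular $(\pi_1)_*(p\wedge\pi_2^*T)$ is the current represented by the smooth $(n-k)$-form $(-1)^k\,T(p\circ\iota)$.

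For the operator identity \eqref{Lpi}: since the previous step applies verbatim to kernels of bidegree $(l,k)$, it shows that $(\pi_1)_*((L_xp)\wedge\pi_2^*T)$ is represented by $(-1)^k\,T\big((L_xp)\circ\iota\big)$, while $(\pi_1)_*(p\wedge\pi_2^*T)$ is represented by $(-1)^k\,T(p\circ\iota)$; thus \eqref{Lpi} reduces to the identity of smooth forms $L\big(T(p\circ\iota)\big)=T\big((L_xp)\circ\iota\big)$, i.e., to the assertion that the operation ``apply $T$ in the second variable'' intertwines a differential operator $L$ in the first variable with its lift $L_x$. Again by density one reduces to $p=\pi_1^*\alpha\wedge\pi_2^*\beta$, where $L_xp=\pi_1^*(L\alpha)\wedge\pi_2^*\beta$ and both sides equal $T(\beta)\,L\alpha$; alternatively, since $L$ is local and differential and $x\mapsto T(p\circ\iota_x)$ may be differentiated under $T$ (by the same Hörmander-type estimate), the commutation is immediate.

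I expect the main obstacle to be the density/continuity step: one must make precise the continuity of $p\mapsto T(p\circ\iota)$ into $\Omega^{n-k}(M)$ with its Fréchet $C^\infty$ topology --- a mild strengthening of the smoothness statement already needed --- and invoke the density of product-type forms in the $C^\infty$ topology on $M\times M$. By contrast, the sign bookkeeping, once the conventions of Appendix~\ref{Sc} are in force, is routine.
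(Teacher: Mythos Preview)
Your proof is correct and follows essentially the same approach as the paper: verify the identity for product kernels $p=\pi_1^*\alpha\wedge\pi_2^*\beta$ by direct computation, then pass to the limit using density of such kernels in $\Omega^{n-k,k}(M\times M)$ together with continuity of both sides in $p$; the operator identity \eqref{Lpi} is handled the same way. Your write-up is slightly more explicit about the continuity of $p\mapsto T(p\circ\iota)$ and about the formal nature of the second equality, but the strategy is identical.
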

  \begin{proof} The first statement is clearly true for kernels $p\in \Omega^{n-k}(M)\otimes \Omega^{k}(M)$ i.e. if $p=\pi_1^*\xi\wedge\pi_2^*\zeta$ where $\xi\in \Omega^{n-k}(M)$ and $\zeta\in\Omega^{k}(M)$. Indeed
  \begin{equation}\label{aeq1}\int_{\pi_2}p\wedge \pi_1^*\phi =(-1)^{kn}\zeta\int_M \phi\wedge\xi
  \end{equation}
  while $p\circ \iota_x= \zeta\otimes \xi_x $ and therefore 
    \begin{equation}\label{aeq2}(-1)^k T(p\circ \iota)\wedge \phi=(-1)^{k^2}T(\zeta)\xi\wedge \phi=(-1)^{k^2+k(n-k)}T(\zeta)\phi\wedge \xi.\end{equation}
  Applying $T$ to (\ref{aeq1}) and $\int_M$ to (\ref{aeq2}) gives the claim.

 On the other hand, the tensor space $\Omega^{n-k}(M)\otimes \Omega^{k}(M)$ is dense in  $\Omega^{n-k,k}(M\times M)$ endowed with the standard $C^{\infty}$ topology. Hence 
  \[
  p=\lim_{j\ra \infty} p_j
  \]
  with $p_j\in \Omega^{n-k}(M)\otimes \Omega^{k}(M)$. Then the operation of wedging with $\phi$ and integrating over the fiber $\int_{\pi_1}$ are both continuous operations in the test functions topology. Hence
  \[ 
  T\left(\int_{\pi_1}p_j\wedge \pi_2^*\phi \right)\ra T \left(\int_{\pi_1}p\wedge \pi_2^*\phi \right).
  \]
  Moreover $T(p_j\circ \iota)$ converges to $T(p\circ \iota)$ in $\Omega^{n-k}(M)$ and therefore $T(p_j\circ \iota)\wedge \phi$ converges uniformly to $T(p\circ \iota)\wedge \phi$. So we can interchange integral and limit.

In order to prove (\ref{Lpi}) one checks that it is true on forms $p=\pi_1^*\xi\wedge \pi_2^*\zeta$ and uses the continuity of any differential operator in the $C^{\infty}$ topology.
  \end{proof}

\section{Blow-up extendibility of differential forms} \label{apB}

This section is about extendibility of differential forms $\omega$ defined in the complement $M\setminus L$ of a closed submanifold $L$, to the oriented blow-up $\Bl_L(M)$. The total space of $\Bl_{L}(M)$ is a manifold with  boundary   and any  Riemannian metric on $M$ can be used for a concrete realization. More precisely,  in order to glue $[0,\epsilon)\times S(\nu L)$ and $M\setminus L$, one uses the composition of  two maps, namely (next $D_{\epsilon}(\cdot)$ is the disk bundle of radius $\epsilon$)
\begin{equation}\label{introblw}[0,\epsilon)\times S(\nu L)\ra D_{\epsilon}(\nu L),\qquad (t,p,v)\ra (p,tv),\qquad p\in L, ~v\in S_p(\nu  L)\end{equation}
 mapping which is a diffeomorphism away from $\{0\}\times S(\nu L)$ and
\[ \exp^{\perp}:D_{\epsilon}(\nu L)\ra M,\qquad (p,w)\ra \exp_p(w)\]
which is a difeomorphism onto a neighborhood $U$ of $L$. We therefore see that $\Bl_L(M)$ is, close to its boundary, difeomorphic with a neighborhood of the blow-up of the zero section of $\nu  L$. This blow up of the zero section of $\nu L$ is given by the expression of (\ref{introblw}). 
For the question of extendibility, there is no significant simplification in considering the specific case of the bundle $\nu L \to L$.   

We are therefore led to study the extendibility of a form $\omega$ in the following setting:
\begin{itemize}
	\item[-] $\pi : E \to B$ is a vector bundle over a closed Riemannian manifold $(B,g_B)$; elements of $E$ will be denoted by $(p,v)$, $p \in B$, $v \in E_p$.
	\item[-] $\metric$ is a bundle metric on $E$;
	\item[-] ${\bf 0} \subset E$ is the image of the zero-section  $B \to E$; 
	\item[-] $\omega \in \Omega^k(E \setminus {\bf 0})$ is a smooth form. 
\end{itemize}
Let $SE \subset E$ be the subbundle of unit vectors, and let
\[
\Bl: [0,\infty)\times SE\ra E,\qquad \Bl(t,p,v)=(p,tv)
\]
be the blow-up of the zero-section. We also write
\[
\Bl_{{\bf 0}}(E) : = [0,\infty)\times SE
\]
for the total space of the blow-up, and we will use $\pi_2$ for the projection $[0,\infty)\times SE \ra SE$.

The map $\Bl$ induces a natural bigrading on forms by writing
	\[
	\Bl^* \omega = \hat\omega_0 + dt \wedge \hat\omega_1,
	\]
	where 
	\[
	\hat\omega_1 := \iota_{\partial_t} \Bl^*\omega, \qquad \hat\omega_0 : = \Bl^*\omega - dt \wedge \hat\omega_1.
	\]
	The forms $\hat\omega_0$ and $dt \wedge \hat \omega_1$ are called the components of $\Bl^*\omega$ with bigrading $(0,k)$ and $(1,k-1)$, respectively. By construction, $\iota_{\partial_t} \hat \omega_j =0$ for $j \in \{0,1\}$, whence in particular $\hat \omega_1$ has bigrading $(0,k-1)$.
	\begin{definition}  $\omega \in \Omega^k( E \setminus {\bf 0})$ is said to be 
		\begin{itemize}
			\item[-] (blow-up) extendible if $\Bl^*\omega$ extends to a continuous differential form on all of $\Bl_{{\bf 0}}(E)$.
			\item[-] weakly (blow-up) extendible  if  the bidegree $(0,k)$ part of $\Bl^*\omega$ extends to a continuous form on $\Bl_{{\bf 0}}(E)$. 
		\end{itemize}
	\end{definition}

\begin{remark}
	It follows directly from the definition that $\omega$ is extendible if and only if both $\hat\omega_0$ and $\hat\omega_1$ extend continuously up to $t=0$.
\end{remark}

We define 
\[
r : E \to [0,\infty), \qquad r(p,v) = |v| \doteq \sqrt{ \langle v,v\rangle}
\]
Let $VE:=\Ker d\pi\subset TE$ be the tangent bundle to the fibers of $\pi : E \to B$. The natural isomorphism $VE\simeq \pi^*E$ as  bundles over $E$ is used almost as an identification, and endow $VE \to E$ with a bundle metric still denoted by $\metric$. Thus, we can consider the vertical gradient $\nabla r:E\setminus {\bf 0}\ra VE$ of $r$, i.e. the metric dual of $dr\bigr|_{VE}$. In fact,
\[\nabla r= r^{-1}\mathcal{R}\]
where $\mathcal{R}:E \ra VE \simeq \pi^*E$ is the tautological section.

\begin{remark}
	When $E = \nu L$,  a neighborhood of the zero section is also endowed with the Riemannian metric $g$ obtained by pulling back the metric of $M$ via $\exp^\perp$. In this case, up to shrinking the neighborhood the function $r$ is the distance from ${\bf 0}$ in the metric $g$, and $\nabla r$ coincides with the $g$-gradient $\nabla^g r$ of $r$. In particular, the flow of $\nabla r$ is via $g$-geodesics. We will come back to this point in more details later, in connection to Example \ref{Partrc}. However, in most of what follows we do not assume that $E$ (or a neighbourhood of ${\bf 0}$) is endowed with a Riemannian metric.
\end{remark}

To state our first characterization of (weak) extendibility, for $\lambda > 0$ define the scaling morphism
\[
\varphi_\lambda : SE \to E, \qquad (p,v) \mapsto (p, \lambda v).
\]

\begin{prop} \label{prwexB} 
A form $\omega\in \Omega^k(E\setminus {\bf 0})$ is: 
\begin{itemize} 
\item[(i)] weakly (blow-up) extendible if and only if the family of forms on $SE$, $\lambda\ra \varphi_\lambda^*\omega$ is uniformly continuous on some interval $(0,\epsilon]$.
\item[(ii)]  (blow-up) extendible if and only if both $\omega$ and $\iota_{\nabla r}\omega$ are weakly extendible.
\end{itemize}
\end{prop}

\begin{remark}\label{MetE}
	{The notion of uniform continuity for the family of forms $\lambda\ra \varphi_\lambda^*\omega$ requires to specify a Banach space structure on $\Omega^*(SE)$, which we take to be the $C^0$-norm induced by the choice of a Riemannian metric on $SE$. The compactness of $SE$ guarantees that different choices of the metric lead to equivalent norms. Hereafter, we fix one such metric without further mentioning. 
	}
\end{remark}

\begin{proof} Write $\Bl^*\omega = \omega_0 + dt \wedge \omega_1$ as above. 
	
For (i), note that the $(0,k)$-part $\omega_0$ of $\omega$ can be seen as a family of forms on $SE$ parametrized by $t \in (0,\epsilon]$. Precisely, we can identify $\omega_0$ with the family $\{j_t^*\omega_0\}_{t>0}$, where 
$j_t : SE \to [0,\infty) \times SE$, $j_t(p,v) = (t,p,v)$. The extendibility of $\omega_0$ at $t=0$ is equivalent to the uniform continuity of the family for $t \in (0, \epsilon]$. Since $\Bl \circ j_\lambda = \varphi_\lambda$ we deduce $j_\lambda^* \omega_0 = \varphi_\lambda^* \omega$, and the conclusion follows.

For (ii) we note that the part of bidegree $(1,k-1)$ of $\Bl^*\omega$ can be written as
\[ 
dt\wedge \iota_{\partial_t}\Bl^*\omega,
\]
i.e. $\omega_1 = \iota_{\partial_t}\Bl^*\omega$. By construction, $d\Bl(\partial_t)$ is the normalized tautological vector field on $E$,  and therefore for $t\neq 0$:
 \[
 d\Bl(\partial_t)=\nabla r.
 \] 
 Therefore, over $(0,\epsilon)\times SE$ one has
\[ 
\iota_{\partial_t}\Bl^*\omega=\Bl^* ({\iota}_{\nabla r}\omega).
\]
The form $ \iota_{\partial_t}\Bl^*\omega$ has bidegree $(0,k-1)$, hence the weak extendibility of ${\iota}_{\nabla r}\omega$ is equivalent to the extendibility of $\omega_1 =  \iota_{\partial_t}\Bl^*\omega$, as claimed.
\end{proof}

Observe that the proof of the above proposition shows that decomposing 
\[
\omega = \omega_0 + dr \wedge \omega_1
\]
with 
\[
\omega_1 = \iota_{\nabla r} \omega, \qquad \omega_0 = \omega - dr \wedge \omega_1
\]
then one has
\[
\Bl^* \omega_j = \hat \omega_j, \qquad i_{\nabla r} \omega_j = 0
\]
and $\omega$ is extendible if and only if both $\omega_0$ and $\omega_1$ are extendible.

Proposition \ref{prwexB} leads to effective criteria to check the extendibility of $\omega$ provided one can make explicit the action of the scaling morphism $\varphi_\lambda$. This leads to study decompositions of $\omega$ into components with different ``vertical degree", on which $\varphi_\lambda$ acts as a multiplication by a suitable power of $\lambda$. To begin with, we study the special case in which $B$ is a point.

\subsection*{The case $B$ a point} 

Let us look at the particular situation
\[ 
B=\{0\}\subset \bR^n=:E.
\]

 Let $r:\bR^n\ra [0,\infty)$, $r(v):=|v|$ be the distance function to the origin with gradient $\nabla r$ and differential $dr\in \Omega^1(\bR^n\setminus \{0\})$. 

The next statement is a particular case of Proposition \ref{prwexB} which makes explicit the action of pull-back via the rescaling morphism. However, we state it in a slightly different form, which tacitly assumes a \emph{canonical} identification of vector spaces as subspaces of $\bR^n$:
\begin{equation}\label{Parti}T_{tv}[S^{n-1}(t)]=T_{v}S^{n-1},\qquad t>0, \; v\in S^{n-1}\end{equation} given by parallel transport. 

With this identification, a linear map $\omega_{tv}:\Lambda^k T_{tv}S^{n-1}(t)\ra \bR$ induces a linear map  $\Lambda^k T_vS^{n-1}\ra \bR$ denoted by the same symbol.  The main novelty  with respect to Proposition \ref{prwexB} is that the action of the rescaling pull-back operator $\varphi_{t}^*$ is now clearly identified with multiplication by $t^{\deg}$ where $\deg$ is the degree of the form to which the operator applies.
\begin{prop}\label{prwexR} Let $\omega\in\Omega^{k}(\bR^n\setminus \{0\})$ be a continuous form. Then $\omega$ is blow-up extendible if and only if the $1$-parameter families of forms on $S^{n-1}$:
\[ 
t\ra \{v\ra t^k(\omega_0)_{tv}\},\qquad t\ra \{v\ra t^{k-1}(\omega_1)_{tv}\},\qquad v\in S^{n-1} 
\]
are uniformly continuous on some interval $(0,\epsilon]$ where 
\[\omega=\omega_0+dr\wedge \omega_1\]
is the decomposition of $\omega$ induced by $T\bR^n\bigr|_{\neq \{0\}}=\langle \nabla r\rangle\oplus \langle \nabla r\rangle^{\perp}$. In particular, $\omega_1=\iota_{\nabla r}\omega$.

The form $\omega$ is weakly extendible if and only if the first family involving $\omega_0$ is uniformly continuous.
\end{prop}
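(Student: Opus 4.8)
The plan is to obtain Proposition \ref{prwexR} as the special case $B=\{\mathrm{pt}\}$, $E=\bR^n$, $SE=S^{n-1}$ of Proposition \ref{prwexB}, the only genuine work being to make the rescaling pull-back $\varphi_t^*$ explicit in terms of the splitting $\omega=\omega_0+dr\wedge\omega_1$. Since the proof of Proposition \ref{prwexB} uses $\omega$ only through its pull-backs by the smooth maps $\Bl$ and $\varphi_\lambda$, it applies verbatim to a merely continuous $\omega$, so the weaker regularity hypothesis here is harmless; and the Banach norm on $\Omega^*(S^{n-1})$ may be taken to be the $C^0$-norm for the round metric, any other choice giving an equivalent norm by compactness.

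First I would record two elementary facts about $\varphi_t\colon S^{n-1}\to\bR^n$, $\varphi_t(v)=tv$. (a) As $r\circ\varphi_t\equiv t$ is constant on $S^{n-1}$, we have $\varphi_t^*dr=0$, hence $\varphi_t^*(dr\wedge\omega_1)=0$ and therefore $\varphi_t^*\omega=\varphi_t^*\omega_0$; thus only the bidegree-$(0,k)$ part $\omega_0$ survives, in agreement with Proposition \ref{prwexB}(i). (b) Writing $\varphi_t=j_t\circ s_t$ with $s_t\colon S^{n-1}\to S^{n-1}(t)$, $v\mapsto tv$, the rescaling diffeomorphism and $j_t\colon S^{n-1}(t)\hookrightarrow\bR^n$ the inclusion, one checks directly that $ds_t$ sends $w\in T_vS^{n-1}=\{v\}^{\perp}$ to $tw\in T_{tv}S^{n-1}(t)=\{v\}^{\perp}$, i.e. $ds_t=t\cdot\id$ under the identification \eqref{Parti}. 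Hence for every continuous $k$-form $\alpha$ on $\bR^n\setminus\{0\}$,
\[
(\varphi_t^*\alpha)_v \;=\; t^k\,(j_t^*\alpha)_{tv}\;=\;t^k\,\alpha_{tv},
\]
the right-hand side read as a linear map on $\Lambda^kT_vS^{n-1}$ via \eqref{Parti}, exactly as in the statement.

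Applying this to $\alpha=\omega_0$ gives $\varphi_t^*\omega=\{v\mapsto t^k(\omega_0)_{tv}\}$ as a family of forms on $S^{n-1}$. Proposition \ref{prwexB}(i) then yields at once that $\omega$ is weakly extendible if and only if $t\mapsto\{v\mapsto t^k(\omega_0)_{tv}\}$ is uniformly continuous on some $(0,\epsilon]$ --- the ``weakly extendible'' half of the statement. For the ``blow-up extendible'' half I would invoke Proposition \ref{prwexB}(ii): $\omega$ is extendible iff both $\omega$ and $\iota_{\nabla r}\omega=\omega_1$ are weakly extendible. As $\omega_1$ is a $(k-1)$-form with $\iota_{\nabla r}\omega_1=0$, its own splitting is $\omega_1=\omega_1+dr\wedge 0$, so the previous paragraph applied to $\omega_1$ in degree $k-1$ shows $\omega_1$ is weakly extendible iff $t\mapsto\{v\mapsto t^{k-1}(\omega_1)_{tv}\}$ is uniformly continuous on some $(0,\epsilon]$. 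Intersecting the two intervals and combining the two conditions gives the claim.

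There is no serious obstacle: the whole argument is bookkeeping on top of Proposition \ref{prwexB}. The one point demanding care is the identification \eqref{Parti} together with the verification that under it $\varphi_t^*$ acts as multiplication by $t^{\deg}$ on forms pulled back from the spheres --- in particular that the radial ($dr$) direction drops out, so that it is precisely $\omega_0$, not $\omega$, that governs weak extendibility, while for full extendibility one must separately track the radial component $\omega_1=\iota_{\nabla r}\omega$ with the shifted power $t^{k-1}$.
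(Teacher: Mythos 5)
Your proposal is correct and is exactly the route the paper intends: Proposition \ref{prwexR} is presented there as the special case $B=\{\mathrm{pt}\}$ of Proposition \ref{prwexB}, with the only new content being the identification of $\varphi_t^*$ as multiplication by $t^{\deg}$ under the parallel-transport identification \eqref{Parti}, which you verify via $ds_t=t\cdot\id$ on $\{v\}^{\perp}$ and $\varphi_t^*dr=0$. Your handling of the two halves (weak extendibility via $\omega_0$, full extendibility by also applying the criterion to $\omega_1=\iota_{\nabla r}\omega$ in degree $k-1$) matches the paper's argument.
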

\begin{example} The $2$-form 
\[B=\frac{x_1}{|x|^3}dx_2\wedge dx_3-\frac{x_2}{|x|^3}dx_1\wedge dx_3+\frac{x_3}{|x|^3}dx_1\wedge dx_2=\frac{1}{|x|^2}\iota_{\nabla r}(\vol_{\bR^3})\] defined on $\bR^3\setminus \{0\}$ is fully extendible to the blow-up of $0$. 
In fact, since $\iota_{\eR}B=0$ the full-extendability is equivalent in this case with the weak-extendability. The latter is immediate from 
\[ r^2 B_{rv}=v_1dv_2\wedge dv_3-v_2dv_1\wedge dv_3+v_3dv_1\wedge dv_2=\vol_{S^2} \]
We note that the condition of uniform continuity at $0$ of the family of $2$-forms on $S^2$ 
\[r\ra \{v\ra r^2B_{rv}\}\] is weaker than the condition that the $2$-form on $B^3\setminus \{0\}$:  \[ x\ra r^2 B_{rx}\] be continuously extendible  at $x=0$. The latter is not even  satisfied in this example. 
\end{example}

\begin{example}\label{B8} Every $1$-form $\omega$ on $\bR^2\setminus \{0\}$ can be written uniquely as
\[\omega=f_0(x,y)dr^{\perp}+f_1(x,y)dr\]
where 
\[r=\sqrt{x^2+y^2}\qquad\quad dr=\frac{x}{r}dx+\frac{y}{r}dy\qquad\quad dr^{\perp}=\frac{-y}{r}dx+\frac{x}{r}dy.\]
Denote $dS^1:=dr^{\perp}\bigr|_{S^1}$. For $j\in\{0,1\}$, the families of $(1-j)$-forms on $S^1$,  $t\ra (\omega_j)_t$ are
\[t\ra \{S^1\ni (x,y)\ra f_0(tx,ty)dS^1\}\qquad\qquad t\ra \{S^1\ni (x,y)\ra f_1(tx,ty)\}\]
The blow-up extendibility of $\omega$ is equivalent to the uniform continuity of the families of functions on $S^1$: \[ t\ra tf_0(t\cdot,t\cdot)\qquad \qquad t\ra f_1(t\cdot,t\cdot).\]
\end{example}

\subsection*{The general case} 

In order to investigate the action of $\varphi_t^*$ on forms on $E \backslash {\bf 0}$, one needs some type of extra-structure. Let us first see what  else we can do with the available data .

{For $t>0$ let $S^tE:=r^{-1}\{t\}$ be the level sets of $r$ (so $SE=S^1E$). The total tangent bundle to the foliation by spherical bundles $S^tE$ over $E \backslash {\bf 0}$ is denoted by $T\hat{S}E$, so that 
\[
T\hat{S}E\bigr|_{SE}=TSE.
\]
Define $VSE:=\Ker d(\pi\bigr|_{SE})$ and $V\hat{S}E$ for the orthogonal complement of $\langle\nabla r \rangle$ inside $VE\bigr|_{E\setminus {\bf 0}}$. 
%
First a general observation:
\begin{lemma}\label{natu} There exists a natural isomorphism of bundles over $E\setminus {\bf 0}$:
\[T\hat{S}E\simeq \Pi^* TSE\]
where $\Pi:E\setminus {\bf 0}\ra SE$ is the radial projection $(p,v)\ra (p,|v|^{-1}v)$. 
\end{lemma}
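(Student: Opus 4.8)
The plan is to produce the isomorphism explicitly, as the restriction of the differential of the radial projection. Recall that $T\hat{S}E = \ker(dr) \subset T(E\setminus{\bf 0})$ is the bundle of vectors tangent to the leaves $S^tE$ of the $r$-foliation, and that $\Pi : E\setminus{\bf 0}\to SE$ is a smooth submersion; hence its differential is a smooth bundle morphism $d\Pi : T(E\setminus{\bf 0}) \to \Pi^*TSE$ over the identity of $E\setminus{\bf 0}$. I would define
\[
\Phi := d\Pi|_{T\hat{S}E} : T\hat{S}E \to \Pi^*TSE .
\]
Since $\Phi$ is built only out of $r$ and $\Pi$, and these in turn depend only on the bundle metric $\metric$ (which is part of the ambient data), $\Phi$ involves no auxiliary choice and is \emph{natural} in the stated sense. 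It then suffices to check that $\Phi_x$ is a linear isomorphism for each $x \in E\setminus{\bf 0}$: a fibrewise isomorphism of vector bundles covering the identity is automatically a smooth bundle isomorphism.

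For the pointwise check, fix $x = (p,w) \in E\setminus{\bf 0}$. The map $\Pi$ is constant along the ray $\{(p,\tau w/|w|) : \tau>0\}$, whose tangent vector at $x$ is the vertical vector $\mathcal{R}(x)/r(x) = \nabla r(x)$; therefore $\ker d\Pi_x = \langle \nabla r(x)\rangle$, a line. By the very definition of the vertical gradient one has $dr_x(\nabla r(x)) = \langle \nabla r(x), \nabla r(x)\rangle = 1 \ne 0$, so $\nabla r(x) \notin \ker dr_x = T_x\hat{S}E$. Hence $\ker d\Pi_x \cap T_x\hat{S}E = \{0\}$, i.e. $\Phi_x$ is injective. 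Finally $\dim T_x\hat{S}E = \dim E - 1$ (because $dr_x \ne 0$) and $\dim T_{\Pi(x)}SE = \dim SE = \dim E - 1$ (because $SE = S^1E$ is a regular level set of $r$), so $\Phi_x$ is an isomorphism.

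An equivalent, more global route is to pass through the diffeomorphism
\[
\Theta : E\setminus{\bf 0} \to SE \times (0,\infty), \qquad (p,v) \mapsto \big((p,|v|^{-1}v),\,|v|\big),
\]
with inverse $\big((p,u),s\big)\mapsto (p,su)$; under $\Theta$ the function $r$ becomes the projection to $(0,\infty)$ and $\Pi$ becomes the projection to $SE$, so $T\hat{S}E = \ker dr$ is carried to $\Theta^*(\mathrm{pr}_{SE}^*TSE) = \Pi^*TSE$. I do not expect any real obstacle in this lemma; the only point deserving a moment's care is the transversality assertion $\langle \nabla r\rangle \cap \ker dr = \{0\}$ — equivalently $dr(\nabla r) = 1$ — which separates the tangent direction to the fibres of $\Pi$ from the leaves of the $r$-foliation and is exactly what makes $d\Pi$ restrict to an isomorphism on $T\hat{S}E$.
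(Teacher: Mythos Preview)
Your proof is correct and in essence coincides with the paper's: the paper constructs the isomorphism by using the flow of $\nabla r$ to carry each leaf $S^tE$ diffeomorphically onto $SE$, while you use the differential of the radial projection $\Pi$. These are literally the same map, since $\Pi|_{S^tE}$ equals the time-$(1-t)$ flow of $\nabla r$ restricted to $S^tE$; your transversality check $dr(\nabla r)=1$ is exactly what shows the flow moves across the leaves and hence that $d\Pi$ is injective on $T\hat{S}E$.
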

\begin{proof} Consider the flow of the vertical vector field $\nabla r$ on $E\setminus {\bf 0}$. Since $S^tE$ are level sets of $r$ the induced family of diffeomorphisms  take level sets to level sets and thus their differentials give bundle isomorphisms $TS^tE\ra TSE$. These isomorphisms, put together, prove the statement. 
\end{proof}
Let $\omega\in \Omega^{k}(E\setminus {\bf 0})$. The decomposition 
\begin{equation}\label{om01}
	\omega=\omega_0+dr\wedge \omega_1
\end{equation}
into its components with bidegree $(0,k)$ and $(1,k-1)$ is the one naturally induced by the splitting 
\[
TE\bigr|_{E\setminus {\bf 0}} =\bR\nabla r\oplus T\hat{S}E, 
\]
}
An immediate consequence of Lemma \ref{natu} is the following
\begin{lemma}\label{om01t} Every form $\omega\in \Omega^k(E\setminus {\bf 0})$ gives rise and is uniquely determined by two $1$-parameter families of forms $((\omega_j)_t)_{t\in (0,\infty)}$ on $SE$ with $j=0,1$ of degrees $k-j$:
\[ t\ra (\omega_j)_{t}:=\{(p,v)\ra (\omega_j)_{(p,tv)}\},\qquad \forall (p,v)\in SE\]
\end{lemma}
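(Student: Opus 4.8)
The plan is to reduce the statement, via the fibrewise decomposition $\omega=\omega_0+dr\wedge\omega_1$, to the elementary fact that $E\setminus{\bf 0}$ is smoothly foliated by the hypersurfaces $S^tE=r^{-1}\{t\}$, each carried diffeomorphically onto $SE$ by the radial projection $\Pi$, and then to unwind Lemma \ref{natu}. First I would record the decomposition: given $\omega\in\Omega^k(E\setminus{\bf 0})$ set $\omega_1:=\iota_{\nabla r}\omega$ and $\omega_0:=\omega-dr\wedge\omega_1$, so that $\iota_{\nabla r}\omega_0=\iota_{\nabla r}\omega_1=0$ and $\omega=\omega_0+dr\wedge\omega_1$. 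Since $TE|_{E\setminus{\bf 0}}=\mathbb{R}\nabla r\oplus T\hat{S}E$ is a direct sum with $dr(\nabla r)=1$, the assignment $\omega\mapsto(\omega_0,\omega_1)$ is a bijection from $\Omega^k(E\setminus{\bf 0})$ onto pairs $(\sigma_0,\sigma_1)$ of smooth sections of $\Lambda^kT^*\hat{S}E$ and $\Lambda^{k-1}T^*\hat{S}E$ over $E\setminus{\bf 0}$, with inverse $(\sigma_0,\sigma_1)\mapsto\sigma_0+dr\wedge\sigma_1$. Thus the lemma is equivalent to: smooth sections of $\Lambda^mT^*\hat{S}E$ over $E\setminus{\bf 0}$ are in natural bijection with smooth one-parameter families $t\mapsto\tau_t\in\Omega^m(SE)$, $t\in(0,\infty)$.

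Next I would invoke Lemma \ref{natu}. Dualizing the bundle isomorphism $T\hat{S}E\simeq\Pi^*TSE$ and passing to exterior powers gives $\Lambda^mT^*\hat{S}E\simeq\Pi^*\Lambda^mT^*SE$ over $E\setminus{\bf 0}$. A smooth section of $\Pi^*\Lambda^mT^*SE$ assigns to $(p,w)$ an element of $\Lambda^mT^*_{\Pi(p,w)}SE$; since $\Pi$ restricts to a diffeomorphism $S^tE\to SE$ for each $t>0$ and $(t,p,v)\mapsto(p,tv)$ is a diffeomorphism $(0,\infty)\times SE\to E\setminus{\bf 0}$, such a section is precisely a smooth family $t\mapsto\tau_t\in\Omega^m(SE)$. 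Concretely, the isomorphism of Lemma \ref{natu} at $(p,tv)$ is the differential of the $\nabla r$-flow carrying $(p,tv)$ to $(p,v)$, which is the inverse of $d\varphi_t|_{T_{(p,v)}SE}$ for $\varphi_t(p,v)=(p,tv)$; tracing this through one finds $\tau_t=\varphi_t^*(\sigma|_{S^tE})$, i.e. $\tau_t(p,v)=\sigma_{(p,tv)}$ read through the identification $T_{(p,v)}SE\simeq T_{(p,tv)}S^tE$, which is exactly the pointwise formula in the statement and is consistent with the rescaling pull-back used in Proposition \ref{prwexB}. Applying this with $m=k$ to $\omega_0$ and $m=k-1$ to $\omega_1$ produces the families $(\omega_0)_t,(\omega_1)_t$ and, composed with the first bijection, gives the asserted correspondence $\omega\leftrightarrow\big((\omega_0)_t,(\omega_1)_t\big)$; the inverse sends a pair of families to $\sigma_0+dr\wedge\sigma_1$.

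There is no genuine obstacle here; the only point requiring care is the bookkeeping identifying the abstract isomorphism of Lemma \ref{natu} with the explicit rescaling $\varphi_t$, and checking that joint smoothness in $(t,p,v)$ of a section over $E\setminus{\bf 0}$ matches smoothness of $t\mapsto\tau_t$ as a curve in $\Omega^m(SE)$. Both become transparent once one observes that $\varphi:(0,\infty)\times SE\to E\setminus{\bf 0}$, $\varphi(t,p,v)=(p,tv)$, is a diffeomorphism with $d\varphi(\partial_t)=\nabla r$ (the normalized tautological field), so that $\varphi^*$ intertwines the $dr$-decomposition of $\omega$ with the $dt$-decomposition of $\varphi^*\omega$ on the product $(0,\infty)\times SE$; indeed this already identifies $\Bl^*\omega$, restricted to $t>0$, with the pair of families, which is all the lemma asserts.
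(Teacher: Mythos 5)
Your proof is correct and follows essentially the same route as the paper: decompose $\omega=\omega_0+dr\wedge\omega_1$ as in (\ref{om01}) and use the identification $T_{(p,tv)}S^tE\simeq T_{(p,v)}SE$ from Lemma \ref{natu} to read $(\omega_j)_{(p,tv)}$ as forms on $SE$, the process being reversible. Your extra observation that this identification agrees with $(d\varphi_t)^{-1}$ on $T_{(p,v)}SE$ is a harmless (and correct) refinement, since the flow of $\nabla r$ and the rescaling $\varphi_t$ coincide as maps $SE\to S^tE$.
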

\begin{proof} By Lemma \ref{natu} we have the identification for every $t>0$:
\[T_{(p,tv)}S^tE\simeq T_{(p,v)}SE,\qquad \forall (p,v)\in SE\]
and so $(\omega_0)_{(p,tv)}$ and $(\omega_1)_{(p,tv)}$ from (\ref{om01}) can be considered as  multilinear functionals on $T_{(p,v)}SE$. We thus get the two claimed families. It is obvious that the process can be reversed.
\end{proof}
\begin{remark} {In the particular case of a point, the identification in Lemma \ref{natu} coincides with the parallel transport identification (\ref{Parti}) along rays.} The rescaling diffeomorphisms $\varphi_r$ are restrictions of the flow diffeomorphisms for the tautological vector field $\mathcal{R}$, as opposed to the flow of $\nabla r$. In the case of a point,  as displayed in Proposition \ref{prwexR} and Example \ref{B8},  {after the identification of Lemma \ref{natu} the map $\varphi_r^*$ acts by} multiplication by a power of $r$ that depends on the degree of the form under consideration. In general, {as we shall see,} this multiplication action happens only along vertical directions and depends on the "vertical degree". 
\end{remark}
We  consider the first type of extra structure in order to make precise the ideas of the previous Remark.

\begin{definition}
An  Ehresmann connection on the vector bundle $E$ is  a splitting $s:TE\ra VE$ of the exact sequence of vector bundles over $E$:
\begin{equation} \label{exseq}0\rightarrow VE \hookrightarrow TE{\overset {d\pi}{\longrightarrow} }\pi^*TB\ra 0\end{equation}
{that is, $s$ restricts to the identity on $VE$.} Let $HE:=\Ker s$.  The Ehresmann connection is \emph{linear} if the following two properties hold:
\begin{itemize} 
\item[(i)] $H_{(p,0)}E=T_p B$ where $T_pB\subset T_{(p,0)}E$ via the differential of the zero-section .
\item[(ii)] $d\varphi_{\gamma}(H_{(p,v)}E)=H_{(p,\lambda v)}E$ where $\varphi_{\gamma}:E\ra E$ is the rescaling diffeomorphism by factor $\gamma$.
\end{itemize}
\end{definition}
The Ehresmann connection induces an isomorphism of vector bundles 
\[TE\simeq \pi^*E\oplus \pi^*TB=\pi^*(TE\bigr|_{[0]}).\]

When $E\ra B$ is a Riemannian vector bundle {with bundle metric $\metric$}, a \emph{linear} Ehresmann connection $s:TE\ra VE$ is said to be \emph{metric compactible} if

\[ H_{(p,v)}E\subset T_{(p,v)}SE,\qquad \forall (p,v)\in SE.\]

\begin{example} A metric Koszul connection $\nabla:\Gamma(TB)\times \Gamma(E)\ra \Gamma(E)$ naturally determines a metric compatible Ehresmann connection. The {horizontal}  subspace $H_{(p,v)}E$ consists of those vectors $u\in T_{(p,v)}E\setminus V_{(p,v)}E$ for which there exists a local section $s:U\ra E\bigr|_{U}$ such that $s(p)=(p,v)$, $u=d_ps(d\pi(u))$ and $\nabla_{d\pi(u)}s=0$. It is standard that the Ehresmann connection thus defined is linear. The metric compatibility follows from the following relation. {Define 
\[
f:E\ra \bR,\qquad f(p,v)= \langle v,v \rangle_p.
\]
} Then one computes
\[d_{(p,v)}f(w,u)=2 \langle v,w \rangle_p +(\nabla_{d\pi(u)} \metric)(v,v),\qquad \forall (w,u)\in V_{(p,v)}E\oplus H_{(p,v)}E=E_p\oplus H_{(p,v)}E
\]
Since $\nabla$ is metric compatible, the second term on the right vanishes and so, for $|v|=1$ one has 
 \[T_{(p,v)}SE=\Ker d_{(p,v)}f\supset H_{(p,v)}E.\]
\end{example}
\begin{remark} The notion of  Ehresmann connection makes sense on any fiber bundle $P\ra B$. In particular, if $P=[0,\infty)\times SE$, a metric Ehresmann connection on $E\ra B$ induces an obvious Ehresmann connection on $P$ such that $d\Bl_{[0]}$ is a bundle isomorphism (over $P$) between $HP$ and $\Bl_{[0]}^*HE$ as one can easily check.
\end{remark}

An Ehresmann connection allows to split the bundle of $k$-forms as follows:
\[ \Lambda^kT^*E\simeq \bigoplus_{i=0}^k\Lambda^iV^*E\otimes \Lambda^{k-i}H^*E.\]
The extra decomposition $VE=\bR\nabla r\oplus V\hat{S}E$ along $E\setminus {\bf 0}$ coupled with  a \emph{metric} Ehresmann connection gives a more refined splitting
\begin{equation}\label{om03} \Lambda^kT^*E\simeq \left(\bigoplus_{i=0}^k\Lambda^iV^*\hat{S}E\otimes \Lambda^{k-i}H^*E\right)\oplus \left(\bR dr\otimes \bigoplus_{i=1}^{k}\Lambda^{i-1}V^*\hat{S}E\otimes \Lambda^{k-i}H^*E\right)\end{equation}
compatible with the decomposition
\begin{equation}\label{om04}\Lambda^kT^*E\simeq\Lambda^kT^*\hat{S}E\oplus \bR dr\otimes \Lambda^{k-1}T^*\hat{S}E.\end{equation}
Compatibility means here that the big brackets  in (\ref{om03}) are  equal separately to the factors in (\ref{om04}). Hence given $\omega\in \Omega^k(E\setminus {\bf 0})$  and recalling (\ref{om01}) we have, for $j=0,1$   a decomposition
\begin{equation}\label{om05}\omega_j:=\sum_{i=0}^{k-j}\omega_{ji}\end{equation}
with 
\[\omega_{ji}\in \Gamma(E\setminus {\bf 0};\Lambda^{i}V^*\hat{S}E\otimes \Lambda^{k-i-j}H^*E).
\] 
The following analogue of Lemma \ref{natu} holds. 
\begin{lemma} There exist natural isomorphisms of vector bundles over $E\setminus {\bf 0}$:
\[V\hat{S}E\simeq \Pi^*VSE,\qquad HE\simeq  \pi^*TB\simeq \Pi^*(HE\bigr|_{SE}) \]
\end{lemma}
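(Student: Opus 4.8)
The plan is to obtain this as the refinement of Lemma \ref{natu} to the vertical and horizontal factors of the splitting $TE\bigr|_{E\setminus{\bf 0}}=\langle\nabla r\rangle\oplus T\hat SE=\langle\nabla r\rangle\oplus V\hat SE\oplus HE$. The vertical isomorphism is produced by the rescaling morphisms $\varphi_\lambda$ (equivalently the flow of $\nabla r$, exactly as in Lemma \ref{natu}); the horizontal isomorphism uses the \emph{linearity} and the metric compatibility of the Ehresmann connection. Throughout, the only data used are the identification $VE\simeq\pi^*E$, the bundle metric $\metric$, and the chosen metric compatible linear Ehresmann connection, so the resulting isomorphisms are natural.

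First I would record that for each $t>0$ the restriction $\Pi\bigr|_{S^tE}\colon S^tE\to SE$ is precisely the inverse of the rescaling diffeomorphism $\varphi_t\colon SE\to S^tE$, $(p,v)\mapsto(p,tv)$, and that $\varphi_t$ is fibrewise (it commutes with $\pi$) and carries the unit sphere bundle onto the radius-$t$ sphere bundle. Hence $d\varphi_t$ maps $V_{(p,v)}SE$ — the $\metric$-orthogonal complement of $v$ inside $E_p\simeq V_{(p,v)}E$ — isomorphically onto the orthogonal complement of $tv$ inside $E_p$, that is, onto $V\hat SE\bigr|_{(p,tv)}$ (recall $V\hat SE\bigr|_{(p,tv)}=V_{(p,tv)}S^tE$ is exactly the $\metric$-orthogonal complement of $\langle\nabla r\rangle$ in $V_{(p,tv)}E$). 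Letting $t$ vary and using smooth dependence of $\varphi_t$ on $t$, the maps $(d\varphi_t)^{-1}\bigr|_{V\hat SE}$ glue to the asserted bundle isomorphism $V\hat SE\simeq\Pi^*VSE$ over $E\setminus{\bf 0}$.

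For the horizontal factor, property (ii) of a linear Ehresmann connection gives $d\varphi_t\big(H_{(p,v)}E\big)=H_{(p,tv)}E$, while metric compatibility gives $H_{(p,v)}E\subset T_{(p,v)}SE$, hence $HE\subset T\hat SE$ throughout $E\setminus{\bf 0}$. Therefore $(d\varphi_t)^{-1}$ identifies $HE\bigr|_{S^tE}$ with $HE\bigr|_{SE}$, and assembling these over $t>0$ yields $HE\simeq\Pi^*(HE\bigr|_{SE})$. Finally, $HE\simeq\pi^*TB$ is just the defining splitting of the Ehresmann connection, namely the fibrewise isomorphism $d\pi\colon HE\to\pi^*TB$; and since $\pi=(\pi\bigr|_{SE})\circ\Pi$, pulling the isomorphism $HE\bigr|_{SE}\simeq(\pi\bigr|_{SE})^*TB$ back along $\Pi$ gives $\Pi^*(HE\bigr|_{SE})\simeq\pi^*TB$, consistent with the previous chain. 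It is also worth noting that all these identifications are compatible with the finer splitting (\ref{om03})--(\ref{om04}), which is automatic because each $d\varphi_t$ preserves separately the $\langle\nabla r\rangle$, the $V\hat SE$, and the $HE$ summands.

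The construction is essentially formal once the set-up is unwound, so I do not expect a conceptual obstacle; the only step requiring genuine care is the bookkeeping — verifying that the pointwise linear isomorphisms $(d\varphi_t)^{-1}$ depend smoothly on $(t,p,v)$ and hence define honest smooth bundle maps, and checking that this description coincides with the flow-of-$\nabla r$ picture used in Lemma \ref{natu} (the two agree because on $S^tE$ radial translation by $1-t$ and rescaling by $1/t$ induce the same map $(p,tv)\mapsto(p,v)$, hence the same differential along $S^tE$). That is the portion I would write out in detail; the remainder follows the proof of Lemma \ref{natu} with only the obvious refinements.
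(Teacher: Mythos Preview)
Your proposal is correct and follows essentially the same approach as the paper: the vertical isomorphism is obtained exactly as in Lemma~\ref{natu} from the flow of $\nabla r$ (equivalently the rescalings $\varphi_t$), using that these commute with $\pi$, while the horizontal chain $HE\simeq\pi^*TB\simeq\Pi^*(HE\bigr|_{SE})$ comes from $d\pi\bigr|_{HE}$ together with $\pi=(\pi\bigr|_{SE})\circ\Pi$. The only minor difference is that you also give a direct argument for $HE\simeq\Pi^*(HE\bigr|_{SE})$ via the linearity of the connection, whereas the paper passes through $\pi^*TB$ as the intermediary; both are fine.
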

\begin{proof} The first isomorphism uses the same argument as Lemma \ref{natu} because the diffeomorphisms of the flow commute with the projection $\pi$ onto $B$.

The second isomorphism is a consequence of the definition of an Ehresmann connection and the isomorphism is $d\pi\bigr|_{HE}$. But of course we also have $HE\bigr|_{SE}\simeq (\pi\bigr|_{SE})^*TB$. 
\end{proof}
It follows that each of the forms $\omega_{ji}$ can be considered as a 1-parameter family of sections 
\[t\ra (\omega_{ji})_{t}=\{(p,v)\ra({\omega}_{ji})(p,tv),\quad (p,v)\in SE\}\] of the vector bundle $\Lambda^iV^*SE\otimes \Lambda^{k-i-j}H^*E\ra SE$. In fact, from the above Lemma it follows that the identification $HE\simeq \Pi^*(HE\bigr|_{SE})$ is done through the auxiliary bundle $\pi^*TB$. Hence it is more useful to consider the related 1-parameter families
\[t\ra (\hat{\omega}_{ji})_t\]
where $(\hat{\omega}_{ji})_t\in \Gamma(\Lambda^iV^*SE\otimes \Lambda^{k-i-j}\pi^*T^*B)$ are obtained from $(\omega_{ji})_t$ via the isomorphism $HE\simeq \pi^*TB$.

\begin{definition} Let $\pi:P\ra B$ be a fiber bundle and let $F\ra B$ be a vector bundle. A vertical form on $P$ with values in  $F$ of degree $i$ is a section of $\Lambda^iV^*P\otimes \pi^*F$.
\end{definition}
The forms $(\hat{\omega}_{ji})_t$ are thus vertical forms on $SE$ with values in the bundle $\Lambda^*T^*B$.

A first generalization of Proposition \ref{prwexR} is the following.

\begin{theorem} \label{mainthE}Let $\omega\in \Omega^k(E\setminus {\bf 0})$.
 \begin{itemize}\item[(i)] The form $\omega$  is weakly extendible if and only if the following $1$-parameter families of vertical forms  on $SE$:
\[
t\ra t^i(\hat{\omega}_{0i})_{tv}, \qquad {i  \in \{0,\ldots, k\}}
\]
are uniformly continuous for $t\in (0,\epsilon]$.
\item[(ii)] The form $\omega$ is blow-up extendible if and only if, {for each $j \in \{0,1\}$}, the following $1$-parameter families of vertical forms on $SE$
\[
t\ra t^i(\hat{\omega}_{ji})_{tv}, \qquad {i  \in \{0,\ldots, k-j\}}
\]
are uniformly continuous for $t\in (0,\epsilon]$
\end{itemize}
\end{theorem}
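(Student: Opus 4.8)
\textbf{Proof plan for Theorem \ref{mainthE}.}

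The plan is to reduce both statements to Proposition \ref{prwexB} together with the decomposition \eqref{om05} by making explicit the action of the rescaling pull-back $\varphi_t^*$ on each vertical component $\omega_{ji}$. First I would record the precise behaviour of $\varphi_t$ on the splitting $TE\bigr|_{E\setminus{\bf 0}} = \bR\nabla r \oplus V\hat SE \oplus HE$. Since the Ehresmann connection is linear, property (ii) of linearity gives $d\varphi_t(HE) = HE$, so $\varphi_t^*$ is ``the identity'' on the $H^*E$-factor once one identifies $H_{(p,v)}E$ and $H_{(p,tv)}E$ via $d\pi$ (both map isomorphically to $T_pB$, which is the role played by the auxiliary bundle $\pi^*T^*B$ in the definition of $\hat\omega_{ji}$). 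On the vertical side, $\varphi_t$ is fibrewise multiplication by $t$, so on $\Lambda^i V^*\hat SE$ it acts by $t^i$; and on the $dr$-factor in the bidegree-$(1,k-1)$ part, since $\varphi_t^* r = t r$ and $\varphi_t$ maps $SE$ to $S^tE$, one gets $\varphi_t^* (dr) = t \, dr_{SE} + (\text{tangential})$, but restricted to $SE$ the relevant contribution is simply a factor $t$ combined with the identification of Lemma \ref{natu}. Assembling these, for $(p,v)\in SE$ one obtains
\[
\varphi_t^*\omega_{0i} = t^i (\hat\omega_{0i})_{tv}, \qquad \varphi_t^* \big(\iota_{\nabla r}\omega\big)_{0i} = t^i (\hat\omega_{1i})_{tv},
\]
where on the right I mean the family of vertical forms on $SE$ with values in $\Lambda^* T^*B$ described just before the statement.

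With that identity in hand, part (i) is immediate: by Proposition \ref{prwexB}(i), $\omega$ is weakly extendible if and only if $t\mapsto \varphi_t^*\omega$ is uniformly continuous on $(0,\epsilon]$ in the $C^0$-norm on $\Omega^*(SE)$; decomposing $\varphi_t^*\omega = \sum_{i=0}^k \varphi_t^*\omega_{0i} = \sum_{i=0}^k t^i(\hat\omega_{0i})_{tv}$ into components of distinct bidegree (hence lying in complementary subbundles of $\Lambda^k T^*SE$ — here one uses the compatibility of \eqref{om03} with \eqref{om04}), uniform continuity of the sum is equivalent to uniform continuity of each summand $t\mapsto t^i(\hat\omega_{0i})_{tv}$ separately. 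For part (ii), Proposition \ref{prwexB}(ii) says $\omega$ is blow-up extendible iff both $\omega$ and $\iota_{\nabla r}\omega$ are weakly extendible; applying part (i) to $\omega$ handles the $j=0$ families, and applying part (i) to the $(k-1)$-form $\iota_{\nabla r}\omega$ — whose vertical components are exactly the $\omega_{1i}$, since $\iota_{\nabla r}$ kills $dr$ and preserves the $V\hat SE$- and $HE$-gradings — handles the $j=1$ families. So (ii) follows by combining the two instances of (i).

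The step I expect to be the main obstacle is pinning down cleanly the claim $\varphi_t^*\omega_{ji} = t^i(\hat\omega_{ji})_{tv}$, i.e.\ checking that after the natural identifications of Lemma \ref{natu} and its vector-bundle refinement, $\varphi_t^*$ really does act by the single scalar $t^i$ on the component of vertical degree $i$ and trivially on the horizontal part, with \emph{no} cross terms surviving. The delicate point is that $\varphi_t$ and the flow of $\nabla r$ are different (the Remark after Lemma \ref{om01t} flags this): the identifications $V\hat SE\simeq \Pi^*VSE$ and $HE\simeq \Pi^*(HE|_{SE})$ are built from the $\nabla r$-flow, while the rescaling is the $\mathcal R$-flow, and one must verify these agree on $SE$ up to the expected powers of $t$. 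Concretely I would work in a local frame adapted to the Ehresmann connection: pick a local orthonormal frame $\{\nu_\alpha\}$ of $E$ over $U\subset B$, let $\{y^\alpha\}$ be the linear fibre coordinates and $\bar\theta^\alpha = dy^\alpha + (\text{connection terms})$ the associated coframe for $V^*E$, and $\pi^*\theta^a$ for $H^*E$; then $\varphi_t^*$ multiplies each $y^\alpha$ by $t$, hence each $\bar\theta^\alpha$ by $t$ (the connection terms scale the same way by linearity), while fixing $\pi^*\theta^a$, and the radial decomposition $V^*E = \bR dr \oplus V^*\hat SE$ restricted to $SE$ is preserved. Substituting a general $\omega$ written in this frame and restricting to $SE$ then yields the displayed identity, after which the argument above closes the proof. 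I would also remark that the $B = \mathrm{pt}$ case recovers Propositions \ref{prwexR}, reassuringly: there $HE = 0$, $V\hat SE = TSE$, and only the $i = k$ (resp.\ $i = k-1$) term survives, giving exactly the $t^k$ (resp.\ $t^{k-1}$) scalings stated there.
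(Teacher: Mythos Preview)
Your proposal is correct and follows essentially the same approach as the paper: reduce to Proposition \ref{prwexB} and exploit the fact that, after identifying $HE\bigr|_{SE}$ with $\pi^*TB$, the differential $d\varphi_t$ is block-diagonal, acting as multiplication by $t$ on $VSE$ and as the identity on the horizontal factor. The paper only sketches this (deferring a full argument to the more general Theorem \ref{thmainA}), so your elaboration of the local-frame verification and the componentwise equivalence of uniform continuity is exactly the content that would make the sketch rigorous.
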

\begin{proof} We only sketch the main idea of the proof because the result is also a consequence of a second generalization, namely Theorem \ref{thmainA} below.

 One direct proof uses Proposition \ref{prwexB} and the fact that the differential of the rescaling diffeomorphism $\varphi_t:SE\ra S^tE$ is of diagonal type 
\[\left(\begin{array}{cc} t&0\\
0&1\end{array}\right)\]
with respect to the decomposition $TSE\simeq VSE\oplus \pi^*TB$, i.e. after the identification of $HE\bigr|_{SE}$ with $\pi^*TB$. 
\end{proof}

\vspace{0.5cm}

 We now consider more general splttings of $TE$ into "vertical" and "horizontal" components. The relevant type of operation whose importance will become clear later,  in Theorem \ref{indth}, is the next one.

\begin{definition} Let $F\ra N$ be a vector bundle over a compact manifold $N$ with a $\bZ_2$-grading $F=F_0\oplus F_1$. An endomorphism $\tilde{\gamma}\in \End(F)$  is said to be \emph{even} if $\tilde{\gamma}(F_j)\subset F_j$, $j=0,1$.

The bundle endomorphism $\mathcal{M}:=(\mathcal{M}_t)_{t\geq 0}\in \End(\pi_2^*F)$ acting on $\pi_2^*F\ra [0,\infty)\times N$
\[\mathcal {M}_{(t,n)}(f_0,f_1)=(f_0,tf_1),\qquad \forall (t,n)\in  [0,\infty)\times N, ~ (f_0,f_1)\in F_{n}\]
is called a multiplication operator.

By extension, a multiplication operator is any of the endomorphisms $\Lambda^q\mathcal{M}\in \End(\pi_2^* \Lambda^ qF)$ or their adjoints $\Lambda^q\mathcal{M}^*\in \End(\pi_2^* \Lambda^ qF^*)$. The action of $\Lambda^q\mathcal{M}$ with respect to the splitting
\[\Lambda^q F=\bigoplus_{i=0}^q \Lambda^iF_1\otimes \Lambda^{q-i}F_0\]
is as follows for $s=(s_t)_{t\geq 0}=\sum_{i=0}^q((s_i)_t)_{t\geq 0}$, with  {$s_i \in \Gamma(\pi_2^* \Lambda^i F_1 \otimes \pi_2^*\Lambda^{q-i} F_0)$:}
\[\Lambda^q\mathcal{M}(s)=\Lambda^q\mathcal{M}_t(s_t)=\sum_{i=0}^q t^i(s_i)_t.\]
\end{definition}
 Typically, $F\ra N$ is endowed with a Riemannian metric which makes $\Gamma(F)$ a Banach space with the $\sup$ norm.  The next {equivalence, while direct,} is stated to emphasize the relevance for our purposes of  vector bundles of type $\pi_2^*F$  over $[0,\infty)\times N$,  i.e. bundles with "constant fibers" in $t$.  Clearly, every vector bundle over $[0,\infty)\times N$ is isomorphic non-canonically to such a vector bundle.

The restriction $\pi_2^*F\bigr|_{(0,\infty)\times N}$ is denoted $\pi_2^*F\bigr|_{t>0}$.

\begin{prop}\label{stprop}  Let $s:=(s_t)_{t>0}\in \Gamma(\pi_2^*F\bigr|_{t>0})$ be a continuous section. Then $s$ extends continuously at $t=0$ if and only if $t\ra s_t\in \Gamma(F)$ is uniformly continuous.
\end{prop}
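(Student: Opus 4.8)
The plan is to reduce the statement to the elementary observation that a (jointly) continuous section of $\pi_2^*F$ over $[0,\infty)\times N$, with $N$ compact, is the same data as a continuous curve into the Banach space $\Gamma(F)=C^0(N;F)$ equipped with the sup norm $\|\cdot\|_{C^0}$, and then to run the standard Banach-space argument near $t=0$. First I would record the setup: since $N$ is compact and $F$ carries the fixed bundle metric, $\Gamma(F)$ is a Banach space; the fibers of $\pi_2^*F$ satisfy $(\pi_2^*F)_{(t,n)}=F_n$ with no $t$-dependence, so differences like $s_t(n)-s_{t'}(n)$ make literal sense, and continuity of a section of the total space of $\pi_2^*F$ is to be read, as usual, through local trivializations of $F$. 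I would also note at the outset that the hypothesis already forces $s_t:=s(t,\cdot)\in\Gamma(F)$ for each $t>0$ and that $t\mapsto s_t$ is continuous on $(0,\infty)$, since $s$ is uniformly continuous on each compact box $[a,b]\times N\subset(0,\infty)\times N$.

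For the forward implication, assume $s$ extends to a continuous section $\hat s$ on $[0,\infty)\times N$. Fix $\epsilon>0$. Since $[0,\epsilon]\times N$ is compact, $\hat s$ is uniformly continuous there: for $\epsilon'>0$ there is $\delta>0$ with $|\hat s(t,n)-\hat s(t',n)|<\epsilon'$ for all $n\in N$ and all $t,t'\in[0,\epsilon]$ with $|t-t'|<\delta$. Taking the supremum over $n$ gives $\|s_t-s_{t'}\|_{C^0}\le\epsilon'$ for $t,t'\in(0,\epsilon]$, $|t-t'|<\delta$, i.e. $t\mapsto s_t$ is uniformly continuous on $(0,\epsilon]$.

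For the converse, assume $t\mapsto s_t$ is uniformly continuous on some $(0,\epsilon]$. Then it is uniformly Cauchy as $t\to0^+$, so by completeness of $\Gamma(F)$ there is a limit $s_0\in\Gamma(F)$ (independent of the approximating sequence by interleaving); being a uniform limit of the continuous sections $s_t$, $s_0$ is itself a continuous section, and the uniform-continuity estimate gives $\|s_t-s_0\|_{C^0}\to0$ as $t\to0^+$. I would then define $\hat s$ to equal $s$ for $t>0$ and $s_0$ on $\{0\}\times N$, and check continuity at a boundary point $(0,n_0)$ via the triangle inequality $|\hat s(t,n)-\hat s(0,n_0)|\le\|s_t-s_0\|_{C^0}+|s_0(n)-s_0(n_0)|$ (the second term being read in a local trivialization around $n_0$), where the first summand $\to0$ with $t$ and the second $\to0$ as $n\to n_0$ by continuity of $s_0$.

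I do not expect a genuine obstacle here: the content is soft analysis. The only points requiring care are making precise, via local trivializations of $F$, what "continuity of a section of the total space of $\pi_2^*F$" means so that the passage to $C^0(N;F)$ and back is rigorous, and keeping track of which side of the equivalence uses uniform continuity on which interval — "for some $(0,\epsilon]$" is all that is needed in the converse, while the forward direction in fact yields it on every $(0,\epsilon]$.
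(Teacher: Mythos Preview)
Your argument is correct. The paper does not actually supply a proof of this proposition: it calls the equivalence ``direct'' and states it only to emphasize the role of bundles with constant fibers in $t$. Your write-up is exactly the routine soft-analysis justification the authors had in mind, via compactness of $[0,\epsilon]\times N$ in one direction and completeness of $(\Gamma(F),\|\cdot\|_{C^0})$ in the other.
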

The abstract result we need is the following one. Though straightforward this is crucial to check the extendibility property.
\begin{theorem}\label{kth} \begin{itemize} 
\item[(i)] Let $\gamma:=(\gamma_t)_{t\geq 0}\in \Gamma(\End(\pi_2^*F))$ be of class $C^1$. If $\gamma(0)\in \End(F)$ is even then $\mathcal{M}_t\gamma_t\mathcal{M}_t^{-1}\in \Gamma(\End(\pi_2^*F\bigr|_{t>0}))$ is continuously extendible at $t=0$. Consequently, the conjugation by $\Lambda^q\mathcal{M}^*$ of $\Lambda^q\gamma^*$ is also continuously extendible at $t=0$.
\item[(ii)] Let $s:=(s_t)_{t>0}\in \Gamma(\pi_2^*F\bigr|_{t>0})$ be a continuous section. Let $\gamma:=(\gamma_t)_{t\geq 0}\in \Gamma(\Aut(\pi_2^*F))$ be of class $C^1$ such that $\gamma(0)$ is even.  Then $\mathcal{M}(s)$ extends continuously at $t=0$ if and only if $\mathcal{M}(\gamma(s))$ extends continuously at $t=0$. If $\gamma(0)= {\rm id}$, then $\mathcal{M}(s)$ and $\mathcal{M}(\gamma(s))$ extend by the same value.
\item[(iii)] Let $s:=(s_t)_{t>0}\in\Gamma(\pi_2^*\Lambda^q F^*\bigr|_{t>0})$ and  $\hat{\gamma}:=\Lambda^q\gamma^*\in\Gamma(\Aut(\pi_2^*F^*))$ a bundle automorphism arising from $\gamma\in \Gamma(\Aut(\pi_2^*F))$ which is assumed $C^1$ and {with $\gamma(0)$} even. Then $\Lambda^q\mathcal{M}^*(s)$ extends continuously at $0$ if and only if $\Lambda^q\mathcal{M}^*(\hat{\gamma}(s))$ extends continuously. They extend by the same value if $\gamma(0)= {\rm id}$.
\end{itemize}

\end{theorem}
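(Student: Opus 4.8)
The statement to prove is Theorem \ref{kth}, with three parts all concerning the interplay between the multiplication operators $\mathcal M_t$ and a $C^1$ family $\gamma_t$ of (endo/auto)morphisms whose value at $t=0$ is even. The plan is to reduce everything to part (i), since (ii) and (iii) are essentially (i) applied to sections rather than endomorphisms, plus a bit of bookkeeping for the exterior powers and duals.

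\begin{proof}
Throughout, write $F = F_0 \oplus F_1$ and, in a local frame adapted to this grading, represent $\gamma_t$ as a block matrix $\gamma_t = \begin{pmatrix} a_t & b_t \\ c_t & d_t\end{pmatrix}$ with $a_t \in \End(F_0)$, $d_t \in \End(F_1)$, $b_t \in \Hom(F_1,F_0)$, $c_t \in \Hom(F_0,F_1)$, all of class $C^1$ in $t$. The evenness hypothesis $\gamma(0)$ even means $b_0 = 0$ and $c_0 = 0$, so by the $C^1$ assumption $b_t = O(t)$ and $c_t = O(t)$ as $t \to 0$ (with the constant controlled locally uniformly; since $N$ is compact this is a genuine uniform bound). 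The operator $\mathcal M_t$ is, in this frame, $\begin{pmatrix} \mathrm{id} & 0 \\ 0 & t\,\mathrm{id}\end{pmatrix}$, hence for $t>0$
\[
\mathcal M_t \gamma_t \mathcal M_t^{-1} = \begin{pmatrix} a_t & t^{-1} b_t \\ t\, c_t & d_t \end{pmatrix}.
\]
Since $b_t = O(t)$, the block $t^{-1}b_t$ stays bounded and in fact converges to $b'_0$ as $t \to 0$ (here one uses $b_0 = 0$ and $b \in C^1$, so $t^{-1}b_t \to \dot b_0$), while $t\,c_t \to 0$, $a_t \to a_0$, $d_t \to d_0$. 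Therefore $\mathcal M_t\gamma_t\mathcal M_t^{-1}$ extends continuously to $t=0$, with value $\begin{pmatrix} a_0 & \dot b_0 \\ 0 & d_0\end{pmatrix}$. Although I wrote this in a local frame, the limit is frame-independent (it is the conjugate of an intrinsic operator), so it glues to a continuous global section of $\End(\pi_2^*F)$ over $[0,\infty)\times N$. Passing to $\Lambda^q \gamma^*$: the exterior power functor and the dual both send $C^1$ families to $C^1$ families and send even morphisms to even morphisms (for the grading on $\Lambda^q F^*$ by "$F_1^*$-degree"), while $\Lambda^q \mathcal M^*$ is the multiplication operator for that induced grading. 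Hence the statement for $\Lambda^q\gamma^*$ conjugated by $\Lambda^q\mathcal M^*$ follows by applying the block computation above verbatim to $\Lambda^q F^*$ in place of $F$. This proves (i).

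For (ii), observe that for $t>0$ one has the identity $\mathcal M_t(\gamma_t(s_t)) = \big(\mathcal M_t \gamma_t \mathcal M_t^{-1}\big)\big(\mathcal M_t(s_t)\big)$. By part (i) the family $\Gamma_t := \mathcal M_t\gamma_t\mathcal M_t^{-1}$ extends continuously to $t=0$; since $\gamma_t \in \Aut(\pi_2^*F)$ for every $t$ including $t=0$ and $\gamma_0$ is even, the same computation applied to $\gamma_t^{-1}$ (whose blocks are again $C^1$ and whose off-diagonal blocks vanish at $t=0$, because the inverse of a block-triangular-at-$0$ invertible matrix is block-triangular-at-$0$) shows that $\Gamma_t^{-1} = \mathcal M_t\gamma_t^{-1}\mathcal M_t^{-1}$ also extends continuously to $t=0$. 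Thus $t \mapsto \Gamma_t$ is a continuous family of automorphisms on all of $[0,\infty)\times N$, and the relations $\mathcal M(\gamma(s)) = \Gamma\,\mathcal M(s)$ and $\mathcal M(s) = \Gamma^{-1}\,\mathcal M(\gamma(s))$ immediately give that $\mathcal M(s)$ extends continuously at $t=0$ if and only if $\mathcal M(\gamma(s))$ does. If moreover $\gamma_0 = \mathrm{id}$, then $a_0 = \mathrm{id}$, $d_0 = \mathrm{id}$, so $\Gamma_0 = \begin{pmatrix}\mathrm{id} & \dot b_0 \\ 0 & \mathrm{id}\end{pmatrix}$; but the limiting value $\lim_{t\to 0}\mathcal M_t(s_t)$ lies in $\pi_2^*\Lambda^\bullet F_0$ (the "$t^0$" component), i.e. in the subbundle on which $\Gamma_0$ acts as the identity, so $\Gamma_0$ fixes $\lim \mathcal M(s)$ and the two extensions agree. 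Part (iii) is the special case of (ii) in which $F$ is replaced by $\Lambda^q F^*$, with the induced grading and with $\hat\gamma = \Lambda^q \gamma^*$, $\Lambda^q\mathcal M^*$ the corresponding multiplication operator; the hypotheses transfer as noted in the proof of (i).

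The only genuinely delicate point is the claim $b_t = O(t)$ \emph{uniformly} (and likewise that $t^{-1}b_t$ has a limit), which is where the $C^1$ hypothesis on $\gamma$ and the compactness of $N$ are used: $b_t$ is a $C^1$ section of a bundle over $[0,\infty)\times N$ vanishing identically on $\{0\}\times N$, hence by the mean value inequality $\|b_t\| \le t \sup_{[0,t]\times N}\|\partial_s b\|$, which is finite and $\to \|\dot b_0\|_\infty$ as $t\to 0$; the limit $\lim_{t\to 0} t^{-1}b_t = \dot b_0$ then holds in $C^0$. Everything else is the block-matrix computation above together with the functoriality of $\Lambda^q(\cdot)$ and $(\cdot)^*$ with respect to $C^1$-dependence and the evenness grading. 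I expect no serious obstacle beyond being careful that the intrinsic (frame-free) meaning of the limiting operators matches the local block computation, which is immediate since conjugation, $\Lambda^q$, and duality are all natural.
\end{proof}
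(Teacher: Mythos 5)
Your argument for (i), for the equivalence in (ii), and for (iii) is essentially the paper's proof: the same block computation
\[
\mathcal{M}_t\gamma_t\mathcal{M}_t^{-1}=\begin{pmatrix} a_t & t^{-1}b_t\\ t\,c_t & d_t\end{pmatrix},
\]
with $b_0=0$ and the $C^1$ hypothesis giving convergence of $t^{-1}b_t$. You are in fact more explicit than the paper on two points that it leaves implicit: the uniformity of $\|b_t\|\le t\sup\|\partial_s b\|$ coming from compactness of $N$, and the converse implication in (ii), which you obtain by running the computation for $\gamma_t^{-1}$ (whose value at $t=0$ is again even). Those additions are correct and welcome.

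There is, however, a genuine gap in your justification of the ``extend by the same value'' clause. You compute correctly that when $\gamma(0)=\mathrm{id}$ the limit of the conjugated family is $\Gamma_0=\begin{pmatrix}\mathrm{id} & \dot b_0\\ 0 & \mathrm{id}\end{pmatrix}$, but your next claim --- that $\lim_{t\to 0}\mathcal{M}_t(s_t)$ necessarily lies in the $F_0$-summand, where $\Gamma_0$ acts as the identity --- is false. The section $s$ is only assumed continuous for $t>0$ and may blow up as $t\to 0$; indeed the whole point of $\mathcal{M}$ is to tame $F_1$-components growing like $t^{-1}$. Concretely, over a point take $s_t=(0,t^{-1}v)$: then $\mathcal{M}_t(s_t)=(0,v)$ converges to a vector with nonzero $F_1$-part (compare the monopole form $B$ in Example B.5 of the appendix, whose rescaled limit is \emph{purely} vertical). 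With $\gamma_t=\begin{pmatrix}1 & t\\ 0 & 1\end{pmatrix}$ one gets $\mathcal{M}_t(\gamma_t(s_t))=(1,v)\neq (0,v)$, so the equality of the two limits actually requires $\dot b_0\cdot(\lim t\,s_t^1)=0$, e.g.\ $\dot b_0=0$, and does not follow from $\gamma(0)=\mathrm{id}$ alone. To be fair, the paper's own proof elides exactly the same point by asserting that $\mathcal{M}\gamma(0)\mathcal{M}^{-1}=\mathrm{id}$, which is the value of the conjugation of the \emph{constant} family $\gamma(0)$ and not the limit $\Gamma_0$ of the conjugated family $\mathcal{M}_t\gamma_t\mathcal{M}_t^{-1}$; so your write-up has surfaced, rather than created, a weakness in this clause. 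Everything else in your proposal stands.
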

\begin{proof} For (i) we write the block-decomposition relative $F=F_0\oplus F_1$ for $\gamma_t$:
\[\gamma_t:=\left(\begin{array}{cc} a_t&b_t\\
 c_t&d_t\end{array}\right) \Rightarrow\mathcal{M}_t\gamma_t\mathcal{M}_t^{-1}=\left(\begin{array}{cc} a_t & t^{-1}b_t\\
 t c_t& d_t \end{array}\right)\]
If $\gamma_0$ is even it follows that $b_0=0$ and, since $\gamma$ is $C^1$,  the continuous extendibility of $\mathcal{M}_t\gamma_t\mathcal{M}_t^{-1}$ follows. 

{For item (ii), write $\mathcal{M}(\gamma(s))=(\mathcal{M}\gamma\mathcal{M}^{-1})(\mathcal{M}(s))$. Item (i) guarantees that $\mathcal{M}\gamma\mathcal{M}^{-1}$ extends continuously at $t=0$, hence if $\mathcal{M}(s)$ extends continuously so does $\mathcal{M}(\gamma(s))$.} This proves one implication. Since $\gamma$ is a bundle automorphism one gets the full statement. Note that $\gamma(0)={\rm id}$ implies that $(\mathcal{M}\gamma(0)\mathcal{M}^{-1})=\id$ and therefore the extensions are equal.

For item (iii) we have that the extendibility of $\mathcal{M}\gamma(0)\mathcal{M}^{-1}$ at $t=0$ implies the extendibility of
\[
\Lambda^q\mathcal{M}^*_t\circ\Lambda^q\gamma_t^*\circ(\Lambda^q\mathcal{M}^*_{t})^{-1}=\Lambda^q \big(\mathcal{M}_t^*\circ\gamma_t^*\circ (\mathcal{M}^*_t)^{-1} \big).
\]
\end{proof}

In our case, we will take 
 \[F:=\pi^*E\oplus \pi^*TB\ra SE=:N.\]
It is implicit  in what follows that this bundle is endowed with a fixed Riemannian metric.

 Note that $F$ has two $\bZ_2$-gradings $F_0\oplus F_1$ such that $F_0\supset \pi^*TB$. The obvious one, which we call  the \emph{standard grading}:
 \[F_0:=\pi^*TB\quad \mbox{ and }\quad F_1:=\pi^*E.\] 
 The other one, which we will call the \emph{blow-up grading} has
\[F_0:=\pi^*TB\oplus \langle\nabla r\rangle\quad\mbox{and}\quad F_1:=VSE=\langle \nabla r\rangle^{\perp} \quad \mbox{ complement in } \pi^*E.\]


  Let us note a basic result which lies behind the first criterion, i.e. Proposition \ref{prwexB} of  blow-up extendibility.
\begin{lemma} There exists  natural isomorphisms of vector bundles over $[0,\infty)\times SE$:
\begin{equation}\label{eqmultop2}T\Bl_{\bf 0}(E)\simeq \bR\oplus \pi_2^*TSE\simeq  \pi_2^*(TE\bigr|_{SE})\end{equation}
where $\pi_2:\Bl_{{\bf 0}}(E)\ra SE$ is the projection onto the second coordinate.
\end{lemma}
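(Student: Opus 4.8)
The plan is to peel off the two isomorphisms in \eqref{eqmultop2} separately, each coming from an elementary structural fact: the first from the product structure of the blow-up, the second from the fact that $SE$ is a regular level set of $r$. First I would use that, by construction, $\Bl_{\bf 0}(E) = [0,\infty)\times SE$ is an honest product, with $\pi_2 : [0,\infty)\times SE \to SE$ the projection onto the second factor and $t$ the standard coordinate on $[0,\infty)$. On any product manifold $X\times Y$ one has the canonical splitting $T(X\times Y)\simeq \mathrm{pr}_X^*TX\oplus\mathrm{pr}_Y^*TY$; applying this with $X=[0,\infty)$ and $Y=SE$, and trivializing $T[0,\infty)$ by the global nonvanishing field $\partial_t$, one obtains the canonical identification
\[
T\Bl_{\bf 0}(E)\;\simeq\;\bR\oplus\pi_2^*TSE,
\]
where $\bR$ is the trivial line bundle and its summand is generated by $\partial_t$. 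No choices enter here, so this step is natural.

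For the second isomorphism I would show $TE|_{SE}\simeq TSE\oplus\bR$, with the trivial summand generated by the tautological section. With respect to the fixed bundle metric $\metric$ on $E$, the set $SE=r^{-1}\{1\}$ is a regular level set of $r$, so $TSE=\Ker\big(dr|_{TE|_{SE}}\big)$; the tautological vertical field $\mathcal{R}:E\to VE\simeq\pi^*E$, $\mathcal{R}_{(p,v)}=v$, satisfies $dr(\mathcal{R})=r$, which equals $1$ along $SE$ (indeed $\nabla r=r^{-1}\mathcal{R}$ and $|\nabla r|=1$). Hence $\mathcal{R}|_{SE}$ is everywhere transverse to $SE$ and spans a trivial line subbundle of $TE|_{SE}$ complementary to $TSE$, giving a natural splitting $TE|_{SE}\simeq TSE\oplus\bR$ (natural once the metric is fixed). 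Pulling this back along $\pi_2$ and combining with the previous step yields
\[
T\Bl_{\bf 0}(E)\;\simeq\;\bR\oplus\pi_2^*TSE\;\simeq\;\pi_2^*\big(TE|_{SE}\big),
\]
the composite isomorphism sending $\partial_t$ to $\mathcal{R}|_{SE}$ (equivalently $\nabla r|_{SE}$) and restricting to the identity on $\pi_2^*TSE$.

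The only thing that is not a one-line observation — and what I expect to be the real content for downstream use — is the compatibility of these identifications, over the interior $(0,\infty)\times SE$, with the blow-down map $\Bl$ and with the vertical/horizontal decompositions of the appendix. Concretely, $\Bl$ restricts to a diffeomorphism onto $E\setminus{\bf 0}$ and, as recorded in the proof of Proposition \ref{prwexB}, $d\Bl(\partial_t)=\nabla r$ for $t\neq 0$; combined with the radial-projection identification $T\hat SE\simeq\Pi^*TSE$ of Lemma \ref{natu}, one sees that $d\Bl$ agrees with the isomorphism above up to the rescaling along vertical directions encoded by the multiplication operators $\Lambda^q\mathcal{M}$. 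Spelling this out is exactly what makes \eqref{eqmultop2} the convenient model for the extendibility criteria in Theorem \ref{mainthE} and Theorem \ref{thmainA}, but it requires no idea beyond unwinding the definitions.
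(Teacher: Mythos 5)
Your proof is correct and follows essentially the same route as the paper: the first isomorphism comes from the product structure $\Bl_{\bf 0}(E)=[0,\infty)\times SE$, and the second from the natural splitting $TE\bigr|_{SE}\simeq \bR\oplus TSE$ with the line summand spanned by the tautological (radial) section, then pulled back along $\pi_2$. The extra paragraph on compatibility with $d\Bl$ and the multiplication operators is not needed for the lemma itself, though it correctly anticipates how the identification is used later.
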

\begin{proof} The first isomorphism is clear, because $\Bl_{{\bf 0}}(E)=[0,\infty)\times SE$. For the second, let us note the natural splitting $TE\bigr|_{SE}\simeq\bR\oplus {TSE}$ where the ``radial" component of the direct sum, induced by the non-vanishing (tautological section) $\nabla r$, and $TSE\subset TE$ are clearly direct summands of $TE$ . This splitting naturally carries over to a  splitting of $\pi_2^*(TE\bigr|_{SE})$.
\end{proof}

Given a form $\omega\in \Omega^q(E)$, the pull-back $\Bl^*\omega =\Lambda^q (d\Bl)^*(\omega_{\Bl})$ is the composition of two operations:
\begin{itemize}
\item[(i)]  the composition $\omega_{\Bl}:=\omega\circ \Bl$ which is a section of $\Bl^*\Lambda^qT^*E$
\item[(ii)] the action of the morphism $\Lambda^q (d\Bl)^*$ on $\omega_{\Bl}$.
Note that
 \[d\Bl:T\Bl_{{\bf 0}}(E)\ra \Bl^*TE.\]
It turns out that $ \Bl^*TE$ is also isomorphic to $\pi_2^*(TE\bigr|_{SE})$, however not ``canonically". With the ultimate purpose of interpreting $\Lambda^q (d\Bl)^*$ as a multiplication operator, we need one more definition.
\end{itemize}

Let $\pi^*E:=\langle \nabla r\rangle\oplus \langle \nabla r\rangle^{\perp}$ be a decomposition into the tautological line bundle  and its orthogonal complement along $E\setminus {\bf 0}$. Note that in fact $\langle \nabla r\rangle^{\perp}\bigr|_{SE}=V{\hat S}E$ is the vertical tangent bundle of the foliation by spherical bundles $S^rE$.

We will use $\nabla r$ for the normalized tautological section $r^{-1}\mathcal{R}$ of $\pi^*E$, but also for {$\pi_2^*\nabla r$},  which is a section of $\pi_2^*F\ra [0,\infty)\times SE$. We use freely the identication $\pi^*E=VE\subset TE$ so $\nabla r$ is also a section of $TE\bigr|_{E\setminus {\bf 0}}$.

\begin{definition}\label{fibtr} A fiber trivialization of $TE\ra E$ is an isomorphism of vector bundles over $E$ 
\[ 
\Psi: TE\ra \pi^*(TE\bigr|_{[0]})=\pi^*(E\oplus TB)=VE\oplus \pi^*TB
\]
such that 
\[
{\Psi =\id_{E\oplus TB} \qquad \text{on } \, TE\bigr|_{[0]}.}
\]
A radial trivialization of $TE\ra E$ is a fiber trivialization 
such that over $E\setminus {\bf 0}$ the following holds:
  \[\Psi(\nabla r)=\nabla r,\quad \mbox{and}\quad \Psi(T{\hat S}E)=\langle \nabla r\rangle^{\perp}\oplus \pi^*TB\]
\end{definition}
A fiber trivialization induces a decomposition $TE:=V_{\Psi}E\oplus H_{\Psi}E$ where
\[
V_{\Psi}E:=\Psi^{-1}(\pi^*E)\qquad\mbox{and}\qquad H_{\Psi}E:=\Psi^{-1}(\pi^*TB)
\]
Set
\[
V_{\Psi}\hat{S}E:=\Psi^{-1}(\langle \nabla r\rangle^{\perp}), \qquad T_{\Psi}\hat{S}E:= V_{\Psi}\hat{S}E\oplus H_{\Psi}E
\]
Note that for a radial trivialization $T_{\Psi}\hat{S}E=T\hat{S}E$, but this may not be the case in general.

There are two main examples of \emph{radial trivializations}.
\begin{example}[\textbf{Ehresmann connection}] An Ehresmann connection $s:TE\ra VE=\pi^*E$ induces an isomorphism
 \[ \Psi:TE\ra VE\oplus \pi^*TB,\qquad \Psi_{(p,v)}(w):=(p,s(w),d_{(p,v)}\pi(w)),\quad (p,v)\in E, ~w\in T_{(p,v)}E\]
We have $V_{\Psi}E=VE$ and $H_{\Psi}E=HE$. By the definition of an Ehresmann connection  $\Psi$ is the identity on $VE$, so
\[
\Psi(\nabla r)=\nabla r.
\]

Only for a \emph{metric} Ehresmann connection,  i.e. $HE\bigr|_{SE}\subset TSE$, we have $T_{\Psi}SE=TSE$.  
\end{example}

\begin{example}[\textbf{Parallel transport}] \label{Partrc} Assume that the manifold $E$ is endowed with a Riemannian metric  $g$ such that the following hold:
\begin{itemize}
\item[(i)] {the restriction of $g$ to $TE\bigr|_{[0]} = E\oplus TB$ is the product metric $\metric + g_B$: 
\[ 
g(p,0) = \metric_p + g_B(p)
\qquad \forall p\in B
\]
where $\metric$ is the bundle metric on $E\ra B$ and $g_B$ the Riemannian metric on $B$}.
\item[(ii)] {along $E\setminus {\bf 0}$ the vector field $\nabla r$ coincides} with the gradient of the function $r:E\setminus {\bf 0}\ra \bR$ for the metric $g$, and
\[|\nabla r|_{g}=1\]
\end{itemize}  
Condition (ii) implies by \cite[Section 5.5]{petersen} that $\nabla r$ is a geodesic vector field, meaning that  the integral curves (i.e. the rays $t\ra (p,tv)$ with $|v|_{E}=1$) of $\nabla r$ are geodesics for $g$.

 The isomorphism $\Psi$ is induced by parallel transport through the Levi-Civita connection of $g$ is a radial trivialization for the following reasons. {Let $(p,v)\in TE$, $v \neq 0$ and let 
\[
\gamma:[0,|v|]\ra E\qquad  \gamma(t)=\left(p,t v/|v|\right).
\]
Then parallel transport induces an isomorphism (taking the values at $|v|$  and $0$) 
\[ 
\mathcal{P}_{\gamma}^{-1}:T_{(p,v)}E\ra T_{(p,0)}E=(\pi^*E\oplus \pi^* TB)_{(p,v)} 
\]
Since $\gamma$ is an integral curve for $\nabla r$  it is also a $g$-geodesic and we have
 \[
 \mathcal{P}_{\gamma}^{-1}((\nabla r)_{(p,v)})=\mathcal{P}_{\gamma}^{-1}\left(p,v, \frac{v}{|v|}\right)=\mathcal{P}_{\gamma}^{-1}(\gamma'(|v|))=\gamma'(0)= \frac{v}{|v|} =(\nabla r)_{(p,v)}\in (\pi^*E)_{(p,v)}
 \]
}
Based on the last computation, the isometry $\mathcal{P}_{\gamma}$ takes the linear space $\langle v\rangle^{\perp}\oplus T_pB$ over $(p,0)$ (which is the $g$-orthogonal complement to $\langle v\rangle$ by using (i) above) to the  $g$-orthogonal complement of $(\nabla r)_{(p,v)}=(p,v,v/|v|)$. The latter is {$T_{(p,v)}\hat SE$, since $\nabla r$ is the $g$-gradient of $r$ and the spheres $S^tE$ are level sets of $r$}. 

\vspace{0.3cm}

We remark that the metric on a neighborhood of the zero-section  on the normal bundle $\nu L$ (of a compact submanifold $L\subset M$) induced by pulling back the metric on $M$ via the exponential map satisfies the properties (i) and (ii) above. Hence the parallel transport gives a radial trivialization of $T\nu L$, albeit only in a neighborhood of the zero-section , which is enough for extendibility purposes.
\end{example}

\vspace{0.3cm}

A fiber trivialization gives more than just an isomorphism $\Bl^*TE\simeq \pi_2^*TE\bigr|_{SE}$ as follows.

\begin{prop}\label{multop1} A  fiber trivialization $\Psi$ of $TE$ induces the following bundle isomorphisms: 
\begin{equation} \label{eqmul1} TE\bigr|_{SE}\simeq \pi^*E\oplus \pi^*TB=F,\qquad\qquad \Bl^*TE\simeq \pi_2^*F
\end{equation}
 Suppose $\Psi$ is induced by a metric Ehresmann connection. Using the identifications (\ref{eqmultop2}), (\ref{eqmul1})  the operator
 \[\Lambda^q (d\Bl)^*: \pi_2^*\Lambda^qF^* \ra \pi_2^*\Lambda^qF^*  \]
 becomes a multiplication operator on $\pi_2^*\Lambda^q F^*\ra [0,\infty)\times SE$ for the blow-up $\bZ_2$-grading.
\end{prop}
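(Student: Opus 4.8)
The plan is to identify $d\Bl$ itself, read through the isomorphisms (\ref{eqmultop2}) and (\ref{eqmul1}), with the multiplication operator $\mathcal{M}$ attached to the blow-up grading $F = F_0 \oplus F_1$, where $F_0 = \pi^*TB \oplus \langle \nabla r\rangle$ and $F_1 = VSE$; then $(d\Bl)^* = \mathcal{M}^*$ and $\Lambda^q(d\Bl)^* = \Lambda^q\mathcal{M}^*$ by functoriality of $\Lambda^q$ and of the transpose, which is precisely a multiplication operator by definition. The two isomorphisms in (\ref{eqmul1}) come for free from the same bookkeeping: over $SE$ the fiber trivialization restricts to $TE\bigr|_{SE} \xrightarrow{\sim} \pi^*E \oplus \pi^*TB = F$, and since $\pi \circ \Bl(t,p,v) = p$, at $(t,p,v)$ the map $\Psi$ sends $(\Bl^*TE)_{(t,p,v)} = T_{(p,tv)}E$ to $E_p \oplus T_pB = F_{(p,v)} = (\pi_2^*F)_{(t,p,v)}$, smoothly in $(t,p,v)$. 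Once $d\Bl = \mathcal{M}$ is established as a bundle endomorphism of $\pi_2^*F$, the final claim is immediate.

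The computation of $d\Bl$ I would carry out pointwise. Fix $(p,v) \in SE$ and $t \ge 0$, and split $T_{(t,p,v)}\Bl_{{\bf 0}}(E) = \bR \partial_t \oplus VSE_{(p,v)} \oplus H_{(p,v)}E$; here I use that the Ehresmann connection is metric, so $H_{(p,v)}E \subset T_{(p,v)}SE$ and $T_{(p,v)}SE = VSE_{(p,v)} \oplus H_{(p,v)}E$. On the first summand, $d\Bl(\partial_t) = \nabla r\bigr|_{(p,tv)}$ (the normalized tautological vector field, as already recorded), which under $\Psi$ is the generator of the radial line $\langle \nabla r\rangle \subset F_0$, so there $d\Bl$ acts as the identity. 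Given $w \in H_{(p,v)}E$, I would represent it by the horizontal lift starting at $(p,v)$ of a curve $c$ in $B$ with $\dot c(0) = d\pi(w)$; linearity of the connection together with metric compatibility force this lift, a parallel section of unit norm, to stay in $SE$. Scaling the fiber variable by $t$ along it produces the horizontal lift of $c$ starting at $(p,tv)$, still parallel; hence its derivative at the origin — which is $d\Bl(w)$ — maps under $\Psi$ (equal to $d\pi$ on horizontal vectors) to $d\pi(w) = \Psi(w) \in T_pB$. So on $H_{(p,v)}E \subset F_0$, $d\Bl$ is the identity, independently of $t$. Given $w \in VSE_{(p,v)} = v^\perp \subset E_p$, I would represent it by a curve through $v$ in the unit sphere of the fiber $E_p$ tangent to $w$; $\Bl$ multiplies the fiber variable by $t$, so $d\Bl(w)$ is the vertical vector $tw \in E_p$, which lies in $\langle \nabla r\rangle^\perp$ at $(p,tv)$ since $tw \perp tv$, and $\Psi$ (the identity on vertical vectors) leaves it equal to $tw$. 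So on $VSE_{(p,v)} = F_1$, $d\Bl$ is multiplication by $t$.

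Putting the three cases together, $d\Bl$ conjugated by (\ref{eqmultop2}) on the source and by $\Psi$ together with (\ref{eqmul1}) on the target is the endomorphism of $\pi_2^*F$ acting as the identity on $\pi_2^*F_0 = \pi_2^*(\pi^*TB \oplus \langle \nabla r\rangle)$ and as multiplication by $t$ on $\pi_2^*F_1 = \pi_2^*VSE$ — that is, $d\Bl = \mathcal{M}$ for the blow-up grading. Consequently $\Lambda^q(d\Bl)^* = \Lambda^q\mathcal{M}^*$, a multiplication operator on $\pi_2^*\Lambda^q F^*$, which is the assertion. The argument is valid uniformly down to $t = 0$, where $d\Bl$ degenerates to $\mathcal{M}_0$ annihilating $F_1$, consistently with $\Bl(0,\cdot)$ collapsing $SE$ onto ${\bf 0}$.

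The routine part is the bookkeeping of the various identifications. The one step that genuinely needs the hypothesis is the second case above, namely that the horizontal curve stays inside $SE$: this is exactly where metric compatibility enters, since it says $HE\bigr|_{SE} \subset TSE$, equivalently — via the scaling equivariance $d\varphi_\lambda(HE) = HE$ of a linear Ehresmann connection — that $HE\bigr|_{S^\lambda E} \subset TS^\lambda E$ for every $\lambda$, i.e. that horizontal transport along $B$ preserves the radius $r = |v|$. Without it, $d\Bl$ fails to be block-diagonal for $F_0 \oplus F_1$ and the conclusion breaks down; I expect this to be the only real obstacle.
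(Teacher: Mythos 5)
Your proposal is correct and follows essentially the same route as the paper: both identify $d\Bl$ pointwise with the multiplication operator for the blow-up grading by checking the three blocks separately — the identity on $\partial_t \mapsto \nabla r$ and on horizontal vectors (using the scaling equivariance $d\varphi_t(H_{(p,v)}E)=H_{(p,tv)}E$ of the linear connection together with metric compatibility to place $HE\bigr|_{SE}$ inside $TSE$), and multiplication by $t$ on $VSE$ — and then pass to $\Lambda^q$ of the transpose. Your horizontal-lift computation is just a more explicit rendering of the paper's appeal to the differential of the rescaling map, so nothing of substance differs.
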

\begin{proof}  The first isomorphism is the restriction of $\Psi$ to $SE$. The second isomorphism is a consequence of the equality
 \[
 \pi \circ \Bl=\pi\bigr|_{SE}\circ \pi_2\qquad: [0,\infty)\times SE\ra B.
 \]
Indeed, one pulls-back $E\oplus TB$ on both sides and uses $TE\simeq \pi^*E\oplus \pi^*TB$. 
 
For the second statement, we claim that 
\[
d \Bl \ : \ T\Bl_{[0]}(E) \simeq \pi_2^* F \to \Bl^* TE \simeq \pi_2^* F
\]
is a multiplication operator for \emph{the blow-up grading}. 

For a metric Ehresmann connection, $d\Bl$ takes $H_{\Psi}E\bigr|_{SE}=HE\bigr|_{SE}\subset TSE$ to $\Bl^*H_{\Psi}E=\Bl^*HE\subset \Bl^*TE$. In fact, $d_{(t,p,v)}\Bl_{[0]}\bigr|_{H_{(p,v)}E}$ is the restriction of the differential of the rescaling mapping which by definition takes $H_{(p,v)E}$ to $H_{(p,tv)}$. After the identifications  \[ H_{(p,v)}E\simeq T_pB\simeq H_{(p,tv)}E\] via $d\pi$ the restriction $d_{(t,p,v)}\Bl\bigr|_{HE}$ becomes the identity morphism on this space.

Note that $(t,p,v)\ra d\Bl_{(t,p,v)}(\partial_t)$ is in fact {$\pi_2^* \nabla r$} where $\nabla r$ is here the tautological section of $\pi^*E\ra SE$. With the identifications of (\ref{eqmultop2}), (\ref{eqmul1}) the vector field $\partial_t$ corresponds to $\nabla r$. Hence, the restriction of $d_{(t,p,v)}\Bl_{[0]}$ to $\langle \nabla r \rangle \oplus HE$ is the identity. On the other hand, if $w\in V_{\Psi}\hat{S}E\bigr|_{SE}=VSE$ we have 
\[(t,p,v)\ra d\Bl_{(t,p,v)}(w)=tw\]
and this proves the claim.
\end{proof}

For a form $\omega\in \Omega^q(E\setminus {\bf 0})$ and a fiber trivialization $\Psi$ of $TE$, define the following $1$-parameter family of sections of $\pi_2^*(\Lambda^qF^*)\ra (0,\infty)\times SE$:
\[ \omega^{F\Psi}=((\Lambda^q\Psi)^{*}\circ \Bl)(\omega \circ \Bl).\footnote{The reason for using $F\Psi$ in the notation is to separate this family from families of forms on $SE$ that will appear later.}\]
Alternatively,  $\omega^{F\Psi}=(\omega_t^{F\Psi})_{t>0}\in \Gamma(\pi_2^*(\Lambda^q F^*))$ is defined via
\[\omega_{(t,p,v)}^{F\Psi}:=(\Lambda^q\Psi_{(p,tv)})^{*}(\omega_{(p,tv)})\]

  The section $\omega^{F\Psi}$ is essentially  $\omega\circ \Bl$ after the identification via $\Psi$ of $\Bl^*TE$ with $\pi_2^*F$. Fix a Riemannian vector bundle metric on $F\ra SE$.

\begin{theorem}\label{indth}  Let $\omega\in \Omega^q(E\setminus {\bf 0})$ be a continuous form. Let $\mathcal{M}_t$ be the multiplication operator associated to the blow-up $\bZ_2$-grading of $F\ra SE$. Let $\Psi$ be a fiber trivialization.
\begin{itemize}
\item[(i)] If $\Psi$ is induced by a metric Ehresmann connection then the blow-up extendibility of $\omega$ is equivalent to the uniform continuity {for $t \in (0,\epsilon]$} of the family $t\ra \Lambda^q\mathcal{M}_t^* (\omega_t^{F\Psi})$, a section of $\pi_2^*(\Lambda^qF^*)\bigr|_{{t\neq0}}\ra (0,\infty)\times SE$.
\item[(ii)]  The uniform continuity of $t\ra \Lambda^q\mathcal{M}_t^* (\omega_t^{F\Psi})$ is independent of the fiber trivialization $\Psi$.
\end{itemize}
\end{theorem}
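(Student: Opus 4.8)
Looking at this statement, I need to prove Theorem \ref{indth}: for a continuous form $\omega$ on $E \setminus {\bf 0}$, (i) when $\Psi$ comes from a metric Ehresmann connection, blow-up extendibility of $\omega$ is equivalent to uniform continuity of $t \mapsto \Lambda^q \mathcal{M}_t^*(\omega_t^{F\Psi})$; and (ii) this uniform continuity is independent of the fiber trivialization $\Psi$.

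\medskip

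\noindent\textbf{Plan of proof.} The plan for item (i) is to unwind the definition of $\Bl^*\omega$ as the composition $\Lambda^q(d\Bl)^* \circ (\omega \circ \Bl)$ described just before the statement, and to insert the isomorphism $\Psi$ at the level of $\Bl^* TE \simeq \pi_2^* F$. First I would recall from Proposition \ref{prwexB} (restated via the decomposition $\omega = \omega_0 + dr \wedge \omega_1$) that $\omega$ is blow-up extendible if and only if $\Bl^*\omega$ extends continuously to $t = 0$. Next, using the bundle isomorphisms $T\Bl_{\bf 0}(E) \simeq \pi_2^* F$ and $\Bl^* TE \simeq \pi_2^* F$ from Proposition \ref{multop1}, I would write $\Bl^*\omega = \Lambda^q(d\Bl)^*(\omega_{\Bl})$ and observe that under these identifications $\omega_{\Bl}$ becomes precisely $\omega^{F\Psi}$, while by Proposition \ref{multop1} (second statement) the operator $\Lambda^q(d\Bl)^*$ becomes the multiplication operator $\Lambda^q \mathcal{M}^*$ for the blow-up grading (here is where the hypothesis that $\Psi$ is induced by a \emph{metric} Ehresmann connection is used, so that $H_\Psi E|_{SE} \subset TSE$ and the diagonal block structure of $d\Bl$ holds). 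Thus $\Bl^*\omega$, as a section of $\pi_2^*\Lambda^q F^*$, equals $\Lambda^q \mathcal{M}_t^*(\omega_t^{F\Psi})$ up to the fixed isomorphisms, which are themselves continuous (and bounded with bounded inverse on the compact $SE$), so continuous extendibility of one is equivalent to that of the other. Finally, Proposition \ref{stprop} translates ``extends continuously at $t = 0$'' into ``uniformly continuous for $t \in (0,\epsilon]$ as a curve in $\Gamma(\Lambda^q F^*)$'', giving the stated equivalence.

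\medskip

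\noindent\textbf{For item (ii)}, the plan is to compare two fiber trivializations $\Psi$ and $\Psi'$ by the transition automorphism. Given $\Psi, \Psi'$, the composition $\gamma := \Psi' \circ \Psi^{-1}$ (pulled back to $(0,\infty) \times SE$ and pre/post-composed appropriately) is a bundle automorphism of $\pi_2^* F$ which is $C^1$ in $t$ down to $t = 0$ — because both $\Psi$ and $\Psi'$ are smooth bundle maps defined on all of $E$, including the zero-section. The key point is that $\gamma(0)$ is \emph{even} with respect to the blow-up $\mathbb{Z}_2$-grading: indeed, by Definition \ref{fibtr} both $\Psi$ and $\Psi'$ restrict to the identity on $TE|_{[0]} = E \oplus TB$, so $\gamma(0) = \mathrm{id}$, which is trivially even (and in fact the extensions will agree, not merely coexist). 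Then $\omega_t^{F\Psi'} = \Lambda^q(\gamma_t^{-1})^* (\omega_t^{F\Psi}) = \Lambda^q (\gamma_t^*)^{-1}(\omega_t^{F\Psi})$, so applying Theorem \ref{kth}(iii) with the automorphism $\hat\gamma = \Lambda^q(\gamma^*)^{-1}$ (which is $\Lambda^q$ of the dual of the $C^1$ automorphism $\gamma^{-1}$, whose value at $0$ is the identity, hence even) we conclude that $\Lambda^q\mathcal{M}_t^*(\omega_t^{F\Psi})$ extends continuously at $t = 0$ if and only if $\Lambda^q\mathcal{M}_t^*(\omega_t^{F\Psi'})$ does, with equal extensions. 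Since uniform continuity for $t \in (0,\epsilon]$ is equivalent to continuous extendibility by Proposition \ref{stprop}, this is exactly the claimed independence.

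\medskip

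\noindent\textbf{Main obstacle.} The routine calculations are the explicit matching of the composition $\Lambda^q(d\Bl)^* \circ (\omega\circ\Bl)$ with $\Lambda^q\mathcal{M}^* (\omega^{F\Psi})$ and the bookkeeping of which identifications of $\Bl^* TE$, $T\Bl_{\bf 0}(E)$, and $\pi_2^* F$ are being used where; the one genuinely delicate point is verifying that $d\Bl$, read through a \emph{metric} Ehresmann trivialization, is exactly a multiplication operator for the blow-up grading rather than merely block-upper-triangular — but this is precisely the content of Proposition \ref{multop1}, which I am entitled to invoke. For item (ii) the only thing requiring care is confirming that the transition map $\gamma$ is $C^1$ up to $t=0$ and that $\gamma(0)=\mathrm{id}$; both follow directly from the fact that fiber trivializations are, by definition, globally defined smooth bundle isomorphisms that are the identity along the zero-section, so there is no real obstacle here beyond careful notation.
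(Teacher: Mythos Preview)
Your proposal is correct and follows essentially the same route as the paper's own proof: for (i) you invoke Proposition \ref{multop1} to identify $\Lambda^q(d\Bl)^*$ with the multiplication operator $\Lambda^q\mathcal{M}^*$ under the $\Psi$-induced identifications, then use Proposition \ref{stprop} to pass between continuous extendibility and uniform continuity; for (ii) you compare two trivializations via the transition automorphism $\gamma$, observe that $\gamma(0)=\mathrm{id}$ because fiber trivializations are the identity along the zero-section, and apply Theorem \ref{kth}(iii). This is exactly the paper's argument, modulo the harmless extra mention of Proposition \ref{prwexB} (the paper phrases extendibility directly as continuous extension of $\Bl^*\omega$) and some bookkeeping about whether to write $\gamma$ or $\gamma^{-1}$ in the relation between $\omega^{F\Psi}$ and $\omega^{F\Psi'}$.
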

\begin{proof} We start with a general observation. If $F'\ra [0,\infty)\times SE$ is a vector bundle with a fixed isomorphism $\Theta: F'\simeq \pi_2^*\tilde{F}$ to a Riemannian vector bundle $\tilde{F}\ra SE$ then the continuous extension of a section $s_1\in \Gamma(F'\bigr|_{t>0})$ is equivalent with the continuous extension of $\Theta(s_1)\in \Gamma(\pi_2^*\tilde{F})$ which by Proposition \ref{stprop} is equivalent with the uniform continuity of the $1$-parameter family of sections 
\[t\ra \Theta(s_1)_t\in \Gamma(\tilde{F})\]

 For (i) we now note that the pull-back $\Bl^*\omega$ is $\Lambda^q(d\Bl)^*$ applied to $\omega\circ \Bl$. 

 By Proposition \ref{multop1}, after the appropriate bundle isomorphisms, the later coincides with a multiplication operator for the blow-up $\bZ_2$-grading of $F$. Therefore the uniform continuity of $t\ra \Lambda^q\mathcal{M}_t^* (\omega_t^{F\Psi})$ characterizes the blow-up extendibility of the form $\omega$.

For (ii), let $\Psi_1$ and $\Psi_2$ be two fiber trivializations. For all $t\geq 0$, $(p,v)\in SE$ let $\gamma\in \Gamma(\Aut(\pi_2^*F))$ be defined as follows.
\[\gamma_{(t,p,v)}:=(\Psi_2)_{(p,tv)}\circ (\Psi_1)_{(p,tv)}^{-1}\in \Aut(E_p\oplus T_pB).\]
Note that $\gamma_t$ is $C^1$ up to $t=0$, and that $\gamma_0=\id$ by Definition \ref{fibtr}. In order to use Theorem \ref{kth} for $\gamma$ and the blow-up $\bZ_2$-grading of $F$ it would be enough  that $\gamma_{0}=\id_E\oplus A$ where $A$ is an automorphism of $TB$. But $\gamma_0={\rm id}$ guarantees not only the independece of fiber trivialization but also that the limits at $t=0$ of the extensions are equal. 
\end{proof}

 We will see  that  Theorem \ref{indth} gives rise to a more practical, sufficient criterion of extendibility when the trivializations are \emph{radial} and the forms involved are functions of $r$ times smooth (everywhere) forms.

Theorem \ref{indth} part (ii) offers a way to check whether a form $\omega$ is blow-up extendible after we have fixed a fiber trivialization $\Psi$. Let us make it more explicit. 

We write the blow-up grading:
\[F=F_0\oplus F_1,\qquad F_0=\langle \nabla r \rangle\oplus \pi^*TB,\qquad F_1:=\langle \nabla r\rangle^{\perp}\]
 Relative to this decomposition we have correspondingly
\[
\omega^{F\Psi}=\sum_{i=0}^k\omega_i^{F\Psi}
\]
where $\omega^{F\Psi}_i\in \Gamma(\Lambda^i F_1^*\otimes \Lambda^{k-i}F_0^*)$. 
The multiplication operator acts as follows:
\[ \Lambda^q\mathcal{M}^* (\omega^{F\Psi})=\sum_{i=0}^kt^i\omega^{F\Psi}_i\]
On the other hand, due to a secondary decomposition  $F_0=L\oplus F_0'$ where $L= \langle\nabla r \rangle$ is a trivial line bundle  and $F_0'=\pi^*TB$, we correspondingly have for every $i=0,\ldots, k$
\[\omega^{F\Psi}_i=\omega^{F\Psi}_{i,0}+dr\wedge \omega^{F\Psi}_{i,1}\]
with $\omega^{F\Psi}_{i,0}\in \Gamma(\Lambda^i F_1^*\otimes \Lambda^{k-i}F_0'^*)$ and $\omega^{F\Psi}_{i,1}\in \Gamma(\Lambda^iF_1^*\otimes\Lambda^{k-i-1}F_0'^* )$.
 We spell out the criterion of extendibility which is immediate from Theorem \ref{indth} part (ii)
\begin{corollary}\label{Psi1} Let $\Psi$ be any fiber trivialization.  A form $\omega\in \Omega^k(E\setminus {\bf 0})$ is blow-up extendible if and only if the following $1$-parameter families of sections  $t\ra t^i(\omega_{i,j}^{F\Psi})_t$ for $j=0,1$ and $i=0,\ldots k-j$, with $ (\omega_{i,j}^{F\Psi})_t\in \Gamma(\Lambda^iF_1^*\otimes \Lambda^{k-i-j}F_0'^*)$ are uniformly continuous for $t\in (0,\epsilon]$.
\end{corollary}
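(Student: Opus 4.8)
The plan is to read off the corollary from Theorem~\ref{indth} by unwinding the two nested orthogonal decompositions of $\Lambda^k F^*$ and observing that the relevant operations — projecting onto a direct summand of a bundle over the compact base $SE$, and wedging with the fixed unit coform $dr$ — are bounded in the $C^0$-norm, hence both preserve and detect uniform continuity of $1$-parameter families of sections.

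First I would record the master equivalence. Fix the fiber trivialization $\Psi$ of the statement. By Theorem~\ref{indth}(ii), uniform continuity on some interval $(0,\epsilon]$ of the family $t\mapsto \Lambda^k\mathcal{M}_t^*(\omega_t^{F\Psi})$ does not depend on the choice of fiber trivialization; in particular it holds for $\Psi$ if and only if it holds for a trivialization $\Psi_0$ arising from a metric Ehresmann connection — such a $\Psi_0$ exists, e.g. from a metric Koszul connection on $E$. By Theorem~\ref{indth}(i) the latter is equivalent to the blow-up extendibility of $\omega$. Hence $\omega$ is blow-up extendible if and only if $t\mapsto \Lambda^k\mathcal{M}_t^*(\omega_t^{F\Psi})$ is uniformly continuous on $(0,\epsilon]$ for the given $\Psi$.

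It remains to decompose this single family. By the definition of the multiplication operator for the blow-up $\bZ_2$-grading $F=F_0\oplus F_1$ and the splitting $\Lambda^k F^*\simeq \bigoplus_{i=0}^k \Lambda^i F_1^*\otimes \Lambda^{k-i}F_0^*$, one has $\Lambda^k\mathcal{M}_t^*(\omega_t^{F\Psi})=\sum_{i=0}^k t^i(\omega_i^{F\Psi})_t$ with $(\omega_i^{F\Psi})_t$ a section of $\Lambda^i F_1^*\otimes\Lambda^{k-i}F_0^*$. Since $SE$ is compact and $F$ carries a fixed bundle metric, $\Gamma(\Lambda^kF^*)$ with the $\sup$-norm is a Banach space and the above splitting is a topological direct sum whose summand projections have norm at most one; therefore a sum of sections lying in distinct summands is uniformly continuous in $t$ exactly when each summand is, and uniform continuity of $t\mapsto \Lambda^k\mathcal{M}_t^*(\omega_t^{F\Psi})$ is equivalent to uniform continuity of $t\mapsto t^i(\omega_i^{F\Psi})_t$ for each $i=0,\dots,k$. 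Finally I would apply the secondary splitting $F_0=L\oplus F_0'$, $L=\langle\nabla r\rangle$, with dual $dr$: it gives $\Lambda^{k-i}F_0^*\simeq \Lambda^{k-i}F_0'^*\oplus dr\wedge\Lambda^{k-i-1}F_0'^*$, whence $\omega_i^{F\Psi}=\omega_{i,0}^{F\Psi}+dr\wedge\omega_{i,1}^{F\Psi}$ as in the text; as $dr$ is a $t$-independent unit section, $\alpha\mapsto dr\wedge\alpha$ is an isometry onto its image, so uniform continuity of $t\mapsto t^i(\omega_i^{F\Psi})_t$ is equivalent to uniform continuity of $t\mapsto t^i(\omega_{i,0}^{F\Psi})_t$ and of $t\mapsto t^i(\omega_{i,1}^{F\Psi})_t$, the second family being defined only for $k-i-1\ge 0$, i.e. $i\le k-1$. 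Assembling all these equivalences over $j\in\{0,1\}$ and the admissible $i$ gives the corollary.

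There is no serious obstacle; Theorem~\ref{indth} already carries the analytic weight. The points to be careful about are purely formal: that the two bundle decompositions are orthogonal with bounded (norm $\le 1$) projection operators on sections, which is automatic over the compact base $SE$; the bookkeeping of degrees, in particular that the $j=1$ family occurs only for $i\le k-1$; and the standing translation, via Proposition~\ref{stprop}, between continuous extendibility at $t=0$ of a section of a bundle $\pi_2^*(\cdot)|_{t>0}$ and uniform continuity of the associated $1$-parameter family of sections over $SE$, which is what legitimizes arguing one orthogonal summand at a time.
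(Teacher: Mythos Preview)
Your proposal is correct and follows the same route as the paper, which simply declares the corollary ``immediate from Theorem~\ref{indth} part (ii)'' without further detail. You have spelled out precisely what the paper leaves implicit: that one needs both parts of Theorem~\ref{indth} (part (i) for the equivalence with blow-up extendibility via a metric Ehresmann trivialization, part (ii) to transfer it to an arbitrary $\Psi$), and that the two successive orthogonal splittings of $\Lambda^k F^*$ over the compact base $SE$ preserve and reflect uniform continuity summand by summand.
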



It turns out that when $\Psi$ is a radial trivialization this criterion takes on a formulation that is closer in spirit with Theorem \ref{mainthE}. The fastest way to obtain such a formulation is to use the isomorphism $\Psi^{-1}$ and move from the $F$ to $TE\bigr|_{SE}$ (again a bundle where a metric has been fixed, but no relation with the metric on $F$ is required).  There are a few ways one can get families of forms on $SE$ from $\omega\in \Omega^k(E\setminus {\bf 0})$ with the same indexing as in the corollary. For radial trivialization they all coincide.
\begin{itemize}
\item[(i)] One way is to simply use the exterior powers of the isomorphism $\Psi:TE\bigr|_{SE}\ra F$ and simply apply them to  $\omega^{F\Psi}_{i,j}$ in order to get families of sections $\omega^{\Psi}_{i,j}\in \Gamma(\Lambda^iV_{\Psi}^*SE\otimes \Lambda^{k-i-j}H_{\Psi}^*E\bigr|_{SE})$. Since, for a radial trivialization we have
\[V_{\Psi}SE\oplus H_{\Psi}E\bigr|_{SE}=T_{\Psi}SE=TSE\]
these $\omega^{\Psi}_{i,j}$ are truly families of forms on $SE$.
\item[(ii)] Use the decomposition on $E\setminus {\bf 0}$
\[TE=V_{\Psi}\hat{S}E\oplus (\langle\nabla r\rangle \oplus H_{\Psi}E)\]
in order to write
 \[\omega=\sum_{i=0}^k\hat{\omega}_i^{\Psi}\] and then decompose further for each $i$:
\[\hat{\omega}_i^{\Psi}=\hat{\omega}_{i,0}^{\Psi}+dr\wedge \hat{\omega}_{i,1}^{\Psi}\]
where $\hat\omega_{i,j}^{\Psi}\in \Gamma(\Lambda^iV^*_{\Psi}\hat{S}E\otimes \Lambda^{k-i-j}H^*_{\Psi}E)$.  From these $\hat{\omega}_{i,j}^{\Psi}$ defined on $E\setminus {\bf 0}$ one obtains $1$-parameter families of sections of the restrictions
\[\left(\Lambda^iV^*_{\Psi}\hat{S}E\otimes \Lambda^{k-i-j}H^*_{\Psi}E\right)\biggr|_{SE}=\Lambda^iV_{\Psi}^*SE\otimes \Lambda^{k-i-j}H_{\Psi}^*E\bigr|_{SE}\]
if we use the identification of $T_{(p,tv)}\hat{S}E$ with $T_{(p,v)}SE$ provided by $\Psi$ (and not by the Lemma \ref{natu}).  More precisely we have isomorphisms for all $(p,v)\in SE$, $t>0$:
\[ { T_{(p,tv)}\hat{S}E{\overset {\Psi_{(p,tv)}}{\longrightarrow}}  \langle\nabla r\rangle^{\perp}\oplus T_pB} {\overset {\Psi_{(p,v)}}{\longleftarrow}}  T_{(p,v)}SE\]
and they are compatible with the decompositions
\[ T_{(p,tv)}\hat{S}E=(V_{\Psi}\hat{S}E)_{(p,tv)}\oplus H_{\Psi}E_{(p,tv)}\qquad T_{(p,v)}{S}E=(V_{\Psi}SE)_{(p,v)}\oplus H_{\Psi}E_{(p,v)} \]
\item[(iii)] One can first decompose on $E\setminus {\bf 0}$
\[TE=\langle\nabla r \rangle\oplus T\hat{S}E\]
and correspondingly
\[\omega=\tilde{\omega}_0+dr\wedge \tilde{\omega}_1,\qquad \tilde{\omega}_j\in \Gamma(\Lambda^{k-j}T^*\hat{S}E).\]
Now take the decomposition
\[ T\hat{S}E=V_{\Psi}\hat{S}E\oplus H_{\Psi}E\]
and decompose further 
\[\tilde{\omega}_j=\sum_{i=0}^{k-j}\tilde{\omega}_{i,j}^{\Psi},\qquad \tilde{\omega}_{i,j}^{\Psi}\in \Gamma(\Lambda^iV_{\Psi}^*\hat{S}E\oplus \Lambda^{k-j-i}H_{\Psi}^*E).\]
Then pass to families of sections of $\Lambda^iV_{\Psi}^*{S}E\oplus \Lambda^{k-j-i}(H_{\Psi}^*E\bigr|_{SE})\ra SE$ as  in item (ii).
\item[(iv)] Start as in item (iii) but before taking the second decomposition use first $\tilde{\omega}_j$ to obtain families of sections of $\Lambda^{k-j}TSE$ again through $\Psi$ and only then use the decomposition 
\[TSE=V_{\Psi}SE\oplus H_{\Psi}SE\bigr|_{SE}\]
to obtain families $t\ra (\check{\omega}_{i,j}^{\Psi})_t$.
\end{itemize}

We then have the following
\begin{prop}\label{Psi2} For a radial trivialization $\Psi$,  the families of sections 
\[t\ra(\omega_{i,j}^{\Psi})_t,  \qquad t\ra (\hat{\omega}_{i,j}^{\Psi})_t,\qquad t\ra (\tilde{\omega}_{i,j}^{\Psi})_t,\qquad t\ra (\check{\omega}_{i,j}^{\Psi})_t\]
of the bundle $\Lambda^iV_{\Psi}^*SE\otimes \Lambda^{k-i-j}H_{\Psi}^*E\bigr|_{SE}\ra SE$    coincide for each fixed pair of indices $(i,j)$.
\end{prop}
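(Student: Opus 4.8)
\textbf{Proof plan for Proposition \ref{Psi2}.}

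The plan is to show that all four families of sections arise from the \emph{same} underlying object — the pullback $\omega^{F\Psi}$ under the chain of canonical identifications — and differ only in the order in which one performs (a) the isomorphism $\Psi$ converting "tangent-to-$E$" data into "$F$" data, and (b) the successive orthogonal splittings $TE = \langle \nabla r\rangle \oplus T\hat SE$, $T\hat SE = V_\Psi\hat SE \oplus H_\Psi E$, and $F_0 = \langle \nabla r\rangle \oplus \pi^*TB$. The key point that makes all these reorderings legitimate is that for a \emph{radial} trivialization one has, by Definition \ref{fibtr}, $\Psi(\nabla r) = \nabla r$ and $\Psi(T\hat SE) = \langle \nabla r\rangle^\perp \oplus \pi^*TB$; in particular $T_\Psi \hat SE = T\hat SE$ and $V_\Psi \hat SE = \langle \nabla r\rangle^\perp$, so that the splitting $F = \langle\nabla r\rangle \oplus \langle\nabla r\rangle^\perp \oplus \pi^*TB$ is carried by $\Psi^{-1}$ to the splitting $TE = \langle\nabla r\rangle \oplus V_\Psi\hat SE \oplus H_\Psi E$, \emph{bidegree by bidegree}. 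Thus $\Psi$ intertwines the two filtrations on the nose, and the order of applying $\Psi$ versus decomposing does not matter.

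Concretely, I would proceed as follows. First, fix $(p,v)\in SE$ and $t>0$, and unwind the definition of each of the four families purely as multilinear functionals on the relevant fibers at the point $(p,tv)$, pushed to $(p,v)$ via the appropriate identification. For $\omega^\Psi_{i,j}$ (construction (i)): apply $\Lambda^\bullet \Psi^{-1}_{(p,v)}$ to the component $\omega^{F\Psi}_{i,j}$ of $\Lambda^k\Psi^*_{(p,tv)}(\omega_{(p,tv)})$ living in $\Lambda^iF_1^*\otimes\Lambda^{k-i-j}F_0'^*$ (after splitting off the $dr$ factor carried by $L=\langle\nabla r\rangle$). For $\hat\omega^\Psi_{i,j}$ (construction (ii)): decompose $\omega_{(p,tv)}$ directly on $T_{(p,tv)}E$ using $TE = V_\Psi\hat SE \oplus (\langle\nabla r\rangle\oplus H_\Psi E)$, then transport to $T_{(p,v)}SE$ via $\Psi_{(p,v)}\circ\Psi^{-1}_{(p,tv)}$. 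For $\tilde\omega^\Psi_{i,j}$ (construction (iii)) and $\check\omega^\Psi_{i,j}$ (construction (iv)): same, but peeling off $dr$ first via $TE = \langle\nabla r\rangle\oplus T\hat SE$ and only then splitting $T\hat SE$. In each case the identification $T_{(p,tv)}\hat SE \simeq \langle\nabla r\rangle^\perp\oplus T_pB \simeq T_{(p,v)}SE$ used is $\Psi_{(p,v)}\circ\Psi_{(p,tv)}$ — \emph{not} the parallel-transport-along-rays identification of Lemma \ref{natu}, which is the essential hypothesis distinguishing the radial case. Once written out, the four expressions are visibly identical: each equals $(\Lambda^\bullet\Psi^{-1}_{(p,v)})$ applied to the $(i,j)$-bidegree component of $(\Lambda^k\Psi^*_{(p,tv)})(\omega_{(p,tv)})$, because $\Psi$ respects every splitting involved.

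The only genuinely substantive verification is that the three splittings commute with $\Psi$ in the sense above; this is exactly the content of "radial", so it reduces to Definition \ref{fibtr} plus the elementary fact that if a linear isomorphism carries a direct-sum decomposition $A = A_1\oplus A_2\oplus A_3$ of one space to a decomposition $B = B_1\oplus B_2\oplus B_3$ of another with $\Psi(A_i)=B_i$, then its exterior powers carry the induced decomposition of $\Lambda^k A^*$ into the induced decomposition of $\Lambda^k B^*$, component by component and independently of the order in which the refinements $A_1\oplus(A_2\oplus A_3)$, $(A_1\oplus A_2)\oplus A_3$ are taken. I expect this multilinear-algebra bookkeeping to be the main (and essentially only) obstacle — not conceptually hard, but requiring care with the $dr\wedge(-)$ factors and with which of the two competing identifications $T_{(p,tv)}\hat SE\simeq T_{(p,v)}SE$ is in force at each stage. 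I would therefore state and prove the elementary "commuting refinements" lemma first, then apply it four times. No estimate, limit, or analysis enters; the statement is pointwise in $t$ and $(p,v)$, so uniform continuity plays no role here and the result is purely algebraic once the identifications are pinned down.
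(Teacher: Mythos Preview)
Your proposal is correct and takes essentially the same approach as the paper: unwind each of the four constructions at a fixed point $(p,tv)$ and observe that, because a radial trivialization satisfies $\Psi(\nabla r)=\nabla r$ and $\Psi(T\hat SE)=\langle\nabla r\rangle^\perp\oplus\pi^*TB$, the isomorphism $\Psi$ carries the three-step splitting of $TE$ to that of $F$ component by component, so the order of decomposing versus applying $\Psi$ is immaterial. The paper's proof compresses this to a single sentence (``the isomorphisms involved, all induced by $\Psi$, are compatible with all the decompositions described when $\Psi$ is a radial trivialization''), while you have spelled out the underlying multilinear-algebra lemma explicitly; there is no substantive difference.
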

\begin{proof} One unravels the definition of $\omega^{F\Psi}$ and sees that the isomorphisms involved, all induced by $\Psi$, are compatible with all the decompositions described when $\Psi$ is a radial trivialization. 
\end{proof}
\begin{remark} When $\Psi$ is induced by an Ehresmann connection one can use the diffeomorphisms of the flow of $\nabla r$ as in Lemma \ref{natu} in order to obtain families of forms on $SE$. The families thus obtained coincide with the others for this particular type of radial trivialization, but not in general. 
\end{remark}

Denote by $\omega_{i,j}^{\Psi}$ any of the families from Proposition \ref{Psi2}. They are truly families of forms on $SE$ of degree $k-j$. Our main characterization of blow-up differentiability is  an immediate consequence of Corollary \ref{Psi1} and Proposition \ref{Psi2}.
\begin{theorem}\label{thmainA} Fix a radial trivialization $\Psi$. A form $\omega\in \Omega^k(E\setminus {\bf 0})$ is blow-up extendible  if and only if for each $j=0,1$ and $i=0,\ldots k-j$   the family of forms on $SE$:
\[t\ra  t^i(\omega_{i,j}^{\Psi})_t,\]
is uniformly continuous on an  interval $(0,\epsilon]$.
\end{theorem}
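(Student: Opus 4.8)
The plan is to read the statement off Corollary \ref{Psi1} and Proposition \ref{Psi2}, which between them already contain all the substance; what remains is purely a matter of transporting a uniform-continuity statement across a fixed bundle isomorphism over the compact manifold $SE$.

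First I would apply Corollary \ref{Psi1} with the given radial trivialization $\Psi$ (that corollary is stated for an arbitrary fiber trivialization, so this is legitimate). This reduces the theorem to the following reformulation: $\omega$ is blow-up extendible if and only if, for each $j\in\{0,1\}$ and each $i\in\{0,\dots,k-j\}$, the $1$-parameter family $t\mapsto t^{i}(\omega_{i,j}^{F\Psi})_{t}$ of sections of the bundle $\Lambda^{i}F_{1}^{*}\otimes\Lambda^{k-i-j}(F_{0}')^{*}$ over $SE$, with $F_{1}=\langle\nabla r\rangle^{\perp}$ and $F_{0}'=\pi^{*}TB$, is uniformly continuous on some interval $(0,\epsilon]$.

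Next I would identify these families with the families of forms on $SE$ appearing in the statement. Since $\Psi$ is radial, $\Psi(T\hat{S}E)=\langle\nabla r\rangle^{\perp}\oplus\pi^{*}TB=F_{1}\oplus F_{0}'$, and over $SE$ one has $V_{\Psi}SE\oplus H_{\Psi}E|_{SE}=T_{\Psi}\hat{S}E=TSE$; hence the restriction of $\Psi$ to $SE$ supplies, for each pair $(i,j)$, a fixed ($t$-independent) smooth bundle isomorphism
\[
\Lambda^{i}F_{1}^{*}\otimes\Lambda^{k-i-j}(F_{0}')^{*}\;\cong\;\Lambda^{i}V_{\Psi}^{*}SE\otimes\Lambda^{k-i-j}H_{\Psi}^{*}E\big|_{SE}
\]
over $SE$, whose target is a subbundle of $\Lambda^{k-j}T^{*}SE$. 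By the very construction of $\omega_{i,j}^{\Psi}$ in item (i) of the list preceding Proposition \ref{Psi2} (apply the exterior powers of $\Psi$ to $\omega_{i,j}^{F\Psi}$), this isomorphism carries $t\mapsto(\omega_{i,j}^{F\Psi})_{t}$ onto $t\mapsto(\omega_{i,j}^{\Psi})_{t}$; and by Proposition \ref{Psi2} the latter family coincides with the three other candidates $(\hat\omega_{i,j}^{\Psi})_{t}$, $(\tilde\omega_{i,j}^{\Psi})_{t}$, $(\check\omega_{i,j}^{\Psi})_{t}$, i.e.\ with the object denoted $\omega_{i,j}^{\Psi}$ in the theorem. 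Because $SE$ is compact, both the isomorphism and its inverse have bounded $C^{0}$-operator norm for any fixed choice of Riemannian metrics, and applying it commutes with multiplication by the scalar $t^{i}$; therefore $t\mapsto t^{i}(\omega_{i,j}^{F\Psi})_{t}$ is uniformly continuous on $(0,\epsilon]$ if and only if $t\mapsto t^{i}(\omega_{i,j}^{\Psi})_{t}$ is. Substituting this equivalence into the criterion obtained from Corollary \ref{Psi1} gives exactly the claimed characterization.

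No step presents a genuine obstacle; the only point requiring care is the bookkeeping in the identification step — keeping straight over which base ($E\setminus{\bf 0}$ versus $SE$) and in which bundle each of $\omega^{F\Psi}$, $\omega_{i}^{F\Psi}$, $\omega_{i,j}^{F\Psi}$, $\omega_{i,j}^{\Psi}$ lives, and verifying that all identifications used (which rest on the radiality of $\Psi$ via Proposition \ref{Psi2}) are $t$-independent, so that uniform continuity in $t$ genuinely transfers. As an alternative route one can bypass Corollary \ref{Psi1} and argue from Theorem \ref{indth} directly: part (ii) shows the uniform continuity of $t\mapsto\Lambda^{k}\mathcal{M}_{t}^{*}(\omega_{t}^{F\Psi})$ is independent of the fiber trivialization, part (i) shows it characterizes blow-up extendibility when $\Psi$ comes from a metric Ehresmann connection (one exists, e.g.\ from any metric Koszul connection), hence it characterizes it for the radial $\Psi$ as well; then expand $\Lambda^{k}\mathcal{M}^{*}(\omega^{F\Psi})=\sum_{i}t^{i}\omega_{i}^{F\Psi}$ and further $\omega_{i}^{F\Psi}=\omega_{i,0}^{F\Psi}+dr\wedge\omega_{i,1}^{F\Psi}$, and note that the grading and the bidegree projections are $t$-independent, so the sum is uniformly continuous on $(0,\epsilon]$ if and only if each summand $t\mapsto t^{i}(\omega_{i,j}^{F\Psi})_{t}$ is, after which the same identification as above finishes the argument.
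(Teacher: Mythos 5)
Your proposal is correct and follows essentially the same route as the paper, which presents Theorem \ref{thmainA} as an immediate consequence of Corollary \ref{Psi1} and Proposition \ref{Psi2}; you have simply made explicit the (correct) point that the identification between $\omega_{i,j}^{F\Psi}$ and $\omega_{i,j}^{\Psi}$ is a fixed, $t$-independent bundle isomorphism over the compact base $SE$, so uniform continuity in $t$ transfers. Nothing further is needed.
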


\vspace{0.5cm}

From Theorem \ref{thmainA} we get a sufficient condition for blow-up extendibility.

\begin{prop}\label{mainpropA} Let $\Psi$ be a radial trivialization and let $\omega\in\Omega^k(E\setminus {\bf 0})$ be a form which   can be written either  as
\begin{itemize}
\item[($\mathrm{O}$)]$\omega=f(r)\omega_i$, statement indexed by $j=0$, or as
\item[($\mathrm{I}$)]$\omega=f(r)\iota_{\nabla r}\omega_{i+1}$, statement indexed by $j=1$
\end{itemize}
where  $\omega_{i+j}\in\Gamma(E,\Lambda^{i+j}V^*_{\Psi}E\otimes \Lambda^{k-i}H^*_{\Psi}E)$ ($j\in\{0,1\}$) is smooth everywhere  and $f:(0,\infty)\ra \bR$ is a continuous function.

If $i+j=0$ occurs, assume that $f(r)\omega_0$ extends continuously along ${\bf 0}$.

If $i+j>0$ occurs, assume that $r^{i+j-1}f(r)\omega_{i+j}$ extends continuously along ${\bf 0}$. 

Then $\omega$ is blow-up extendible.
\end{prop}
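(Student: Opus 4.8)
The plan is to reduce everything to the criterion of Theorem \ref{thmainA}. Fix the radial trivialization $\Psi$ once and for all, and recall that by Theorem \ref{thmainA} the form $\omega$ is blow-up extendible if and only if, for each $j\in\{0,1\}$ and each $\ell$ with $0\le \ell\le k-j$, the $1$-parameter family of forms on $SE$ obtained by decomposing $\omega$ according to vertical/horizontal degree and rescaling, namely $t\mapsto t^{\ell}(\omega_{\ell,j}^{\Psi})_t$, is uniformly continuous on some interval $(0,\eps]$. So the whole argument consists in computing $\omega_{\ell,j}^{\Psi}$ for the two model forms $(\mathrm{O})$ and $(\mathrm{I})$ and checking the uniform continuity by hand.

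Consider first case $(\mathrm{O})$, i.e. $\omega=f(r)\omega_{i}$ with $\omega_i\in\Gamma(\Lambda^i V_{\Psi}^*E\otimes \Lambda^{k-i}H_{\Psi}^*E)$ smooth on all of $E$. Since $\iota_{\nabla r}\omega_i=0$ (the vertical part of $\omega_i$ lives in $\Lambda^i V_{\Psi}^*\hat SE\subset \Lambda^i(\langle\nabla r\rangle^{\perp})^*$ once we use the secondary splitting $V_\Psi E=\langle\nabla r\rangle\oplus V_\Psi\hat SE$... ) — more precisely, in the notation preceding Theorem \ref{thmainA}, $\omega$ has only a $j=0$ component, and among the vertical degrees only $\ell=i$ is present, so $\omega_{\ell,0}^{\Psi}=0$ for $\ell\ne i$ and $\omega_{i,0}^{\Psi}$ is, at the point $(p,v)\in SE$ and parameter $t$, the restriction of $f(t)\,\omega_i(p,tv)$ transported via $\Psi$. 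Because $\omega_i$ is smooth and $SE$ is compact, $(p,v)\mapsto \omega_i(p,tv)$ is a smooth family converging uniformly in $C^0$ (indeed $C^\infty$) as $t\to 0$ to its value along ${\bf 0}$; hence the family $t\mapsto t^{i}(\omega_{i,0}^{\Psi})_t$ is (up to the bounded transport isomorphism $\Psi$) the family $t\mapsto t^i f(t)\,\omega_i(p,tv)$. If $i=0$ this is $f(t)\omega_0(p,tv)$, and uniform continuity at $t=0$ is exactly the hypothesis that $f(r)\omega_0$ extends continuously along ${\bf 0}$; the factor $\omega_0(p,tv)$ is harmless since it converges uniformly. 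If $i>0$, write $t^i f(t)=t\cdot\bigl(t^{i-1}f(t)\bigr)$ and note $t^{i-1}f(t)\omega_i(p,tv)$ is uniformly continuous down to $t=0$ by the hypothesis that $r^{i-1}f(r)\omega_i$ extends continuously; multiplying by the extra factor $t\to 0$ only improves matters, and all the other rescaled families (those with $\ell\ne i$, or with $j=1$) vanish identically. This settles $(\mathrm{O})$.

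Case $(\mathrm{I})$ is entirely parallel with a shift of indices: $\omega=f(r)\,\iota_{\nabla r}\omega_{i+1}$ with $\omega_{i+1}\in\Gamma(\Lambda^{i+1}V_\Psi^*E\otimes\Lambda^{k-i}H_\Psi^*E)$ smooth. Contracting with $\nabla r$ kills one vertical degree and, crucially, produces a form that is $dr$-divisible: in the $(i,j)$-decomposition $\omega$ has only the component with vertical degree $i$ and $j=1$. Thus the relevant rescaled family is $t\mapsto t^{i}(\omega_{i,1}^{\Psi})_t$, which after transport via $\Psi$ equals $t^i f(t)\,(\iota_{\nabla r}\omega_{i+1})(p,tv)$; since $\iota_{\nabla r}\omega_{i+1}$ is smooth on $E\setminus{\bf 0}$ but, more to the point, $r^{\,i}\iota_{\nabla r}\omega_{i+1}=r^{(i+1)-1}$ times a smooth everywhere form, one checks exactly as above: if $i+j=i+1=0$ is impossible here unless $i=-1$, so in the genuinely occurring subcase $i+1\ge 1$ we use that $r^{(i+1)-1}f(r)\omega_{i+1}$ extends continuously along ${\bf 0}$, hence $t^{i}f(t)\omega_{i+1}(p,tv)$ is uniformly continuous, and contraction with $\nabla r$ (a bounded operation, as $\nabla r$ has unit norm and extends to the blow-up) preserves uniform continuity; the degenerate subcase $i+1=0$, if one insists on including it, is covered by the first extendibility hypothesis applied to $\omega_0$. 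Again all other rescaled families vanish, so Theorem \ref{thmainA} applies.

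The only real subtlety — and the step I would be most careful with — is the bookkeeping identifying the abstractly defined family $(\omega_{\ell,j}^{\Psi})_t$ with the concrete expression $f(t)\,(\text{transport of }\omega_{i+j}(p,tv))$, i.e. making sure that for a radial trivialization the decomposition used in Theorem \ref{thmainA} really does see $f(r)\omega_i$ (resp. $f(r)\iota_{\nabla r}\omega_{i+1}$) as living in a single bidegree $(\ell,j)$ with $\ell=i$. This is where Proposition \ref{Psi2} is invoked: for a radial trivialization all four natural ways of extracting families of forms on $SE$ coincide, and $\Psi$ being radial means $\Psi(\nabla r)=\nabla r$ and $\Psi(T\hat SE)=\langle\nabla r\rangle^\perp\oplus\pi^*TB$, so contraction with $\nabla r$ and the assumption $\omega_{i+j}\in\Gamma(\Lambda^{i+j}V_\Psi^*E\otimes\Lambda^{k-i}H_\Psi^*E)$ interact with the splitting in exactly the clean way needed. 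Once that identification is in place, the estimates are the one-line arguments sketched above. No genuinely hard analysis is required — the content is entirely in choosing the right criterion (Theorem \ref{thmainA}) and reading off the power of $t$.
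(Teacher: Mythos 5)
Your overall strategy is exactly the paper's: reduce to the criterion of Theorem \ref{thmainA} and check that the hypothesis on $r^{i+j-1}f(r)\omega_{i+j}$ dominates the required uniform continuity. But there is a concrete error in your identification of the components, and in case ($\mathrm{O}$) it leaves a genuine gap. You claim that $\iota_{\nabla r}\omega_i=0$ and hence that $f(r)\omega_i$ has only an $(\ell,j)=(i,0)$ component. This is false: $\omega_i$ is a section of $\Lambda^i V^*_{\Psi}E\otimes\Lambda^{k-i}H^*_{\Psi}E$, and over $E\setminus{\bf 0}$ the vertical bundle splits as $V_{\Psi}E=\langle\nabla r\rangle\oplus V_{\Psi}\hat{S}E$, so $\Lambda^i V^*_{\Psi}E\simeq\Lambda^i V^*_{\Psi}\hat{S}E\oplus\bigl(dr\wedge\Lambda^{i-1}V^*_{\Psi}\hat{S}E\bigr)$ and $\omega=f(r)\omega_i$ generically has a nonvanishing $j=1$ component $dr\wedge f(r)\iota_{\nabla r}\omega_i$ of vertical degree $i-1$. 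The family you must also check is $t\mapsto t^{i-1}f(t)\,\iota_{\nabla r}\omega_i(p,tv)$, and this is precisely the one for which the hypothesis that $r^{i-1}f(r)\omega_i$ extends continuously is used with no room to spare (your $j=0$ family carries an extra factor of $t$). The check does go through, since contraction with the unit section $\nabla r$ is a bounded operation, but as written your proof asserts this component vanishes identically and so omits the sharpest case.

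A smaller slip of the same kind occurs in case ($\mathrm{I}$): $\iota_{\nabla r}\omega_{i+1}$ is not ``$dr$-divisible''; on the contrary, it is annihilated by $\iota_{\nabla r}$, so $\omega=f(r)\iota_{\nabla r}\omega_{i+1}$ sits entirely in the $j=0$ part of the decomposition of Theorem \ref{thmainA} (this is what the paper records as $\hat{\omega}_1=0$). You appear to have conflated the index $j$ labelling the two cases of the Proposition with the bigrading index $j$ of Theorem \ref{thmainA}. Since the vertical degree is $i$ either way, your power of $t$ and the resulting estimate are correct, so this slip does not invalidate the argument; but the omission in case ($\mathrm{O}$) should be repaired by checking the $t^{i-1}f(t)\,\iota_{\nabla r}\omega_i$ family against the same hypothesis.
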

\begin{proof} We have the decomposition
\[
\omega=\hat{\omega}_0+dr\wedge \hat{\omega}_1,\qquad\mbox{with}\quad \hat{\omega}_1=\iota_{\nabla r}\omega
\]
where, for $l\in\{0,1\}$, 
\[
\hat{\omega}_l\in\Gamma(E\setminus {\bf 0},\Lambda^{i-l}V^*_{\Psi}\hat{S}E\otimes \Lambda^{k-i}H^*_{\Psi}E)). 
\]
In the case ($\mathrm{I}$), one obviously has $\hat{\omega}_1=0$.

 Both $\hat{\omega}_l$ give rise to families of forms on $SE$ and according to Theorem \ref{thmainA} we need to understand when are the families $r\ra r^{i-l}(\hat{\omega}_l)_{r}$  uniformly continuous. In the case ($\mathrm{I}$), just $l=0$ is necessary. But clearly the extension along ${\bf 0}$ of $r^{i-1}f(r)\omega_i$ in case ($\mathrm{O}$) or of $r^{i}f(r)\omega_{i+1}$ in case ($\mathrm{I}$) is a stronger condition then the uniform continuity of the required families.
\end{proof}

When $B$ is a point it is standard when a function/section $\frac{g}{r^{a}}$ is continuously extendible at $\{0\}$ for $g$  smooth everywhere. One needs the vanishing of $g^{(l)}(0)=0$ for $0\leq l\leq a-1$. We describe analogous  conditions that imply the hypothesis of Proposition \ref{mainpropA}.

{If $G \to B$ is a vector bundle on $B$ for sections  $s$ of a pull-back bundle $\pi^*G\ra E$ it makes sense to differentiate $s$ in the vertical directions. One takes the differential of the restriction $s\bigr|_{E_p}:E_p\ra G_p$ in the $v\in E_p$ direction.  We get a morphism
\[Ds:VE\ra \pi^*G\]


Since $VE=\pi^*E$, the process can be repeated in order to get the higher differentials which are symmetric bundle morphisms
\[
VE\times\ldots\times VE\ra \pi^*G
\]
We denote by $D^{(j)}s$  the homogeneization of this morphism, i.e. the composition with the main diagonal morphism
\[VE\ra VE\times \ldots\times VE\]
Then $D^{(j)}_0s:E\ra G$ is the restriction of $D^{(j)}s$ to points on the zero-section. The Taylor Theorem can be easily adapted to compact families of mappings (i.e. $B$ is compact) and we have the following approximation at $0$ for a section $s:E\ra \pi^*G$:
\begin{equation}\label{TT}s(p,v)=\sum_{j=0}^k\frac{1}{j!}D^{(j)}_0s(p,v)+o(|v|^{k+1})\end{equation}

 As an immediate consequence of Proposition \ref{mainpropA} we have the next statement, whose proof we leave to the reader as we only need "the border cases" (no vertical derivatives) which are straightforward.

\begin{corollary} \label{maincorA} Suppose that one of the conditions ($\mathrm{O}$) or ($\mathrm{I}$) of Propostion \ref{mainpropA} is fulfilled.  Assume $f(r)=(r^{-a})g(r)$ for some natural number $a>0$ and continuous $g$ all the way to $r=0$.
 Let $\tilde{\omega}_{i+j}:=(\Lambda^k\Psi)^*(\omega_{i+j})$ be the section of $\Lambda^{i+j}\pi^*E^*\otimes \Lambda^{k-i}\pi^*T^*B$ which corresponds to $\omega_{i+j}$ after the fiber trivialization. 

If any of the following situations occurs, then $\omega$ is blow-up extendible:

\begin{itemize}
\item[(A)] $i+j=0$  and $D^{(l)}_0\tilde{\omega}_0=0$ for $0\leq l\leq a-1$ or
\item[(B)]  $0<i+j\leq a$  and $D^{(l)}_0\tilde{\omega}_{i+j}=0$ for $0\leq l\leq a-i-j$ or
\item[(C)] $a<i+j$.
\end{itemize}
In particular, (A) occurs if $(i,j,a)=(0,0,1)$ and $\omega_0\bigr|_{{\bf 0}}\equiv 0$  and (B) occurs if $a=i+j$ and $\omega_{i+j}\bigr|_{{\bf 0}}\equiv 0$.

\end{corollary}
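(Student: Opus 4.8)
The plan is to derive this straight from Proposition \ref{mainpropA}, the only work being to recognize its standing hypotheses in the data at hand. We are assuming that one of (O), (I) holds with $\omega_{i+j}$ smooth on all of $E$ and $f$ continuous up to $r=0$; writing $f(r)=r^{-a}g(r)$, the $r$-weight that Proposition \ref{mainpropA} asks us to control in front of $\omega_{i+j}$ is $r^{-a}g(r)$ when $i+j=0$ and $r^{\,i+j-1-a}g(r)$ when $i+j>0$, so everything hinges on the sign of $i+j-1-a$, resp. of $-a$. First I would dispatch case (C): if $a<i+j$ the exponent $i+j-1-a$ is $\geq 0$, hence $r^{\,i+j-1-a}g(r)$ is continuous up to $r=0$ and, multiplied by the everywhere-smooth $\omega_{i+j}$, gives a form extending continuously across ${\bf 0}$; Proposition \ref{mainpropA} then applies with no condition on the vertical derivatives.

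For cases (A) and (B) the $r$-weight is $r^{-m}g(r)$ with $m=a$ (when $i+j=0$) or $m=a-i-j+1\geq1$ (when $0<i+j\leq a$), so the exponent is strictly negative and vanishing of $\omega_{i+j}$ to fiber-order $m$ at ${\bf 0}$ is what is needed. I would pass through the fiber trivialization $\Psi$: the form $\omega_{i+j}$ becomes the section $\tilde\omega_{i+j}$ of the pull-back bundle $\Lambda^{i+j}\pi^*E^*\otimes\Lambda^{k-i}\pi^*T^*B$, to which the fiberwise Taylor approximation \eqref{TT} applies uniformly over the compact base $B$. The hypothesis $D^{(l)}_0\tilde\omega_{i+j}=0$ for $0\leq l\leq a-i-j$ (resp. for $0\leq l\leq a-1$ when $i+j=0$) says exactly that the first $m$ Taylor terms vanish, so
\[
\tilde\omega_{i+j}(p,v)=\tfrac{1}{m!}D^{(m)}_0\tilde\omega_{i+j}(p,v)+o(|v|^{m}).
\]
Restricting to $SE$, where $r(p,tv)=t$, this gives uniform convergence on $SE$
\[
t^{-m}\,\tilde\omega_{i+j}(p,tv)\ \longrightarrow\ \tfrac{1}{m!}D^{(m)}_0\tilde\omega_{i+j}(p,v)\qquad\text{as }t\to0,
\]
the limit being a form polynomial, hence continuous, in the sphere variable $v$.

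Then, multiplying by $g(t)$ and reassembling the vertical, horizontal and $dr$ components of $\omega$ as in the statement of Theorem \ref{thmainA} — in case (I) only the bidegree $(0,\ast)$ part is present, since $\iota_{\nabla r}$ annihilates the purely vertical complement, so $\hat\omega_1=0$ there — one concludes that the families $t\mapsto t^{\,i}(\omega^{\Psi}_{i,j})_t$ are uniformly continuous on some interval $(0,\epsilon]$; by Theorem \ref{thmainA}, $\omega$ is blow-up extendible. The two displayed special cases are just the border instances: $(i,j,a)=(0,0,1)$ collapses condition (A) to $D^{(0)}_0\tilde\omega_0=0$, i.e.\ $\omega_0\bigr|_{{\bf 0}}\equiv0$, and $a=i+j$ collapses (B) to $D^{(0)}_0\tilde\omega_{i+j}=0$, i.e.\ $\omega_{i+j}\bigr|_{{\bf 0}}\equiv0$; for these the Taylor expansion above is used only to first order, and these are precisely the instances invoked in Theorems \ref{codim3} and \ref{codim2}.

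The one point needing care — the place I expect the genuine obstacle to sit — is that one must not try to finish by dividing a smooth form vanishing to order $m$ along ${\bf 0}$ by $r^{m}$ \emph{on $E$ itself}: that quotient typically has a direction-dependent limit at ${\bf 0}$ and does not extend continuously there. The correct bookkeeping lives on the blow-up $[0,\infty)\times SE$, where the displayed limit is a genuine continuous form in $v$; once this is granted, what remains is the routine uniform Taylor estimate, and one only has to check that the multiplication by $g(t)$ and the passage from $\hat\omega_j$ to the families on $SE$ do not reintroduce singular powers of $t$ — they do not, by construction of a radial trivialization, the extra $t^i$ from the vertical legs being exactly the one already present in Theorem \ref{thmainA}.
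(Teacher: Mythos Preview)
Your proposal is correct and matches the route the paper intends: the paper itself leaves this proof to the reader, calling it an ``immediate consequence of Proposition \ref{mainpropA}'' together with the Taylor approximation \eqref{TT}, and this is exactly what you carry out.

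You are also right to flag the delicate point: the literal hypothesis of Proposition \ref{mainpropA} asks that $r^{\,i+j-1}f(r)\omega_{i+j}$ extend continuously on $E$ across ${\bf 0}$, which in cases (A) and (B) is generally \emph{too strong}---a smooth section vanishing to order $m$ divided by $r^m$ typically has a direction-dependent limit and fails to extend on $E$. Your fix, namely passing to $SE$ via the radial trivialization and invoking the uniform convergence
\[
t^{-m}\tilde\omega_{i+j}(p,tv)\longrightarrow \tfrac{1}{m!}D^{(m)}_0\tilde\omega_{i+j}(p,v)
\]
to feed directly into Theorem \ref{thmainA}, is the correct way to close the gap. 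This is in fact how the \emph{proof} of Proposition \ref{mainpropA} proceeds (it reduces everything to the uniform continuity of the families $t\mapsto t^{\,i-l}(\hat\omega_l)_t$ on $SE$), so you are using the proposition in spirit even where its stated hypothesis is unavailable. The paper's informal phrase ``$r^{-1}\gamma_0$ extends along $L$'' in the proof of Theorem \ref{codim3} should likewise be read in this blow-up sense; your account makes this precise. One cosmetic point: the Taylor remainder after $m$ vanishing terms is $o(|v|^{m+1})$ rather than $o(|v|^{m})$, but this does not affect the argument.
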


The "border cases" consist of  (C) plus the particular cases described in the last statement of Corollary \ref{maincorA}.


\begin{thebibliography}{99}
\bibitem{ArK} V.I. Arnold, B.A. Khesin, \emph{Topological Methods in Hydrodynamics}, Springer, Appl. Math. Sci. {\bf 125}, 1998.
\bibitem{BSS1} S. Bechtluft-Sachs, E. Samiou, \emph{Green’s functions, Biot-Savart operators,
and linking numbers on negatively curved symmetric spaces}, J. Math.
Phys. {\bf 60}  (2019), 111503.

  \bibitem{BSS2} S. Bechtluft-Sachs, E. Samiou, \emph{Linking Integrals on Rank-1-Symmetric Spaces}, Trans. Amer. Math. Soc. {\bf 377} (2024), 7777-7792.
  \bibitem{BGM} M. Berger, P. Gauduchon, E. Mazet, \emph{Le Spectre d'une Vari\' et\' e Riemannianenne}, Springer, LNM {\bf 194}, 1971.
  \bibitem{BGV} N. Berline, E. Getzler, M. Vergne, \emph{Heat kernels and Dirac operators}, 2nd ed.,  Springer, 2004.
  \bibitem{Bochner} S. Bochner, Vector fields and Ricci curvature, Bull. AMS {\bf 52} (1946), 776--797.
\bibitem{BT} R. Bott, L. Tu, \emph{Differential forms in algebraic topology}, Springer, GTM {\bf 82} 1982.
  \bibitem{dR} G. deRham, \emph{Differentiable manifolds}, Springer,  1984.
 \bibitem{dTG} D. deTurck, H. Gluck, \emph{Linking, twisting, writhing and helicity on the $3$-sphere and in the hyperbolic $3$ space}, J.Diff. Geom., {\bf 94} (2013), 87-128.
\bibitem{dTG2} D. deTurck, H. Gluck, \emph{Electrodynamics and the Gauss linking integral on the $3$-sphere and in hyperbolic $3$-space}, J. Math. Phys. {\bf 49} (2008), 023504, 35 pp.
  \bibitem{Fe} H. Federer, \emph{Geometric Measure Theory}, Springer, 1996.
\bibitem{Fo} G. Folland, \emph{Real Analysis}, 2nd ed., Wiley \& Sons, 1999.
 \bibitem{GS} H. Gillet, C. Soul\'e, \emph{Arithmetic Intersection Theory}, Publ. Math. IHES, {\bf 72} (1990), 93-174.
\bibitem{Gri} A. Grigor'yan, \emph{Heat Kernels on Riemannian manifolds}, Am. Math. Soc./Inter. Press, St. Adv. Math. {\bf 47}, 2009.
\bibitem{Ha} B. Harris, \emph{Cycle pairings and the heat equation},  Topology, {\bf 32} (1993), 225-238.
  \bibitem{He} G. Hein, \emph{Computing the Green currents via the heat kernels}, J. Reine \& Angew. Math. {\bf 540}  (2001), 87-104.
  \bibitem{H1} L. Hormander, \emph{The Analysis of Linear Partial Differential Equations I}, 2nd ed., Springer, GMW {\bf 256}, 1990.
\bibitem{Ko} S. Kobayashi, \emph{Transformation Groups in Differential Geometry}, Springer, 1972.
\bibitem{Ku} G. Kuperberg, \emph{From the Mahler conjecture to Gauss linking integrals}, GAFA Geom. Funct. Anal. {\bf 18}(2008), 870-892. 
  \bibitem{LM} B. Lawson, M. Michelsohn, \emph{Spin Geometry}, Princeton Univ. Press, 1989.
\bibitem{Li} W.B.R. Lickorish, \emph{An introduction to knot theory}, Springer, GTM {\bf 175}, 1997.
  \bibitem{Ma} V. Mathai, \emph{Heat kernels and Thom forms}, J. Func. Anal. {\bf 104} (1992), 34-46.
  \bibitem{MP} S. Minakshisundaram, A. Pleijel, \emph{Some properties of the eigenfunctions of the
Laplace operator on Riemannian manifolds}, Canad. J. Math., {\bf 1} (1949), 242-256.
\bibitem{OR} E. Outerelo, J. Ruiz, \emph{Mapping degree theory}, Am. Math. Soc. - Real Soc. Mat. Esp. , GSM {\bf{108}}, 2009.
  \bibitem{Pa} V. Patodi, \emph{Curvature and eigenforms for the Laplace operator}, J. Diff. Geom., {\bf 5} (1971), 233-249.
\bibitem{Par} J. Parsley, \emph{The Biot-Savart operator and electrodynamics on subdomains of the three-sphere}, J. Math. Phys. {\bf 53} (2012), 013102.
  \bibitem{petersen} P. Petersen, \emph{Riemannian geometry.
  Third edition} Grad. Texts in Math., 171
  Springer, Cham, 2016. xviii+499 pp.
  \bibitem{RS} M. Reed, B. Simon, \emph{Methods of Mathematical Physics}, vol I, Academic Press Inc., 1980.
  \bibitem{Ri} R. Ricca, B. Nipoti, \emph{Gauss linking number revisited}, J. Knot Th. \& Ramif., {\bf 20} (2011), 1325-1343.
\bibitem{SVV} C. Shonkwiler, D. S. Vela-Vick, \emph{Higher-dimensional linking integrals}, Proc. AMS, {\bf 139} (2011), 1511-1519.
\bibitem{Vo} T. Vogel, \emph{On the asymptotic linking number}, Proc. AMS, {\bf 131} (2002),  2289-2297. 
  \end{thebibliography}
\end{document}